\documentclass[10pt,a4paper]{amsart}

\usepackage[a4paper,top=3.5 cm,bottom=3 cm,left=2.5 cm,right=2.5 cm]{geometry}
\allowdisplaybreaks
\usepackage[T1]{fontenc}
\usepackage[english]{babel}
\usepackage{url}
\usepackage{amsmath,amssymb,amsthm}

\usepackage{graphicx}
\usepackage{xcolor} 
\usepackage{bbm}
\usepackage{dsfont}
\usepackage{stmaryrd}
\usepackage{esint}
\usepackage{floatrow}
\usepackage{epsfig}

\usepackage[normalem]{ulem}
\usepackage{mathrsfs}
\usepackage{tikz}
\usetikzlibrary{decorations.markings,backgrounds}
\usetikzlibrary{arrows.meta}
\usepackage{pgfplots}\pgfplotsset{compat=1.18}
\usepackage{pst-plot}
\usepackage{subfig}
\usepackage{caption}
\usepackage{float}
\usepackage[colorinlistoftodos]{todonotes}
\usepackage{import}
\usepackage{enumitem}
\usepackage{verbatim,fancyvrb}
\usepackage{epigraph}

\usepackage{mathtools}
\usepackage{hyperref}
\usepackage[nameinlink,capitalise,sort]{cleveref}
\usepackage{autonum}
\crefname{equation}{}{} 
\crefname{enumi}{}{} 

\definecolor{mblue}{rgb}{         0    0.4470    0.7410}
\definecolor{mred}{rgb}{    0.8500    0.3250    0.0980}  
\definecolor{myellow}{rgb}{    0.9290    0.6940    0.1250}

\theoremstyle{plain}
\newtheorem{lemma}{Lemma}[section]
\newtheorem{proposition}[lemma]{Proposition}
\newtheorem{theorem}[lemma]{Theorem}
\newtheorem{corollary}[lemma]{Corollary}

\newtheorem*{idea*}{Idea}

\theoremstyle{definition}
\newtheorem{definition}[lemma]{Definition}

\theoremstyle{remark}
\newtheorem{remark}[lemma]{Remark}

\numberwithin{equation}{section}

\newcommand{\loc}{\mathrm{loc}}

\newcommand{\R}{\mathbb{R}}
\newcommand{\N}{\mathbb{N}}
\newcommand{\T}{\mathbb{T}}
\newcommand{\Z}{\mathbb{Z}}
\newcommand{\eps}{{\varepsilon}}

\DeclareMathOperator{\supp}{supp}
\DeclareMathOperator{\dive}{div}
\DeclareMathOperator{\curl}{curl}          
\DeclareMathOperator{\dist}{dist}

\DeclareMathOperator{\tr}{tr}              

\newcommand{\abs}[1]{\left\lvert#1\right\rvert}
\newcommand{\norm}[1]{\left\lVert#1\right\rVert}
\newcommand\ip[1]{\left\langle#1\right\rangle}

\newcommand{\dd}{\,\mathrm{d}}

\newcounter{eq}[section]

\title[On Lions' density patch problem at a critical level of regularity]{On Lions' density patch problem at a critical level of regularity}

\date{}

\author[S. \v{S}kondri\'c]{Stefan \v{S}kondri\'c}
\address[S. \v{S}kondri\'c]{Chair of Analysis, Department of
Mathematics, Friedrich-Alexander-Universität Erlangen-Nürnberg, Cauerstraße 11, 91058 Erlangen, Germany.}
\email{stefan.skondric@fau.de}

\author[A.~Violini]{Alessandro~Violini}
\address[A.~Violini]{Universit\"at Basel, Department Mathematik und Informatik, Spiegelgasse 1, 4051 Basel, Switzerland. }
\email{alessandro.violini@unibas.ch}

\subjclass[2020]{76D03, 76D05, 76T10, 46E35, 35Q30}
\keywords{Inhomogeneous Navier--Stokes equations; density patch problem; vacuum; critical Besov regularity; long-time asymptotics
}

\begin{document}

\begin{abstract}
In this article, we study Lions' density patch problem in two space dimensions at critical regularity. We prove global existence, uniqueness, and stability for a fluid occupying a bounded Lipschitz region surrounded by vacuum and evolving according to the incompressible Navier--Stokes equations, with initial velocity in $\dot{B}^0_{2,1}(\R^2)$. Moreover, we show that the Lipschitz regularity of the patch is preserved, and that its long-time dynamics is a rigid motion leading to the emergence of an asymptotic domain.
\end{abstract}
\maketitle

\tableofcontents

\section{Introduction}

We study the evolution of a homogeneous incompressible fluid occupying a bounded region \(D \subset \mathbb{R}^2\), surrounded by vacuum, and evolving according to the incompressible Navier--Stokes equations. This configuration is commonly referred to as the \emph{density patch problem}.

The region occupied by the fluid evolves in time into a family of domains \(D_t\), describing the motion of the interface. This point of view naturally leads to a free-boundary formulation, where the evolution of the fluid is described through the motion of the domain \(D_t\) and its boundary.

An equivalent Eulerian formulation is obtained by introducing a density transported by the flow, which formally takes the form  
\begin{align}
    \rho(t) = \mathbf{1}_{D_t}.
\end{align}

In this formulation, the problem can be described by the inhomogeneous incompressible Navier--Stokes system in  
\(\mathbb{R}_+ \times \mathbb{R}^2\)
\begin{align}\label{eq:INS}
    \begin{cases}
        \partial_t (\rho u) + \dive (\rho u \otimes u) - \nu \Delta u + \nabla P = 0, \\
        \partial_t \rho + \dive (\rho u) = 0, \\
        \dive u = 0,
    \end{cases}
\end{align}
where \(\rho \geq 0\) denotes the density, \(u\) the velocity field, \(P\) the pressure, and \(\nu\) the viscosity. The system is supplemented with the initial data
\begin{align}\label{eq:initial_data}
\begin{cases}
\rho(0,x) = \mathbf{1}_D,\\
u(0,x) = u_0(x) \qquad \text{with } \dive u_0 = 0.
\end{cases}
\end{align}

In this framework, the evolution of the interface \(D_t\) can be studied from a Lagrangian perspective. Indeed, the domains are given by
\begin{align}
D_t = X(t,D),
\end{align}
where \(X(t,x)\) denotes the flow associated with the velocity field \(u\). Since \(u\) is incompressible, i.e.\ \(\dive u = 0\), the flow \(X(t,\cdot)\) is measure-preserving, and therefore
\begin{align}
    |D_t| = |D|.
\end{align}

In particular, the evolution preserves the size of the patch, but this information alone does not provide any control on the geometry of the boundary. This naturally leads to the central question, raised by Lions in his monograph
\emph{Mathematical Topics in Fluid Mechanics}~\cite[p.~34]{Lions1996}, of whether the regularity of the domains \(D_t\) is preserved by the time evolution.

More precisely, starting from an initial domain \(D\) with a given geometric regularity, one may ask whether the transported domains \(D_t\) retain the same structure, or whether singularities may develop at the interface as time evolves. Since the domain \(D\) is transported by the flow \(X(t,\cdot)\), its evolution ultimately depends on the regularity properties of the velocity field \(u\) and on the initial data \(u_0\).

\vspace{0.3cm}

Our main result provides a positive answer to this question in a critical regularity framework. We establish existence, uniqueness, and stability of solutions, together with the propagation of Lipschitz regularity of the interface. In addition, we describe the long-time behavior of the flow, showing convergence towards a rigid motion and the emergence of an asymptotic domain.

Before stating a simplified version of our main result, we introduce the quantity
\begin{align}\label{eq:Mdef}
    M := \frac{1}{|D|}\int_{\R^2}\rho_0(x)u_0(x)\,\dd x
    = \frac{1}{|D|}\int_{D} u_0(x)\,\dd x.
\end{align}
This quantity can be interpreted as the average momentum per unit mass inside the patch \(D\). In particular, it has the same scaling as the velocity field \(u\), and represents the average initial velocity of the fluid in \(D\).

\begin{theorem}\label{thm:main}
Let $\rho_0 = \mathbf{1}_{D}$ be the characteristic function of a domain 
$D \subset \R^2$ with Lipschitz boundary, 
and let the initial velocity satisfy 
\begin{align}
    u_0 \in \dot{B}^0_{2,1}(\R^2), 
    \qquad \dive u_0 = 0.
\end{align}
Then there exists a unique global solution $(\rho,u)$ to \eqref{eq:INS} satisfying the energy equality such that 
\begin{align}   \sqrt{\rho }\dot u, \nabla^2 u , \nabla P \in L^1 \big((0,\infty);L^2(\R^2)\big), \qquad
     \nabla u \in L^1 \big((0,\infty);L^\infty(\R^2)\big),
     \qquad 
     \rho(t) = \mathbf{1}_{D_t},
\end{align}
where $D_t = X(t,D) \subset \R^2$ is a Lipschitz domain for all $t\geq 0$, and $X(t,x)$ is the flow associated with $u$.

Moreover, the quantity $M$ is conserved in time, and there exists a bi-Lipschitz, measure-preserving map 
$X_\infty : D \to \R^2$ such that
\begin{align}
\|X(t,\cdot)-Mt - X_\infty(\cdot)\|_{L^\infty(D)} \to 0
\qquad \text{as } t\to\infty .
\end{align}
In particular, up to the rigid translation $Mt$, the domains $D_t$ converge in the Hausdorff sense to the limiting domain $D_\infty \coloneqq X_\infty(D)$. 

If, in addition, $u_0 \in \dot{B}^\gamma_{2,1}$ for some $\gamma \in (0,1)$, then 
$\nabla u \in L^1((0,\infty);C^{\gamma}(\R^2))$. In this case, if the initial domain 
$D$ is of class $C^{1,\gamma}$, then the evolved domains $D_t$ remain of class 
$C^{1,\gamma}$ for all $t \geq 0$.
\end{theorem}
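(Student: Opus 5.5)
The plan is to follow the Lagrangian, time-weighted energy approach of Danchin and collaborators for the density patch problem, adapted to the critical space $\dot B^0_{2,1}$ through a decomposition of the initial data along Littlewood--Paley blocks.

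\textbf{A priori estimates for smooth data.} First we derive a priori estimates for smooth solutions with a smoothed density $0\le\rho\le 1$ (or $\rho$ bounded below by a small $\varepsilon$, which is removed at the end). The basic energy identity gives $\sqrt\rho u\in L^\infty L^2$ and $\nabla u\in L^2L^2$. Testing the momentum equation with $\dot u=u_t+u\cdot\nabla u$ gives $\|\nabla u\|_{L^\infty L^2}$ and $\sqrt\rho\dot u\in L^2L^2$. The time-weighted version (multiply by $t$) yields $\sqrt t\sqrt\rho\dot u\in L^\infty L^2$ and $\sqrt t\nabla\dot u\in L^2L^2$, for data in $L^2$ and $\dot H^1$ respectively.

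\textbf{Critical estimate by interpolation.} The key new step is the $L^1$-in-time estimate. Following the Danchin--Mucha idea, we split $u_0=\sum_j\dot\Delta_j u_0$ and consider the problem with data $\dot\Delta_j u_0$. Interpolation between the $L^2$ estimate (which gives $\int_0^T\|\sqrt\rho\dot u\|_{L^2}\lesssim \|u_0\|_{L^2}\log$-type bounds on short times) and the $\dot H^1$ estimate with time weights shows that each block contributes $\|\sqrt\rho\dot u_j\|_{L^1L^2}\lesssim\|\dot\Delta_ju_0\|_{L^2}$, uniformly in $j$. This uses the characteristic time scale $2^{-2j}$: split $\int_0^\infty=\int_0^{2^{-2j}}+\int_{2^{-2j}}^\infty$ and use $\sqrt t$ and $t$ weights in the second piece. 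Since the linearized Stokes problem with density is not linear in $u_0$, we cannot simply superpose the blocks. I would therefore work with the linear system in which the density (and the flow) is frozen, prove the block estimates there, sum them to get $\|\sqrt\rho\dot u,\nabla^2u,\nabla P\|_{L^1L^2}\lesssim\|u_0\|_{\dot B^0_{2,1}}$, and close by a fixed point in Lagrangian coordinates. Global smallness is not needed, because the energy-dissipated part at large times can be handled by the time-weighted bounds. The Stokes estimate with bounded density, $\|\nabla^2u,\nabla P\|_{L^2}\lesssim\|\rho\dot u\|_{L^2}$, turns this into the stated bounds. The 2D Sobolev-type embedding $\dot W^{1,2}\cap$ (a logarithmic refinement, or $\dot B^{1}_{2,1}\hookrightarrow L^\infty$) then gives $\nabla u\in L^1L^\infty$. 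Getting this genuinely, and not only $L^1$ of $\|\nabla^2u\|_{L^2}$, is the main obstacle. I would try to prove the block estimate in $\dot B^{1}_{2,1}$ for $\nabla u$ directly, by maximal regularity of the Stokes system with variable density, exploiting $\rho\le1$.

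\textbf{Uniqueness, stability, patch regularity and the extra regularity.} Uniqueness and stability follow in Lagrangian coordinates. Since $\nabla u\in L^1L^\infty$, the flow $X$ is bi-Lipschitz with $\|\nabla X^{\pm1}\|\le\exp\int\|\nabla u\|_{L^\infty}$. The difference of two solutions then satisfies a linear Stokes-type system whose estimates close in $L^1$-in-time spaces, perhaps only for small times, with the argument then iterated. The same bi-Lipschitz bound shows that $D_t=X(t,D)$ is Lipschitz. For the long-time behavior, momentum conservation gives that $M$ is constant. Then $X(t,x)-Mt-x=\int_0^t(u(s,X)-M)\,ds$, and we need $\int_0^\infty\|u-M\|_{L^\infty(D_s)}ds<\infty$. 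Since the mean of $u$ over $D_s$ is exactly $M$, Poincaré on the uniformly Lipschitz domain $D_s$ (the Lipschitz constant is controlled by the flow bound, and the area is fixed) gives $\|u-M\|_{L^\infty(D_s)}\lesssim \mathrm{diam}(D_s)\|\nabla u\|_{L^\infty}$. The diameter stays bounded by bi-Lipschitz control, so the integral is finite. Hence $X(t)-Mt$ converges uniformly to some $X_\infty$, which inherits the bi-Lipschitz and measure-preserving properties, and Hausdorff convergence follows. Finally, for $u_0\in\dot B^\gamma_{2,1}$ the same block argument with weights $2^{j\gamma}$ gives $\nabla^2u\in L^1\dot B^{\gamma}_{2,1}$ away from the interface. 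The jump of $\rho$ across $\partial D_t$ prevents this globally, so I would instead run a striated/tangential regularity argument, Chemin-style, along a $C^\gamma$ vector field tangent to $\partial D_t$. This yields $\nabla u\in L^1C^\gamma$, and the transport of the tangent field then preserves $C^{1,\gamma}$.
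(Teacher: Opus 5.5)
Your treatment of the critical $L^1$-in-time bound has a genuine gap at large times, and this bound is the heart of the theorem.

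\textbf{The large-time gap.} On $(0,2^{-2j})$ your splitting works. On $(2^{-2j},\infty)$, however, the only information you propagate for a block is of $L^2$ and $\dot H^1$ type: $\|\sqrt{\rho}\,\dot u_j(t)\|_{L^2}\lesssim t^{-1}\|\dot\Delta_j u_0\|_{L^2}$ and $\int_0^\infty t\|\sqrt{\rho}\,\dot u_j\|_{L^2}^2\,\dd t\lesssim\|\dot\Delta_j u_0\|_{L^2}^2$. The $\dot H^1$-based bounds decay even more slowly. Any combination of these leaves $\int_{2^{-2j}}^{T}\|\sqrt{\rho}\,\dot u_j\|_{L^2}\,\dd t$ growing like a power of $\log(T2^{2j})$. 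This diverges as $T\to\infty$ and is not uniform in $j$, so the sum over blocks does not close.

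For the heat equation the cure is the factor $e^{-ct2^{2j}}$ from frequency localization. The variable-density dynamics with vacuum does not preserve frequency localization, and neither the $\sqrt t$, $t$ weights nor a Lagrangian fixed point produce extra decay at large times. What is needed is a rate strictly better than $t^{-1}$, driven by \emph{negative} regularity of each piece, for instance $\|\sqrt\rho\,\dot u_j(t)\|_{L^2}\lesssim t^{-1-\eta/2}\|u_{0,j}\|_{\dot H^{-\eta}}$. Your blocks could serve as such pieces, since $\|\dot\Delta_ju_0\|_{\dot H^{\pm\eta}}\approx 2^{\pm j\eta}\|\dot\Delta_ju_0\|_{L^2}$, but only if the negative norm is actually used.

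In the patch setting this is hard for two reasons. First, $u_j(t)\notin L^2(\R^2)$, so $\dot H^{-\eta}$ cannot be propagated by energy methods. Second, the momentum $\int\rho u_j=|D|M_j$ is conserved, so $\|\sqrt\rho\,u_j(s)\|_{L^2}\ge|D|^{1/2}|M_j|$, and the energy bound $\int_s^\infty\|\nabla u_j\|_{L^2}^2\le\frac12\|\sqrt\rho\,u_j(s)\|_{L^2}^2$ gives no decay rate in $s$.

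\textbf{How the paper closes it.} It removes the rigid translation by the Galilean shift $u_{j,M_j}(t,x)=u_j(t,x+M_jt)-M_j$. It then uses duality with the backward system $\partial_s(\rho w)+\dive(\rho u\otimes w)+\nabla Q=-\Delta w+\rho u_j$, $w(t)=0$. Since $\bar w_0=w(0)+tM_j$ has zero mean on $D$, three ingredients apply: the multiplier property of $\mathbf 1_D$ on $\dot H^\eta$ for $\eta<\tfrac12$, extension, and Poincar\'e on $D$. Together they give $\int_0^t\|\sqrt{\rho_{M_j}}\,u_{j,M_j}\|_{L^2}^2\lesssim t^{1-\eta}\|u_{0,j}\|_{\dot H^{-\eta}}^2$. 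By monotonicity this yields $\|\sqrt{\rho_{M_j}}\,u_{j,M_j}(t)\|_{L^2}^2\lesssim t^{-\eta}\|u_{0,j}\|_{\dot H^{-\eta}}^2$. The time-shifted bounds $(t-s)^2\|\sqrt\rho\,\dot u_j(t)\|_{L^2}^2\lesssim\int_s^t\|\nabla u_j\|_{L^2}^2$ with $s=t/2$ then turn this into the needed decay.

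The same gap affects your $\nabla u\in L^1L^\infty$ step. The paper gets it from $\|\nabla v\|_{L^\infty}\lesssim\|\sqrt\rho\,\dot v\|_{L^2}+\|\nabla v\|_{L^2}^{1/2}\|\nabla\dot v\|_{L^2}^{1/2}$ plus blockwise time splitting. Besov maximal regularity for the Stokes system is not available with vacuum, because $\rho\partial_t u$ gives no control of the time derivative outside the patch.

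\textbf{Secondary points.}

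\emph{Integrability outside the patch.} You never address that energy controls only $\|u\|_{L^2(D_t)}$ and $\nabla u$ in $\R^2$. The paper relies on a $\sqrt t$-localized Gagliardo--Nirenberg inequality, built on the $(A)$-property of $D$ and $\operatorname{supp}\rho(t)\subset D+B_{C\sqrt t}(0)$. It needs this for compactness in the construction, for momentum conservation and tail estimates, and for its Eulerian weak--strong uniqueness. In Lagrangian coordinates this is milder, since the density is frozen to $\mathbf 1_D$.

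\emph{Existence and the energy equality.} Existence for large vacuum data needs an approximation and compactness argument, and the energy equality does not pass to the limit in the lifting $\rho_0+\frac1n$. The paper recovers it by gluing solutions of the linearized system along an atomic decomposition; you provide no substitute.

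\emph{The $C^{1,\gamma}$ step.} You misdiagnose the difficulty: the jump of $\rho$ does not prevent $\nabla u\in L^1C^\gamma$ globally, since $\nabla^2u\in L^p$ with $p=2/(1-\gamma)$ suffices by Morrey. The paper bounds $\|\nabla^2u_j\|_{L^p}\lesssim\|\rho\,\dot u_j\|_{L^p}$. It controls the right-hand side by Gagliardo--Nirenberg on the uniformly Lipschitz domain $D_t$, using $\int\rho\,\dot u_j=0$, and time-splits as before. No striated regularity is needed.

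\emph{Asymptotics.} Your argument here is correct and simpler than the paper's. Once $\nabla u\in L^1L^\infty$ is known, $\|u(s)-M\|_{L^\infty(D_s)}\le\operatorname{diam}(D_s)\|\nabla u(s)\|_{L^\infty}\le L_\infty\operatorname{diam}(D)\|\nabla u(s)\|_{L^\infty}$ is integrable in $s$. This bypasses the exponential $L^2$ decay and the $L^\infty$ Gagliardo--Nirenberg inequality on $D_t$ that the paper uses.
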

The previous theorem shows that the evolution of the patch splits into a finite deformation and a rigid motion, namely a translation with constant velocity $M$, directed along the average of the initial velocity inside the patch. This decomposition highlights a striking rigidity in the long-time behavior of the flow. Indeed, the velocity converges exponentially fast to its spatial average inside the patch, namely
\begin{align}
\|u(t)-M\|_{L^2(D_t)}
\lesssim
\|u_0-M\|_{L^2(D)}\,e^{-\lambda t},
\end{align}
as proved in \cref{lem:expdecay}. The rate \(\lambda\) has the same scaling as the two-dimensional heat equation. This decay is a consequence of the dissipative structure of the system, which tends to damp out spatial variations of the velocity. At the same time, the conservation of the total momentum prevents the velocity from vanishing, and instead selects the constant vector \(M\) as the asymptotic state. As a consequence, the velocity becomes asymptotically constant and the motion is governed by the uniform drift \(M\). 

Moreover, the condition \(\nabla u \in L^1_t L^\infty_x\) implies that the associated flow has finite total distortion. In particular, all the deformation of the domain is accumulated over time and remains finite. More precisely, after subtracting the translation \(Mt\), the flow converges uniformly to a limiting map \(X_\infty\), which admits the representation
\begin{align}
    X_\infty(x) = x + \int_0^\infty \bigl(u(t, X(t,x)) - M\bigr)\,\dd t,
\end{align}
see \cref{lem:finaldomain}. In particular, \(X_\infty\) encodes the total deformation of the domain. As a consequence, the domains \(D_t - Mt\) converge in the Hausdorff sense to the limiting domain \(D_\infty\), and no further deformation occurs as \(t \to \infty\).

The key mechanism behind the previous result is the integrability condition
\begin{align}\label{eq:LIp}
    \nabla u \in L^1\big((0,\infty);L^\infty\big),
\end{align}
which ensures that the associated flow has finite total distortion and allows one to control the evolution of the interface. This condition is critical with respect to the natural scaling of the system, and therefore represents the threshold for controlling the geometry of the flow.

We recall that the natural scaling of~\eqref{eq:INS} is given by
\begin{equation}\label{eq:scaling}
(\rho_\lambda,u_\lambda,P_\lambda)(t,x) 
:= \big(\rho(\lambda^2 t,\lambda x), \; \lambda u(\lambda^2 t,\lambda x), \; \lambda^2 P(\lambda^2 t,\lambda x)\big),
\qquad \forall \lambda >0.
\end{equation}
The space $\dot{B}^0_{2,1}$ is invariant under the scaling \eqref{eq:scaling}. Moreover, one has the continuous embedding
\begin{align}
    \dot{B}^0_{2,1} \hookrightarrow L^2,
\end{align}
while the space $L^2$ alone is not sufficient to ensure the propagation of the regularity of the patch.

This obstruction is already visible at the level of the linear heat equation: in two dimensions, the heat flow with initial data in $\dot{B}^0_{2,1}$ satisfies \eqref{eq:LIp} 
whereas there exist explicit initial data $u_0 \in L^2$ for which this property fails. In particular, one cannot expect to obtain \eqref{eq:LIp} for solutions to \eqref{eq:INS} starting from $L^2$ initial data. As a consequence, the preservation of the Lipschitz regularity of the patch cannot hold in this setting.

For this reason, $\dot{B}^0_{2,1}$ appears as the natural (critical) functional framework for the density patch problem with Lipschitz boundary.

\subsubsection*{Overview of the literature.}  The mathematical analysis of~\eqref{eq:INS} dates back to the works of Kazhikhov~\cite{Kazhikhov1974} and Simon~\cite{Simon1990}, who established global weak solutions under suitable assumptions on the density. A major breakthrough was achieved by Lions~\cite{Lions1996}, who constructed global weak solutions for initial velocity in $L^2$ and bounded nonnegative initial density.

\underline{Well-posedness.}
The well-posedness theory in the absence of vacuum has been extensively developed in a series of works, including Danchin~\cite{Danchin2003}, Abidi~\cite{Abidi2007}, Abidi--Gui--Zhang~\cite{AbidiGuiZhang2012}, Abidi--Gui~\cite{AbidiGui2021}, Danchin--Mucha~\cite{DanchinMucha2012}, Paicu--Zhang--Zhang~\cite{PaicuZhangZhang2013}, Zhang~\cite{Zhang2020}, and Danchin--Wang~\cite{DanchinWang2023}. Recently, global well-posedness in two dimensions for critical data was established by Danchin~\cite{Danchin2025} for $u_0 \in \dot{B}^0_{2,1}$, and shortly after by Hao, Shao, Wei and Zhang~\cite{HaoShaoWeiZhang2025} for $u_0 \in L^2$. The latter result was further extended to the full Leray--Hopf class in~\cite{Skondric2025}. In this sense, the well-posedness theory in the absence of vacuum is by now well understood.

The analysis in the presence of vacuum for bounded densities is more delicate, since the momentum equation degenerates where the density vanishes. Danchin and Mucha~\cite{DanchinMucha2019} proved global well-posedness in two dimensions for $u_0 \in H^1(\T^2)$ and general bounded nonnegative densities. The extension to the whole space $\R^2$ was obtained by Prange and Tan~\cite{PrangeTan2023} and by Hao, Shao, Wei and Zhang~\cite{HaoShaoWeiZhang2026}, under structural or geometric assumptions on the density, covering in particular density patch configurations. In both works, the initial velocity belongs in $H^1$.

\underline{Propagation of regularity.} The propagation of boundary regularity was first studied in a relaxed setting compared to that of Lions, where the initial density takes the form
\begin{align}
\rho_0(x) = \rho^{\mathrm{in}}_0\mathbf{1}_{D}(x) + \rho^{\mathrm{out}}_0\mathbf{1}_{D^c}(x), \qquad \rho^{\mathrm{in}}_0, \rho^{\mathrm{out}}_0 > c \geq 0.
\end{align}

In this framework, Liao and Zhang~\cite{LiaoZhang2016,LiaoZhang2019} proved the propagation of $W^{3,p}(\R^2)$ regularity of the interface for $p \in (2,4)$, first under a smallness assumption on the density jump and later without this restriction. See also Danchin and Zhang~\cite{DanchinZhang2017} for related results. Gancedo and García-Juárez~\cite{GancedoGarcia2018} established the propagation of $C^{1+\gamma}$ regularity of the interface for initial velocities with sharp Sobolev regularity $u_0 \in H^{s}(\R^2)$, with $s \in (\gamma,1)$, without any smallness assumptions.

In the presence of vacuum, that is when $\rho_0^{\mathrm{out}} = 0$, only few results are available. Danchin and Mucha~\cite{DanchinMucha2019} proved the preservation of $C^{1,\gamma}$ regularity of the boundary $D$ for initial velocities $u_0 \in H^1(\T^2)$ (see also \cite{DanchinMuchaPiasecki2024}). More recently, Gancedo, García-Juárez, and Luna-Velasco~\cite{GancedoGarciaJuarezLunaVelasco2025} extended this result to initial data $u_0 \in H^s(\R^2)$ with $s \in (\gamma,1)$.

\cref{thm:main} advances the theory initiated in~\cite{DanchinMucha2019} by establishing global well-posedness together with the propagation of boundary regularity in the critical scaling-invariant framework, in the whole space and in the presence of vacuum.  In addition, the Lipschitz control \eqref{eq:LIp} provides a quantitative description of the flow and allows us to characterize its long-time asymptotic behavior. To the best of our knowledge, this is the first result in the literature establishing such asymptotic properties for density patches.

\subsubsection*{Conventions on the Notation.}

We write $A \lesssim B$ to denote an inequality up to a multiplicative constant whose dependence is not relevant. When we need to emphasize the dependence on a parameter $p$, we write $A \lesssim_p B$. The measure of integration is omitted whenever it is clear from the context. Similarly, we omit the domain in norms when no ambiguity arises.

For simplicity, except when studying the asymptotics, we set $\nu = 1$.

\subsection{Notion of solutions and Main results}
Consider the following initial data:
\begin{align}\label{ass:weakData}
    u_0 \in L_\sigma^2(\R^2), \qquad 0 \leq \rho_0 \in L^\infty(\R^2),
\end{align}
where $L^2_\sigma(\R^2)$ denotes the subspace of $L^2(\R^2)$ consisting of weakly divergence-free vector fields. Under these assumptions, one can introduce the classical notion of weak solution satisfying the energy inequality, namely a Leray--Hopf-type weak solution.
 
\begin{definition}[Weak solution]\label{def:Distsol}
A pair $(\rho,u)$ is called a weak solution of \eqref{eq:INS}
with initial data $(\rho_0,u_0)$ satisfying \eqref{ass:weakData} if the following properties hold:

\begin{itemize}
    \item[(i)] The solution satisfies the integrability conditions
    \begin{align}
        \sqrt{\rho} u \in L^\infty\big((0,\infty);L^2(\R^2)\big),\qquad
       \rho \in L^\infty\big((0,\infty)\times\R^2\big), \qquad
        \nabla u \in L^2\big((0,\infty)\times\R^2\big).
    \end{align}

    \item[(ii)] The pair $(\rho,u)$ is a distributional solution of \eqref{eq:INS}, in the following sense:
    \begin{itemize}
        \item For every $\varphi \in C_c^\infty\big([0,\infty)\times\R^2;\R\big)$,
        \begin{align}
            \int_0^\infty \!\!\int_{\R^2} \rho\,\partial_t \varphi \,dx\,dt
            + \int_0^\infty \!\!\int_{\R^2} \rho u \cdot \nabla \varphi \,dx\,dt
            = - \int_{\R^2} \rho_0 \,\varphi(0,x)\,dx.
        \end{align}
        \item For every $\varphi \in C_{c,\sigma}^\infty\big([0,\infty)\times\R^2;\R^2\big)$,
        \begin{align}
            \int_0^\infty \!\!\int_{\R^2}
            \rho\,\partial_t \varphi \cdot u
            + \rho\,u\otimes u : \nabla \varphi \,dx\,dt
            - \int_0^\infty \!\!\int_{\R^2} \nabla u : \nabla \varphi \,dx\,dt
            = - \int_{\R^2} \rho_0 u_0 \cdot \varphi(0,x)\,dx.
        \end{align}
        \item For every $\varphi \in C_c^\infty\big(\R^2;\R\big)$ and for almost every $t \in (0,\infty)$,
        \begin{align}
             \int_{\R^2} u(t) \cdot \nabla \varphi \,dx = 0.
        \end{align}
    \end{itemize}

    \item[(iii)] The energy inequality holds for every $t \geq 0$:
    \begin{equation}\label{eq:energy}
    \frac12 \int_{\R^2} \rho(t,x)\,|u(t,x)|^2 \, dx
    + \int_0^t \int_{\R^2} |\nabla u(s,x)|^2 \, dx\,ds
    \;\le\;
    \frac12 \int_{\R^2} \rho_0(x)\,|u_0(x)|^2 \, dx.
    \end{equation}
\end{itemize}
\end{definition}
Solutions of \eqref{eq:INS} with initial data in $L^2$ typically satisfy higher-order energy estimates, at the expense of a singular behavior near $t=0$, reflecting the parabolic regularization of the system. In particular, weak solutions constructed via approximation schemes (and, in a suitable sense, mild solutions associated with the heat semigroup) satisfy additional \emph{a priori} bounds. It is natural to expect that such estimates can be propagated to the limit.

Let $u$ be a weak solution in the sense of \cref{def:Distsol}. For a (sufficiently regular) function $v$, we define the \emph{material derivative} along $u$ by
\begin{align}
    \dot v := D_u v := \partial_t v + u \cdot \nabla v,
\end{align}
which represents the rate of change of $v$ along the flow generated by $u$.

For $\eta \in [0,1]$ and $t \geq 0$, we introduce the time-weighted energy functionals 
\begin{align}\label{eq:Aidef}
\begin{aligned}
 A^\eta_0(t,v) &\coloneqq \| \sqrt{\rho}\,v(t)\|_{L^2(\R^2)}^2 
     + \int_0^t \| \nabla v(s)\|_{L^2(\R^2)}^2 \, \mathrm{d}s, \\
 A^\eta_1(t,v,u) &\coloneqq t^{1-\eta} \|\nabla v(t)\|_{L^2(\R^2)}^2 
     + \int_0^t s^{1-\eta} 
     \| \nabla^2 v,\, \nabla P,\, \sqrt{\rho}\,\dot{v} \|_{L^2(\R^2)}^2 \, \mathrm{d}s, \\
 A^\eta_2(t,v,u) &\coloneqq t^{2-\eta} 
     \bigl(\| \sqrt{\rho}\,\dot{v}(t) \|_{L^2(\R^2)}^2 + \|\nabla^2 v(t)\|_{L^2(\R^2)}^2 \bigr) 
     + \int_0^t s^{2-\eta} \| \nabla \dot{v}(s) \|_{L^2(\R^2)}^2 \, \mathrm{d}s, \\
 A^\eta_3(t,v,u) &\coloneqq t^{3-\eta} \| \nabla \dot{v}(t) \|_{L^2(\R^2)}^2 
     + \int_0^t s^{3-\eta} 
     \| \nabla^2 \dot{v} \|_{L^2(\R^2)}^2 
     \, \mathrm{d}s.
\end{aligned}
\end{align}
If $u = v$, we simply write $A_i^\eta(t,u)$ for $i \in \{0,1,2,3\}$.

\begin{definition}[Immediately strong solution]\label{def:immstrongsol-INS}
A pair $(\rho,u)$ is called an \emph{immediately strong weak solution} of \eqref{eq:INS}
with initial data $(\rho_0,u_0)$ satisfying \eqref{ass:weakData} if $(\rho,u)$ is a weak
solution and there exists a constant $C_{(\rho,u)}(u)$ such that
\begin{align}\label{eq:BarC-INS}
   \sup_{i \in \{0,1,2,3\}} \sup_{t>0} A_i^0(t,u) \le C_{(\rho,u)}(u).
\end{align}
\end{definition}
For a detailed discussion of this class of solutions and related equivalent notions, we refer to \cite{CrinBaratDeNittiSkondricViolini2025}.

We now focus on the case of density patches, namely
\begin{align}\label{ass:weakData2}\tag{$L^2$-patch}
    u_0 \in L_\sigma^2(\R^2), \qquad
    \rho_0 = \mathbf{1}_D,
\end{align}
where $D \subseteq \R^2$ is a bounded Lipschitz domain. In this setting, we establish the following existence result. To the best of our knowledge, all the properties listed below are new in this level of generality.

\begin{theorem}[Existence for $L^2$ data]\label{thm:existL2}
Let $(\rho_0,u_0)$ be as in \eqref{ass:weakData2}. Then there exists an immediately strong weak solution $(\rho,u)$ to \eqref{eq:INS} with initial data $(\rho_0,u_0)$.

Moreover, the following properties hold:
\begin{itemize}
    \item[(i)] Continuity in time:
    \begin{align}
        \rho u \in C([0,\infty); L^2(\R^2)), \qquad
        \rho \in C([0,\infty); L^p_{\loc}(\R^2)), \quad \forall p \in [1,\infty).
    \end{align}
    \item[(ii)] Energy equality: the identity \eqref{eq:energy} holds with equality.
    \item[(iii)] Conservation of momentum: for almost every $t \geq 0$,
    \begin{equation}
        \int_{\mathbb{R}^2} \rho(t,x)\,u(t,x)\,dx
        =
        \int_{\mathbb{R}^2} \rho_0(x)\,u_0(x)\,dx.
    \end{equation}
    \item[(iv)] Propagation of the support: there exists a constant $C>0$ such that for almost every $t \geq 0$,
    \begin{align}
        \operatorname{supp}\rho(t) \subseteq D + B_{C\sqrt{t}}(0).
    \end{align}
\end{itemize}
\end{theorem}

We refer to the next subsection for a discussion of the result and an outline of the proof. For the moment, we only point out that properties (iii) and (iv) hold for general immediately strong weak solutions (see Section~3), while (i) and (ii) require an explicit construction of the solution (see Section~4).

Combining the estimates $A_i^0(t,u)$ is not sufficient to show that $\nabla u \in L^1(0,\infty;L^\infty(\R^2))$. As discussed above, the obstruction already appears at the level of the heat equation, which does not allow one to control critical quantities in $L^1$ in time starting from $L^2$ initial data.

For this reason, we restrict to the case of Besov initial data:
\begin{align}\label{ass:weakData3}
    u_0 \in \dot{B}^0_{2,1}(\R^2), \qquad \rho_0 = \mathbf{1}_D.
\end{align}
In this setting, we establish global existence and uniqueness of Lipschitz solutions.

\begin{theorem}[Well-posedness for $\dot{B}^0_{2,1}$ data]\label{thm:existB}
Let $(\rho_0,u_0)$ be as in \eqref{ass:weakData3}. Then there exists an immediately strong weak solution $(\rho,u)$ to \eqref{eq:INS} with initial data $(\rho_0,u_0)$ satisfying all the properties of \cref{thm:existL2}. In addition, the following quantity is bounded:
\begin{align}\label{eq:L1estimates}
   \int_0^\infty \left( \|\nabla u(t)\|_{L^\infty(\R^2)} + \|\rho \dot{u}(t)\|_{L^2(\R^2)} + t^{3/4}
   \| \nabla \dot{u}(t)\|_{L^4(\R^2)} +
  \sqrt{t}\,\|\nabla \dot{u}(t)\|_{L^2(\R^2)} \right)\,\dd t,
\end{align}
and $  \sqrt{t}\,\|\nabla u(t)\|_{L^2(\R^2)} \to 0$ as $t \to 0$.

Let $\bar{u}_0 \in L^2(\R^2)$ and let $(\bar{\rho},\bar{u})$ be an immediately strong weak solution arising from $(\rho_0,\bar{u}_0)$. Fix $t>0$. Then there exist a function $\gamma \in L^1(0,\infty)$ and a constant $C>0$ such that
\begin{align}\label{eq:stabWS}
\sup_{s\in(0,t)} \|\bar{\rho}(s)\,\left(\bar{u}(s)-u(s)\right)\|_{L^2(\R^2)}^2
\le C
\|\rho_0\,\left(\bar{u}_0-u_0\right)\|_{L^2(\R^2)}^2
\exp\!\Bigl(C\int_0^t \gamma(s)\,\dd s\Bigr).
\end{align}
In particular, if $\bar{u}_0 = u_0$, then $(\bar{\rho},\bar{u}) = (\rho,u)$ almost everywhere in $(0,\infty)\times \R^2$.
\end{theorem}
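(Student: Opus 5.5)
Since $\dot B^0_{2,1}\hookrightarrow L^2$, \cref{thm:existL2} already provides an immediately strong weak solution with properties (i)--(iv). The new content is the $L^1$-in-time bound \eqref{eq:L1estimates} and the weak--strong stability \eqref{eq:stabWS}. I will prove the $L^1$ bounds for the approximate solutions used in the construction of \cref{thm:existL2}, with constants depending only on $\|u_0\|_{\dot B^0_{2,1}}$ and $D$, and pass them to the limit by lower semicontinuity.

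\textbf{Step 1 (interpolation in $\eta$).} The strategy for the critical estimates is real interpolation between the time-weighted functionals $A_i^\eta$. For $\eta\in(0,1)$ I will first rerun the energy estimates behind \cref{thm:existL2} with weights $t^{i-\eta}$. The aim is $\sup_t A_i^\eta(t,u)\lesssim \|u_0\|_{\dot H^{-\eta}}^2$, or a suitable variant that is linear in the data at the level of differences, for $\eta$ in a small interval around $0$. Then I decompose $u_0=\sum_j \dot\Delta_j u_0$. The map $u_0\mapsto t^{1/2}\|\rho\dot u\|_{L^2}$ is nonlinear, so I do not interpolate directly. Instead I use the standard trick: split $(0,\infty)$ dyadically, $t\sim 2^{-2k}$. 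On each dyadic interval I control $\|\rho \dot u\|_{L^1(I_k;L^2)}$ using $A_1^{\pm\eta}$ and Cauchy--Schwarz,
\begin{align}
\int_{I_k}\|\sqrt\rho\dot u\|_{L^2}\dd t\le \Big(\int_{I_k}t^{1\mp\eta}\|\sqrt\rho\dot u\|^2\Big)^{1/2}\Big(\int_{I_k}t^{-1\pm\eta}\Big)^{1/2}.
\end{align}
Combined with the bounds in $\dot H^{\pm\eta}$, this yields an $\ell^1$ sum of the form $\sum_k \min(2^{k\eta}a,2^{-k\eta}b)$. This is exactly how $\dot B^0_{2,1}=(\dot H^{-\eta},\dot H^{\eta})_{1/2,1}$ enters. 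To make the argument rigorous I will follow Danchin's approach~\cite{Danchin2025}: since the density only enters through $\rho\le1$ and the equations for time derivatives, I use a Lagrangian/linearized splitting $u=\sum_j u_j$. Here $u_j$ solves the linear Stokes-type problem with frozen $\rho$ and data $\dot\Delta_j u_0$, and the nonlinear remainder is handled by the smallness coming from $\|\nabla u\|_{L^2_tL^2}$ on large time intervals.

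\textbf{Step 2 (Lipschitz bound).} From the $L^1$ bounds on $\rho\dot u$, $\sqrt t\,\nabla\dot u$ and $t^{3/4}\nabla\dot u\in L^1L^4$, I will obtain $\nabla u\in L^1L^\infty$. The Stokes system $-\Delta u+\nabla P=-\rho\dot u$ gives $\nabla^2u,\nabla P\in L^1L^2$. For $L^\infty$, I use the Gagliardo--Nirenberg-type estimate $\|\nabla u\|_{L^\infty}\lesssim\|\nabla u\|_{L^2}^{1/3}\|\nabla^2u\|_{L^4}^{2/3}$, together with $\|\nabla^2u\|_{L^4}\lesssim\|\rho\dot u\|_{L^4}\lesssim\|\rho\dot u\|_{L^2}^{1/2}\|\nabla\dot u\|_{L^2}^{1/2}$. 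The vacuum issue, namely Poincaré/Sobolev for $\dot u$ only on the support, is handled by the support bound (iv) and a Desjardins-type inequality. After inserting the time weights, I close with Hölder in time. Finally, $\sqrt t\|\nabla u\|_{L^2}\to0$ follows from density of smooth data in $\dot B^0_{2,1}$ plus the uniform bound.

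\textbf{Step 3 (stability).} I set $\delta u=\bar u-u$ and test the momentum equation of $\bar u$ against $\delta u$, which is legitimate because $u$ is Lipschitz. Since $\bar\rho=\rho$ by transport with a Lipschitz field (uniqueness of the density via the flow of $u$ plus a DiPerna--Lions argument), I get
\begin{align}
\tfrac12\tfrac{d}{dt}\|\sqrt\rho\,\delta u\|_{L^2}^2+\|\nabla\delta u\|_{L^2}^2\le \|\nabla u\|_{L^\infty}\|\sqrt\rho\,\delta u\|_{L^2}^2,
\end{align}
with $\gamma=\|\nabla u\|_{L^\infty}\in L^1$. Gronwall then gives \eqref{eq:stabWS}.

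\textbf{Main obstacle.} The delicate point is that $\bar\rho=\rho$ requires a density-difference estimate. I plan to work in Lagrangian coordinates, or with an $\dot H^{-1}$ estimate on $\bar\rho-\rho$ weighted by $\bar u$ regularity, which is available from immediate strongness for $t>0$. The main obstacle overall is Step 1: obtaining the $\eta$-shifted estimates in the presence of vacuum with constants uniform in time.
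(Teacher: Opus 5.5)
The existence part, obtained by reusing \cref{thm:existL2}, is fine. The new content, however, has genuine gaps in all three steps.

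\textbf{Step~1.} The whole difficulty is the $\dot H^{-\eta}$ (large-time) half of the interpolation, and rerunning the energy hierarchy with weights $t^{i+\eta}$ cannot produce it. In vacuum one generally has $u(t)\notin L^2(\R^2)$, so there is no $\dot H^{-\eta}$ energy estimate to start from. Worse, the zeroth-order bound $t^{\eta}\|\sqrt{\rho}\,u_j(t)\|_{L^2}^2\lesssim\|u_{0,j}\|_{\dot H^{-\eta}}^2$ is false: momentum conservation and Cauchy--Schwarz give $\|\sqrt{\rho}\,u_j(t)\|_{L^2}^2\ge|D|\,|M_j|^2$ for all $t$, with $M_j=|D|^{-1}\int_D u_{0,j}\neq0$ in general. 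Estimates in the original frame therefore only give $\|\nabla u_j(t)\|_{L^\infty}\lesssim t^{-1}\|u_{0,j}\|_{\dot H^{-\eta}}$ for large $t$, which is not integrable.

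The missing idea is the paper's duality argument. One solves the backward linear system with source $\rho u_j$ on $[0,t]$ and uses that $\mathbf{1}_D$ is a multiplier of $\dot H^{\pm\eta}$ for $\eta<\frac12$, together with an extension operator. Crucially, one performs the Galilean shift by the conserved $M_j$, so that $w_j^t(0)+tM_j$ is mean-free on $D$ and Poincar\'e applies. This gives $\int_s^\infty\|\nabla u_j\|_{L^2}^2\lesssim s^{-\eta}\|u_{0,j}\|_{\dot H^{-\eta}}^2$, and the time-shifted bounds \eqref{eq:decayAoiuj} then yield the decay of $\nabla u_j$, $\sqrt\rho\,\dot u_j$ and $\nabla\dot u_j$.

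The atoms must solve the linear system advected by the full $(\rho,u)$. Then there is no nonlinear remainder and no smallness is needed, but identifying $\sum_j u_j=u$ requires the linear relative energy inequality (\cref{thm:relaEnergyL}). The same atomwise structure, via $\sqrt t\|\nabla u_j(t)\|_{L^2}\lesssim c_j$ and dominated convergence, is what gives $\sqrt t\|\nabla u(t)\|_{L^2}\to0$. Your density argument would instead need continuous dependence for the nonlinear problem, which you do not have.

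\textbf{Step~2} is circular. The bound $\|\rho\dot u\|_{L^4}\lesssim\|\rho\dot u\|_{L^2}^{1/2}\|\nabla\dot u\|_{L^2}^{1/2}$ with a time-independent constant needs $D_t$ to be uniformly Lipschitz, which is exactly the bound being proved. The $\sqrt t$-strip inequalities of \cref{cor:Lpstrip} do not help globally, since their constants depend on $T$. The paper avoids geometry entirely by using $\|\nabla v\|_{L^\infty}\lesssim\|\sqrt\rho\,\dot v\|_{L^2}+\|\nabla v\|_{L^2}^{1/2}\|\nabla\dot v\|_{L^2}^{1/2}$ (\cref{prop:Linfty}) on each atom, combined with \cref{lem:ancientlemma}.

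\textbf{Step~3.} The claim $\bar\rho=\rho$ is false. $\bar\rho$ is transported by $\bar u$, not by $u$, so for $\bar u_0\neq u_0$ the two patches genuinely differ, and for $\bar u_0=u_0$ the identity is part of what must be proved. Your differential inequality therefore drops the term $-\int_0^t\!\int_{\R^2}\delta\rho\,\delta u\cdot\dot u$ of the relative energy inequality (\cref{thm:relaEnergyNL}). That term is the heart of the weak--strong argument, and $\gamma=\|\nabla u\|_{L^\infty}$ cannot control it. Your fallback, an $\dot H^{-1}$ bound weighted by the regularity of $\bar u$, fails near $t=0$: $\bar u$ comes from $L^2$ data, and in general $\nabla\bar u\notin L^1(0,t;L^\infty)$.

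The paper instead writes $\partial_t\delta\rho+\dive(\delta\rho\,u)=-\dive(\bar\rho\,\delta u)$ and transports along the flow of the Besov solution $u$, whose gradient is bounded by $L_\infty$. Together with the $\sqrt t$-support bounds, this gives $\bigl|\int\delta\rho(s)\phi\bigr|\lesssim s\,\sup_{\tau<s}\|\bar\rho\,\delta u(\tau)\|_{L^2}\|\nabla\phi\|_{L^2(D_{N,s})}$ (\cref{lem:H-1stability}). Taking $\phi=\delta u\cdot\dot u$, and bounding $\|\delta u\|_{L^4(D_{N,s})}$ (through $\bar\rho$) and $\|\dot u\|_{L^\infty(D_{N,s})}$ by \cref{cor:Lpstrip}, one gets $J\le\frac12\|\nabla\delta u\|_{L^2}^2+C\gamma\sup_{\tau<s}\|\bar\rho\,\delta u(\tau)\|_{L^2}^2$. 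Here $\gamma$ also contains $s^{3/4}\|\nabla\dot u\|_{L^4}$, $s\|\rho\dot u\|_{L^2}^2$ and $s^{5/2}\|\nabla\dot u\|_{L^4}^2$, which is why those quantities appear in \eqref{eq:L1estimates}.

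Finally, testing against $\delta u$ is not justified merely because $u$ is Lipschitz. $\delta u$ does not decay at infinity, so one needs the tail estimates coming from momentum conservation. The limit $t\to0$ uses $\rho u\in C([0,\infty);L^2)$ and $\sqrt t\|\nabla u\|_{L^2}\to0$ (\cref{prop:Scen2}).
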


We emphasize that the estimate on each of the critical quantities in \eqref{eq:L1estimates} relies crucially on the assumption $u_0 \in \dot{B}^0_{2,1}(\R^2)$, as discussed in Section~5.  The stability estimate \eqref{eq:stabWS} requires one of the solutions to be the Besov solution constructed above, while the other is only assumed to be an arbitrary immediately strong weak solution. For this reason, the result can be interpreted as a weak--strong uniqueness statement.

In the case of more regular initial data or a more regular initial patch, a slight adaptation of the previous theorem naturally yields the following proposition, which, for the first time, covers the critical level of regularity for $C^{1,\gamma}$ patches with $\gamma \in (0,1)$.

\begin{theorem}\label{thm:gammareg}
Let $(\rho,u)$ be the Besov solution constructed in \cref{thm:existB}. Then\footnote{Here, $C_0(\R^2)$ denotes the space of bounded, continuous functions vanishing at infinity.}
\begin{align}
    \int_0^\infty \|\nabla u(t)\|_{C_0(\R^2)} \dd t < \infty.
\end{align}
In particular, for every $t \geq 0$, the associated flow satisfies $X(t)\in C^1(\R^2)$, and if $D$ is a $C^1$ domain, then $D_t$ is also of class $C^1$.

If in addition $u_0 \in \dot{B}^\gamma_{2,1}(\R^2)$ for some $\gamma \in (0,1)$, then
\begin{align}
    \int_0^\infty \|\nabla u(t)\|_{C^\gamma(\R^2)} \dd t < \infty.
\end{align}
Consequently, for every $t \geq 0$ one has $X(t)\in C^{1,\gamma}(\R^2)$, and if $D$ is a $C^{1,\gamma}$ domain, then $D_t$ remains a $C^{1,\gamma}$ domain for all $t\geq 0$.
\end{theorem}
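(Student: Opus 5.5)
The plan is to reuse the $L^1$-in-time bounds \eqref{eq:L1estimates} from \cref{thm:existB} and to recover $\nabla u$ from an elliptic (Stokes) problem at each fixed time. Write the momentum equation as
\begin{align}
-\Delta u + \nabla P = -\rho\dot u, \qquad \dive u = 0 .
\end{align}
This gives $\nabla u = \mathcal{K}(\rho\dot u)$, where $\mathcal K$ is a matrix of operators of the form $\nabla(-\Delta)^{-1}\mathbb P$, homogeneous of degree $-1$.

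\emph{First claim ($C_0$ part).} I would split $\rho\dot u$ in frequency into low and high parts. Alternatively, use interpolation: in two dimensions,
\begin{align}
\|\nabla u\|_{L^\infty}\lesssim \|\rho\dot u\|_{L^2}^{1/2}\,\|\rho\dot u\|_{L^4}^{1/2},
\end{align}
which follows from a Gagliardo--Nirenberg/Besov bound $\dot B^{0}_{\infty,1}\supset \dot B^{1}_{2,\infty}\cap \dot B^{1/2}_{4,\infty}$-type interpolation. The paper presumably proves exactly this kind of estimate in Section~5 to get $\nabla u\in L^1L^\infty$. The quantitative control actually yields $\nabla u(t)\in \dot B^0_{\infty,1}$ for a.e.\ $t$, with $\int_0^\infty \|\nabla u\|_{\dot B^0_{\infty,1}}\,\dd t<\infty$. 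Elements of $\dot B^0_{\infty,1}$ whose Littlewood--Paley series converges absolutely in $L^\infty$ are uniform limits of smooth functions. Each dyadic block $\dot\Delta_j\nabla u$ lies in $L^2\cap L^\infty$ and is smooth, hence belongs to $C_0$. So the sum lies in $C_0$, and
\begin{align}
\int\|\nabla u\|_{C_0}\,\dd t\le\int\|\nabla u\|_{\dot B^0_{\infty,1}}\,\dd t<\infty .
\end{align}
I therefore expect the key step to be upgrading the $L^\infty$ bound of Section~5 to a $\dot B^0_{\infty,1}$ bound. For the low frequencies I would use $\|\rho\dot u\|_{L^2}\in L^1_t$ together with Bernstein. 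For the high frequencies I would use $\nabla\dot u$ through the weights $\sqrt t\,\|\nabla\dot u\|_{L^2}$ and $t^{3/4}\|\nabla\dot u\|_{L^4}$. The difficulty is that $\rho\dot u$ is discontinuous across $\partial D_t$, so $\nabla(\rho\dot u)$ is not available. I would instead bound $\|\rho\dot u\|_{\dot B^{s}_{p,\infty}}$ for small $s<1/p$, via multiplication by the characteristic function of a Lipschitz domain, which is a bounded operation on $\dot B^s_{p,q}$ for $s<1/p$. The resulting positive regularity $\dot B^{2/p+\epsilon'}$ summed with the $L^2$ low part gives $\ell^1$ summability.

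\emph{Flow and domain.} With $\nabla u\in L^1_tC_0$, the linearized flow equation $\partial_t\nabla X=\nabla u(t,X)\nabla X$ has continuous coefficients. Hence $X(t)\in C^1$ with $\nabla X$ continuous and invertible, since $\det\nabla X=1$. The inverse flow is $C^1$ as well. Images of $C^1$ domains under $C^1$ diffeomorphisms are $C^1$ domains.

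\emph{Hölder case.} If $u_0\in\dot B^\gamma_{2,1}$, I would redo the Section~5 estimates with the time weights shifted by $\gamma/2$, i.e.\ using the functionals $A_i^{\eta}$ with $\eta=\gamma$. The heat-flow model gives $\int\|\nabla e^{t\Delta}u_0\|_{\dot B^\gamma_{\infty,1}}\,\dd t\lesssim\|u_0\|_{\dot B^\gamma_{2,1}}$. Then
\begin{align}
\nabla u\in L^1_t\dot B^{\gamma}_{\infty,1}\subset L^1_t\dot C^\gamma ,
\end{align}
which requires $\rho\dot u\in L^1_t\dot B^{\gamma}_{\infty,1}$ up to one derivative, again by multiplier bounds with $\mathbf 1_{D_t}$, now exploiting $C^{1,\gamma}$ or Lipschitz regularity of $D_t$ bootstrapped in time. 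The nonlinear terms are handled as in \cref{thm:existB} by a Gronwall argument.

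Finally, $\nabla X$ satisfies the linear ODE above, and $\|\nabla u(X)\|_{C^\gamma}\le\|\nabla u\|_{C^\gamma}\|\nabla X\|_{L^\infty}^\gamma$. Gronwall then gives $X(t)\in C^{1,\gamma}$ with inverse also $C^{1,\gamma}$, so $D_t$ is a $C^{1,\gamma}$ domain. The main obstacle is the product $\mathbf 1_{D_t}\dot u$ in positive-regularity Besov norms, which caps the usable regularity below $1/p$.
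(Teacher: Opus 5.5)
\paragraph{There is a genuine gap.} The estimate that carries the theorem is an $L^1$-in-time bound on a norm of $\nabla u$ that controls $C_0$ (resp.\ $\dot C^\gamma$). Your proposal never produces it, and the concrete steps you offer for it do not work.

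\emph{The displayed inequality.} The bound $\|\nabla u\|_{L^\infty}\lesssim\|\rho\dot u\|_{L^2}^{1/2}\|\rho\dot u\|_{L^4}^{1/2}$ is not invariant under \eqref{eq:scaling}: the two sides scale like $\lambda^{2}$ and $\lambda^{9/4}$. It can only hold with a constant such as $|D|^{1/8}$, coming from $|\supp\rho\dot u|=|D|$. Then, even granting $\|\rho\dot u\|_{L^4}\lesssim\|\rho\dot u\|_{L^2}^{1/2}\|\nabla\dot u\|_{L^2}^{1/2}$, you must integrate $t^{-1/8}\|\rho\dot u\|_{L^2}^{3/4}(\sqrt t\,\|\nabla\dot u\|_{L^2})^{1/4}$. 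The unbalanced weight $t^{-1/8}$ cannot be absorbed near $t=0$ by \eqref{eq:L1estimates}: already for heat data at frequency $2^k$ this integral exceeds those quantities by a factor $2^{k/2}$.

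\emph{The embedding.} The claim $\dot B^1_{2,\infty}\cap\dot B^{1/2}_{4,\infty}\subset\dot B^0_{\infty,1}$ is false. Take $\sum_{k\ge1}\psi(2^kx)$ with $\hat\psi$ supported in an annulus: it lies in both spaces but not in $L^\infty$.

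\emph{Low frequencies.} Bernstein gives $\|\dot\Delta_j\nabla u\|_{L^\infty}\lesssim\|\dot\Delta_j(\rho\dot u)\|_{L^2}$, which is only $\ell^2$-summable. Using the bounded support instead reintroduces the length $|D|^{1/2}$ and again fails near $t=0$. What is needed there is lower-order information such as $\|\nabla u\|_{L^2}$, together with $\int_0^\infty t^{-1/2}\|\nabla u\|_{L^2}\,\dd t<\infty$ from \eqref{eq:addiestimatesu}. You never use this.

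\emph{High frequencies.} A multiplier bound for $\mathbf 1_{D_t}$ requires controlling $\dot u$ on $D_t$ by $\|\rho\dot u\|_{L^2}$ and $\|\nabla\dot u\|_{L^2}$ with constants uniform in $t$. That rests on the uniform Lipschitz character of $D_t=X(t,D)$, i.e.\ on $L_\infty<\infty$. The resulting bound is scale-invariant only after subtracting the mean of $\dot u$ over $D_t$, which vanishes because $\int\rho\dot u\,\dd x=0$ (\cref{lem:MomentCons}). Neither point appears in your plan.

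\paragraph{How the paper closes the argument.} These are exactly the ingredients the paper uses, packaged in \cref{lem:GNonDt}: for the mean-free $\rho\dot u$ one has $\|\rho\dot u\|_{L^4}\lesssim\|\rho\dot u\|_{L^2}^{1/2}\|\nabla\dot u\|_{L^2}^{1/2}$ uniformly in $t$. Combine this with the $L^4$ Stokes estimate and $\|\nabla u\|_{\dot B^{1/2}_{4,1}}\lesssim\|\nabla u\|_{L^4}^{1/2}\|\nabla^2u\|_{L^4}^{1/2}$. This bounds $\|\nabla u\|_{\dot B^{1/2}_{4,1}}$ by the product of $t^{-1/8}\|\nabla u\|_{L^2}^{1/4}$, $\|\nabla^2u\|_{L^2}^{1/4}$, $\|\rho\dot u\|_{L^2}^{1/4}$ and $t^{1/8}\|\nabla\dot u\|_{L^2}^{1/4}$. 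The weights cancel, each factor lies in $L^4(0,\infty)$ by \cref{prop:existencebesov}, and $\dot B^{1/2}_{4,1}\hookrightarrow C_0$ finishes.

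\paragraph{The H\"older part.} It lacks the same ingredient and is also vague in three further ways.
\begin{itemize}
\item The $C^{1,\gamma}$ regularity of $D_t$ you propose to bootstrap is the conclusion. It is not needed: the velocity bound holds for any Lipschitz $D$.
\item No Gronwall argument is needed, because each atom solves a linear problem.
\item Since $\dot C^\gamma=\dot B^\gamma_{\infty,\infty}$, no $\ell^1$ summability is required; $L^p$ control of $\rho\dot u$ suffices.
\end{itemize}
The paper writes $u_0=\sum_ju_{0,j}$ via $\dot B^\gamma_{2,1}=[L^2,\dot H^1]_{\gamma,1}$. It combines the $A^0$ and $A^1$ bounds of the linearized atoms $u_j$ with \cref{lem:GNonDt} for $p=2/(1-\gamma)$ to get $\|\rho\dot u_j\|_{L^p}\lesssim\min\{t^{-1-\gamma/2}\|u_{0,j}\|_{L^2},\,t^{-1/2-\gamma/2}\|u_{0,j}\|_{\dot H^1}\}$. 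It then splits time at $2^{-2j}$ and concludes with the $L^p$ Stokes estimate and Morrey's embedding $\dot W^{1,p}\subset\dot C^{\gamma}$.

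Your arguments for $X(t)\in C^1$, for $X(t)\in C^{1,\gamma}$, and for transporting the boundary regularity are fine.
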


Compared to \cite[Theorem 1.1]{GancedoGarciaJuarezLunaVelasco2025}, where the patch regularity is preserved provided $u_0 \in H^{s}(\R^2)$ with $s > \eta$, \cref{thm:gammareg} establishes the end point case $s = \eta$ in the Besov setting.

Once the global Lipschitz estimate is established, the total distortion of the flow map is finite:
\begin{equation}\label{eq:defdist}
L_\infty 
:= \sup_{t >0} \|\nabla X(t)\|_{L^\infty(\R^2)} 
\le 
\exp\!\left(
\int_0^{\infty}
\|\nabla u(s)\|_{L^\infty(\R^2)}\,\dd s
\right)
<\infty .
\end{equation}
As a consequence, the asymptotic behavior can be studied.

\begin{theorem}[Asymptotics]\label{thm:asymptotics}
    Let $(\rho,u)$ be the Besov solution constructed in \cref{thm:existB}. Then
    \begin{align}
        \|u(t)-M\|_{L^2(D_t)}
        \le
        \|u_0-M\|_{L^2(D)}\,e^{-\lambda t},
        \qquad
        \lambda \coloneqq \frac{\nu}{C_D^2 L_\infty^2},
    \end{align}
    where $C_D$ is the Poincar\'e constant of $D$, $M$ is the average momentum defined in \eqref{eq:Mdef}, and $L_\infty$ is the upper bound on the distortion defined in \eqref{eq:defdist}. 
    
    Moreover, there exists a measure-preserving map $X_\infty$ such that
    \begin{align}
       \lim_{t \to \infty} \bigl\|( X(t,\cdot ) - Mt) - X_\infty (\cdot) \bigr\|_{L^\infty(D)} = 0.
    \end{align}
    Furthermore, for every $x \in D$, we have
    \begin{align}
        X_\infty(x)
        =
        x + \int_0^\infty \bigl(u(t, X(t,x)) - M\bigr)\,\dd t,
    \end{align}
    where $u(t, X(t,x)) - M \in L^1(0,\infty)$ for every $x \in D$.
\end{theorem}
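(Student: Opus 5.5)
The plan is to work entirely on the fluid region, where $\rho(t)=\mathbf{1}_{D_t}$, and to combine three ingredients already available: conservation of momentum (\cref{thm:existL2}(iii)), the energy equality, and the global Lipschitz bound \eqref{eq:defdist} from \cref{thm:existB}. First note that $|D_t|=|D|$ and that momentum is conserved, so
\begin{align}
\frac{1}{|D_t|}\int_{D_t}u(t)\,\dd x=M \qquad \text{for all } t\ge 0.
\end{align}
Because $M$ is constant, $u-M$ solves the same system in the fluid region: $\rho(\partial_t+u\cdot\nabla)(u-M)-\nu\Delta(u-M)+\nabla P=0$. Testing this against $u-M$ (or equivalently expanding the energy equality and using momentum conservation) should give
\begin{align}
\frac12\frac{\dd}{\dd t}\|u(t)-M\|_{L^2(D_t)}^2+\nu\|\nabla u(t)\|_{L^2(\R^2)}^2=0.
\end{align}
To justify this for the immediately strong solution, I would use the regularity in \cref{def:immstrongsol-INS} for $t>0$ together with the continuity $\rho u\in C([0,\infty);L^2)$ at $t=0$.

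The key step is a Poincaré inequality on the moving domain $D_t$ whose constant does not grow in time. Since $u(t)-M$ has zero mean on $D_t$, I would pull it back to $D$ via $w:=(u(t)-M)\circ X(t)$. As $X(t)$ is measure preserving, $w$ has zero mean on $D$ and $\|w\|_{L^2(D)}=\|u-M\|_{L^2(D_t)}$. Moreover $\nabla w=(\nabla u\circ X)\nabla X$, so $\|\nabla w\|_{L^2(D)}\le L_\infty\|\nabla u\|_{L^2(D_t)}$. The Poincaré inequality on the Lipschitz domain $D$ then yields
\begin{align}
\|u-M\|_{L^2(D_t)}\le C_D L_\infty\|\nabla u\|_{L^2(D_t)}.
\end{align}
Inserting this into the energy identity gives $\frac{\dd}{\dd t}\|u-M\|^2_{L^2(D_t)}\le -2\lambda\|u-M\|^2_{L^2(D_t)}$, and Grönwall's inequality then produces the claimed exponential decay. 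I expect this step to be the main obstacle: it needs $L_\infty<\infty$, which is exactly where the critical $L^1_tL^\infty_x$ bound is used, and it needs $X(t)$ to be bi-Lipschitz so that the pullback is legitimate.

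For the asymptotic domain, set $Y(t,x):=X(t,x)-Mt-x=\int_0^t\bigl(u(s,X(s,x))-M\bigr)\dd s$. It then suffices to show that $\int_0^\infty\sup_{x\in D}|u(s,X(s,x))-M|\,\dd s<\infty$, because this gives uniform Cauchy convergence and the stated formula for $X_\infty$. I would interpolate in two dimensions using the Gagliardo–Nirenberg/Agmon inequality on the Lipschitz domain $D_t$, after pulling back to $D$ so that the constants stay uniform thanks to $L_\infty$:
\begin{align}
\|u-M\|_{L^\infty(D_t)}\lesssim \|u-M\|_{L^2(D_t)}^{1/2}\|\nabla u\|_{L^\infty}^{1/2}+\|u-M\|_{L^2(D_t)}.
\end{align}
Applying Cauchy–Schwarz in time, together with $e^{-\lambda t}\in L^1$ and $\nabla u\in L^1_tL^\infty_x$, makes the right-hand side integrable on $(0,\infty)$. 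This gives both the pointwise $L^1$ claim and the uniform convergence.

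The properties of $X_\infty$ then pass to the limit. For measure preservation, $X(t)-Mt$ is measure preserving and converges uniformly, so the push-forwards of Lebesgue measure converge weakly. For the Lipschitz bound, $\|\nabla X(t)\|_{L^\infty}\le L_\infty$ gives $X_\infty$ Lipschitz constant $\le L_\infty$. The inverse maps $X(t)^{-1}$ are also uniformly Lipschitz, which gives the lower bound $|X_\infty(x)-X_\infty(y)|\ge L_\infty^{-1}|x-y|$. This bi-Lipschitz property is what yields Hausdorff convergence of $D_t-Mt$ to $X_\infty(D)$.
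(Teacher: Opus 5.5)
Your proposal is correct and follows the paper's proof step by step. You obtain the shifted energy identity for $u-M$ from the energy equality and momentum conservation, which is equivalent to the paper's Galilean transform. You then apply the Poincar\'e inequality after pulling back by $X(t)$, giving $\|u-M\|_{L^2(D_t)}\le C_D L_\infty\|\nabla u\|_{L^2}$, followed by Gr\"onwall. Finally, you use the interpolation $\|u-M\|_{L^\infty(D_t)}\lesssim\|u-M\|_{L^2(D_t)}^{1/2}\|\nabla u\|_{L^\infty}^{1/2}$ with Cauchy--Schwarz in time to get a Cauchy criterion in $L^\infty(D)$.

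The differences are minor. You pull back to $D$ for the $L^\infty$ interpolation, with a harmless lower-order term, whereas the paper uses uniform extension operators on $D_t$ as in \cref{lem:GNonDt}. You also sketch why $X_\infty$ is measure-preserving and bi-Lipschitz, which the paper only asserts.
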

In the next subsection, we outline the main difficulties and the strategy of the proofs of \cref{thm:existL2,thm:existB,thm:asymptotics}.

\subsection{Main difficulties and ideas}

In this section, we discuss the main difficulties of the problem and outline some of the key ideas of the analysis.

The analysis relies on three key mechanisms: a localized, scaling-critical substitute for Gagliardo--Nirenberg inequalities; a gluing procedure based on atomic decompositions, which allows one to recover strong structural properties of weak solutions; and a refined analysis of Lipschitz patches, which yields local-in-time Lipschitz control of the velocity and, when combined with a Galilean transformation removing the rigid motion component, leads to a global-in-time control.

\subsubsection*{Lack of integrability outside of the patch and a dynamic Gagliardo--Nirenberg inequality.}

The first issue we address is the lack of integrability of the velocity field outside the patch.
Indeed, the energy estimates only control
\begin{align}
    \|\sqrt{\rho}\, u(t)\|_{L^2(\R^2)}
    = \|u(t)\|_{L^2(X(t,D))},
    \qquad \text{and} \qquad
    \|\nabla u\|_{L^2((0,\infty);L^2(\R^2))},
\end{align}
but they do not provide any bound on \(u(t)\) in \(L^p\) outside the patch, for any \(p \in [1,\infty]\).
In dimension three, this difficulty can be bypassed thanks to the Sobolev embedding
\begin{align}
    \|u(t)\|_{L^6(\R^3)} \;\lesssim\; \|\nabla u(t)\|_{L^2(\R^3)},
\end{align}
which yields integrability of $u(t)$ directly from the dissipation.
In dimension two, however, the critical embedding only gives
\begin{align}
    \|u(t)\|_{BMO(\R^2)} \;\lesssim\; \|\nabla u(t)\|_{L^2(\R^2)},
\end{align}
which does not imply any $L^p$-integrability at the critical level.

To overcome this obstacle, Prange and Tan~\cite{PrangeTan2023}, and later Hao, Shao, Wei and Zhang~\cite{HaoShaoWeiZhang2026}
(see in particular~\cite[Lemma A.2]{HaoShaoWeiZhang2026}), proved that
\begin{align}\label{eq:prangetN}
    \|u(t)\|_{L^2(\R^2)} \;\lesssim  
    \|\nabla u(t)\|_{L^2(\R^2)} + \|\sqrt{\rho}\,u(t)\|_{L^2(\R^2)},
\end{align}
under suitable geometric assumptions on the density.

However, this estimate is not compatible with the critical scaling, since the two terms on the right-hand side scale differently. As a consequence, it cannot be used in a critical framework.

Our goal is therefore to obtain a critical substitute for \eqref{eq:prangetN}. 
The price we pay is that the estimate holds only locally in space, namely in a $\sqrt{t}$-neighborhood of the initial patch:
\begin{align}\label{eq:Ddef}
    D_{R,t} := D + B_{R\sqrt{t}}(0).
\end{align}
The mechanism relies on two ingredients.

First, \cref{prop:ImprovedDecay} shows that \eqref{eq:Ddef} is the natural region where the dynamics takes place. Indeed, there exists $C>0$ such that
\begin{align}\label{eq:speedC}
X(t,D) = \operatorname{supp}\rho(t) \subseteq D_{C,t}.
\end{align}
Second, \cref{cor:Lpstrip} yields that for any measurable $v$, any $t>0$, any $p \in [2,\infty)$, and any $R>0$,
\begin{align}\label{eq:Lpsppeed}
t^{\frac12-\frac1p}\,
\|v\|_{L^p(D_{R,t})}
\;\lesssim_{D,R,p}\;
t^{\frac12}\,\|\nabla v\|_{L^2(\R^2)}
+
\|\rho(t)\,v\|_{L^2(\R^2)}.
\end{align}
The implicit constant deteriorates as $p \to \infty$ and $R \to \infty$.
This inequality is obtained by exploiting the transport structure of \eqref{eq:INS}, together with the Lipschitz regularity of the domain $D$ and a $\rho(t)$-weighted version of the Poincaré inequality.

Together, \eqref{eq:speedC} and \eqref{eq:Lpsppeed} show that the relevant dynamics is effectively confined to regions of the form \eqref{eq:Ddef}, where one recovers critical integrability. This provides a localized, time-weighted substitute for the classical Gagliardo--Nirenberg inequality adapted to the moving domain. Indeed, combining \eqref{eq:Lpsppeed} and \eqref{eq:speedC}, we obtain that for any measurable function $v$ and any $p\in[2,\infty)$,
\begin{align}\label{eq:GNsub}
\|v\|_{L^p(X(t,D))} \leq \|v\|_{L^p(D_{R,t})}
\;\lesssim \;
t^{\frac1p}\,\|\nabla v\|_{L^2(\R^2)}
+
t^{-\frac12+\frac1p}\,
\|v\|_{L^2(X(t,D))}.
\end{align}

This estimate should be compared with the classical Gagliardo--Nirenberg inequality on Lipschitz domains, which states that for any Lipschitz domain $\Omega \subset \R^2$ and any $p\in[2,\infty)$,
\begin{align}\label{eq:GNclass}
\|v\|_{L^p(\Omega)}
\;\lesssim \;
\|\nabla v\|_{L^2(\R^2)}^{1-\frac{2}{p}}
\|v\|_{L^2(\Omega)}^{\frac{2}{p}}.
\end{align}

If the flow $X$ were Lipschitz, then $X(t,D)$ would remain a Lipschitz domain uniformly in time, and \eqref{eq:GNclass} applied with $\Omega = X(t,D)$ would yield \eqref{eq:GNsub} via a suitable Young inequality with time weights. However, in our setting the flow is not known to be Lipschitz (we work with immediately strong solutions), so this argument is not available. Moreover, \eqref{eq:GNsub} provides control of $v$ on the larger set $D_{R,t} \supset X(t,D)$, a feature which is crucial in the analysis. Thus, \eqref{eq:GNsub} can be viewed as a substitute for \eqref{eq:GNclass} compatible with the available regularity: although weaker in structure, it preserves the correct scaling and encodes the same interpolation mechanism through time weights.

Having established this substitute, we now present some applications. First, it yields new $L^p$ estimates for quantities such as $\rho u$ and $\rho \dot{u}$. 
Since $\rho(t) = \mathbf{1}_{X(t,D)}$, choosing $p = 4$ and $v = \dot{u}(t)$ in \eqref{eq:GNsub}, we obtain
\begin{align}
\|\rho\dot{u}(t)\|_{L^4(\R^2)}
\;\lesssim \;
t^{\frac14}\,\|\nabla \dot{u}(t)\|_{L^2(\R^2)}
+
t^{-\frac14}\,
\|\rho\dot{u}(t)\|_{L^2(\R^2)}.
\end{align}
Multiplying by $t^{3/4}$, integrating in time, and using the bounds encoded in $A_1^0(t,u)$ and $A_2^0(t,u)$, we obtain
\begin{align}
t^{3/4}\,\|\rho\dot{u}(t)\|_{L^4(\R^2)} \in L^2_{\loc}[0,\infty),
\end{align}
which is a scale-invariant estimate.

Similarly, choosing $v = u(t)$ and $p = 2$, we obtain for every $R>0$
\begin{align}
\|u(t)\|_{L^2(D_{R,t})} 
\;\lesssim \;
t^{\frac12}\,\|\nabla u(t)\|_{L^2(\R^2)}
+
\|\rho(t)u(t)\|_{L^2(\R^2)}.
\end{align}
Compared to \eqref{eq:prangetN}, both terms on the right-hand side now scale in the same way.

Another crucial application of \eqref{eq:GNsub} is uniqueness. 
Consider two sufficiently regular solutions $(\rho_1,u_1)$ and $(\rho_2,u_2)$ with the same initial data $(\rho_0,u_0)$, and set $(\delta \rho, \delta u) = (\rho_1-\rho_2, u_1-u_2)$. Then, for almost every $t>0$ (see \cref{thm:relaEnergyNL}),
\begin{align}
  \frac{1}{2} \| \sqrt{\rho_1}\,\delta u(t)\|_{L^2(\R^2)}^2
  + \int_0^t \|\nabla \delta u(s)\|_{L^2(\R^2)}^2 \,ds
  \le  
  - \int_0^t\!\!\int_{\R^2} \rho_1\,\delta u\!\otimes\!\delta u : \nabla u_2
  - \int_0^t\!\!\int_{\R^2} \delta \rho\,\delta u \cdot \dot{u}_2 .
\end{align}
The first term on the right-hand side can be handled by Gronwall, since
\begin{align}
\int_0^t\!\!\int_{\R^2} \rho_1\,\delta u\!\otimes\!\delta u : \nabla u_2
\le 
\int_0^t 
\|\sqrt{\rho_1}\,\delta u(s)\|_{L^2(\R^2)}^2
\|\nabla u_2(s)\|_{L^\infty(\R^2)}\,ds,
\end{align}
and $\nabla u_2 \in L^1((0,\infty);L^\infty(\R^2))$.

The second term is more delicate, due to the lack of integrability of $\delta u$ and $\dot{u}_2$. However, thanks to \eqref{eq:speedC}, there exist $C_1,C_2>0$ such that for almost every $s>0$,
\begin{align}
 \operatorname{supp}\rho_1(s) \subseteq D_{C_1,s}, 
 \qquad  
 \operatorname{supp}\rho_2(s) \subseteq D_{C_2,s}.
\end{align}
Setting $C=\max\{C_1,C_2\}$, we obtain
\begin{align}
\int_0^t\!\!\int_{\R^2} \delta \rho\,\delta u \cdot \dot{u}_2
=
\int_0^t\!\!\int_{D_{C,s}} \delta \rho\,\delta u \cdot \dot{u}_2 .
\end{align}
In this localized region we can exploit \eqref{eq:Lpsppeed}. For instance, applying it with $v=\delta u(s)$  yields
\begin{align}
\|\delta u(s)\|_{L^2(D_{C,s})}
\;\lesssim\;
s^{\frac12}\,\|\nabla \delta u(s)\|_{L^2(\R^2)}
+
\|\sqrt{\rho_1(s)}\,\delta u(s)\|_{L^2(\R^2)}.
\end{align}
This provides the correct structure to close the Gronwall argument. 
The key point is that the dynamics can be localized in $\sqrt{t}$-neighborhoods of the initial patch, where critical integrability is recovered; combined with $\dot{H}^{-1}$-type stability for the density, this yields uniqueness.

\subsubsection*{Existence by gluing}

The second difficulty is the construction of sufficiently regular weak solutions. 
To the best of our knowledge, the only existence result for initial data $u_0 \in L^2(\R^2)$ in the presence of vacuum can be found in \cite[Section~4]{HaoShaoWeiZhang2025}, where the existence of a distributional solution to \eqref{eq:INS} is established. 

Their approach (see also \cite{DanchinMucha2019}) relies on a lifting procedure for the initial density. More precisely, for $n \in \mathbb{N}$, one considers the regularized data
\begin{align}
    \rho_0^n = \rho_0 + \frac{1}{n},
\end{align}
which is bounded away from vacuum, and constructs a unique solution $(\rho_n,u_n)$ associated with the initial data $(\rho_0^n,u_0)$. One then shows that the sequence $(\rho_n,u_n)$ is sufficiently compact to pass to the limit in the distributional formulation, yielding a solution $(\rho,u)$ of \eqref{eq:INS} with initial data $(\rho_0,u_0)$.

In \cref{prop:existenceL2}, we present a proof based on a similar lifting scheme which, thanks to the critical control provided by \eqref{eq:Lpsppeed} and the a priori bounds encoded in $A_3^0(t,u_n)$, yields a distributional solution that additionally satisfies the energy inequality and enjoys the following convergences in $\mathcal{D}'\bigl((0,\infty)\times\R^2\bigr)$:
\begin{align}\label{eq:distrconvergences}
 u_n \to  u,
\qquad
\sqrt{\rho_n} u_n \to \sqrt{\rho} u,
\qquad
\dot u_n \to \dot u,
\qquad
\sqrt{\rho_n} \dot u_n \to \sqrt{\rho} \dot u.
\end{align}
By identifying the limits in all nonlinear terms and using the lower semicontinuity of weak convergence, we show that $(\rho,u)$ is not only a Leray--Hopf solution, but in fact an immediately strong solution.

However, the immediately strong solution obtained via lifting does not possess sufficient structure to establish uniqueness and long-time asymptotics. In particular, one needs the strong continuity
\begin{align}\label{eq:strongcont}
    \sqrt{\rho} u \in C([0,\infty);L^2(\R^2)),
\end{align}
which is closely related to the energy equality. Both \eqref{eq:strongcont} and the energy equality hold at the level of the approximating sequence $(\rho_n,u_n)$, but cannot be directly passed to the limit, since one cannot improve the convergence of $\sqrt{\rho_n} u_n$ to $\sqrt{\rho} u$ in a strong enough sense (see \cref{rem:nconvergence}). A similar obstruction appears when considering mollification of the initial data.

To overcome this difficulty, we exploit a decomposition of the initial data. More precisely, we write
\begin{align}
    u_0 = \sum_{j \in \mathbb{Z}} u_{0,j},
\end{align}
where the partial sums converge strongly to $u_0$ in $L^2(\R^2)$, and each atom $u_{0,j}$ is divergence free and belongs to the subcritical space $\dot{H}^\eta(\R^2)$ for some $\eta \in (0,1)$. Such a decomposition can be obtained, for instance, via a Littlewood--Paley decomposition or through an atomic decomposition in the spirit of \cite{LionsPeetre1964} via real interpolation. In this work, we adopt the latter approach.

For each atom $u_{0,j}$, we then consider the linearized system transported by $(\rho,u)$, where $(\rho,u)$ is the solution constructed via lifting:
\begin{align}\label{eq:linearized-system}
\begin{cases}
\partial_t (\rho u_j) + \dive (\rho u \otimes u_j) + \nabla P_j = \Delta u_j,\\
\dive u_j = 0,\\
u_j(0) = u_{0,j}.
\end{cases}
\end{align}
In \cref{lem:exLS}, we prove that \eqref{eq:linearized-system} admits a solution such that $\rho u_j \in C([0,\infty);L^2(\R^2))$ and satisfies the corresponding energy equality. This relies on the additional regularity of the initial data, namely $u_{0,j} \in \dot{H}^{\eta}(\R^2)$.

We then define the partial sums
\begin{align}\label{eq:partial-sum}
u_J := \sum_{|j|\le J} u_j,
\end{align}
which, being finite sums of solutions to \eqref{eq:linearized-system}, inherit the same properties. We next study the convergence $u_J \to u$.

Since $u_J - u$ still solves \eqref{eq:linearized-system}, we have
\begin{align}\label{eq:uJu}
\partial_t (\rho (u_J - u)) + \dive (\rho u \otimes (u_J - u)) + \nabla (P_J - P) = \Delta (u_J - u),
\end{align}
and testing \eqref{eq:uJu} with $(u_J - u)$ yields, for almost every $t>0$,
\begin{align}\label{eq:energy-difference}
\frac{1}{2}\|\sqrt{\rho(t)}(u_J(t)-u(t))\|_{L^2(\R^2)}^2
+
\int_0^t \|\nabla(u_J(s)-u(s))\|_{L^2(\R^2)}^2\,\dd s
\le \frac{1}{2}
\|u_{0,J}-u_0\|_{L^2(\R^2)}^2 .
\end{align}
Since the right-hand side converges to zero as $J \to \infty$ (by \eqref{eq:atomic-decomp}), we deduce the strong convergences
\begin{align}\label{eq:strong-convergence}
\sqrt{\rho} u_J \to \sqrt{\rho} u 
\quad \text{in } C([0,\infty);L^2(\R^2)), 
\qquad 
\nabla u_J \to \nabla u
\quad \text{in } L^2((0,\infty);L^2(\R^2)).
\end{align}
These convergences are sufficient to pass to the limit in the energy equality for $u_J$, thereby obtaining the energy equality for $u$.

We emphasize that testing \eqref{eq:uJu} with $u_J - u$ is not immediate, due to the lack of decay of $u_J - u$ as $|x| \to \infty$. The corresponding justification, including admissibility of test functions, is addressed at the end of Section~3 and in Section~7.

Finally, we believe that this construction is robust and may be adapted to other contexts.

\subsubsection*{Galilean Transform and faster decays for negative Sobolev data}

The impossibility of having $\nabla u \in L^1((0,\infty);L^\infty(\R^2))$ for initial data $u_0 \in L^2(\R^2)$ can already be observed at the level of the heat equation. Indeed, one has
\begin{align}
    \|\nabla (e^{t\Delta} u_0)\|_{L^\infty(\R^2)} \lesssim \frac{1}{t}\, \|u_0\|_{L^2(\R^2)},
\end{align}
where $e^{t\Delta}$ denotes the heat semigroup. Clearly, the function $t \mapsto \frac{1}{t}$ is not integrable near $t=0$, nor at infinity. To obtain time-integrability of $\|\nabla u(t)\|_{L^\infty(\R^2)}$, the optimal space for the heat equation is $\dot{B}^{-1}_{\infty,1}(\R^2)$ (\cite[Theorem 2.34]{BCD2011}). However, $L^2(\R^2) \not\subset \dot{B}^{-1}_{\infty,1}(\R^2)$, and if $u_0 \in \dot{B}^{-1}_{\infty,1}(\R^2)$, then the corresponding solution $e^{t\Delta}u_0$ does not have finite energy.

For this reason, the natural choice is the space $\dot{B}^0_{2,1}(\R^2)$, which captures both finite energy and the critical integrability properties. To show that initial data in $\dot{B}^0_{2,1}(\R^2)$ generate Lipschitz solutions, several techniques can be used. Among them is the method of \emph{dynamic interpolation}.

We briefly sketch this idea. Let $u_0 \in \dot{B}^0_{2,1}(\R^2)$. Then, for every $\eta \in (0,1)$, there exists a sequence $(u_{0,j})_{j \in \mathbb{Z}}$ such that
\begin{align}\label{eq:atomic-decomp}
u_0 = \sum_{j \in \mathbb{Z}} u_{0,j}, \qquad 
c_j :=
2^{-j/2}\|u_{0,j}\|_{\dot{H}^{\eta}(\R^2)}
+
2^{j/2}\|u_{0,j}\|_{\dot{H}^{-\eta}(\R^2)},
\qquad \sum_{j \in \Z} c_j \leq 2\|u_0\|_{\dot{B}^0_{2,1}(\R^2)},
\end{align}
and each $u_{0,j} \in \dot{H}^{\eta}(\R^2) \cap \dot{H}^{-\eta}(\R^2)$.

For each $u_{0,j}$, we denote by $u_j$ the solution to the heat equation with initial data $u_{0,j}$. By the regularization properties of the heat semigroup, one has
\begin{align}\label{eq:assumeestimates}
    \|\nabla u_j(t)\|_{L^\infty(\R^2)} \leq C \min\Bigl\{
t^{\frac{\eta}{2}-1}\,\|u_{0,j}\|_{\dot H^\eta(\R^2)},
\;
t^{-\frac{\eta}{2}-1}\,\|u_{0,j}\|_{\dot H^{-\eta}(\R^2)}
\Bigr\}.
\end{align}
The idea is now to exploit both decay regimes in \eqref{eq:assumeestimates}: for small times we use the estimate involving the positive Sobolev norm $\|u_{0,j}\|_{\dot H^\eta(\R^2)}$, while for large times we use the one involving the negative Sobolev norm $\|u_{0,j}\|_{\dot H^{-\eta}(\R^2)}$. This is implemented by splitting the time integral:
\begin{align}
\int_0^\infty \|\nabla u_j(t)\|_{L^\infty(\R^2)} \, dt 
&= \int_0^{2^{-2j/\eta}} \|\nabla u_j(t)\|_{L^\infty(\R^2)} \, dt 
+ \int_{2^{-2j/\eta}}^\infty \|\nabla u_j(t)\|_{L^\infty(\R^2)} \, dt \\
&\lesssim \|u_{0,j}\|_{\dot H^\eta(\R^2)} \int_0^{2^{-2j/\eta}} t^{\frac{\eta}{2}-1} dt
+ \|u_{0,j}\|_{\dot H^{-\eta}(\R^2)} \int_{2^{-2j/\eta}}^\infty t^{-\frac{\eta}{2}-1} dt \\
&\lesssim 2^{-j/2}\|u_{0,j}\|_{\dot H^\eta(\R^2)}
+ 2^{j/2}\|u_{0,j}\|_{\dot H^{-\eta}(\R^2)}
= c_j.
\end{align}
Since $(c_j) \in \ell^1(\Z)$, summing over $j$ and using the linearity of the heat equation yields
\begin{align}
\int_0^\infty \|\nabla u(t)\|_{L^\infty(\R^2)} \, dt
\le \sum_{j \in \Z} \int_0^\infty \|\nabla u_j(t)\|_{L^\infty(\R^2)} \, dt
\lesssim \sum_{j \in \Z} c_j
\le 2\|u_0\|_{\dot{B}^0_{2,1}(\R^2)}.
\end{align}

Compared to other techniques for obtaining Lipschitz regularity, \emph{dynamic interpolation} can also be adapted to PDEs with nonlinearities, as in the case of the Navier--Stokes system. This was observed by Danchin in \cite{Danchin2025}. We take inspiration from his work and apply a similar idea in the vacuum case. Since $u_0 \in \dot{B}^0_{2,1}(\R^2) \subseteq L^2(\R^2)$, we can use \cref{thm:existL2} to construct a solution $(\rho,u)$ from patch data $(\rho_0=\mathbf{1}_D,u_0)$. We then apply the atomic decomposition \eqref{eq:atomic-decomp} and, for each atom, consider $u_j$ as the solution to the linear system \eqref{eq:linearized-system} with initial data $u_{0,j}$.

We then prove the estimates in \eqref{eq:assumeestimates} with a constant $C$ independent of $j \in \Z$, and by repeating the time-splitting argument described above, we obtain $\nabla u \in L^1((0,\infty);L^\infty(\R^2))$, using that the series $\sum_{j \in \Z} u_j$ converges to $u$.

Since the existence of $(\rho,u)$ is already established in \cref{thm:existL2}, the main difficulty is to prove \eqref{eq:assumeestimates}. In our case, the estimates corresponding to positive Sobolev regularity in $\dot{H}^\eta(\R^2)$ follow from the bounds on the functionals $A_i^\eta(t,u_j,u)$ established in Section~2. These estimates hold for any nonnegative initial density and do not rely on the patch structure.

It remains to study the decay of $\|\nabla u_j(t)\|_{L^\infty(\R^2)}$ in terms of negative Sobolev initial data, and here the patch structure plays a crucial role. To better explain the difficulty, let us first consider the homogeneous case $\rho_0(x)=1$. In that setting, negative Sobolev regularity is propagated, and by interpolating
\begin{align}
     \|\nabla u_j(2t)\|_{L^\infty(\R^2)}  
     \leq C\, t^{-\frac{1}{2}} \|u_j(t)\|_{\dot{H}^1(\R^2)},
     \qquad
     \|\nabla u_j(2t)\|_{L^\infty(\R^2)}  
     \leq C\, t^{-1} \|u_j(t)\|_{L^2(\R^2)},
\end{align}
one obtains
\begin{align}\label{eq:grad-Linf-interpolation}
     \|\nabla u_j(2t)\|_{L^\infty(\R^2)}  
     \leq C\, t^{-1+\frac{\eta}{2}} \|u_j(t)\|_{\dot{H}^\eta(\R^2)}.
\end{align}
Moreover, by the smoothing effect of the heat semigroup, one has
\begin{align}\label{eq:heat-negative-to-positive}
    \|u_j(t)\|_{\dot{H}^\eta(\R^2)}
    \lesssim t^{-\eta}\|u_{0,j}\|_{\dot{H}^{-\eta}(\R^2)}.
\end{align}
Inserting \eqref{eq:heat-negative-to-positive} into \eqref{eq:grad-Linf-interpolation}, one recovers the desired decay estimate. The previous interpolation argument can also be suitably adapted to the case of initial densities bounded away from zero, namely when $\rho_0(x)\ge c>0$. Unfortunately, this is no longer possible in the patch case $\rho_0=\mathbf{1}_D$. Indeed, for $t>0$ one generally has $u_j(t)\notin L^2(\R^2)$, and therefore one cannot rely on $\dot{H}^\eta(\R^2)$ estimates as in the homogeneous case. For this reason, we cannot proceed as above and must develop a different approach.

As a first step, we perform a careful time shift and prove that
\begin{align}\label{eq:andso}
     \|\nabla u_j(2t)\|_{L^\infty(\R^2)}  
     \leq C\, t^{-\frac{3}{2}}\left( \int_0^t \|\sqrt{\rho}\, u_j(s)\|_{L^2(\R^2)}^2 \dd s \right)^{\frac{1}{2}}.
\end{align}
where the presence of $\sqrt{\rho}$ is crucial. We now study the decay of the $L^2$-norm of $\sqrt{\rho}\,u_j$ in terms of negative Sobolev initial data. We proceed by duality, introducing the backward system on $[0,t]\times\R^2$:
\begin{align}
\begin{cases}\label{eq:BLSjw}
\partial_s (\rho w) + \dive (\rho u \otimes w) + \nabla Q
= - \Delta w + \rho u_j,\\
\dive w = 0,\\
w(t) = 0.
\end{cases}
\end{align}
Then we have
\begin{align}\label{eq:neg11}
\int_0^t \|\sqrt{\rho}\, u_j(s)\|_{L^2(\R^2)}^2 \dd s
= 
- \int_{\R^2} \rho_0(x)\, w(0,x)\cdot u_{0,j}(x)\,\dd x
\leq 
\|\sqrt{\rho_0}\, w(0)\|_{\dot{H}^\eta(\R^2)} 
\|\sqrt{\rho_0}\, u_{0,j}\|_{\dot{H}^{-\eta}(\R^2)}.
\end{align}
Since $D$ is a bounded Lipschitz domain, one has
\begin{align}
\mathbf{1}_D \in \mathcal{M}\bigl(\dot{H}^\eta(\R^2)\bigr)
\qquad \text{for every } \eta \in \Bigl(-\tfrac12,\tfrac12\Bigr),
\end{align}
and there exists a bounded extension operator $E:H^1(D)\to H^1(\R^2)$. Therefore, up to a multiplicative constant depending only on $D$, we have
\begin{align}\label{eq:ExD}
\begin{aligned}
\|\sqrt{\rho_0}\, w(0)\|_{\dot{H}^\eta(\R^2)}
&= \|\mathbf{1}_D E(w(0))\|_{\dot{H}^\eta(\R^2)} \lesssim \|E(w(0))\|_{\dot{H}^\eta(\R^2)} \lesssim \|E(w(0))\|_{L^2(\R^2)}^{1-\eta}\,
\|E(w(0))\|_{\dot{H}^1(\R^2)}^\eta \\
&\lesssim \|w(0)\|_{L^2(D)}^{1-\eta}\,
\|w(0)\|_{H^1(D)}^\eta \lesssim \|w(0)\|_{L^2(D)}
+ \|w(0)\|_{L^2(D)}^{1-\eta}\|\nabla w(0)\|_{L^2(D)}^\eta \\
&\lesssim \|\sqrt{\rho_0}\, w(0)\|_{L^2(\R^2)}
+ \|\sqrt{\rho_0}\, w(0)\|_{L^2(\R^2)}^{1-\eta}\|\nabla w(0)\|_{L^2(\R^2)}^\eta.
\end{aligned}
\end{align}
Arguing as for the linearized system \eqref{eq:linearized-system}, one obtains the parabolic estimate
\begin{align}
 t^{-1} \|\sqrt{\rho_0}\, w(0)\|_{L^2(\R^2)}^2
+
\|\nabla w(0)\|_{L^2(\R^2)}^2
\leq
C\int_0^t \|\sqrt{\rho}\, u_j(s)\|_{L^2(\R^2)}^2 \dd s.
\end{align}
Inserting these bounds into \eqref{eq:ExD} we obtain the following 
\begin{align}
\|\sqrt{\rho_0}\, w(0)\|_{\dot{H}^\eta(\R^2)}
\lesssim (t + t^{1-\eta})^{\frac{1}{2}}
\left( \int_0^t \|\sqrt{\rho(s)}\, u_j(s)\|_{L^2(\R^2)}^2 \dd s \right)^{\frac{1}{2}}.
\end{align}
Plugging this estimate into \eqref{eq:neg11} yields
\begin{align}\label{eq:thisone}
\left( \int_0^t \|\sqrt{\rho}\, u_j(s)\|_{L^2(\R^2)}^2 \dd s \right)^{\frac{1}{2}}  
\lesssim (t + t^{1-\eta})^{\frac{1}{2}} \| u_{0,j}\|_{\dot{H}^{-\eta}(\R^2)}.
\end{align}
We have finally obtained (see \eqref{eq:andso}), after replacing $2t$ by $t$, that
\begin{align}\label{eq:andso2}
     \|\nabla u_j(t)\|_{L^\infty(\R^2)}  
     \lesssim \left(t^{-1}  + t^{-1-\frac{\eta}{2}}\right)\| u_{0,j}\|_{\dot{H}^{-\eta}(\R^2)}.
\end{align}
Finally, for $t \leq 1$ this provides the desired decay in \eqref{eq:assumeestimates}. However, for large times one has the decay $\|\nabla u_j(t)\|_{L^\infty(\R^2)} \sim t^{-1}$, which is not integrable. As a consequence, the time-splitting argument can only be applied on bounded time intervals, and we obtain
\begin{align}
    \nabla u \in L^1_{\mathrm{loc}}([0,\infty);L^\infty(\R^2)).
\end{align}
To overcome this obstruction, inspired by the Galilean transformation, we introduce the momentum per unit mass associated with $u_j$, which is conserved in time:
\begin{align}\label{eq:Mj}
    M_j = \frac{1}{|D|}\int_{\R^2} \rho_0(x)\, u_{0,j}(x) \,\dd x =  \frac{1}{|D_t|}\int_{\R^2} \rho(t,x)\, u_{j}(t,x) \,\dd x.
\end{align}
We then consider the following Galilean change of variables:
\begin{align}
    \rho_{M_j}(t,x) := \rho(t,x + M_j t), \qquad
    u_{j,M_j}(t,x) := u_j(t,x + M_j t) - M_j, \qquad 
    \bar w_0(x) := w(0,x) + t M_j.
\end{align}
Integrating in space the momentum equation in \eqref{eq:BLSjw}, we obtain
\begin{align}\label{eq:meanwjj}
- \int_{\R^2} \rho_0(x)\, w(0,x)\,\dd x
=
\int_0^t \int_{\R^2} \rho(s,x)\, u_j(s,x)\,\dd x \,\dd s
= t\, M_j\, |D|.
\end{align}
As a consequence, $\bar w_0$ has zero average on $D$. Moreover, subtracting $t |D| |M_j|^2$ from both sides of \eqref{eq:neg11}, we obtain
\begin{align}
    \int_0^t \|\sqrt{\rho_{M_j}}\,u_{j,M_j}(s)\|_{L^2(\R^2)}^2 \,\dd s
    = - \int_{\R^2} \rho_0(x)\, u_{0,j}(x)\cdot  \bar{w}_0(x)\,\dd x.
\end{align}
Now, since $\bar{w}_0$ is mean free on $D$, we have $\|\bar{w}_0\|_{H^1(D)} \lesssim \|\nabla \bar{w}_0\|_{L^2(D)}$ by the Poincaré inequality. For this reason, \eqref{eq:ExD} can be simplified to
\begin{align}\label{eq:ExD2}
\|\sqrt{\rho_0}\,  \bar{w}_0\|_{\dot{H}^\eta(\R^2)}\lesssim \|\sqrt{\rho_0}\,  \bar{w}_0\|_{L^2(\R^2)}^{1-\eta}\|\nabla  \bar{w}_0\|_{L^2(\R^2)}^\eta.
\end{align}
In the same spirit as before one can improve \eqref{eq:thisone} to 
\begin{align}\label{eq:thisone2}
\left(  \int_0^t \|\sqrt{\rho_{M_j}}\,u_{j,M_j}(s)\|_{L^2(\R^2)}^2 \right)^{\frac{1}{2}}  
\lesssim t^{\frac{1}{2}-\frac{\eta}{2}} \| u_{0,j}\|_{\dot{H}^{-\eta}(\R^2)}.
\end{align}
This shows that, in the moving frame associated with $M_j$, the decay is compatible with the desired one and is faster than in the original variables. To conclude, it is enough to observe that the gradient is invariant under translations, and therefore the left-hand side of \eqref{eq:andso} is unchanged. Hence for al $t>0$,
\begin{align}
     \|\nabla u_j(2t)\|_{L^\infty(\R^2)}   \lesssim t^{-1-\frac{\eta}{2}} \| u_{0,j}\|_{\dot{H}^{-\eta}(\R^2)}.
\end{align}
This is precisely the desired decay estimate.

Therefore, the lack of decay in the original variables is entirely due to a rigid motion component, and does not affect the Lipschitz continuity of the velocity field.

\subsection{Organization of the paper}

In Section 2, we establish \emph{a priori} bounds for the functionals $A_i^\eta$ under $L^2$ initial data and non-negative bounded density.

In Section 3, we investigate the properties of immediately strong solutions. In particular, we prove the $\sqrt{t}$-rate of propagation of the support (point (iv) of \cref{thm:existL2}) and the conservation of momentum (point (iii) of \cref{thm:existL2}). Moreover, we introduce and discuss a dynamic Gagliardo--Nirenberg inequality.

In Section 4, we first prove existence for \eqref{eq:INS} via a lifting procedure, and then establish existence by gluing, thereby proving \cref{thm:existL2}. We also study the linearized system \eqref{eq:linearized-system} and the backward system \eqref{eq:BLSjw}, establishing both existence and suitable bounds.

In Section 5, we introduce the Galilean transform and show how to obtain global-in-time Lipschitz solutions. We further prove uniqueness and stability within this class, thereby completing the proof of \cref{thm:existB}.

In Section 6, we address more regular patches and analyze the long-time dynamics of Lipschitz patches, proving \cref{thm:gammareg} and \cref{thm:asymptotics}.

Finally, in the appendix, we discuss the derivation of relative energy identities and characterize the admissible class of test functions for the weak formulation of \eqref{eq:INS}.

\section{A priori estimates}\label{sec:energy}

In this section, we study linear systems of the form
\begin{align}
    \begin{cases}\label{eq:LSs}
        \partial_t \rho + \dive(\rho u) = 0,\\
        \rho \partial_t v + \rho (u \cdot \nabla) v + \nabla P = \Delta v,\\
        \dive v = 0,\\
        v|_{t=0} = v_0,
    \end{cases}
\end{align}
where $v_0 \in L^2_\sigma(\R^2)$ and $(\rho,u)$ if fixed. \cref{def:immstrongsol-INS} extends to the system \eqref{eq:LSs}. 

\begin{definition}\label{def:immstrongsol-LS}
Let $(\rho,u)$ be a weak solution of \eqref{eq:INS} with initial data $(\rho_0,u_0)$ satisfying \eqref{ass:weakData}. We say that $v$ is an \emph{immediately strong solution} of \eqref{eq:LSs}, advected by $(\rho,u)$, if the following properties hold:
\begin{itemize}
    \item $v$ is a distributional solution of \eqref{eq:LSs}, in the sense of
    \cref{def:Distsol}.
    \item The energy inequality holds. 
    \item There exists a constant $C_{(\rho,u)}(v)$ such that
    \begin{align}\label{eq:BarC-LS}
       \sup_{i \in \{0,1,2,3\}} \sup_{t>0} A_i^0(t,v,u) \le C_{(\rho,u)}(v).
    \end{align}
\end{itemize}
\end{definition}

In the following we assume that $(\rho,u)$ is an immediately strong solution of \eqref{eq:INS} with respect to initial data $(\rho_0,u_0)$ satisfying
\begin{align}\label{ass:datasec2}
    0 < \eps \leq \rho_0 \leq 1, \quad u_0 \in L^2_\sigma(\R^2).
\end{align}
Our goal is to obtain \emph{a priori} estimates on $v$ that are independent of $\varepsilon>0$. More precisely, we aim to derive bounds of the form
\begin{align}\label{eq:estimateslinear}
    \sup_{i \in \{0,1,2,3\}} \sup_{t>0} A_i^0(t,v,u)
    & \le C(\|\sqrt{\rho_0} u_0\|_{L^2(\R^2)}) \, \|\sqrt{\rho_0}\, v_0\|_{L^2(\R^2)}.
\end{align}
Here, $C \colon [0,\infty) \to [0,\infty)$ denotes a continuous and monotonically increasing function. Consequently, by setting $v=u$, we obtain that
\begin{align}\label{eq:estimatesnonlinear}
    \sup_{i \in \{0,1,2,3\}} \sup_{t>0} A_i^0(t,u)
    & \le C(\|\sqrt{\rho_0} u_0\|_{L^2(\R^2)}) \, \|\sqrt{\rho_0}\, u_0\|_{L^2(\R^2)}.
\end{align}
Without loss of generality, we can assume that $(\rho_0, u_0)$ and $v_0$ are smooth and therefore $(\rho,u)$ and $v$ are smooth solution of \eqref{eq:INS} and \eqref{eq:LSs}, respectively.

\begin{theorem}\label{thm:sect2}
Let $(\rho,u)$ be a solution of \eqref{eq:INS} with initial data $(\rho_0,u_0)$ satisfying \eqref{ass:datasec2}, and let $v$ be a solution of \eqref{eq:LSs}. Then there exists a constant
\begin{align}\label{eq:constofsect2}
    C_u = C(\|\sqrt{\rho_0} u_0\|_{L^2(\R^2)}),
\end{align}
which is non decreasing and independent of $\eps > 0$, such that
\begin{align}
    \sup_{i \in \{0,1,2,3\}} \sup_{t>0} A_i^0(t,v,u)
    & \le C_u \, \|\sqrt{\rho_0}\, v_0\|_{L^2(\R^2)}, \label{eq:A0}\\
    \sup_{i \in \{1,2,3\}} \sup_{t>0} A_i^s(t,v,u)
    & \le C_u \, \|v_0\|_{\dot H^s(\R^2)}\label{eq:As}.
\end{align}
In particular, if $v=u$, then 
\begin{align}\label{eq:A0u}
    \sup_{i \in \{0,1,2,3\}} \sup_{t>0} A_i^0(t,u)
    \le C_u \, \|\sqrt{\rho_0}\, u_0\|_{L^2(\R^2)}.
\end{align}
\end{theorem}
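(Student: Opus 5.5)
We plan to follow the by now classical hierarchy of time-weighted energy estimates for the inhomogeneous system in two dimensions (Hoff, Danchin--Mucha, Hao--Shao--Wei--Zhang), carried out for the linear system \eqref{eq:LSs} with coefficients $(\rho,u)$. Since everything is smooth and $\rho_0\ge\eps$, all integrations by parts are justified. The point is to never use $\eps$ quantitatively. We only use $0\le\rho\le 1$, the transport structure $\partial_t\rho+\dive(\rho u)=0$, and bounds on $u$ that come from applying the same argument with $v=u$ first.

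We therefore proceed in two rounds. In the first round we take $v=u$ and prove \eqref{eq:A0u}. In the second round we treat general $v$ linearly, with coefficients controlled by the first round. The $A_0$ bound is the energy identity: test the momentum equation with $v$ and use the continuity equation, which cancels the convective term.

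For $A_1$ we test with $t\,\dot v$. This gives
\begin{align}
\frac{d}{dt}\Bigl(\frac t2\|\nabla v\|_{L^2}^2\Bigr)+t\|\sqrt\rho\,\dot v\|_{L^2}^2=\frac12\|\nabla v\|_{L^2}^2-t\int \nabla v:(\nabla u\cdot\nabla v)+t\int P\,\partial_i u^j\partial_j v^i .
\end{align}
The Stokes problem $-\Delta v+\nabla P=-\rho\dot v$ gives $\|\nabla^2v,\nabla P\|_{L^2}\lesssim\|\sqrt\rho\dot v\|_{L^2}$. The cubic terms are bounded by
\begin{align}
\|\nabla u\|_{L^2}\,\|\nabla v\|_{L^4}^2+\|\nabla u\|_{L^4}\,\|\nabla v\|_{L^2}\,\|P\|_{L^4},
\end{align}
and we use Ladyzhenskaya on $\nabla v$ and $P$, which are whole-space objects, so no lower bound on $\rho$ is needed. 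Young's inequality and Gronwall then close, with the weight $\int_0^\infty\|\nabla u\|_{L^2}^2\,dt$ controlled by the energy.

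For $A_2$ we apply the material derivative to the momentum equation:
\begin{align}
\rho\,\dot{\dot v}-\Delta\dot v+\nabla\dot P=\text{commutators},
\end{align}
where the commutators are of the form $\partial_k(\partial_k u\cdot\nabla v)$, $\partial_k u\cdot\nabla\partial_k v$ and $\nabla u\cdot\nabla P$. We test with $t^2\dot v$. The commutators are integrated by parts onto $\nabla\dot v$ and estimated by $\|\nabla u\|_{L^4}\|\nabla v,P\|_{L^4}\|\nabla\dot v\|_{L^2}$, again via Ladyzhenskaya and the Stokes bound. For $A_3$ we test the same equation with $t^3\dot{\dot v}$ and use the $\rho$-weighted Stokes estimate
\begin{align}
\|\nabla^2\dot v\|_{L^2}\lesssim\|\sqrt\rho\,\dot{\dot v}\|_{L^2}+\text{lower-order terms}.
\end{align}
The main obstacle is precisely these higher levels. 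There terms like $\|\dot v\|_{L^4}$ appear, and in the vacuum-compatible setting we cannot bound $\|\dot v\|_{L^2}$ by $\|\sqrt\rho\dot v\|_{L^2}$. We plan to avoid any $L^p$ norm of $\dot v$ itself: every such term is placed in divergence form and moved onto $\nabla\dot v$ or $\nabla^2\dot v$, which are controlled in $\dot H^1$ without density weights. We accept the resulting commutators containing $u\cdot\nabla$, rewriting $\partial_t$ terms as $\dot{}$ terms to preserve Galilean invariance. If a term such as $\int\rho\,\dot u\cdot\nabla v\,\dot v$ resists this, we would use $\|\sqrt\rho\dot v\|_{L^2}^{1/2}\|\nabla\dot v\|_{L^2}^{1/2}$-type interpolation only after testing. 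A priori this requires a Poincaré-type step that is unavailable, so we would instead integrate by parts using $\dive(\rho\dot u)$ from the equation.

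For \eqref{eq:As}, we repeat the $A_1,A_2,A_3$ computations with weights $t^{i-s}$. The boundary terms at $t=0$ are then not zero but are controlled by interpolation: $\int_0^t s^{-s}\|\nabla v\|_{L^2}^2\,ds$ must be bounded by $\|v_0\|_{\dot H^s}^2$. We obtain this by interpolating the linear solution map $v_0\mapsto v$ between the $L^2$ energy bound and the $\dot H^1$ bound from the $A_1$ computation without weight. The linearity in $v$ (with $u$ fixed) makes real interpolation applicable, and all constants depend only on $\|\sqrt{\rho_0}u_0\|_{L^2}$ through the first round. This yields \eqref{eq:A0} and \eqref{eq:As}; the constants come out as squares, which we consider matching the statement up to normalization.
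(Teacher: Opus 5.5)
There is a genuine gap in how you control the pressure, and this is where the paper's main auxiliary tool enters. In the $A_1$ step you bound $t\int P\,\partial_i u^j\partial_j v^i$ by $\|\nabla u\|_{L^4}\|\nabla v\|_{L^2}\|P\|_{L^4}$ and apply Ladyzhenskaya to $P$. But $\|P\|_{L^4}^2\lesssim\|P\|_{L^2}\|\nabla P\|_{L^2}$ requires $P\in L^2$, and nothing in your hierarchy controls $\|P\|_{L^2}$. Since $\Delta P=-\dive(\rho\dot v)$, this norm is essentially $\|\rho\dot v\|_{\dot H^{-1}}$, which is not available. The Stokes problem only gives $\|\nabla P\|_{L^2}\lesssim\|\sqrt\rho\,\dot v\|_{L^2}$. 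In $\R^2$ the sharp embedding is $\dot H^1\hookrightarrow\mathrm{BMO}$, so no $L^p$ norm of $P$ is controlled by $\|\nabla P\|_{L^2}$, even modulo constants. Being a ``whole-space object'' does not help: on the torus you could normalize $P$ to be mean free, but here you cannot. Integrating by parts the other way, $\int P\,\partial_iu^j\partial_jv^i=-\int\nabla P\cdot(u\cdot\nabla)v$, brings in an undifferentiated $u$ or $v$, which has no $\eps$-independent bound.

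The same defect sits in your $A_2$ commutator bound $\|\nabla u\|_{L^4}\|P\|_{L^4}\|\nabla\dot v\|_{L^2}$. You also do not say how $t^2\langle\nabla\dot P,\dot v\rangle=-t^2\langle\dot P,\dive\dot v\rangle$ is handled; it involves $\nabla\dot P$, an $A_3$-level quantity, and again needs a bound on $\dot P$ itself.

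The missing idea is compensated compactness. The identity $\dive\dot v=\tr(\nabla u\nabla v)=\sum_j\partial_jv\cdot\nabla u^j$ exhibits $\dive\dot v$ as a div--curl product. By Coifman--Lions--Meyer--Semmes it lies in $\mathcal H^1$ with norm $\lesssim\|\nabla u\|_{L^2}\|\nabla v\|_{L^2}$. Since $\|P\|_{\mathrm{BMO}}\lesssim\|\nabla P\|_{L^2}$, Fefferman--Stein duality gives $|\langle P,\dive\dot v\rangle|\lesssim\|\nabla P\|_{L^2}\|\nabla u\|_{L^2}\|\nabla v\|_{L^2}$. The paper computes $\dive\ddot v$ and $\dive\dddot v$ (\cref{lem:divstr}) so that the same duality applies at every level (\cref{prop:divH}). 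For $\dive\dddot v$ this requires rewriting $\tr(\nabla\dot u)\tr(\nabla\dot v)-\tr(\nabla\dot u\nabla\dot v)$ as mixed Jacobians, because $\dot u,\dot v$ are not divergence free.

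A second, related gap is at the $A_3$ level. Your test function there is $\ddot v$, which is controlled only through $\sqrt\rho\,\ddot v\in L^2$. The commutators, including $\nabla P\,\nabla u$, cannot be paired with $\ddot v$ by H\"older. Your strategy of moving spatial derivatives onto the test function now produces $\nabla\ddot v$, which is not in the hierarchy; the remark about avoiding $L^p$ norms of $\dot v$ does not address this.

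The paper instead integrates by parts in \emph{time}. It uses $\nabla\ddot v=D(\nabla\dot v)+\nabla\dot v\nabla u$ and $\langle D\nabla P,\ddot v\rangle=\frac{d}{dt}\langle\nabla P,\ddot v\rangle+\langle P,\dive\dddot v\rangle$. This turns the bad terms into total time derivatives, collected in the modified functional $\psi(t)$, which is bounded above and below using the lower levels. At the $A_2$ level the same device, with $\phi(t)=\|\sqrt\rho\dot v\|_{L^2}^2-\langle P,\dive\dot v\rangle$, is what avoids $\nabla\dot P$ altogether.

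Your ``lower-order terms'' in the Stokes estimate for $\dot v$ also hide $\|\nabla\dive\dot v\|_{L^2}$. That term needs $\|\nabla u\|_{L^\infty}$ and $\|\nabla v\|_{L^\infty}$, obtained via \cref{prop:Linfty} and the bound on $\int t\|\nabla u\|_{L^\infty}^2$.

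The rest of your plan is consistent with the paper: the ordering of the levels, the preliminary round with $v=u$, interpolation for \eqref{eq:As}, and bounds quadratic in $\|\sqrt{\rho_0}v_0\|_{L^2}$, which is what the paper's proof actually yields.
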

\begin{remark}
    Notice that, in order to prove \eqref{eq:As}, it is sufficient to establish \eqref{eq:A0} and \eqref{eq:As} in the case $s=1$. Indeed, interpolating these two estimates yields the result for any $s \in (0,1)$. We therefore present only the proof of \eqref{eq:A0}, since the estimate \eqref{eq:As} with $s=1$ follows in the same way, up to a straightforward modification of the time weights. We refer, for instance, to \cite{Danchin2025}.
\end{remark}

The proof of \eqref{eq:A0} and \eqref{eq:A0u} will be carried out in the next subsections simultaneously for each $A_i$.

\subsection{\texorpdfstring{Proof of \cref{thm:sect2}}{Proof of Theorem}}

\subsubsection{\texorpdfstring{Estimates for $A_0$}{Estimates for A0}}

It is well-known that smooth solutions of \eqref{eq:INS} and \eqref{eq:LSs} satisfy the energy equality and this immediately gives
\begin{align}
    \sup_{t>0} A^0_0(t,v,u) \leq \|\sqrt{\rho_0} v_0\|_{2}^2.
\end{align}

\subsubsection{\texorpdfstring{Estimates for $A_1$}{Estimates for A1}}

First, notice that by standard Stokes-type estimates, since $\rho$ is bounded, we have
\begin{align} \label{eq:StokesEst}
    \|\nabla^2 v\|_2^2 + \|\nabla P\|_2^2 \leq C_S \|\sqrt{\rho} \dot{v}\|_2^2.
\end{align}
 We test the momentum equation in \eqref{eq:LSs} with \( \dot{v} \) and obtain
\begin{align}\label{eq:momA1}
    \|\sqrt{\rho} \dot{v}\|_2^2 = \ip{\Delta v - \nabla P, \dot{v}} = - \frac{1}{2}\frac{d}{dt}\norm{\nabla v}_2^2 - \ip{\nabla v, \nabla(u \cdot \nabla v)} + \ip{P, \dive \dot{v}}.
\end{align}
Integration by parts and incompressibility yield
\begin{align}
     -  \ip{\nabla v, \nabla(u \cdot \nabla)v} 
     &= - \frac{1}{2} \ip{u, \nabla |\nabla v|^2} - \int_{\R^2} \tr(\nabla v \nabla u (\nabla v)^T) \dd x \eqqcolon 0+ L(t)
\end{align}
Using Gagliardo--Nirenberg, the Stokes estimates \eqref{eq:StokesEst}, and Young's inequality, we obtain
\begin{align}
\label{eq:Lfirst}
\begin{aligned}
    |L(t)| &\leq C_{GN}\|\nabla v\|_2 \|\nabla^2 v\|_2 \|\nabla u\|_2
    \leq C_{GN} C_S\|\nabla v\|_2 \|\sqrt{\rho}\dot v\|_2 \|\nabla u\|_2
    \\ &\leq \frac{1}{8} \norm{\sqrt{\rho} \dot{v}}_2^2 + C\, \norm{\nabla v}_2^2 \norm{\nabla u}_2^2,
    \end{aligned}
\end{align}
where $C$ depends only on $C_{GN}$ and $C_S$.

For the pressure term, we first use \cref{prop:divH} and then proceed exactly as for $L$:
\begin{align}\label{eq:boundphi}
   \ip{P, \dive \dot{v}}   
   &\leq \tilde{C} \norm{\nabla P}_2 \norm{\nabla v}_2 \norm{\nabla u}_2
   \leq \frac{1}{8} \norm{\sqrt{\rho}\dot{v}}_2^2 + C \norm{\nabla v}_2^2 \norm{\nabla u}_2^2.
\end{align}
Combining the above estimates with \eqref{eq:momA1} and absorbing $\norm{\sqrt{\rho}\dot{v}}_2$ into the left-hand side yields
\begin{align} \label{eq:conl1}
    \frac{d}{dt} \|\nabla v\|_2^2 + \|\sqrt{\rho} \dot{v}\|_2^2 
    \lesssim \|\nabla v\|_2^2 \|\nabla u\|_2^2.
\end{align}
To prove \eqref{eq:A0} for $i=1$, we multiply \eqref{eq:conl1} by \( t \):
\begin{align}\label{eq:conl2}
    \frac{d}{dt} \left( t \|\nabla v\|_2^2 \right) + t\|\sqrt{ \rho} \dot{v}\|_2^2 
    \lesssim \|\nabla v\|_2^2 + t \|\nabla v\|_2^2 \|\nabla u\|_2^2.
\end{align}
Applying Grönwall's inequality and using the energy inequality for $(\rho,u)$ and $v$ leads to
\begin{align}
    t \| \nabla v(t)\|_2^2  \lesssim \int_0^t \|\nabla v(s)\|_2^2 \, \dd s 
    \exp\left( \int_0^t \|\nabla u(s)\|_2^2 \, \dd s \right) \lesssim \exp\left(2 \norm{\sqrt{\rho_0} u_0}_2^2 \right) \norm{\sqrt{\rho_0} v_0}_2^2.
\end{align}
Integrating in time \eqref{eq:conl2} and using the previous estimate, we see that
\begin{align}
   \int_0^t s \|\sqrt{ \rho} \dot{v}\|_2^2  \dd s 
    & \lesssim \int_0^t \|\nabla v\|_2^2 \dd s + \int_0^t s \|\nabla v\|_2^2 \|\nabla u\|_2^2 \dd s  \\  & \lesssim \| \sqrt{\rho_0} v_0 \|_2^2
     + \exp\left(2 \norm{\sqrt{\rho_0} u_0}_2^2 \right) \norm{\sqrt{\rho_0} v_0}_2^2 \norm{\sqrt{\rho_0} u_0}_2^2.
\end{align}
Setting
\begin{align}\label{eq:C0def}
    C^0_u \coloneqq \exp\left(4 \norm{\sqrt{\rho_0} u_0}_2^2 \right),
\end{align}
and using the Stokes estimates \eqref{eq:StokesEst} we conclude
\begin{align}\label{eq:A1H2}
        t \|\nabla v (t)\|_{2}^2 + \int_0^t s \norm{\nabla P(s), \nabla^2 v (s), \sqrt{\rho} \dot{v}(s)}_{2}^2 \dd s 
        \lesssim C^0_u \|\sqrt{\rho_0}v_0\|_{2}^2. 
\end{align}
Since $C^0_u$ is non-decreasing in $\|\sqrt{\rho_0}\,u_0\|_{2}$, 
\eqref{eq:A1H2} yields \eqref{eq:A0} in the case $i=1$. Moreover, taking $v=u$ we obtain \eqref{eq:A0u} in the case $i=1$.

\subsubsection{\texorpdfstring{Estimates for $A_2$}{Estimates for A2}}
To estimate $A_2$, it is necessary to differentiate \eqref{eq:INS} and \eqref{eq:LSs} with respect to the material derivative $D$. Here, for every smooth vector field $w \colon [0,\infty) \times \R^2 \to \R^2$, the material derivative of $w$ is given by
\begin{align}
    D w = D_u w = \partial_t w + (u \cdot \nabla) w.
\end{align}
It is clear that $D$ does not commute with $\nabla$, $\dive$, and $\Delta$. However, we have the following identities:
\begin{align}
    D(\nabla w) &= \nabla \dot w - \nabla w\, \nabla u, \label{eq:grad}\\
    D(\dive w) &= \dive \dot w - \tr(\nabla u\, \nabla w), \label{eq:div}\\
    D(\Delta w) & = \Delta \dot w
    - \partial_k ((\partial_k u \cdot \nabla)w) - (\partial_k u\cdot \nabla) \partial_k w. \label{eq:lap}
\end{align}
Note that, since $\rho$ satisfies the transport equation with transport field $u$, we have $D_u (\rho \dot{v}) = \rho \ddot{v}$.

We now turn to the estimate. By applying \( D = D_u \) to \eqref{eq:LSs}, multiplying the result by $\dot{v}$, and integrating, we deduce that
\begin{align}\label{eq:mom2}
    \langle \rho \ddot{v}, \dot{v} \rangle = \langle D \Delta v - D \nabla P, \dot{v} \rangle.
\end{align}
The left-hand side simplifies as follows:
\begin{align}
    \langle \rho \ddot{v}, \dot{v} \rangle 
    = \langle \rho [\partial_t \dot{v} + (u \cdot \nabla) \dot{v}], \dot{v} \rangle
    = \frac{1}{2} \int \rho \partial_t |\dot{v}|^2 + \rho u \cdot \nabla |\dot{v}|^2 
    = \frac{1}{2} \int \rho \partial_t |\dot{v}|^2 - \dive(\rho u) |\dot{v}|^2.
\end{align}
Using that $- \dive(\rho u) = \partial_t \rho$ and the product rule for the time derivative, we obtain
\begin{align}
    \langle \rho \ddot{v}, \dot{v} \rangle = \frac{1}{2} \int \rho \partial_t |\dot{v}|^2 + \partial_t \rho |\dot{v}|^2 = \frac{1}{2} \frac{d}{dt} \| \sqrt{\rho} \dot{v} \|_2^2.
\end{align}
To estimate the right-hand side of \eqref{eq:mom2}, we treat the Laplacian and pressure terms separately.

For the Laplacian term, we use identity \eqref{eq:lap} to commute the material derivative with the Laplacian and then integrate by parts:
\begin{align}\label{eq:LapA2}
\begin{aligned}
    \langle D \Delta v, \dot{v} \rangle 
    &= \ip{\Delta \dot{v}, \dot{v}} - \ip{\partial_k(\partial_k u_j \partial_j v_i), \dot{v}_i} - \ip{\partial_k u_j \partial_j \partial_k v_i, \dot{v}_i}\\
    &= - \frac{1}{2} \| \nabla \dot{v} \|_2^2 + \ip{\partial_k u_j \partial_j v_i, \partial_k \dot{v}_i} + \ip{\partial_k u_j \partial_k v_i, \partial_j \dot{v}_i}\\
    &= - \frac{1}{2} \| \nabla \dot{v} \|_2^2 + \int_{\R^2} \tr(\nabla v \nabla u (\nabla \dot{v})^T) + \int_{\R^2} \tr(\nabla \dot{v} \nabla u (\nabla v)^T) \eqqcolon - \frac{1}{2} \| \nabla \dot{v} \|_2^2 + L(t).
\end{aligned}
\end{align}
For the pressure term, we apply Leibniz’s rule in time and integrate by parts:
\begin{align}\label{eq:formulaforpressure}
    \langle D \nabla P, \dot{v} \rangle 
    = & \frac{d}{dt} \langle \nabla P, \dot{v} \rangle - \langle \nabla P, \ddot{v} \rangle 
    \eqqcolon -\frac{d}{dt} \langle P, \dive \dot{v} \rangle + Q(t).
\end{align}

Collecting all terms, we obtain
\begin{align}\label{eq:coll}
    \phi'(t) \coloneqq \frac{d}{dt} \left( \| \sqrt{\rho} \dot{v} \|_2^2 - \langle P, \dive \dot{v} \rangle \right) + \frac{1}{2} \| \nabla \dot{v} \|_2^2 
    & = L(t) + Q(t).
\end{align}
We now estimate each term on the right-hand side of \eqref{eq:coll}. To this end, we introduce 
\begin{align}\label{eq:C1def}
    C^1_u \coloneqq  \exp\left(9 \norm{\sqrt{\rho_0} u_0}_2^2 \right).
\end{align}
For the  laplacian term \(L(t)\), with the same combination of \eqref{eq:Lfirst}:
\begin{align}\label{eq:Lsecond}
    \abs{L(t)} 
    \leq \underbrace{C \| \nabla v \|_2 \| \nabla^2 v \|_2  \| \nabla u \|_2 \| \nabla^2 u \|_2 }_{\eqqcolon L_1(t) }+ \frac{1}{8} \| \nabla \dot{v} \|_2^2.
\end{align}
Thanks to \eqref{eq:A1H2}, we obtain:
\begin{align}
    \int_0^t s^2 \abs{L_1(s)} \dd s & \lesssim \sup_{s\in (0,t)} \sqrt{s}\norm{\nabla v}_2  
    \sup_{s\in (0,t)} \sqrt{s}\norm{\nabla u}_2  \left( \int_0^t s \norm{\nabla^2 v}_2^2  \dd s \right)^{\frac{1}{2}} 
    \left( \int_0^t s \norm{\nabla^2 u}_2^2  \dd s \right)^{\frac{1}{2}}\\
    & \lesssim (C^0_u)^2 \norm{\sqrt{\rho_0}u_0}_2^2 \norm{\sqrt{\rho_0}v_0}_2^2 \leq C^1_u\norm{\sqrt{\rho_0}v_0}_2^2
\end{align} 
In order to estimate the pressure term \(Q\), we use \cref{prop:divH} and Young's inequality:
\begin{align}\label{eq:Qsecond}
    Q(t) & \leq \underbrace{C\| \nabla P \|_2  \| \nabla u \|_2 \| \nabla \dot{v} \|_2 }_{ \eqqcolon Q_1(t) } + \frac{1}{8} \|\nabla \dot{v} \|^2_2  + \underbrace{C\| \nabla P \|_2  \| \nabla v \|_2 \| \nabla \dot{u} \|_2}_{ \eqqcolon Q_2(t) }.
\end{align}
$Q_1(t)$ behaves as $L_1(t)$:
\begin{align}
    \int_0^t s^2 Q_1(s) \dd s \lesssim  C^1_u \norm{\sqrt{\rho_0}v_0}_2^2.
\end{align}
Inserting \eqref{eq:Lsecond} and \eqref{eq:Qsecond} in \eqref{eq:coll}, we obtain
\begin{align}\label{eq:coll2}
    \phi^\prime(t) + \| \nabla \dot{v} \|_2^2 \lesssim L_1(t) + Q_1(t) + Q_2(t).
\end{align}
Multiplying \eqref{eq:coll2} by \( s^2 \) and integrating in time, thanks to the estimates on $L_1$ and $Q_1$ we get
\begin{align}\label{eq:coll3}
    t^2 \phi(t) +  \int_0^t s^2 \| \nabla \dot{v} \|_2^2 \, ds 
    &\lesssim  \int_0^t s \phi(s) \, ds +  C^1_u \norm{\sqrt{\rho_0}v_0}_2^2+  \int_0^t s^2   Q_2(s)\dd s.
\end{align}
Notice that, using \eqref{eq:boundphi}, we obtain an upper bound for \( \phi(t) \):
\begin{align}
    \phi(t) = \| \sqrt{\rho} \dot{v} \|_2^2 - \langle P, \dive \dot{v} \rangle \leq \frac{5}{4}\norm{\sqrt{\rho} \dot{v}}_2^2 + C \| \nabla u \|^2_2 \| \nabla v \|^2_2.
\end{align}
Hence, by \eqref{eq:A1H2},
\begin{align}\label{eq:upperboundphi}
    \int_0^t s \phi(s) \, ds & \lesssim  C^0_u  \|\sqrt{\rho_0} v_0\|_{2}^2  +C^0_u    \|\sqrt{\rho_0} u_0\|_{2}^2  \|\sqrt{\rho_0} v_0\|_{2}^2   \leq C^1_u   \|\sqrt{\rho_0} v_0\|_{2}^2.
\end{align}
Again by \eqref{eq:A1H2}, we obtain a lower bound for \( \phi \):
\begin{align}
     \phi(t) 
    & \geq  \| \sqrt{\rho} \dot{v} \|_2^2 - \frac{1}{4} \| \sqrt{\rho} \dot{v} \|_2^2 - C \| \nabla v \|_2^2  \| \nabla u \|_2^2 \gtrsim \frac{3}{4} \| \sqrt{\rho} \dot{v} \|_2^2 -C \| \nabla v \|_2^2  \| \nabla u \|_2^2
\end{align}
Then, by \eqref{eq:A1H2},
\begin{align}\label{eq:revbound}
    t^2 \phi(t) &    \gtrsim t^2 \| \sqrt{\rho} \dot{v} \|_2^2 - (C^0_u)^2 \norm{\sqrt{\rho_0} v_0}_2^2 \norm{\sqrt{\rho_0} u_0}_2^2   \geq  t^2 \| \sqrt{\rho} \dot{v} \|_2^2- C^1_u \norm{\sqrt{\rho_0} v_0}_2^2.
\end{align}
By inserting \eqref{eq:upperboundphi} and \eqref{eq:revbound} into \eqref{eq:coll3}, we obtain
\begin{align}\label{eq:estimateA2bad}
    t^2 \| \sqrt{\rho} \dot{v} \|_2^2 + \int_0^t s^2 \| \nabla \dot{v} \|_2^2 \, ds \lesssim C^1_u \norm{\sqrt{\rho_0} v_0}_2^2 + \int_0^t s^2 \| \nabla P \|_2  \| \nabla v \|_2 \| \nabla \dot{u} \|_2 \dd s.
\end{align}
By setting $u=v$ in \eqref{eq:estimateA2bad} and applying Young's inequality, we get
\begin{align}
    \int_0^t s^2 \| \nabla \dot{u} \|_2^2 \, ds 
    & \leq C C^1_u \norm{\sqrt{\rho_0} u_0}_2^2 + C \int_0^t s^2 \| \nabla P \|_2^2  \| \nabla u \|_2^2\dd s + \frac{1}{2}\int_0^t s^2 \| \nabla \dot{u} \|_2^2\dd s \\
    & \leq 2 C C^1_u \norm{\sqrt{\rho_0} u_0}_2^2 + \frac{1}{2}\int_0^t s^2 \| \nabla \dot{u} \|_2^2\dd s
\end{align}
and therefore
\begin{align}
     \int_0^t s^2 \| \nabla \dot{u} \|_2^2 \, ds \lesssim C^1_u \norm{\sqrt{\rho_0} u_0}_2^2.
\end{align}
By inserting the latter estimate into \eqref{eq:estimateA2bad}, we deduce that
\begin{align}\label{eq:estimateA2conclusion}
\begin{aligned}
    t^2 \| \sqrt{\rho} \dot{v} \|_2^2 &+ \frac{1}{2} \int_0^t s^2 \| \nabla \dot{v} \|_2^2 \, ds \lesssim C^1_u \norm{\sqrt{\rho_0} v_0}_2^2  + \left(  \int_0^t s^2 \| \nabla P \|^2_2  \| \nabla v \|_2^2 \dd s\right)^\frac{1}{2} \left(  \int_0^t s^2  \| \nabla \dot{u} \|^2_2 \dd s \right)^\frac{1}{2} \\ & \lesssim C^1_u \norm{\sqrt{\rho_0} v_0}_2^2  + \left( (C^0_u)^2  \norm{\sqrt{\rho_0} v_0}_2^4   \right)^\frac{1}{2} \left(  C^1_u \norm{\sqrt{\rho_0} u_0}_2^2 \right)^\frac{1}{2} \lesssim C^1_u \norm{\sqrt{\rho_0} v_0}_2^2   
\end{aligned}
\end{align}
which, thanks to Stokes estimates \eqref{eq:StokesEst}, completes the proof of \eqref{eq:A0} for $i=2$.

Recall that by \cref{prop:Linfty}
\begin{align}\label{eq:L2Lip}
    \|\nabla v(t)\|^2_{\infty} \lesssim \|\nabla^2 v(t)\|^2_{2} + \|\nabla v(t)\|_{2} \|\nabla \dot v(t)\|_{2}.
\end{align}
By combining \eqref{eq:L2Lip} with \eqref{eq:estimateA2conclusion}, we conclude by Hölder's inequality
\begin{align}
\begin{aligned}
    \int_0^t s\,\|\nabla v\|_{\infty}^2 \dd s
    \leq & \int_0^t s\,\|\nabla^2 v\|_2^2 \dd s + \left(\int_0^t \|\nabla v\|_2^2 \dd s\right)^{1/2} \left(\int_0^t s^2\,\|\nabla \dot v\|_2^2 \dd s\right)^{1/2}\\
    \lesssim & C_u^0\,\|\sqrt{\rho_0}\,v_0\|_2^2 + (C^1_u)^\frac{1}{2} \|\sqrt{\rho_0}\,v_0\|^2_2 \lesssim C_u^1\,\|\sqrt{\rho_0}\,v_0\|_2^2.
\end{aligned}
\end{align}
Since the right-hand side does not depend on $t$, we conclude
\begin{align}\label{eq:almostlipv}
     \int_0^\infty t\,\|\nabla v\|_{\infty}^2 \dd t
   \lesssim C_u^1\,\|\sqrt{\rho_0}\,v_0\|_2^2.
\end{align}
With the same arguments as above, we get
\begin{align}\label{eq:almostlipu}
\begin{aligned}
    \int_0^\infty t\,\|\nabla u\|_{\infty}^2 \dd t
    \lesssim & C_u^1\,\|\sqrt{\rho_0}\,u_0\|_2^2.
\end{aligned}
\end{align}

\subsubsection{\texorpdfstring{Estimates for $A_3$}{Estimates for A3}}

Similarly to what was done in the estimate for \( A_2 \), we commute the material derivative with the differential operators following \eqref{eq:grad} and \eqref{eq:lap} to obtain:
\begin{align}\label{eq:momA3}
    \rho \ddot{v} + \nabla \dot{P} - \Delta \dot{v} 
    = \nabla P\, \nabla u- \partial_k ((\partial_k u \cdot \nabla)v) - (\partial_k u\cdot \nabla) \partial_k v \eqqcolon F.
\end{align}
Using \cref{lem:Genstokes}, we deduce:
\begin{align}
    \|\nabla^2 \dot{v}, \nabla \dot{P}\|_2^2 
    &\leq \norm{\rho \ddot{v}}_2^2 + \norm{F}_2^2 + \norm{\nabla \dive \dot{v}}_2^2 \\
    &\leq \norm{\rho \ddot{v}}_2^2 + C\left( \norm{\nabla u}_\infty^2 \norm{\nabla P}_2^2 + \norm{\nabla u}_\infty^2 \norm{\nabla^2 v}_2^2 +  \norm{\nabla v}_\infty^2 \norm{\nabla^2 u}_2^2 \right)\\
    &\leq\norm{\sqrt{\rho} \ddot{v}}_2^2 + C \left( \norm{\nabla u}_\infty^2 \norm{\sqrt{\rho} \dot{v}}_2^2 +  \norm{\nabla v}_\infty^2 \norm{\nabla^2 u}_2^2 \right).
\end{align}
Here, we have used \eqref{eq:StokesEst} in the last inequality. Then by \eqref{eq:estimateA2conclusion}, \eqref{eq:almostlipv} and \eqref{eq:almostlipu}, we deduce that
\begin{align}
    \int_0^t s^3 & \Bigl(\norm{\nabla u}_\infty^2 \norm{\sqrt{\rho} \dot{v}}_2^2 + \norm{\nabla v}_\infty^2 \norm{\nabla^2 u}_2^2 \Bigr) \dd s \\
    &\lesssim \sup_{s\in(0,t)} \left(s^2 \norm{\sqrt{\rho} \dot{v}}_2^2 \right) \int_0^t s \norm{\nabla u}_\infty^2 \dd s + \sup_{s\in(0,t)} \left( s^2 \norm{\nabla^2 u }_2^2 \right) \int_0^t s \norm{\nabla v}_\infty^2 \dd s \\
    & \lesssim (C^1_u)^2 \norm{\sqrt{\rho_0} u_0}_2^2  \norm{\sqrt{\rho_0} v_0}_2^2.
\end{align}
Introducing
\begin{align}\label{eq:C2def}
    C^2_u \coloneqq  \exp\left(19 \norm{\sqrt{\rho_0} u_0}_2^2 \right)
\end{align}
we conclude that
\begin{align}\label{eq:stokesA3}
    \int_0^t s^3 \|\nabla^2 \dot{v}, \nabla \dot{P}\|_2^2 \dd s \leq \int_0^t s^3 \norm{\rho \ddot{v}}_2^2 \dd s +C  C^2_u \norm{\sqrt{\rho_0}v_0}_2^2.
\end{align}
Now we estimate $A_3$. By testing \eqref{eq:momA3} with $\ddot{v}$, we get
\begin{align}\label{eq:momdd}
    \norm{\sqrt{\rho}\ddot{v}}_2^2 +  \ip{D \nabla P, \ddot{v}} =  \ip{ D \Delta v, \ddot{v}}.
\end{align}
For the term involving the Laplacian, similarly to \eqref{eq:LapA2}
\begin{align}\label{eq:lapA3}
\begin{aligned}
     \ip{ D \Delta v, \ddot{v}} =  & \ip{  \Delta \dot{v},\partial_t\dot{v}} + \ip{\Delta \dot{v},u \cdot \nabla \dot{v}}
    + \int_{\R^2} \tr(\nabla v \nabla u (\nabla \ddot{v})^T) + \int_{\R^2} \tr(\nabla \ddot{v} \nabla u (\nabla v)^T)\\
    = & -\frac{1}{2} \frac{d}{d t}\|\nabla \dot{v}\|^2_2 + \ip{\Delta \dot{v},u \cdot \nabla \dot{v}}
    + \int_{\R^2} \tr(\nabla v \nabla u (\nabla \ddot{v})^T) + \int_{\R^2} \tr(\nabla \ddot{v} \nabla u (\nabla v)^T)\\
    =: & -\frac{1}{2} \frac{d}{d t}\|\nabla \dot{v}\|^2_2 + L_1(t) + L_2(t) + L_3(t).
     \end{aligned}
\end{align}
For $L_1$, we integrate by parts and use the incompressibility
\begin{align}
    L_1(t) = & - \ip{u_j \partial_j \partial_k \dot{v}_i, \partial_k  \dot{v}_i} - \ip{\partial_k u_j \partial_j \dot{v}_i, \partial_k  \dot{v}_i}=  0 - \int_{\R^2} \tr(\nabla \dot{v} \nabla u (\nabla \dot{v})^T).
\end{align}
Using \eqref{eq:stokesA3} we deduce that 
\begin{align}\label{eq:L1A3}
\begin{aligned}
    \int_0^t s^3 \abs{L_1(s)} \leq & \int_0^t s^3 \|\nabla u\|_2 \|\nabla \dot{v}\|_4^2 \leq C\int_0^t s^3 \|\nabla u\|_2^2 \|\nabla \dot{v}\|_2^2 + \frac{1}{8}\int_0^t s^3 \|\nabla^2 \dot{v}\|_2^2\\
    \leq & CC^0_u \norm{\sqrt{\rho_0} u_0}_2^2  C^1_u\norm{\sqrt{\rho_0} v_0}_2^2 + \frac{1}{8}\int_0^t s^3 \norm{\rho \ddot{v}}_2^2 + C C^2_u \norm{\sqrt{\rho_0}v_0}_2^2\\
    \leq & C  C^2_u \norm{\sqrt{\rho_0}v_0}_2^2 + \frac{1}{8}\int_0^t s^3 \norm{\rho \ddot{v}}_2^2.
\end{aligned}
\end{align}
For $L_2,$ we integrate by parts
\begin{align}
    L_2(t) & = \int\tr( \nabla v \nabla u D (\nabla^T \dot{v})) + \int  \tr( \nabla v \nabla u \nabla \dot{v}^T \nabla u)  \\ 
    & = \frac{d}{dt} \int\tr( \nabla v \nabla u  \nabla^T \dot{v})  \underbrace{-\int \tr(D(\nabla v \nabla u) \nabla^T \dot{v}) + \int  \tr( \nabla v \nabla u \nabla \dot{v}^T \nabla u)}_{L_{21}(t)}.
\end{align}
We use the product rule and \eqref{eq:grad} and obtain
\begin{align}
    \norm{D(\nabla v \nabla u)}_{4\slash 3} \leq & \norm{\nabla uD\nabla v }_{4\slash 3} + \norm{\nabla v D\nabla u}_{4\slash 3} \leq \norm{D\nabla v}_2 \norm{\nabla u}_4 + \norm{D\nabla u}_2 \norm{\nabla v}_4\\
    \leq & \norm{\nabla u}_4 \norm{\nabla \dot{v}}_2 + \norm{\nabla v}_4 \norm{\nabla u}^2_4 + \norm{\nabla v}_4 \norm{\nabla \dot{u}}_2.
\end{align}
Thus, we can estimate
\begin{align}\label{eq:L21A3}
    \begin{aligned}
        L_{21}(t) \leq & \norm{\nabla \dot{v}}_4 \norm{D(\nabla v \nabla u)}_{4\slash 3} + \norm{\nabla \dot{v}}_4\norm{\nabla v}_4 \norm{\nabla u}^2_4 \\
        \lesssim & \norm{\nabla \dot{v}}_4 \left( \norm{\nabla u}_4 \norm{\nabla \dot{v}}_2 + \norm{\nabla v}_4 \norm{\nabla u}^2_4 + \norm{\nabla v}_4 \norm{\nabla \dot{u}}_2 \right).
    \end{aligned}
\end{align}
By exploiting the same argument as in \eqref{eq:L1A3}, we infer that
\begin{align}
\int_0^t s^3 \abs{L_{21}(s)}  \dd s
\leq \frac{1}{8} \int_0^t s^3 \|\sqrt{\rho} \ddot{v}\|_2^2  \dd s
+ C C_u^2 \|\sqrt{\rho_0} v_0\|_2^2.
\end{align}
Similarly, we have:
\begin{align}
   L_3(t) 
   = \frac{d}{dt} \int \tr( \nabla u\, \nabla\dot{v}\, \nabla^T v) + L_{31}(t),
\end{align}
where $L_{31}$ satisfies
\begin{align}
    \int_0^t s^3 \abs{L_{31}(s)} \dd s \leq \frac{1}{8} \int_0^t s^3 \|\sqrt{\rho} \ddot{v}\|^2_2 + C C_u^2 \norm{\sqrt{\rho_0} v_0}^2_2.
\end{align}
To simplify the pressure term we integrate by parts
\begin{align}\label{eq:presA3}
    \ip{D \nabla P, \ddot{v}} = \frac{d}{d t}\ip{\nabla P, \ddot{v}} - \ip{\nabla P, \dddot{v}} = \frac{d}{d t}\ip{\nabla P, \ddot{v}} + \ip{P, \dive \dddot{v}}.
\end{align}
Then, by \cref{prop:divH},
\begin{align}
    \ip{P, \dive \dddot{v}} \lesssim & \frac{d}{d t} \ip{P ,\tr(3\nabla u \nabla \dot{v}+\nabla v \nabla \dot{u})}  + \|\nabla P\|_2 \|\nabla \dot{u}\|_2\|\nabla \dot{v}\|_2 + \|\nabla \dot{P}\|_2 \left(\|\nabla v\|_2 \|\nabla \dot{u}\|_2 + \|\nabla u\|_2 \|\nabla \dot{v}\|_2 \right)\\
    \eqqcolon & \frac{d}{d t} \ip{P ,\tr(3\nabla u \nabla \dot{v}+\nabla v \nabla \dot{u})} + Q_1(t).
\end{align}
For $Q_1,$ similarly to \eqref{eq:L1A3}, we have
\begin{align}
\int_0^t s^3 |Q_1(s)| \, ds
\leq {}& \sup_{s>0} \bigl(s \|\nabla P\|_2\bigr)
\left(\int_0^t s^2 \|\nabla \dot{u}\|_2^2 \, ds\right)^{1/2}
\left(\int_0^t s^2 \|\nabla \dot{v}\|_2^2 \, ds\right)^{1/2} \\
&+ \frac{1}{8} \int_0^t s^3 \|\nabla \dot{P}\|_2^2 \, ds + \sup_{s>0} \bigl(s \|\nabla v\|_2^2\bigr)
\int_0^t s^2 \|\nabla \dot{u}\|_2^2 \, ds + \sup_{s>0} \bigl(s \|\nabla u\|_2^2\bigr)
\int_0^t s^2 \|\nabla \dot{v}\|_2^2 \, ds \\
\leq {}& C C_u^2 \|\sqrt{\rho_0} v_0\|_2^2
+ \frac{1}{8} \int_0^t s^3 \|\sqrt{\rho} \ddot{v}\|_2^2 \, ds.
\end{align}
Now we define
\begin{align}
    \psi(t) \coloneqq \frac{1}{2}\|\nabla \dot{v}\|_2^2  - \int_{\R^2} \tr(\nabla v \nabla u \nabla^T \dot{v})
    - \int_{\R^2} \tr(\nabla u\, \nabla \dot{v}\, \nabla^T v)+ \ip{P, \tr(3\nabla u \nabla \dot{v}+\nabla v \nabla \dot{u})}
    + \ip{\nabla P, \ddot{v}}.
\end{align}
Finally, we may write
\begin{align}
    \frac{1}{2}\|\sqrt{\rho}\ddot{v}\|_2^2
    + \psi^\prime(t)
    \leq L_{21}(t) + L_{31}(t) + Q_1(t).
\end{align}
Multiplying by \( s^3 \), integrating in time and using the previous estimates
\begin{align}
\begin{aligned}
    \frac{1}{2} \int_0^t s^3 \|\sqrt{\rho}\ddot{v}\|_2^2\, ds
    + t^3  \psi(t)
    \leq & C \int_0^t s^2 \psi(s) \, ds + \int_0^t s^3 \left(L_{21}(s) + L_{31}(s) + Q_1(s) \right)\, ds \\
    \leq & C \int_0^t s^2 \psi(s) ds + C C_u^2 \|\sqrt{\rho_0}v_0\|_2^2
    + \frac{1}{4} \int_0^t s^3 \|\sqrt{\rho}\ddot{v}\|_2^2\, ds,
\end{aligned}
\end{align}
absorbing we get
\begin{align}\label{eq:A33}
\begin{aligned}
    \int_0^t s^3 \|\sqrt{\rho}\ddot{v}\|_2^2\, ds
    + t^3  \psi(t)
    \lesssim   \int_0^t s^2 \psi(s) ds +  C_u^2 \|\sqrt{\rho_0}v_0\|_2^2
\end{aligned}
\end{align}
Next, we estimate \( \psi(t) \) from above and below. For the upper bound, we combine Young's inequality, Ladyzhenskaya's inequality, and \eqref{eq:Qsecond} for the pressure term to obtain
\begin{align}
    \psi(t) 
    \lesssim & \|\nabla \dot{v}\|_2^2
    + \| \nabla P \|_2 \| \nabla u \|_2 \| \nabla \dot{v} \|_2  + \| \nabla P \|_2 \| \nabla v \|_2 \| \nabla \dot{u} \|_2 
    + \|\nabla \dot{v}\|_2 \|\nabla u\|_4 \|\nabla v\|_4.
\end{align}
Arguing as in \eqref{eq:estimateA2conclusion}, we infer that
\begin{align}\label{eq:uppsi}
    \int_0^t s^2 \psi(s)\, ds \lesssim C^1_u \|\sqrt{\rho_0} v_0\|_2^2.
\end{align}
For the lower bound, we use Young's inequality:
\begin{align}
\begin{aligned}
    \psi(t) 
    &\geq \frac{1}{2}\norm{\nabla \dot{v}}_2^2 - C\norm{\nabla P}_2\, \norm{\nabla u}_2\, \norm{\nabla \dot{v}}_2-C \| \nabla P \|_2 \| \nabla v \|_2 \| \nabla \dot{u} \|_2  - C \norm{\nabla \dot{v}}_2 \norm{\nabla u}_4 \norm{\nabla v}_4 \\   &\geq \frac{1}{4}\norm{\nabla \dot{v}}_2^2 - C\norm{\nabla P}^2_2\, \norm{\nabla u}^2_2\,-C \| \nabla P \|_2 \| \nabla v \|_2 \| \nabla \dot{u} \|_2  - C \norm{\nabla u}^2_4 \norm{\nabla v}^2_4,
\end{aligned}
\end{align}
and so
\begin{align}\label{eq:lowpsi}
\begin{aligned}
    t^3 \psi(t) 
    &\gtrsim  t^3 \norm{\nabla \dot{v}}_2^2 - t^3  \left( \norm{\nabla P}^2_2\, \norm{\nabla u}^2_2\, + \norm{\nabla u}^2_4 \norm{\nabla v}^2_4 \right) -t^3 \| \nabla P \|_2 \| \nabla v \|_2 \| \nabla \dot{u} \|_2 \\ &\gtrsim  t^3 \norm{\nabla \dot{v}}_2^2 - C^2_u\|\sqrt{\rho_0} v_0\|_2^2  -t^3 \| \nabla P \|_2 \| \nabla v \|_2 \| \nabla \dot{u} \|_2 
\end{aligned}
\end{align}
Inserting \eqref{eq:uppsi} and \eqref{eq:lowpsi} in \eqref{eq:A33} we get 
\begin{align}\label{eq:A33bis}
\begin{aligned}
    \int_0^t s^3 \|\sqrt{\rho}\ddot{v}\|_2^2\, ds
    + t^3  \norm{\nabla \dot{v}}_2^2
    \lesssim t^3  \| \nabla P \|_2 \| \nabla v \|_2 \| \nabla \dot{u} \|_2 +    C_u^2 \|\sqrt{\rho_0}v_0\|_2^2.
\end{aligned}
\end{align}
Assuming \(v=u\), we may apply Young's inequality to \eqref{eq:A33bis} and obtain
\begin{align}
\begin{aligned}
     t^3 \|\nabla \dot{u}\|_2^2
    \leq {}& C\, t^2 \|\nabla P\|_2^2 \, t \|\nabla u\|_2^2
    + \frac{1}{2} t^3 \|\nabla \dot{u}\|_2^2 + C C_u^2 \|\sqrt{\rho_0}u_0\|_2^2.
\end{aligned}
\end{align}
Absorbing the second term on the right-hand side and using the previous estimates for $\nabla u$ and $\nabla P$, we obtain
\begin{align}\label{eq:u3}
\begin{aligned}
     t^3 \|\nabla \dot{u}\|_2^2
    \lesssim C_u^2 \|\sqrt{\rho_0}u_0\|_2^2.
\end{aligned}
\end{align}
Finally, we may use \eqref{eq:u3} to conclude the estimate for \eqref{eq:A33bis}. Indeed,
\begin{align}\label{eq:last}
\begin{aligned}
    t^3 \| \nabla P \|_2 \| \nabla v \|_2 \| \nabla \dot{u} \|_2
    &\lesssim ( t \| \nabla P \|_2)\, (\sqrt{t} \| \nabla v \|_2)\, (t^{3/2} \| \nabla \dot{u} \|_2 ) \\
    &\lesssim \sqrt{C^1_u}\,\|\sqrt{\rho_0}v_0\|_2\; \sqrt{C^0_u}\,\|\sqrt{\rho_0}v_0\|_2 \sqrt{C^2_u}\;\|\sqrt{\rho_0}u_0\|_2 \lesssim C^2_u \|\sqrt{\rho_0}v_0\|_2^2.
\end{aligned}
\end{align}
Inserting \eqref{eq:last} into \eqref{eq:A33bis} and using \eqref{eq:stokesA3}, we obtain
\begin{align}
    \int_0^t s^3 \|\sqrt{\rho}\ddot{v}, \nabla^2 \dot{v}, \nabla \dot{P}\|_2^2\, ds
    + t^3 \|\nabla \dot{v}\|_2^2
    \lesssim C^2_u \|\sqrt{\rho_0} v_0\|_2^2.
\end{align}

This concludes the proof of \eqref{eq:A0} and \eqref{eq:A0u} for $i=3$.

\subsection{Auxiliary Results }

\subsubsection{Some useful identities}

\begin{lemma}\label{lem:divstr}
Assume that \( v,u \colon [0,\infty) \times \mathbb{R}^2 \to \mathbb{R}^2 \) are smooth divergence-free vector fields. Then the following identities hold:
\begin{align}
    \dive \dot{v} &= \tr(\nabla u \nabla v) \label{eq:H1dot} \\
    \dive \ddot{v} &= 2 \tr(\nabla u \nabla \dot{v}) +  \tr(\nabla \dot{u} \nabla v)\label{eq:H2dot} \\
    \dive \dddot{v} & = D(\tr( 3\nabla u \nabla \dot{v}+\nabla v \nabla \dot{u})) - \tr(\nabla \dot{u} \nabla \dot{v}) + \tr(\nabla \dot{u}) \tr(\nabla \dot{v}) \label{eq:H3dot_alt}
\end{align}
where we have used the notation $\dot{v} = D_u v$.
\end{lemma}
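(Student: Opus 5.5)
The three identities are purely algebraic consequences of the commutator formulas \eqref{eq:grad}--\eqref{eq:div} combined with the two-dimensional Cayley--Hamilton identity. The plan is to prove them in order, each one feeding into the next.

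The basic tool is \eqref{eq:div}, rewritten as
\begin{align}
\dive \dot w = D(\dive w) + \tr(\nabla u\,\nabla w),
\end{align}
valid for any smooth vector field $w$. The second ingredient is the following observation. Since $\dive u = 0$, the matrix $\nabla u$ is a traceless $2\times 2$ matrix. By Cayley--Hamilton, $(\nabla u)^2 = -\det(\nabla u)\,I$. Consequently:
\begin{itemize}
\item $\tr\big((\nabla u)^2 A\big) = -\det(\nabla u)\,\tr A$ for every $2\times 2$ matrix $A$;
\item by \eqref{eq:H1dot} applied with $v=u$, $\tr(\nabla \dot u) = \dive \dot u = \tr\big((\nabla u)^2\big) = -2\det(\nabla u)$.
\end{itemize}

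\textbf{Step 1: proof of \eqref{eq:H1dot}.} Take $w=v$ in the basic formula and use $\dive v=0$.

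\textbf{Step 2: proof of \eqref{eq:H2dot}.} Take $w=\dot v$. This gives
\begin{align}
\dive\ddot v = D\big(\tr(\nabla u\nabla v)\big) + \tr(\nabla u\nabla\dot v).
\end{align}
Expand the material derivative by the product rule and \eqref{eq:grad}:
\begin{align}
D\,\tr(\nabla u\nabla v) = \tr(\nabla\dot u\nabla v) + \tr(\nabla u\nabla \dot v) - \tr(\nabla u\nabla u\nabla v) - \tr(\nabla u\nabla v\nabla u).
\end{align}
By cyclicity of the trace, the last two terms together equal $-2\tr\big((\nabla u)^2\nabla v\big)$. By the Cayley--Hamilton remark this is $2\det(\nabla u)\,\tr(\nabla v) = 0$, since $\tr(\nabla v) = \dive v = 0$. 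Collecting terms gives exactly \eqref{eq:H2dot}.

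\textbf{Step 3: proof of \eqref{eq:H3dot_alt}.} Take $w=\ddot v$. This gives
\begin{align}
\dive\dddot v = D\big(2\tr(\nabla u\nabla\dot v) + \tr(\nabla\dot u\nabla v)\big) + \tr(\nabla u\nabla\ddot v).
\end{align}
To handle the last term, write $\nabla\ddot v = D(\nabla\dot v) + \nabla\dot v\,\nabla u$, which is \eqref{eq:grad} applied to $\dot v$. Moving $D$ off $\nabla\dot v$ by the product rule, with $D\nabla u = \nabla\dot u - (\nabla u)^2$, yields
\begin{align}
\tr(\nabla u\nabla\ddot v) = D\,\tr(\nabla u\nabla\dot v) - \tr(\nabla\dot u\nabla\dot v) + 2\tr\big((\nabla u)^2\nabla\dot v\big).
\end{align}
Now $\dot v$ is not divergence-free, so this time the Cayley--Hamilton remark gives a nonzero term:
\begin{align}
2\tr\big((\nabla u)^2\nabla\dot v\big) = -2\det(\nabla u)\,\tr(\nabla\dot v) = \tr(\nabla\dot u)\,\tr(\nabla\dot v).
\end{align}
Summing, the two $D$-terms combine into $D\big(\tr(3\nabla u\nabla\dot v + \nabla\dot u\nabla v)\big)$. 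Note that $\tr(\nabla\dot u\nabla v) = \tr(\nabla v\nabla\dot u)$, which matches the form written in the statement. This gives \eqref{eq:H3dot_alt}.

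The only delicate point is the sign and index bookkeeping in Step 3, namely:
\begin{itemize}
\item using the convention $(\nabla w)_{ij} = \partial_j w_i$ consistently with \eqref{eq:grad};
\item recognizing that the cubic term no longer vanishes once $\dot v$ replaces $v$, but instead produces the product $\tr(\nabla\dot u)\,\tr(\nabla\dot v)$ via $\tr(\nabla \dot u) = -2\det(\nabla u)$.
\end{itemize}
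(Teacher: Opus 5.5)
Your proposal is correct and follows the paper's proof essentially step by step: you apply \eqref{eq:div} successively to $v$, $\dot v$, $\ddot v$, expand by the product rule with \eqref{eq:grad}, and use the two-dimensional Cayley--Hamilton identity $(\nabla u)^2=-\det(\nabla u)\,\mathrm{Id}$, which makes the cubic term vanish in \eqref{eq:H2dot} (since $\dive v=0$) and turns it into $\tr(\nabla\dot u)\,\tr(\nabla\dot v)$ in \eqref{eq:H3dot_alt}. Your bookkeeping via $\tr(\nabla\dot u)=-2\det(\nabla u)$ is in fact the correct form of the paper's last step, which has a typo ($\det(\nabla\dot u)$ where $\det(\nabla u)$ is meant).
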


\begin{proof}
By incompressibility we compute:
\begin{align}
    \dive \dot{v} = \dive [ (u \cdot \nabla) v] = \partial_i v_k \partial_k v_i = \tr ( \nabla u \nabla v),
\end{align}
which proves \eqref{eq:H1dot}. 

For \eqref{eq:H2dot}, we apply \eqref{eq:div} to interchange \( D \) and \( \dive \):
\begin{align}
    \dive \ddot{v} = D(\dive \dot{v}) + \tr(\nabla u \nabla \dot{v}).
\end{align}
Now, using the product rule and \eqref{eq:grad}, we compute:
\begin{align}
    D(\dive \dot{v}) & = D \tr( \nabla u \nabla v) \\
    & =  \tr( D(\nabla u) \nabla v)+ \tr(\nabla u  D(\nabla v) ) \\
    & =  \tr( \nabla \dot{u} \nabla v) - 2\tr( \nabla u \nabla u \nabla v) + \tr(\nabla u  \nabla \dot{v} )
\end{align}
Since both \( \nabla u \) and \( \nabla v \) are trace-free \( 2 \times 2 \) matrices, we recall:
\begin{equation} \label{eq:A^2}
\operatorname{adj}(\nabla u) = -\nabla u 
\quad \Rightarrow \quad 
\nabla u \nabla u = - \det(\nabla u) \operatorname{Id} 
\quad \Rightarrow \quad  
\tr( \nabla u \nabla u \nabla v) = 0
\end{equation}
and thus we obtain \eqref{eq:H2dot}.

For the third derivative, apply again \eqref{eq:div} and \eqref{eq:grad}:
\begin{align}\label{eq:dddot1}
    \dive \dddot{v} &= D(\dive \ddot{v}) + \tr(\nabla u \nabla \ddot{v}).
\end{align}
For the second term, apply the product rule:
\begin{align}
    \tr(\nabla u \nabla \ddot{v}) & = \tr( \nabla u D(\nabla \dot{v})) + \tr(\nabla u \nabla u \nabla \dot{v}) \\
    & = D \tr( \nabla u \nabla \dot{v}) - \tr(D (\nabla u ) \nabla \dot{v}) + \tr(\nabla u \nabla u \nabla \dot{v}) \\
    & = D \tr( \nabla u \nabla \dot{v}) - \tr(\nabla \dot{u}  \nabla \dot{v}) + 2\tr(\nabla u \nabla u \nabla \dot{v}).
\end{align}
To handle the last term, recall that, due to \eqref{eq:A^2},
\begin{align}
\tr(\nabla \dot{u}) = \dive \dot{u} = \tr(\nabla u \nabla u)
\end{align}
(from \eqref{eq:H1dot}). Thus, we have:
\begin{align}
    2 \tr(\nabla \dot{v} \nabla u \nabla u)
    = -2 \det(\nabla \dot{u}) \, \tr (\nabla \dot{v})= - 2 \tr \big(\nabla \dot{v} \det(\nabla \dot{u}) \operatorname{Id}\big)
    =\tr (\nabla \dot{v}) \tr (\nabla \dot{u})
\end{align}
which completes the proof.
\end{proof}

\subsubsection{Hardy and BMO}\label{subsub: Hardy}

\paragraph{Functions with bounded mean oscillations.}Let \(\mathcal{B}\) be the family of all balls in \(\mathbb{R}^d\).  A locally integrable function \(f\) belongs to \(\mathrm{BMO}(\mathbb{R}^d)\) if
\begin{align}
\|f\|_{\mathrm{BMO}}
\;=\;
\sup_{B\in\mathcal{B}}
\fint_{B}\bigl|f(x)-\fint_{B}f(y)\,\mathrm{d}y\bigr|\,
\mathrm{d}x
\;<\;\infty.
\end{align}
Identifying functions that differ by constants, \(\mathrm{BMO}(\mathbb{R}^d)\) becomes a Banach space with the norm above.

In \(\mathbb{R}^2\), Poincaré’s inequality on balls yields, for every \(B=B(x_{0},R)\) and all \(f\in H^{1}(B)\),
\begin{align}
\bigl\|\,f-\fint_{B}f\bigr\|_{L^{2}(B)}
\;\le\;
C\,R\;\|\nabla f\|_{L^{2}(B)},
\end{align}
whence, by Hölder’s inequality,
\begin{align}
\fint_{B}\bigl|f(x)-\fint_{B}f(y)\,\mathrm{d}y\bigr|\,
\mathrm{d}x
\;\le\;
\frac{C}{R}
\bigl\|\,f-\fint_{B}f\bigr\|_{L^{2}(B)}.
\end{align}
Together, these estimates imply the following:

\begin{lemma}\label{lem:embbmo}
There exists \(C>0\) such that for every \(f\in H^{1}_{\mathrm{loc}}(\mathbb{R}^2)\) with \(\nabla f\in L^{2}(\mathbb{R}^2)\),
\begin{align}
\|f\|_{\mathrm{BMO}(\mathbb{R}^2)}
\;\le\;
C\,\|\nabla f\|_{L^{2}(\mathbb{R}^2)}.
\end{align}
\end{lemma}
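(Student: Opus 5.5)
The plan is to combine the two displayed inequalities just before the statement, but the oscillation has to be controlled at every scale, so the balls are split into small ones and large ones. Fix $f\in H^1_{\loc}(\R^2)$ with $\nabla f\in L^2(\R^2)$ and an arbitrary ball $B=B(x_0,R)$. Since $f\in H^1(B)$, the Poincaré inequality on balls applies. By scaling, the Poincaré constant on $B(x_0,R)$ is $C R$, with $C$ the constant on the unit ball. This follows from the change of variables $x=x_0+Ry$, which maps $B(0,1)$ to $B$ and transforms $L^2$ norms and gradients in a way compatible with the factor $R$.

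The key steps, in order, are these. First, Hölder (Cauchy--Schwarz) on $B$ gives
\begin{align}
\fint_B\Bigl|f-\fint_B f\Bigr|\,\mathrm{d}x\le |B|^{-1/2}\Bigl\|f-\fint_B f\Bigr\|_{L^2(B)}=\frac{1}{\sqrt{\pi}\,R}\Bigl\|f-\fint_B f\Bigr\|_{L^2(B)}.
\end{align}
Second, inserting the Poincaré bound $\|f-\fint_B f\|_{L^2(B)}\le C R\|\nabla f\|_{L^2(B)}$ makes the factors of $R$ cancel exactly. This cancellation is the two-dimensional scaling miracle: $|B|^{-1/2}\sim R^{-1}$ matches the Poincaré factor $R$. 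The result is
\begin{align}
\fint_B\Bigl|f-\fint_B f\Bigr|\,\mathrm{d}x\le \frac{C}{\sqrt\pi}\|\nabla f\|_{L^2(B)}\le \frac{C}{\sqrt\pi}\|\nabla f\|_{L^2(\R^2)}.
\end{align}
Third, the bound is uniform in $x_0$ and $R$, so taking the supremum over all $B\in\mathcal{B}$ yields $\|f\|_{\mathrm{BMO}}\le C'\|\nabla f\|_{L^2(\R^2)}$. Constants are immaterial under the identification modulo constants, so the statement follows.

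There is essentially no obstacle. The only points to check are that $f\in H^1_{\loc}$ makes $f\in L^1(B)$ with a well-defined mean, so the BMO quantity makes sense, and that the Poincaré constant scales linearly in $R$ with a dimensionless constant, which the rescaling argument justifies.
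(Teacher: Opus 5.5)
Your proposal is correct and follows the same argument as the paper: Poincaré on $B(x_0,R)$ with constant $CR$, combined with Cauchy--Schwarz giving the factor $|B|^{-1/2}\sim R^{-1}$, so that the oscillation on every ball is bounded by $C\|\nabla f\|_{L^2(\mathbb{R}^2)}$. The split into small and large balls announced in your opening sentence is never used and is not needed, since the bound is already uniform in $R$ for every ball.
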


\paragraph{Hardy Space.}
The \textit{Hardy space} $\mathcal{H}^1(\mathbb{R}^d)$ is a subspace of $L^1(\mathbb{R}^d)$ consisting of all functions $f \in L^1$ whose Riesz transforms also belong to $L^1$:
\begin{align}
    \mathcal{H}^1(\R^d) := \left\{ f \in L^1(\R^d) : R_j f \in L^1(\R^d), \ 1 \leq j \leq d \right\},
\end{align}
where $R_j = \partial_j (-\Delta)^{1/2}$.  
For the norm, alternative definitions, and equivalence between them, we refer to~\cite{FeffermanStein1972}.

We recall the following famous result.

\begin{theorem}[Fefferman–Stein]\label{thm:FeffStein}
    The dual of the Hardy space \( \mathcal{H}^1(\mathbb{R}^d) \) is isomorphic to \( \mathrm{BMO}(\mathbb{R}^d) \), with equivalent norms.
\end{theorem}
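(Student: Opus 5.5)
Since this is the classical theorem of Fefferman and Stein~\cite{FeffermanStein1972}, I would follow the standard two-inclusion argument, using the Riesz-transform definition of $\mathcal{H}^1(\R^d)$ recalled above. The first inclusion is $\mathrm{BMO}\hookrightarrow(\mathcal{H}^1)^*$: every $b\in\mathrm{BMO}$ should define, via $\ell_b(f)=\int_{\R^d} f\,b$, a bounded functional with $\|\ell_b\|\lesssim\|b\|_{\mathrm{BMO}}$. The second inclusion is the converse: every bounded functional arises this way from some $b$ with $\|b\|_{\mathrm{BMO}}\lesssim\|\ell\|$. Since $\mathrm{BMO}$ is taken modulo constants, and $\int f=0$ for every $f\in\mathcal{H}^1$ (because $\widehat{R_jf}$ continuous forces $\hat f(0)=0$), the pairing is insensitive to constants, as it must be.

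\textbf{Inclusion $\mathrm{BMO}\subset(\mathcal{H}^1)^*$.}
\begin{itemize}
\item I would invoke the atomic decomposition of $\mathcal{H}^1$ (Coifman--Latter). Every $f\in\mathcal{H}^1$ can be written as $f=\sum_k\lambda_k a_k$ with $\sum_k|\lambda_k|\lesssim\|f\|_{\mathcal{H}^1}$. Each $a_k$ is supported in a ball $B_k$, has zero mean, and satisfies $\|a_k\|_{L^\infty}\le|B_k|^{-1}$.
\item For a single atom, the zero mean gives
$\bigl|\int a\,b\bigr|=\bigl|\int_B a\,(b-\fint_B b)\bigr|\le\|b\|_{\mathrm{BMO}}$.
\item Consequently $|\ell_b(f)|\lesssim\|b\|_{\mathrm{BMO}}\|f\|_{\mathcal{H}^1}$ on the dense subspace $\mathcal{H}^1_{\mathrm{fin}}$ of finite atomic combinations. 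Then $\ell_b$ extends uniquely by continuity to all of $\mathcal{H}^1$.
\end{itemize}
I expect this step to be the main obstacle. The integral $\int f b$ need not converge absolutely for general $f\in\mathcal{H}^1$ and $b\in\mathrm{BMO}$. One must also check that the extension agrees with $\int fb$ whenever the latter makes sense, e.g.\ for $b$ bounded, via truncation $b_N=\max(-N,\min(b,N))$ with $\|b_N\|_{\mathrm{BMO}}\lesssim\|b\|_{\mathrm{BMO}}$. This is exactly where the atomic decomposition, or alternatively Fefferman's Carleson-measure argument through Poisson extensions, is indispensable.

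\textbf{Inclusion $(\mathcal{H}^1)^*\subset\mathrm{BMO}$.}
\begin{itemize}
\item The map $f\mapsto(f,R_1f,\dots,R_df)$ embeds $\mathcal{H}^1$ isometrically, for the norm $\|f\|_1+\sum_j\|R_jf\|_1$, as a closed subspace of $(L^1)^{d+1}$.
\item Given $\ell\in(\mathcal{H}^1)^*$, the Hahn--Banach theorem extends it to $(L^1)^{d+1}$ with the same norm. The extension is represented by $(g_0,\dots,g_d)\in(L^\infty)^{d+1}$, so that
$\ell(f)=\int f g_0+\sum_j\int (R_jf)\,g_j$.
\item Using $R_j^*=-R_j$, at least for $f$ in the dense class of Schwartz functions with compactly supported Fourier transform vanishing near the origin, this becomes $\ell(f)=\int f\,b$ with $b=g_0-\sum_jR_jg_j$.
\item To conclude, I need $R_j:L^\infty\to\mathrm{BMO}$ boundedly. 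This is the standard Calder\'on--Zygmund estimate: for a ball $B$, split $g=g\mathbf{1}_{2B}+g\mathbf{1}_{(2B)^c}$. The first piece is controlled by $L^2$-boundedness of $R_j$. The second is controlled by the kernel smoothness $|K(x-y)-K(x_0-y)|\lesssim|x-x_0|/|x_0-y|^{d+1}$, after subtracting the constant $R_j(g\mathbf{1}_{(2B)^c})(x_0)$.
\item Hence $\|b\|_{\mathrm{BMO}}\lesssim\sum_j\|g_j\|_\infty\lesssim\|\ell\|$.
\end{itemize}

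Combining the two inclusions gives the isomorphism with equivalent norms. In the paper, only the inequality $|\int fb|\lesssim\|f\|_{\mathcal{H}^1}\|b\|_{\mathrm{BMO}}$ (together with Lemma~\ref{lem:embbmo}) will actually be used.
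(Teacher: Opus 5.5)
The paper does not prove this statement. It quotes it from \cite{FeffermanStein1972}, and the only consequence used later is the pairing bound $|\int fb|\lesssim\|f\|_{\mathcal{H}^1}\|b\|_{\mathrm{BMO}}$, in the corollary following \cref{thm:divcurl}. Your outline is the standard textbook proof, with Coifman--Latter atoms for $\mathrm{BMO}\subset(\mathcal{H}^1)^*$ and Hahn--Banach on the Riesz embedding plus $R_j:L^\infty\to\mathrm{BMO}$ for the converse. The architecture is right, but the direction the paper actually needs is not justified as written.

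\textbf{The gap in the first inclusion.} Your sentence ``consequently $|\ell_b(f)|\lesssim\|b\|_{\mathrm{BMO}}\|f\|_{\mathcal{H}^1}$ on $\mathcal{H}^1_{\mathrm{fin}}$'' does not follow from the single-atom estimate.
A finite decomposition $f=\sum_{k\le K}\lambda_k a_k$ only gives $|\int fb|\le\|b\|_{\mathrm{BMO}}\sum_k|\lambda_k|$. Finite decompositions need not satisfy $\sum_k|\lambda_k|\lesssim\|f\|_{\mathcal{H}^1}$; for $L^\infty$-atoms, Bownik exhibited linear functionals on $\mathcal{H}^1_{\mathrm{fin}}$ that are uniformly bounded on atoms yet unbounded in the $\mathcal{H}^1$ norm.
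The infinite Coifman--Latter decomposition does satisfy $\sum_k|\lambda_k|\lesssim\|f\|_{\mathcal{H}^1}$, but it converges only in $\mathcal{H}^1\subset L^1$. So writing $\int fb=\sum_k\lambda_k\int a_k b$ for unbounded $b$ presupposes the continuity you are trying to prove.

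The truncation you mention is exactly the cure, but it must be used to \emph{prove} the bound, not just to check consistency afterwards:
\begin{enumerate}
\item For $b\in L^\infty$, $L^1$ convergence of the atomic series justifies the exchange, giving $|\int fb|\le C\|b\|_{\mathrm{BMO}}\|f\|_{\mathcal{H}^1}$ for every $f\in\mathcal{H}^1$.
\item Apply this to $b_N$, using $\|b_N\|_{\mathrm{BMO}}\le 2\|b\|_{\mathrm{BMO}}$ (truncation is $1$-Lipschitz).
\item Let $N\to\infty$ by dominated convergence. This is legitimate because $f\in\mathcal{H}^1_{\mathrm{fin}}$ is bounded with compact support and $b\in L^1_{\loc}$.
\end{enumerate}
This yields the bound on $\mathcal{H}^1_{\mathrm{fin}}$, and only then do you extend by density.

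\textbf{Two smaller omissions in the converse.} First, for $g\in L^\infty$ the singular integral defining $R_j g$ diverges logarithmically at infinity, so $R_j(g\mathbf{1}_{(2B)^c})(x_0)$ is not a finite number. You must \emph{define} $R_j g$ modulo constants, for example through the kernel $K(x-y)-K(x_0-y)$ on $(2B)^c$. The identity $\int (R_j f)\,g_j=-\int f\,R_j g_j$ then has to be verified using $\int f=0$.
It is most convenient to verify it for $f\in\mathcal{H}^1_{\mathrm{fin}}$. Treat $g_j\mathbf{1}_{2B^*}$, with $B^*\supset\supp f$, by $L^2$ duality, and the remaining part by Fubini with the difference kernel. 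This also spares you checking that the continuous extension $\ell_b$ coincides with $\int fb$ on your Fourier-defined dense class, whose nonzero elements are never compactly supported.

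Second, equivalence of norms requires $\|b\|_{\mathrm{BMO}}\lesssim\|\ell_b\|$ for \emph{every} $b$, hence injectivity of $b\mapsto\ell_b$. This is quick: if $\int fb=0$ for all $f\in\mathcal{H}^1_{\mathrm{fin}}$, testing with bounded mean-zero functions supported in a ball shows $b$ is a.e.\ constant on every ball, hence constant.

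With these repairs your sketch becomes a complete proof of the cited theorem.
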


We also need the following important result, see \cite{CoifmanLionsMeyerSemmes1993}.

\begin{theorem}\label{thm:divcurl}
    Let \( v, z \in L^2(\mathbb{R}^2; \mathbb{R}^2) \) be two vector fields such that \(\dive v = 0\) and \(\operatorname{curl} z = 0\). Then, we have that $v \cdot z \in \mathcal{H}^1(\mathbb{R}^2)$ and the estimate
    \begin{align}
         \|u \cdot v\|_{\mathcal{H}^1(\mathbb{R}^2)} \leq C \|u\|_{L^2(\mathbb{R}^2)}\|v\|_{L^2(\mathbb{R}^2)},
    \end{align}
    where $C$ is a constant independent of $u$ and $v.$
\end{theorem}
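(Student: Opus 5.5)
The statement to prove is the div--curl lemma (\cref{thm:divcurl}). It is a classical result of Coifman--Lions--Meyer--Semmes. I would prove it through the maximal function characterization of $\mathcal{H}^1(\R^2)$, which is equivalent to the Riesz-transform definition recalled above (see \cite{FeffermanStein1972}).

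Fix a radial bump $\phi\in C_c^\infty(B_1)$ with $\int\phi=1$ and write $\phi_t=t^{-2}\phi(\cdot/t)$. The goal is the pointwise bound
\begin{align}
\sup_{t>0}\abs{(\phi_t * (v\cdot z))(x)} \lesssim \bigl(M(|v|^{p})(x)\bigr)^{1/p}\bigl(M(|z|^{q})(x)\bigr)^{1/q},
\end{align}
where $M$ is the Hardy--Littlewood maximal function and $1/p+1/q = 1+1/2$ with $p,q<2$. Taking $L^1$ norms, applying Hölder, and using boundedness of $M$ on $L^{2/p}$ and $L^{2/q}$ then gives
\begin{align}
\|v\cdot z\|_{\mathcal{H}^1}\lesssim \|v\|_{L^2}\|z\|_{L^2}.
\end{align}
Note the statement's display has a typo: it writes $u$ for $z$.

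For the pointwise bound, fix $x$ and $t$, and let $B=B(x,t)$. Since $\operatorname{curl} z=0$ on $\R^2$, write $z=\nabla \pi$ with $\pi\in \dot H^1$. Because $\dive v=0$,
\begin{align}
\int \phi_t(x-y)\, v(y)\cdot\nabla\pi(y)\,\dd y = -\int \nabla_y\bigl[\phi_t(x-y)\bigr]\cdot v(y)\,\bigl(\pi(y)-\pi_B\bigr)\,\dd y,
\end{align}
where $\pi_B$ is the average of $\pi$ over $B$. Since $|\nabla\phi_t|\lesssim t^{-3}\mathbf 1_B$, this is bounded by
\begin{align}
t^{-1}\Bigl(\fint_B|v|^{p}\Bigr)^{1/p}\Bigl(\fint_B|\pi-\pi_B|^{p'}\Bigr)^{1/p'}.
\end{align}
The Sobolev--Poincaré inequality in two dimensions gives
\begin{align}
\Bigl(\fint_B|\pi-\pi_B|^{p'}\Bigr)^{1/p'}\lesssim t\Bigl(\fint_B|\nabla\pi|^{q}\Bigr)^{1/q},
\end{align}
valid when $1/q-1/2 = 1/p'$, that is, $1/p+1/q=3/2$. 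Taking for instance $p=q=4/3$, the factor $t$ cancels and we obtain the claimed maximal bound. The exponents $p,q<2$ are exactly what makes $M$ bounded on $L^{2/p}$ and $L^{2/q}$.

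The main obstacle is the integration by parts: the cancellation from $\dive v=0$ must be used before estimating, and it needs a density argument. I would first prove the estimate for smooth compactly supported $v$ and $\pi$, where the identity is exact. Then I would approximate $v$ in $L^2$ by smooth divergence-free fields, for example via frequency truncation, and $z$ by smooth gradients, and pass to the limit using the uniform bound together with the completeness of $\mathcal{H}^1$. The remaining ingredients are the $\mathcal H^1$ characterization and the maximal inequality, which I would quote. The Riesz-transform characterization in the statement then follows from the Fefferman--Stein equivalence.
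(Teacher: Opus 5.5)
Your proposal is correct and is exactly the classical Coifman--Lions--Meyer--Semmes maximal-function proof: you use $\dive v=0$ to subtract $\pi_B$, then Hölder plus Sobolev--Poincaré with $1/p+1/q=3/2$ (e.g.\ $p=q=4/3$), then boundedness of $M$ on $L^{3/2}$; the paper does not reprove this result but imports it by citing \cite{CoifmanLionsMeyerSemmes1993}. Your density step can be dropped, since $\phi_t(x-\cdot)(\pi-\pi_B)$ is a compactly supported $H^1$ function whose gradient is already annihilated by a distributionally divergence-free $v\in L^2$, so the integration by parts holds directly.
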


\cref{thm:FeffStein} and \cref{thm:divcurl} allow us to show more accurate estimates of certain products which cannot be deduced from Hölder's inequality. We obtain the following.

\begin{corollary}
    Let \( v, z \in L^2(\mathbb{R}^2; \mathbb{R}^2) \) be two vector fields such that \(\dive v = 0\) and \(\operatorname{curl} z = 0\) and let $\varphi \in \dot{H}^1(\R^2).$ Then we have the estimate
    \begin{align}\label{eq:h1bmo}
        \int_{\mathbb{R}^2} \varphi(x) \, v(x) \cdot z(x) \, \dd x
        \lesssim \norm{ \nabla \varphi}_{L^2(\mathbb{R}^2)} \, \norm{ v}_{L^2(\mathbb{R}^2)} \, \norm{ z}_{L^2(\mathbb{R}^2)}.
    \end{align}
    Moreover, if $v \in \dot{H}^1(\mathbb{R}^2; \mathbb{R}^2)$, then we have $\det(\nabla v) \in \mathcal{H}^1(\R^2)$ and
    \begin{align}\label{eq:h2bmo}
        \int_{\mathbb{R}^2} \varphi(x) \det(\nabla v(x)) \, \dd x 
        \lesssim \norm{\nabla \varphi}_{L^2(\mathbb{R}^2)} \, \norm{\nabla v}_{L^2(\mathbb{R}^2)}^2.
    \end{align}
\end{corollary}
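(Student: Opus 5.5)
The plan is to derive both estimates from duality, combining \cref{thm:FeffStein} with \cref{thm:divcurl} and \cref{lem:embbmo}.

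For \eqref{eq:h1bmo}, I first approximate $\varphi\in\dot H^1(\R^2)$ by smooth compactly supported functions. For such $\varphi$ the integral is an honest pairing of $\varphi$ with the $L^1$ function $v\cdot z$. By \cref{thm:divcurl}, $v\cdot z\in\mathcal H^1(\R^2)$, and
\begin{align}
\|v\cdot z\|_{\mathcal H^1}\lesssim\|v\|_{L^2}\|z\|_{L^2}.
\end{align}
By \cref{thm:FeffStein}, the pairing $\int\varphi\, f$ with $f\in\mathcal H^1$ is bounded by $C\|\varphi\|_{\mathrm{BMO}}\|f\|_{\mathcal H^1}$. This is legitimate for bounded compactly supported $\varphi$, where the pairing coincides with the duality bracket. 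Since $\mathcal H^1$ functions have zero mean, adding constants to $\varphi$ does not change the integral, which is consistent with BMO being taken modulo constants. Finally, \cref{lem:embbmo} gives $\|\varphi\|_{\mathrm{BMO}}\lesssim\|\nabla\varphi\|_{L^2}$. Chaining these bounds proves the estimate for smooth $\varphi$. It then extends by density to all of $\dot H^1$, with the left-hand side understood as the $\mathcal H^1$--BMO pairing.

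For \eqref{eq:h2bmo}, I write the determinant as a div--curl product. With $v=(v_1,v_2)$,
\begin{align}
\det(\nabla v)=\partial_1 v_1\,\partial_2 v_2-\partial_2 v_1\,\partial_1 v_2 = V\cdot Z,\qquad V:=(\partial_2 v_2,-\partial_1 v_2),\quad Z:=\nabla v_1.
\end{align}
Here $\dive V=\partial_1\partial_2 v_2-\partial_2\partial_1 v_2=0$ and $\curl Z=0$ in the distributional sense. Moreover $\|V\|_{L^2},\|Z\|_{L^2}\le\|\nabla v\|_{L^2}$. Hence \cref{thm:divcurl} yields $\det(\nabla v)\in\mathcal H^1$ with
\begin{align}
\|\det(\nabla v)\|_{\mathcal H^1}\lesssim\|\nabla v\|_{L^2}^2.
\end{align}
Applying the first part with $V,Z$ in place of $v,z$ then gives \eqref{eq:h2bmo}.

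The main (mild) obstacle is justifying the pairing for a general $\varphi\in\dot H^1$, which need not be integrable against an arbitrary $L^1$ function. I would handle this by density of $C_c^\infty$ in $\dot H^1$ modulo constants, together with continuity of the BMO--$\mathcal H^1$ bracket.
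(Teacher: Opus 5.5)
Your proposal is correct and is essentially the paper's proof. The first estimate comes from combining \cref{thm:divcurl}, \cref{lem:embbmo} and Fefferman--Stein duality, and the second from writing $\det(\nabla v)$ as the div--curl product $(\partial_2 v_2,-\partial_1 v_2)\cdot\nabla v_1$; your density argument justifying the $\mathcal H^1$--BMO pairing for general $\varphi\in\dot H^1(\R^2)$ spells out a point the paper leaves implicit.
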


\begin{proof}
    Since $v \cdot z \in \mathcal{H}^1(\R^2)$ by \cref{thm:divcurl} and since $\varphi \in \operatorname{BMO}(\R^2)$ by \cref{lem:embbmo}, the estimate in \eqref{eq:h1bmo} follows from \cref{thm:FeffStein}. The estimate in \eqref{eq:h2bmo} follows from the identity
    \begin{align}
        \det( \nabla \varphi) = \nabla^\perp \varphi_1 \cdot \nabla \varphi_2
    \end{align}
    and from \eqref{eq:h1bmo}.
\end{proof}

Combining \eqref{eq:h1bmo}, \eqref{eq:h2bmo} and \cref{lem:divstr}, we prove the following proposition.

\begin{proposition}\label{prop:divH}
Assume \( v,u : \R_+ \times \mathbb{R}^2 \to \mathbb{R}^2 \) are sufficiently regular divergence-free vector fields. Then, for every \(\varphi \in C^\infty_c([0,\infty) \times \mathbb{R}^2)\), we have
\begin{align}
    \int_{\mathbb{R}^2} \varphi \, \dive \dot{v} \lesssim & \norm{\nabla \varphi}_{2} \, \norm{\nabla v}_{2} \, \norm{\nabla u}_{2}, \\
    \int_{\mathbb{R}^2} \varphi \, \dive \ddot{v} \lesssim & \norm{\nabla \varphi}_{2} \left( \norm{\nabla \dot{v}}_{2} \, \norm{\nabla u}_{2} + \norm{\nabla v}_{2} \, \norm{\nabla \dot{u}}_{2} \right),\\
    \int_{\mathbb{R}^2} \varphi \dive \dddot{v} \lesssim & \frac{\dd}{\dd t} \ip{\varphi ,\tr(3\nabla u \nabla \dot{v}+\nabla v \nabla \dot{u})} + \|\nabla \varphi\|_2 \|\nabla \dot{u}\|_2\|\nabla \dot{v}\|_2\\
    & + \|\nabla \dot{\varphi}\|_2 \left(\|\nabla v\|_2 \|\nabla \dot{u}\|_2 + \|\nabla u\|_2 \|\nabla \dot{v}\|_2 \right)
\end{align}
\end{proposition}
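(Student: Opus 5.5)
The plan is to insert the identities of \cref{lem:divstr} into each integral and then apply the $\mathcal{H}^1$--BMO duality from \eqref{eq:h1bmo} and \eqref{eq:h2bmo}. The structural fact used throughout is this: for divergence-free $a,b$, every entry combination $\partial_i a_k \partial_k b_i$ in $\tr(\nabla a \nabla b)$ is a div--curl product. Indeed, writing
\begin{align}
\tr(\nabla a\nabla b)=\sum_{k}\nabla a_k\cdot \partial_k b ,
\end{align}
the field $\nabla a_k$ is curl-free and $\partial_k b$ is divergence-free. Equivalently, by polarization, $\tr(\nabla a\nabla b)$ is a combination of determinants $\det(\nabla(a\pm b))$, since $\tr(A^2)=-2\det A$ for trace-free $2\times 2$ matrices. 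Hence $\tr(\nabla a\nabla b)\in\mathcal{H}^1$ with norm $\lesssim\|\nabla a\|_2\|\nabla b\|_2$.

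\emph{First and second estimates.} By \eqref{eq:H1dot}, $\dive\dot v=\tr(\nabla u\nabla v)$. Pairing with $\varphi$ and using \eqref{eq:h1bmo} together with \cref{lem:embbmo} gives $\|\nabla\varphi\|_2\|\nabla u\|_2\|\nabla v\|_2$. For the second estimate, \eqref{eq:H2dot} gives $\dive\ddot v=2\tr(\nabla u\nabla\dot v)+\tr(\nabla\dot u\nabla v)$. Here $\dot u,\dot v$ are \emph{not} divergence-free, so the div--curl structure must be checked more carefully. In $\nabla u_k\cdot\partial_k\dot v$ I would instead pair the divergence-free factor with a gradient: write $\tr(\nabla u\nabla \dot v)=\sum_i \partial_i u\cdot\nabla\dot v_i$. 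Here $\partial_i u$ is divergence-free and $\nabla \dot v_i$ is curl-free, so \cref{thm:divcurl} applies with only $\nabla\dot v\in L^2$ required. The term $\tr(\nabla\dot u\nabla v)$ is treated the same way, with $\partial_i v$ divergence-free and $\nabla\dot u_i$ a gradient. This yields the second bound.

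\emph{Third estimate.} Start from \eqref{eq:H3dot_alt}. The term $-\tr(\nabla\dot u\nabla\dot v)+\tr(\nabla\dot u)\tr(\nabla\dot v)$ equals $-2\det$-type expressions. Concretely, for general $2\times2$ matrices,
\begin{align}
\tr(A)\tr(B)-\tr(AB)=\det(A+B)-\det A-\det B ,
\end{align}
and $\det\nabla w\in\mathcal{H}^1$ for any $w\in\dot H^1$ by \eqref{eq:h2bmo}, without any divergence condition. So this piece is bounded by $\|\nabla\varphi\|_2\|\nabla\dot u\|_2\|\nabla\dot v\|_2$.

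The remaining term is $\int\varphi\, D(g)$ with $g=\tr(3\nabla u\nabla\dot v+\nabla v\nabla\dot u)$. Using $\partial_t$ plus transport and $\dive u=0$,
\begin{align}
\int\varphi\,Dg=\frac{d}{dt}\int\varphi g-\int(\partial_t\varphi+u\cdot\nabla\varphi)g=\frac{d}{dt}\ip{\varphi,g}-\int\dot\varphi\, g .
\end{align}
The last integral is handled exactly as in the second estimate, giving $\|\nabla\dot\varphi\|_2(\|\nabla u\|_2\|\nabla\dot v\|_2+\|\nabla v\|_2\|\nabla\dot u\|_2)$.

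The main obstacle is making sure each bilinear term really has one divergence-free factor and one gradient factor, or is a determinant, since $\dot u,\dot v$ are not solenoidal. If the row/column pairing above fails for some term, the fallback is the determinant identity, which needs no divergence condition at all. A secondary issue is that $\varphi$ is only in $\dot H^1$, not bounded. This is handled by BMO duality, and integrability of $u\,g\,\varphi$ for the integration by parts is justified by the assumed smoothness and compact support of $\varphi$.
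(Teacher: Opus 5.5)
Your proposal is correct and follows the paper's proof essentially step by step: the identities of \cref{lem:divstr}, the div--curl pairing $\tr(\nabla v\nabla\dot u)=\sum_i\partial_i v\cdot\nabla\dot u_i$ as in \eqref{eq:tracedivcurl}, the integration by parts $\int\varphi\,Dg=\frac{\dd}{\dd t}\ip{\varphi,g}-\int\dot\varphi\,g$, and the $2\times 2$ identity $\tr A\,\tr B-\tr(AB)=\det(A+B)-\det A-\det B$. The only step to tighten is the last one: applying \eqref{eq:h2bmo} to the three determinants separately gives a sum of squares $\norm{\nabla\varphi}_2\bigl(\norm{\nabla\dot u}_2^2+\norm{\nabla\dot v}_2^2\bigr)$, not the product $\norm{\nabla\dot u}_2\norm{\nabla\dot v}_2$. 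To fix this, either use bilinearity and rescale $(\dot u,\dot v)\mapsto(\lambda\dot u,\lambda^{-1}\dot v)$, or, as the paper does in \eqref{eq:noquadraticterms}, write the combination directly as $-\nabla\dot v_1\cdot\nabla^\perp\dot u_2-\nabla\dot u_1\cdot\nabla^\perp\dot v_2$ and apply \eqref{eq:h1bmo}.
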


\begin{proof}
The first two estimates follow by combining \eqref{eq:H1dot} and \eqref{eq:H2dot} with \eqref{eq:h1bmo}, and observing that
\begin{align}\label{eq:tracedivcurl}
    \tr(\nabla v \nabla \dot{u}) = \partial_i v_j \, \partial_j \dot{u}_i = \partial_i v \cdot \nabla \dot{u}_i,
\end{align}
where \(\dive (\partial_i v) = 0\) (due to divergence-free condition) and \(\curl (\nabla u_i) = 0\), with similar reasoning applying to \(\tr(\nabla v \nabla u)\) and \(\tr(\nabla \dot{v} \nabla u)\).

With \eqref{eq:H3dot_alt}, we can write
\begin{align}
    \int_{\mathbb{R}^2} \varphi \dive \dddot{v} = & \int_{\mathbb{R}^2} \varphi D(\tr( 3\nabla u \nabla \dot{v}+\nabla v \nabla \dot{u}))\\
    & + \int_{\mathbb{R}^2} \varphi \det (\nabla (\dot{v} +\dot{u})) - \det (\nabla \dot{v})  - \det (\nabla \dot{u})
    \eqqcolon I(t) + II(t).
\end{align}
For $I$, observe that
\begin{align}
    I(t) = & \frac{\dd}{\dd t} \ip{\varphi ,\tr(3\nabla u \nabla \dot{v}+\nabla v \nabla \dot{u})} - \ip{\dot{\varphi} ,\tr(3\nabla u \nabla \dot{v}+\nabla v \nabla \dot{u})}\\
    \lesssim & \frac{\dd}{\dd t} \ip{\varphi ,\tr(3\nabla u \nabla \dot{v}+\nabla v \nabla \dot{u})} + \|\nabla \dot{\varphi}\|_2 \left(\|\nabla v\|_2 \|\nabla \dot{u}\|_2 + \|\nabla u\|_2 \|\nabla \dot{v}\|_2 \right).
\end{align}
Note that we used \eqref{eq:tracedivcurl} to estimate the second term in the last step. In order to use estimate $II$, we use the following identity
\begin{align}\label{eq:noquadraticterms}
    \det\big(\nabla (\dot{v} + \dot{u})\big) - \det(\nabla \dot{v}) - \det(\nabla \dot{u}) = - \nabla  \dot{v}_1 \cdot \nabla^\perp \dot{u}_2 -  \nabla \dot{u}_1 \cdot \nabla^\perp  \dot{v}_2.
\end{align}
Using \eqref{eq:h1bmo} and \eqref{eq:noquadraticterms}, we can estimate $II$ as
\begin{align}
    II(t) \lesssim & \|\nabla \varphi\|_2 \|\nabla \dot{u}\|_2\|\nabla \dot{v}\|_2,
\end{align}
which completes the proof.
\end{proof}

\subsubsection{Linear Stokes estimates}

We start with the following estimate for the Stokes operator, see \cite{Danchin2025}.

\begin{lemma}\label{lem:Genstokes}
    Let $p \in (1,\infty)$. Let $(u,P)$ be a solution of the following Stokes equation
    \begin{align}
        \begin{cases}
            - \Delta u + \nabla P = f,\\
            \dive(u) = g,\\
            u \to 0,\quad \abs{x} \to \infty,
        \end{cases}
    \end{align}
    where $f \in L^p(\R^2)$ and $g \in \dot{W}^{1,p}(\R^2).$ Then we have that
    \begin{align}
        \| \nabla^2 u, \nabla P\|_{L^2(\R^2)} \lesssim  \|f\|_{L^2(\R^2)} + \| \nabla g \|_{L^p(\R^2)}.
    \end{align}
\end{lemma}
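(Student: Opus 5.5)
I would reduce to the divergence-free Stokes problem by removing a gradient field, and then use Calder\'on--Zygmund estimates for the resulting Fourier multipliers. As stated, the lemma bounds the $L^2$ norm on the left by the $L^p$ norm of $\nabla g$ on the right, and these cannot be compared for $p\neq 2$. I would therefore prove the homogeneous $L^p$ version
\begin{align}
\|\nabla^2 u,\nabla P\|_{L^p(\R^2)} \lesssim_p \|f\|_{L^p(\R^2)}+\|\nabla g\|_{L^p(\R^2)},
\end{align}
whose case $p=2$ is exactly what is used in the paper. Indeed, in \eqref{eq:stokesA3} the data are $f=F-\rho\ddot v$ and $g=\dive\dot v$, both measured in $L^2$.

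The first step is to set $w:=\nabla(-\Delta)^{-1}(-g)$, that is, $w=\nabla\Delta^{-1}g$, so that $\dive w=g$. Then $\nabla^2 w=\nabla\nabla\nabla\Delta^{-1}g=\mathcal R\mathcal R\,\nabla g$, where $\mathcal R$ denotes Riesz transforms. Since Riesz transforms are bounded on $L^p$ for $p\in(1,\infty)$, this gives $\|\nabla^2 w\|_{L^p}\lesssim\|\nabla g\|_{L^p}$. Moreover $\Delta w=\nabla g$, so $z:=u-w$ solves
\begin{align}
-\Delta z+\nabla P=f+\nabla g, \qquad \dive z=0 .
\end{align}

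The second step handles $z$ and $P$. Taking the divergence gives $\Delta P=\dive f+\Delta g$, so $\nabla P=\nabla\Delta^{-1}\dive f+\nabla g$. The first term is $\mathcal R\mathcal R f$, hence $\|\nabla P\|_{L^p}\lesssim\|f\|_{L^p}+\|\nabla g\|_{L^p}$. Then $-\Delta z=\mathbb P f$, where $\mathbb P$ is the Leray projector, itself a matrix of Riesz-transform products. Consequently $\nabla^2 z=\nabla^2(-\Delta)^{-1}\mathbb P f$ is again a Calder\'on--Zygmund operator applied to $f$, which gives $\|\nabla^2 z\|_{L^p}\lesssim\|f\|_{L^p}$. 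Adding the bounds for $z$ and $w$ yields the claim.

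The main point of care is uniqueness and the meaning of the inverse operators. The multiplier formulas identify $\nabla^2 u$ and $\nabla P$ only modulo harmonic functions, that is, modulo polynomials in $u$ and constants in $P$. I would use the decay assumption $u\to 0$ at infinity, together with $\nabla P\in L^p$ (or membership in $L^p+L^\infty$), to rule out nontrivial harmonic remainders via Liouville's theorem. Concretely, the difference between $u$ and the constructed solution is divergence-free and satisfies $-\Delta\tilde u+\nabla\tilde P=0$, so $\Delta\tilde P=0$ and $\Delta^2\tilde u=0$. Combined with vanishing at infinity, this forces $\tilde u=0$ and makes $\tilde P$ constant. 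The application in \eqref{eq:stokesA3} only needs $p=2$, where all these steps reduce to Plancherel.
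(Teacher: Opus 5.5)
Your proof is correct. It is the standard Calder\'on--Zygmund (Riesz-transform) argument behind the estimate that the paper quotes from \cite{Danchin2025} without proof. You are also right that the lemma must be read with $L^p$ norms on both sides: as printed it fails for $p\neq 2$ already when $f=0$, by scaling, and $p=2$ is the case used in \eqref{eq:stokesA3}.

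The one step to tidy is the Liouville argument. In two dimensions your constructed solution is determined by $\nabla^2 u_c\in L^p$ only up to an affine function, and in general it cannot be normalized to vanish at infinity: for $g=0$ and $\int f\neq 0$ it grows like $\log|x|$ (the Stokes paradox). So from $u\to 0$ and the $o(|x|^2)$ averaged growth of $u_c$ you should conclude that the polynomial $\tilde u$ is affine, not zero. That gives $\nabla^2\tilde u=0$ and $\nabla\tilde P=\Delta\tilde u=0$, which is exactly what the estimate needs, and no a priori assumption on $\nabla P$ is required.
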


We finish this section with an important $L^\infty$-estimates for the gradient of the velocity of solutions \eqref{eq:LSs}.
The case $v=u$ was shown in \cite{HaoShaoWeiZhang2026}, the extension to the linear case is straightforward.

\begin{proposition}\label{prop:Linfty}
    Let $(\rho,u)$ be an immediately strong solution of \eqref{eq:INS} and assume that $v$ is an immediately strong solution of \eqref{eq:LSs}. Then, for every $t>0,$
    \begin{align}
        \norm{\nabla v(t)}_{L^\infty(\R^2)} \lesssim \|\sqrt{\rho}\dot{v}(t)\|_{L^2(\R^2)} + \|\nabla v(t)\|_{L^2(\R^2)}^{1/2} \|\nabla\dot{v}(t)\|_{L^2(\R^2)}^{1/2}.
    \end{align}
\end{proposition}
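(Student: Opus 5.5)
We will prove the bound on $\|\nabla v\|_{L^\infty}$ by splitting $v$ according to the Stokes structure of the momentum equation in \eqref{eq:LSs}. We rewrite it as the stationary Stokes system
\begin{align}
-\Delta v + \nabla P = -\rho \dot v, \qquad \dive v = 0 .
\end{align}
Since $\dive \dot v = \tr(\nabla u \nabla v)$ does not vanish by \cref{lem:divstr}, we first apply the Leray projector $\mathbb{P}$ and write $\nabla v = -\nabla (-\Delta)^{-1}\mathbb{P}(\rho \dot v)$. Writing $\rho\dot v = \dot v - (1-\rho)\dot v$, we split
\begin{align}
\nabla v = \underbrace{-\nabla(-\Delta)^{-1}\mathbb{P}(\rho \dot v)}_{\text{full}}.
\end{align}
Instead of a literal splitting, we plan to estimate this single operator $T := \nabla(-\Delta)^{-1}\mathbb{P}$, which has order $-1$, via the following interpolation. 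First, Gagliardo--Nirenberg in two dimensions gives $\|\nabla v\|_\infty \lesssim \|\nabla v\|_2^{1/2}\|\nabla^2 v\|_{L^4}^{1/2}$ (up to endpoint issues, which we handle with Besov $\dot B^{0}_{\infty,1}$ interpolation). Next, by Stokes estimates in $L^4$ (\cref{lem:Genstokes} with $p=4$, $g=0$), we have $\|\nabla^2 v\|_4 \lesssim \|\rho\dot v\|_4$. This leaves the task of bounding $\|\rho \dot v\|_{L^4}$.

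For $\|\rho\dot v\|_{L^4}$ we use $0\le\rho\le 1$ and Gagliardo--Nirenberg. Here the obstacle is that $\dot v\notin L^2$, since only $\sqrt\rho\,\dot v$ is controlled. To get around this, we use a Ladyzhenskaya-type inequality that requires only a weighted $L^2$ bound and the gradient:
\begin{align}
\|\rho \dot v\|_{L^4}^2 \lesssim \|\sqrt\rho \dot v\|_2\big(\|\sqrt\rho\dot v\|_2+\|\nabla \dot v\|_2\big)
\end{align}
on sets of fixed measure. This is plausible because $\rho$ is transported, so $|\{\rho>0\}|$ stays fixed, but it is delicate in general. A cleaner route is to avoid $L^4$ of $\rho\dot v$ entirely and work with the $\dot B^0_{\infty,1}$ estimate directly. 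For this we decompose $\nabla v$ into low and high frequencies at a threshold $2^{N}$. Low frequencies are bounded by Bernstein: $\sum_{j\le N} 2^{j}\|\Delta_j \nabla v\|_2 \lesssim 2^{N}\|\nabla v\|_2$, and more sharply by $\|\nabla^2 v\|_2 \lesssim \|\sqrt\rho\dot v\|_2$ after summing a geometric series with logarithmic loss. High frequencies are bounded by $\sum_{j>N}2^{j}\|\Delta_j\nabla v\|_2\lesssim 2^{-N}\|\nabla^3 v\|_2$-type quantities.

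The key identity for the high-frequency part is to differentiate the Stokes system: $\nabla^2 v$ is the image of $\rho\dot v$ under a zero-order operator, and hence $\nabla^3 v$ is the image of $\nabla(\rho\dot v)$. Since $\rho$ is only $L^\infty$, we cannot take $\nabla(\rho\dot v)$ directly. Instead we control the high frequencies of $\nabla v$ through $\dot v$ using the identity $\Delta v - \nabla P = \rho\dot v$ combined with $\|\Delta_j(\rho \dot v)\|_2\le\|\Delta_j \dot v\|_2+\|\Delta_j((1-\rho)\dot v)\|_2$. This is the main obstacle, and it is where the argument of \cite{HaoShaoWeiZhang2026} enters. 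We expect to use that $(1-\rho)\dot v$ is supported away from the fluid and that $\Delta v = \nabla P$ there, so $v$ is harmonic-like in the vacuum region. Combining this with the div-curl structure of \cref{prop:divH} controls the vacuum contribution by $\|\nabla v\|_2^{1/2}\|\nabla\dot v\|_2^{1/2}$.

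Optimizing in $N$ then balances $\|\nabla v\|_2$ against $\|\nabla\dot v\|_2$ and produces the geometric mean. The term $\|\sqrt\rho\dot v\|_2$ collects the mid-frequency contribution together with the Stokes $\dot H^2$ bound \eqref{eq:StokesEst}. Since all constants depend only on the Stokes and Bernstein bounds, the linear case $v\neq u$ follows verbatim from the case $v=u$.
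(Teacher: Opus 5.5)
Your proposal has a genuine gap: the step that carries all the weight is only announced, and the steps around it fail.

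\textbf{The interpolation steps.} The inequality you start from, $\|\nabla v\|_{L^\infty}\lesssim\|\nabla v\|_{L^2}^{1/2}\|\nabla^2 v\|_{L^4}^{1/2}$, is false in $\R^2$. Under $f\mapsto f(\lambda\cdot)$ its right-hand side picks up a factor $\lambda^{-1/4}$; the correct exponents are $1/3$ and $2/3$. Closing that route would then require $\|\rho\dot v\|_{L^4}^2\lesssim\|\sqrt\rho\dot v\|_{L^2}\|\nabla\dot v\|_{L^2}$. This fails because $\dot v$ may carry a large constant mode. Your ``fixed measure'' variant has a constant depending on $|\{\rho>0\}|$. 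That measure is infinite for the densities $\rho\ge\varepsilon>0$ of Section~2, which is exactly where the proposition is used, with constants that must be uniform in $\varepsilon$.

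\textbf{The Littlewood--Paley route.} It has the same defect in another form. Bounding $\sum_{j>N}\|\Delta_j\nabla v\|_{L^\infty}\lesssim\sum_{j>N}\|\Delta_j(\rho\dot v)\|_{L^2}$ by $2^{-N}\|\nabla\dot v\|_{L^2}$ needs positive regularity of $\rho\dot v$, and a merely bounded $\rho$ destroys it. Your low-frequency bound also carries a logarithmic loss, which is incompatible with a log-free conclusion.

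\textbf{The proposed fix.} The mechanism you invoke for the missing piece does not supply it. Stokes-harmonicity of $v$ in the open vacuum set gives nothing when there is no vacuum, which is again the case $\rho\ge\varepsilon$. Moreover, \cref{prop:divH} controls $\dive\dot v$ and $\dive\ddot v$, not $\nabla v$ in $L^\infty$. The paper gives no argument of its own here; it imports the case $v=u$ from \cite{HaoShaoWeiZhang2026}. So deferring the key step to that reference with a guessed mechanism leaves the statement unproved.

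\textbf{What would work.} The missing idea is a physical-space localization at the scale $\ell=(\|\nabla v\|_{L^2}/\|\nabla\dot v\|_{L^2})^{1/2}$. It uses only $\Delta v-\nabla P=\rho\dot v$, $\dive v=0$ and $0\le\rho\le1$; no relation between $v$ and $\dot v$ and no transport is needed.

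Fix $x_0$ and set $B=B_{2\ell}(x_0)$ and $c=\fint_B\dot v$. Let $w$ be the Stokeslet potential of $\mathbf{1}_B\rho\dot v$. Then
$|\nabla w(x)|\lesssim |c|\int_B\rho(y)|x-y|^{-1}\,dy+\int_B|\dot v(y)-c|\,|x-y|^{-1}\,dy$.
\begin{itemize}
\item Since $0\le\rho\le1$, the bathtub principle gives $\int_B\rho(y)|x-y|^{-1}\,dy\lesssim(\int_B\rho)^{1/2}$.
\item Poincar\'e on $B$ then gives $|c|(\int_B\rho)^{1/2}\le\|\sqrt\rho\dot v\|_{L^2}+\|\dot v-c\|_{L^2(B)}\lesssim\|\sqrt\rho\dot v\|_{L^2}+\ell\|\nabla\dot v\|_{L^2}$.
\item The second integral is $\lesssim\ell^{1/2}\|\dot v-c\|_{L^4(B)}\lesssim\ell\|\nabla\dot v\|_{L^2}$, by H\"older and Sobolev--Poincar\'e.
\end{itemize}
The remainder $v-w$ solves the homogeneous Stokes system in $B$. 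Interior estimates therefore give $|\nabla(v-w)(x_0)|\lesssim\ell^{-1}\|\nabla(v-w)\|_{L^2(B)}\lesssim\ell^{-1}\|\nabla v\|_{L^2}+\|\nabla w\|_{L^\infty}$.

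Altogether $|\nabla v(x_0)|\lesssim\ell^{-1}\|\nabla v\|_{L^2}+\|\sqrt\rho\dot v\|_{L^2}+\ell\|\nabla\dot v\|_{L^2}$, and the choice of $\ell$ gives the claim. As you anticipated, this holds verbatim for the linearized $v$. The rough piece $c\rho\mathbf{1}_B$ is controlled by its size, an $L^{2,1}$-type bound that uses $\rho\le1$, rather than by any regularity. That is exactly what a frequency decomposition cannot see.
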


\section{Properties of immediately strong solutions}

\subsection{The continuity equation}

In this subsection, we recall some basic properties of bounded solutions to the continuity equation. 
Let $T>0$ be fixed and let $D\subseteq\mathbb{R}^2$ be a bounded Lipschitz domain. Let
\begin{equation}\label{eq:trExDP}
u \in L^1\big((0,T);L^1_{\loc}(\mathbb{R}^2)\big),
\qquad \dive u(t,\cdot)=0 \quad \text{for a.e. } t\in(0,T).
\end{equation}
We consider the continuity equation on $(0,T)\times\mathbb{R}^2$
\begin{equation}\label{eq:conteq}
\begin{cases}
\partial_t \rho + \dive(\rho u)=0,\\
\rho(0,\cdot)=\rho_0=\mathbf{1}_D.
\end{cases}
\end{equation}

\begin{definition}[Weak and renormalized solutions]\label{def:weakrensol}
We say that $\rho \in L^\infty((0,T)\times\mathbb{R}^2)$ is a weak solution of \eqref{eq:conteq} (under the assumption \eqref{eq:trExDP}) if for every $\varphi \in C_c^\infty([0,T)\times\mathbb{R}^2)$,
\begin{align}
\int_0^T \!\!\int_{\mathbb{R}^2} \rho\,\partial_t \varphi \,\dd x\,\dd t
+ \int_0^T \!\!\int_{\mathbb{R}^2} \rho\, u \cdot \nabla \varphi \,\dd x\,\dd t
= - \int_{\mathbb{R}^2} \rho_0\, \varphi(0,\cdot)\,\dd x.
\end{align}
We say that $\rho$ is a renormalized solution of \eqref{eq:conteq} if, for every $\beta \in C^1(\mathbb{R})$, the function $\beta(\rho)$ is a weak solution of \eqref{eq:conteq} with initial datum $\beta(\rho_0)$.
\end{definition}

Under assumption \eqref{eq:trExDP}, the continuity equation \eqref{eq:conteq} admits weak solutions, which are renormalized under additional regularity assumptions on the velocity field.

\begin{proposition}\label{prop:wellposedtrans}
Assume that $u$ satisfies \eqref{eq:trExDP}. Then there exists a weak solution $\rho$ of \eqref{eq:conteq} with initial datum $\rho_0$.  If, in addition, for some $q\in[1,\infty)$,
\begin{equation}\label{ass:velocity}
u \in L^1\big((0,T);W^{1,q}_{\loc}(\mathbb{R}^2)\big),
\qquad
\frac{u}{1+|x|} \in L^1\big((0,T);L^1(\mathbb{R}^2)+L^\infty(\mathbb{R}^2)\big),
\end{equation}
then there exists a unique renormalized solution $\rho \in L^\infty((0,T)\times\mathbb{R}^2)$. Moreover, up to a modification on a negligible set of times, one has
\begin{align}
\rho \in C_{w^\ast}\big([0,T];L^\infty(\mathbb{R}^2)\big)
\quad\text{and}\quad
\rho \in C\big([0,T];L^p(\mathbb{R}^2)\big)
\ \text{for every } p\in[1,\infty).
\end{align}

\end{proposition}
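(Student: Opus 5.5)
Existence will come from compactness and uniqueness from the DiPerna--Lions theory, which applies here because the velocity is divergence-free.

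\emph{Existence.} Mollify the field, $u_\eps := u * \eta_\eps$ in space. For a.e.\ $t$, $u_\eps(t,\cdot)$ is smooth, divergence-free, and locally bounded in $x$ with an $L^1$ bound in time, so it generates a measure-preserving flow $X_\eps$. Set $\rho_\eps(t) = \mathbf{1}_D\circ X_\eps(t)^{-1}$. Then $0\le \rho_\eps\le 1$, and $\rho_\eps$ satisfies the weak formulation with field $u_\eps$. By Banach--Alaoglu we extract $\rho_\eps \rightharpoonup^\ast \rho$ in $L^\infty((0,T)\times\R^2)$. We pass to the limit in $\int\!\!\int \rho_\eps u_\eps\cdot\nabla\varphi$ using weak-$\ast$ against strong convergence, since $u_\eps \to u$ in $L^1((0,T);L^1_{\loc})$ and $\nabla\varphi$ has compact support. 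This gives a weak solution under \eqref{eq:trExDP} alone.

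\emph{Renormalization and uniqueness.} Under \eqref{ass:velocity}, take any bounded weak solution $\rho$ and regularize it, $\rho_\delta = \rho*\eta_\delta$. Then
\begin{align}
\partial_t\rho_\delta + u\cdot\nabla\rho_\delta = r_\delta := u\cdot\nabla\rho_\delta - (u\cdot\nabla\rho)*\eta_\delta,
\end{align}
and the DiPerna--Lions commutator lemma gives $r_\delta\to0$ in $L^1((0,T);L^1_{\loc})$. Here we use $u\in L^1W^{1,q}_{\loc}$ together with $\rho\in L^\infty \subset L^{q'}_{\loc}$ and $\dive u = 0$. Multiplying by $\beta'(\rho_\delta)$ and letting $\delta\to0$ shows that every bounded weak solution is renormalized.

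For uniqueness, let $\rho_1,\rho_2$ be two bounded solutions with the same datum. Their difference $\sigma$ is renormalized with $\sigma_0 = 0$. Apply the equation to $\beta(\sigma)$, with $\beta$ a smoothed version of $|\cdot|$, tested against a cutoff $\chi_R(x) = \chi(x/R)$. This yields
\begin{align}
\int \beta(\sigma(t))\chi_R \le \int_0^t\!\!\int \beta(\sigma)\,|u|\,|\nabla\chi_R|.
\end{align}
The growth condition $u/(1+|x|)\in L^1(L^1+L^\infty)$ makes the right-hand side vanish as $R\to\infty$, since $|\nabla\chi_R|\lesssim (1+|x|)^{-1}\mathbf{1}_{R<|x|<2R}$. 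One point needs care: $\beta(\sigma)$ must be integrable, otherwise the $L^\infty$ part of $u/(1+|x|)$ gives no decay. I handle this through the $L^1$ structure, namely $\sigma\in L^\infty(L^1)$. I would first show $\int\beta(\rho_i)\le |D|$ for a renormalized solution by the same cutoff argument applied to $\beta(\rho)$ with $\beta(0)=0$, bootstrapping with $R\to\infty$. \textbf{I expect this truncation-at-infinity step to be the main obstacle.} The alternative is to invoke directly the DiPerna--Lions/Ambrosio uniqueness theorem under exactly the growth hypothesis \eqref{ass:velocity}.

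\emph{Time continuity.} Renormalization with $\beta(s)=s^2$ gives $\frac{d}{dt}\int\rho^2\chi_R = \int\rho^2 u\cdot\nabla\chi_R$, so $t\mapsto\|\rho(t)\|_{L^2}$ is continuous, and in fact constant, equal to $|D|$. Weak-$\ast$ continuity in $L^\infty$ follows from the equation itself: $t\mapsto\int\rho\varphi$ is absolutely continuous for each test function. The two facts together give $C([0,T];L^2)$. This upgrades to $L^p$ for $p\in[1,\infty)$ because $0\le\rho\le1$; for $p<2$ one uses the conserved mass $\int\rho = |D|$ and tightness coming from the same cutoff estimate.
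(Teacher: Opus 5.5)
Your proposal is correct and follows the same route as the paper, whose proof simply cites DiPerna--Lions for existence, renormalization and uniqueness (and a separate reference for time continuity); you are unpacking that theory, including the useful step of first proving $\rho\in L^\infty((0,T);L^1(\mathbb{R}^2))$ for $\rho_0=\mathbf{1}_D$ so that the $L^\infty$ part of $u/(1+|x|)$ can be absorbed. Two details need tightening. First, under \eqref{eq:trExDP} alone the mollified field $u_\eps$ need not generate a global flow, since trajectories can reach infinity in finite time; either define $\rho_\eps$ through the partial flow or approximate by compactly supported divergence-free fields (e.g.\ by cutting off a stream function). Second, the integrability step closes cleanly with the weight $\phi_R(x)=(1+|x|^2/R^2)^{-2}$: it satisfies $|\nabla\phi_R|\lesssim\phi_R/(1+|x|)$ uniformly in $R\ge1$, so Gronwall bounds $\int\beta(\rho(t))\phi_R$ independently of $R$, whereas a compactly supported cutoff on its own just returns the annulus term unless you iterate over dyadic radii.
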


\begin{proof}
The existence of weak solutions is proved in \cite[Proposition~II.4]{DipLi89}. The time continuity properties are pointed out in \cite{Skondric2025}. The uniqueness and renormalization results follow from \cite{DipLi89}.
\end{proof}

\subsection{\texorpdfstring{$A$-property and $L^p$ strip estimate}{A-property and Lp strip estimate}}
In this subsection we investigate additional properties of weak solutions to the continuity equation \eqref{eq:conteq} under the assumption \eqref{eq:trExDP}, and assuming further regularity on the solution--velocity pair $(\rho,u)$:
\begin{equation}\label{eq:trExDP2}
 \norm{\sqrt{\rho}\,u }_{L^\infty\big((0,T);L^2(\mathbb{R}^2)\big)} \eqqcolon M < \infty.
\end{equation}
Throughout, the regularity of $D$ is crucial. Since $D$ is a bounded Lipschitz domain, it enjoys the so-called $(A)$--property.

\begin{definition}
    A set $D \subseteq \R^2 $ has the $(A)$--property if there exists a constant $A>0$ such that for every $r>0$ and every
$x\in\mathbb{R}^2$ satisfying
\begin{align}
d := \operatorname{dist}(x,D) < r,
\end{align}
one has
\begin{equation}\label{eq:Aproperty}
|B_r(x_0)\cap D| \;\ge\; \min \{A\,(r-d)^2, \abs{D}\}.
\end{equation}
\end{definition}

The $(A)$--property is used to track the size of the mixing zone between mass and vacuum. For this reason for $t>0$ and $R>0$ a key role is played by the following time-dependent neighbourhood of $D$:
\begin{align}
D_{t,R} := D + B_{R\sqrt{t}}(0).
\end{align}

\begin{lemma}\label{lem:nondegdensity}
Let $u$ satisfy \eqref{eq:trExDP} and let $\rho$ be a weak solution of  \eqref{eq:conteq} on $(0,T)$. Assume that the pair $(\rho,u)$ satisfies  \eqref{eq:trExDP2}, and fix $R>0$.  Then there exists a positive constant
\begin{align}
L = L(M,R,T,D),
\end{align}
which is increasing with respect to both $M$ and $R$, such that for every $t \in (0,T)$ and every $x \in D_{t,R}$ one has
\begin{equation}\label{eq:chAprop2}
\int_{B_{\sqrt{t}\,L}(x)} \rho(t,x)\,dx 
\;\gtrsim_{T,D}\; t.
\end{equation}
\end{lemma}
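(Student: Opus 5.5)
The plan is a duality argument: test the continuity equation against a radial cutoff centred at $x$, and use the energy bound \eqref{eq:trExDP2} to control how much mass can leave a large ball in time $t$.

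Fix $t\in(0,T)$ and $x\in D_{t,R}$, so $d:=\operatorname{dist}(x,D)\le R\sqrt t$. Let $L$ be a large parameter. Choose a radial Lipschitz cutoff $\varphi$ with $\varphi\equiv 1$ on $B_{L\sqrt t/2}(x)$, $\supp\varphi\subseteq B_{L\sqrt t}(x)$, $0\le\varphi\le1$ and $|\nabla\varphi|\lesssim (L\sqrt t)^{-1}$. Test the weak formulation of \eqref{eq:conteq} with the time-independent $\varphi$ on $[0,t]$. This uses the weak-$*$ continuity of $\rho$ from \cref{prop:wellposedtrans}, or a standard approximation of the time cutoff, and gives
\begin{align}
\int \rho(t)\varphi \;=\; \int_D\varphi + \int_0^t\!\!\int \rho\,u\cdot\nabla\varphi .
\end{align}

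For the flux term, Cauchy--Schwarz together with $\rho\le1$ (so that $\rho\le\sqrt\rho$) gives
\begin{align}
\Bigl|\int_0^t\!\!\int \rho u\cdot\nabla\varphi\Bigr| \le \int_0^t \|\sqrt\rho u\|_{2}\,\|\sqrt{\rho}\|_{L^2(\supp\nabla\varphi)}\|\nabla\varphi\|_\infty \lesssim \frac{M t}{L\sqrt t}\,\|\rho\|_{L^\infty}^{1/2}\,(L\sqrt t)\,\cdots
\end{align}
This needs care. The cleanest bound uses $\|\sqrt{\rho(s)}\|_{L^2}=|D|^{1/2}$, since $\int\rho(s)=\int\rho_0$ by mass conservation (test with cutoffs tending to $1$). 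That yields the flux bound $\lesssim M|D|^{1/2}\,t\,(L\sqrt t)^{-1}=M|D|^{1/2}\sqrt t/L$.

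For the initial term, the $(A)$-property applied with radius $r=L\sqrt t/2$ gives
\begin{align}
\int_D\varphi\ge|B_{L\sqrt t/2}(x)\cap D|\ge\min\{A(L\sqrt t/2-R\sqrt t)^2,|D|\}\ge \min\{A L^2t/16,|D|\}
\end{align}
once $L\ge 4R$.

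The main obstacle is the comparison of scales: the initial mass is of order $L^2 t$, or of order $|D|$ when saturated, while the flux loss is only of order $\sqrt t/L$. The loss is smaller in the sense required, and the claimed lower bound is merely $\gtrsim t$ (not $L^2t$), which leaves room. Two cases must be checked.
\begin{itemize}
\item[(i)] Unsaturated case, $AL^2t/16\le|D|$. I need $AL^2t/16-CM|D|^{1/2}\sqrt t/L\gtrsim t$, i.e.\ $L^3\sqrt t\gtrsim M|D|^{1/2}$. This can fail for very small $t$. I would fix it by first absorbing with the sharper estimate of the flux through the annulus, namely $\|\sqrt\rho\|_{L^2(\text{annulus})}\lesssim L\sqrt t$. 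That gives a loss $\lesssim M\sqrt t\cdot t^{1/2}\cdot$ const $= CMt$, independent of $L$. Hence choosing $L$ with $AL^2/16\ge 2CM+1$ gives $\int\rho(t)\varphi\ge t$.
\item[(ii)] Saturated case. Then $\int_D\varphi\ge |D|\ge |D|t/T$, and the loss is $CMt\le |D|t/(2T)$ if $L$ is large depending on $M,T,D$. To arrange this, I would carry the $1/L$ factor from $\nabla\varphi$ together with the annulus area bound in a mixed way, using $\min(L\sqrt t,|D|^{1/2})$, to get a loss $\lesssim Mt\cdot\min(1,|D|^{1/2}/(L\sqrt t))$.
\end{itemize}

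Monotonicity of $L$ in $M$ and $R$ comes from the explicit choice $L=\max\{4R,\,c(1+M)(1+T/|D|)\}$, or similar. Since $\varphi\le\mathbf 1_{B_{L\sqrt t}(x)}$, the bound \eqref{eq:chAprop2} follows.
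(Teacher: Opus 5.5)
Your argument works, but it takes a different route from the paper's, and the comparison in case (ii) is misstated.

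\textbf{The paper's route.} The paper applies the $(A)$-property once, at the fixed radius $(R+1)\sqrt t$. It absorbs saturation immediately via $\min\{At,|D|\}\ge Ct$, with $C$ depending on $A,T,|D|$ (this uses $t<T$). It then invokes the argument of \cite[Lemma~2.2]{HaoShaoWeiZhang2026}. Testing with $\psi^2$ gives $\bigl|\tfrac{d}{ds}\int\rho\psi^2\bigr|\le 2M\|\nabla\psi\|_\infty\bigl(\int\rho\psi^2\bigr)^{1/2}$, so $\sqrt{\int\rho\psi^2}$ drops by at most $tM\|\nabla\psi\|_\infty$. A transition layer of width $2M\sqrt t/\sqrt C$ then leaves mass $\ge Ct/4$, which gives $L=R+1+2M/\sqrt C$. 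Because the loss is measured against the square root of the mass being tracked, both sides scale like $\sqrt t$. There is no case split, and neither $\rho\le 1$ nor mass conservation is needed.

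\textbf{Your route.} Your linear Cauchy--Schwarz bound needs an external bound on $\|\sqrt{\rho(s)}\|_{L^2(\mathrm{annulus})}$. You get it either from the area of the annulus, which uses $\rho\le1$, or from the total mass. That is why you need two regimes. On the total mass: with only $\rho\le1$, cutoffs $\chi_N\to1$ do not immediately give equality. The flux through $\{N\le|x|\le 2N\}$ is only $O(Mt)$ on the first pass, so you get $\int\rho(s)\le|D|+CMs$. This weaker bound is sufficient for your purposes.

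\textbf{The slip in case (ii).} You aim for loss $\le|D|t/(2T)$ ``for $L$ large'', but this cannot be arranged.
\begin{itemize}
\item The area bound $CMt$ does not involve $L$ at all.
\item The improved bound $CM|D|^{1/2}\sqrt t/L$ does not help either. At the smallest saturated times, $\sqrt t\approx 4|D|^{1/2}/(\sqrt A L)$, its ratio to $|D|t/(2T)$ is about $CMT\sqrt A/(2|D|)$, which is independent of $L$.
\end{itemize}
The fix is to compare the loss with $|D|$ instead of with $|D|t/T$. Since loss $\le CM|D|^{1/2}\sqrt T/L\le |D|/2$ once $L\ge 2CM\sqrt T\,|D|^{-1/2}$, you get $\int\rho(t)\varphi\ge |D|/2\ge |D|t/(2T)$. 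Combined with case (i), the choice $L=\max\{4R,\;4\sqrt{(2CM+1)/A},\;2CM\sqrt T\,|D|^{-1/2}\}$ closes the argument. This $L$ is independent of $t$ and nondecreasing in $M$ and $R$.
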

\begin{proof} For simplicity we assume $\abs{D}=1$. Since $x \in D + B_{R\sqrt t}(0)$, we have $d := \dist(x,D) < R\sqrt t$.
By the $(A)$--property, there exists a constant $A>0$, such that for every $r> d$
\begin{align}
\int_{B_r(x)} \rho_0(x)\,dx
= |B_r(x)\cap D|
\ge \min \{A\,(r-d)^2, 1\}.
\end{align}
We then set $r := (R+1)\sqrt t$. 
Using $d\le R\sqrt t$, we obtain
\begin{align}
\int_{B_r(x)} \rho_0(x)\,dx
\ge  \min \{A t, 1\}=  C \; t, \qquad \text{with }C := \min \{ A, T^{-1}\}.
\end{align}
Following
the argument of \cite[Lemma~2.2]{HaoShaoWeiZhang2026} with
$c_0 := C_T \; t$ and $R_0 = r$, we infer that if
\begin{align} \label{eq:condQ}
Q \ge (R+1) \sqrt{t} + \frac{2t}{\sqrt{ C \; t}\,}\,
M  =: \sqrt{t} \; L 
\end{align}
then
\begin{align}
\int_{B_Q(x)} \rho(t,x)\,dx \ge \frac{C \; t}{4}.
\end{align}
Since $L$ is increasing in both $R$ and $M$ we conclude the thesis.
\end{proof}

Now we show that inside $D_{t,R}$ one can control the $L^p$ norm of a general measurable function $v$ in a scale-critical way.

\begin{proposition}\label{prop:localLp}
Let $u$ satisfy \eqref{eq:trExDP} and let $\rho$ be a weak solution of
\eqref{eq:conteq} in $(0,T)$. Assume that $(\rho,u)$ satisfies \eqref{eq:trExDP2}, and fix $R>0$.
 Then there exist two constants $C_1$ and $C_2$ such that, for any measurable function $v$,
any $t \in (0,T)$ and any $x \in D_{t,R}$, the following holds.

For any $p \in [2,\infty)$
\begin{equation}\label{eq:localLp}
t^{\frac12-\frac1p}\,
\|v\|_{L^p(B_{\sqrt t L}(x))}
\lesssim_{T,D,p}
C_1\left(
t^{\frac{1}{2}}\,\|\nabla v\|_{L^2(B_{\sqrt t L}(x))}
+
\|\rho(t)\,v\|_{L^2(B_{\sqrt t L}(x))}
\right)
\end{equation}
Moreover, if $p=\infty$, then for any $q>2$ we have
\begin{align}\label{eq:globLinfty}
t^{\frac12}\,
\|v\|_{L^\infty(D_{R,t})}
\lesssim_{T,D,q}
C_2\left(
t^{1-\frac{1}{q}}\|\nabla v\|_{L^q(\mathbb{R}^2)}
+
\|\rho(t)\,v\|_{L^2(\mathbb{R}^2)}
\right).
\end{align}
Both constants depend in a non decreasing way on $R$ and $M$.
\end{proposition}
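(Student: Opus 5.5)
The plan is a rescaled, weighted Poincaré inequality on the ball $B:=B_{\sqrt t L}(x)$, where \cref{lem:nondegdensity} supplies the mass of $\rho(t)$ in $B$. Write $r=\sqrt t L$. By \cref{lem:nondegdensity}, $\int_B\rho(t)\gtrsim_{T,D} t\simeq r^2/L^2$, so $\rho(t)$ occupies a fixed proportion of $|B|$, with a constant depending on $L$, hence nondecreasingly on $R$ and $M$. Since $0\le\rho\le 1$, we also have $\rho^2\le\rho$. After rescaling $y=x+rz$ to the unit ball $B_1$, the weight $\tilde\rho(z)=\rho(t,x+rz)$ satisfies $0\le\tilde\rho\le1$ and $\int_{B_1}\tilde\rho\ge c_0>0$, with $c_0=c_0(L,T,D)$.

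\textbf{Step 1 (weighted Poincaré on $B_1$).} I claim that for $w\in H^1(B_1)$
\begin{align}
\|w\|_{L^2(B_1)}\lesssim_{c_0}\|\nabla w\|_{L^2(B_1)}+\Bigl|\int_{B_1}\tilde\rho\, w\Bigr|\lesssim \|\nabla w\|_{L^2(B_1)}+\|\tilde\rho w\|_{L^2(B_1)}.
\end{align}
To see this, let $\bar w$ be the mean of $w$ on $B_1$. Then
\begin{align}
c_0|\bar w|\le\Bigl|\int\tilde\rho\,\bar w\Bigr|\le\Bigl|\int\tilde\rho w\Bigr|+\Bigl|\int\tilde\rho(w-\bar w)\Bigr|,
\end{align}
and the last term is bounded by $\|w-\bar w\|_{L^2}\lesssim\|\nabla w\|_{L^2}$ by the classical Poincaré inequality. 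For the second inequality in the claim I will not use Cauchy--Schwarz in the form $\int\tilde\rho w\le\|\sqrt{\tilde\rho}\|_{L^2}\|\sqrt{\tilde\rho}w\|_{L^2}$. Instead I write $\int\tilde\rho w\le |B_1|^{1/2}\|\tilde\rho w\|_{L^2}$, which directly gives the $\|\rho v\|$ form appearing in the statement. Next, Sobolev embedding $H^1(B_1)\hookrightarrow L^p(B_1)$ for $p<\infty$ gives
\begin{align}
\|w\|_{L^p(B_1)}\lesssim_p\|\nabla w\|_{L^2(B_1)}+\|\tilde\rho w\|_{L^2(B_1)}.
\end{align}
If $v\notin H^1(B)$ the right-hand side of \eqref{eq:localLp} is infinite, so there is nothing to prove.

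\textbf{Step 2 (scaling back).} Apply Step 1 to $w(z)=v(x+rz)$. We have $\|w\|_{L^p(B_1)}=r^{-2/p}\|v\|_{L^p(B)}$, $\|\nabla w\|_{L^2(B_1)}=\|\nabla v\|_{L^2(B)}$ (scale-invariant in 2D), and $\|\tilde\rho w\|_{L^2(B_1)}=r^{-1}\|\rho v\|_{L^2(B)}$. Multiplying through by $r$ gives
\begin{align}
r^{1-2/p}\|v\|_{L^p(B)}\lesssim r\|\nabla v\|_{L^2(B)}+\|\rho v\|_{L^2(B)}.
\end{align}
With $r=L\sqrt t$ this is \eqref{eq:localLp}, where the powers of $L$ are absorbed into $C_1$.

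\textbf{Step 3 ($L^\infty$ bound).} For \eqref{eq:globLinfty} I replace Sobolev by Morrey's embedding $W^{1,q}(B_1)\hookrightarrow L^\infty$ for $q>2$:
\begin{align}
\|w\|_{L^\infty(B_1)}\lesssim\|w-\bar w\|_{L^\infty(B_1)}+|\bar w|\lesssim\|\nabla w\|_{L^q(B_1)}+\|\tilde\rho w\|_{L^2(B_1)},
\end{align}
where $\bar w$ is controlled exactly as in Step 1, using $\|w-\bar w\|_{L^2}\lesssim\|\nabla w\|_{L^q}$. Rescaling, $\|\nabla w\|_{L^q(B_1)}=r^{1-2/q}\|\nabla v\|_{L^q(B)}$, so
\begin{align}
r\|v\|_{L^\infty(B)}\lesssim r^{2-2/q}\|\nabla v\|_{L^q}+\|\rho v\|_{L^2}.
\end{align}
Since $r\simeq\sqrt t$, this matches the weights in \eqref{eq:globLinfty}. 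Taking the supremum over $x\in D_{t,R}$ covers $D_{t,R}$ by such balls, and bounding the local norms by global ones gives \eqref{eq:globLinfty}.

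\textbf{Main obstacle.} The delicate point is the uniformity of the constants in $t\in(0,T)$. The mass lower bound of \cref{lem:nondegdensity} must scale exactly like $r^2$, and it does: it is $\gtrsim t$ while $r^2=L^2t$. The only dependence then enters through $L(M,R,T,D)$, which is monotone in $R$ and $M$, so the constants inherit that monotonicity.
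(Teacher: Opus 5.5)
Your proposal is correct and follows essentially the paper's argument. The lower mass bound from \cref{lem:nondegdensity} controls the $\rho$-weighted average of $v$ on $B_{\sqrt t L}(x)$, using the same Cauchy--Schwarz step $\int_B\rho|v|\le|B|^{1/2}\|\rho v\|_{L^2(B)}$, and Sobolev--Poincar\'e (respectively Morrey) handles the oscillation. The only differences are presentational: you rescale to the unit ball instead of tracking powers of $\sqrt t L$, and for $p=\infty$ you bound the mean $\bar w$ rather than the value at a near-infimum point $x'$. Neither change affects the conclusion or the monotone dependence of the constants on $L$.
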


\begin{proof} We pick $ L$ as in \cref{lem:nondegdensity} and consider $t \in (0,  T)$ and $x \in D_{R,t}$. For simplicity we use the shortcut $B = B_{\sqrt{t }L}(x)$. We introduce the weighted average
\begin{align}
v_\rho = 
\frac{\int_{B} \rho(t,y)\,v(y)\,dy}
{\int_{B} \rho(t,y)\,dy}.
\end{align}
To estimate $v_\rho$, we use Cauchy--Schwarz inequality for the numerator
\begin{align}
\int_{B} \rho(t,y)\,|v(y)|\,dy
& \le \;
|B|^{1/2}\,
\|\rho(t)\,v\|_{L^2(B)} \lesssim \;  L t^{1\slash 2}
\|\rho(t)\,v\|_{L^2(B)}.
\end{align}
and \cref{lem:nondegdensity} for the denominator
\begin{align}
\int_{B} \rho(t,y)\,dy
\gtrsim_{T,D}
\,t.
\end{align}
Therefore we conclude
\begin{align}\label{eq:pointwise}
\abs{v_\rho}
\lesssim_{T,D}
L 
\,t^{-1/2}\,
\|\rho(t)\,v\|_{L^2(B)}.
\end{align}
Let $\Bar{v}$ be the average of $v$ in $B$. By \eqref{eq:pointwise} we can estimate the difference between the weighted average and the average. Indeed, since $\rho \leq 1$ 
\begin{align}
\begin{aligned}\label{eq:pointwise2}
    \abs{v_\rho - \Bar{v}} & = \abs{ ( v -\Bar{v})_\rho} \lesssim  L 
\,t^{-1/2}\, \norm{  v -\Bar{v}}_{L^2(B)}.
\end{aligned}
\end{align}
The last ingredient we need is the Sobolev--Poincar\'e inequality in dimension two. For $p \in [2,\infty)$
\begin{align}\label{eq:Poinc}
\|v-\Bar{v}\|_{L^p(B)}
\lesssim_p\,(\sqrt t L)^{2/p}\,\|\nabla v\|_{L^2(B)}.
\end{align}
Finally we can estimate $v$ in $L^p$
\begin{align}
\|v\|_{L^p(B)}
\le
\|v- \Bar{v}\|_{L^p(B)} 
+ 
|v_\rho - \Bar{v}| |B|^{1/p}+ |v_\rho||B|^{1/p}.
\end{align}
Using \eqref{eq:pointwise}, \eqref{eq:pointwise2}, \eqref{eq:Poinc} and $|B| \leq 4 t L^2 $,
\begin{align}
t^{-\frac{1}{p}}\|v\|_{L^p(B)}
&\lesssim_{T,,pD}
 \; L^\frac2p\,\|\nabla v\|_{L^2(B)}  +  L^{\frac2p+2}\,\|\nabla v\|_{L^2(B)} + L^{\frac2p+1} t^{- \frac12} 
\|\rho(t)\,v\|_{L^2(B)}
\end{align}
and since $L$ depends in a non decreasing way on $M$ and $R$ we conclude \eqref{eq:localLp}. For the case $p = \infty$, we fix $\eps>0$ and choose $x'\in B$ such that
\begin{align}
|v(x')|
\le
\operatorname*{ess\,inf}_{y\in B} |v(y)| + \varepsilon.
\end{align}
Then, by \eqref{eq:pointwise}, we obtain
\begin{align}
|v(x')|
\le
\frac{\int_{B} \rho(t,y)\,|v(y)|\,dy}
{\int_{B} \rho(t,y)\,dy}
+\varepsilon \lesssim_{T,D}
L 
\,t^{-1/2}\,
\|\rho(t)\,v\|_{L^2(B)}.
\end{align}
Then, by Morrey's inequality for any $q>2$:
\begin{align}
|v(x)|
&\le |v(x) - v(x')| + |v(x')| \\
&\lesssim_{T,D,p} \|\nabla v\|_{L^q(\mathbb{R}^2)} |x - x'|^{1-2/q}
+ L\,t^{-\frac12}\,\|\rho(t)\,v\|_{L^2(\mathbb{R}^2)}
+\varepsilon \\
&\lesssim \|\nabla v\|_{L^q(\mathbb{R}^2)} (L t^\frac12)^{1-2/q}
+ L\,t^{-\frac12}\,\|\rho(t)\,v\|_{L^2(\mathbb{R}^2)}
+\varepsilon.
\end{align}
Letting $\varepsilon \to 0$ and using the arbitrariness of $x \in D_{t,R}$, we conclude \eqref{eq:globLinfty}.
\end{proof}

The following corollary will be a key point in the subsequent analysis.

\begin{corollary}\label{cor:Lpstrip}
Let $(\rho,u)$ be a Leray--Hopf weak solution arising from \eqref{ass:weakData2}. Then, for any $R>0$ and $T>0$, there are two constants $C_1$, $C_2$ such that, for any measurable
function $v$, and every $t \in (0,   T)$ the following holds:

For any $p\in[2,\infty)$,
\begin{align}
t^{\frac12-\frac1p}\,
\|v\|_{L^p(D_{R,t})}
\lesssim_{T,D,p}
C_1\Big(
t^{1\slash 2}\,\|\nabla v\|_{L^2(\mathbb{R}^2)}
+
\|\rho(t)\,v\|_{L^2(\mathbb{R}^2)}
\Big).
\end{align}
If $p=\infty$, then for any $q>2$,
\begin{equation}
t^{\frac12}\,
\|v\|_{L^\infty(D_{R,t})}
\lesssim_{T,D,q}
C_2\left(
t^{1-\frac{1}{q}}\|\nabla v\|_{L^q(\mathbb{R}^2)}
+
\|\rho(t)\,v\|_{L^2(\mathbb{R}^2)}
\right)
\end{equation}
Both constants depend  monotonically on $R$ and $\|\rho_0 u_0\|_{L^2(\mathbb{R}^2)}$. 
\end{corollary}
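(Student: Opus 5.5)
The plan is to derive \cref{cor:Lpstrip} from \cref{prop:localLp} in three steps: check its hypotheses, cover $D_{R,t}$ by boundedly overlapping balls of radius $\sqrt t L$ centred in $D_{R,t}$, and sum the local estimates.

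\emph{Hypotheses.} For a Leray--Hopf weak solution arising from \eqref{ass:weakData2}, $\rho$ is a weak solution of \eqref{eq:conteq} with $\rho_0=\mathbf{1}_D$. The velocity satisfies \eqref{eq:trExDP}: $\dive u=0$, and $u\in L^1((0,T);L^1_{\loc})$ follows from $\nabla u\in L^2L^2$ together with $\sqrt\rho u\in L^\infty L^2$ via a local Poincaré argument. By the energy inequality \eqref{eq:energy}, \eqref{eq:trExDP2} holds with $M\le \|\sqrt{\rho_0}u_0\|_{L^2}=\|\rho_0u_0\|_{L^2}$, since $\rho_0$ is an indicator. \cref{prop:localLp} then gives \eqref{eq:localLp} for every $x\in D_{t,R}$, with constants monotone in $R$ and $M$, hence monotone in $\|\rho_0u_0\|_{L^2}$ after replacing $M$ by this upper bound.

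\emph{Covering.} Since $D$ is bounded, $D_{R,t}\subseteq D+B_{R\sqrt T}(0)$. Pick a maximal $\sqrt t L$-separated family $\{x_k\}\subset D_{R,t}$. The balls $B_k=B_{\sqrt tL}(x_k)$ cover $D_{R,t}$, and the balls $B_{\sqrt tL/2}(x_k)$ are disjoint, so each point lies in at most $N$ of the $B_k$ with $N$ an absolute constant in dimension two. For finite $p$, raise \eqref{eq:localLp} to the power $p$ and sum over $k$:
\begin{align}
\|v\|_{L^p(D_{R,t})}^p\le\sum_k\|v\|_{L^p(B_k)}^p .
\end{align}
Since $p\ge 2$ and $\ell^2\subset\ell^p$, we obtain
\begin{align}
\Big(\sum_k a_k^p\Big)^{1/p}\le\Big(\sum_k a_k^2\Big)^{1/2},
\end{align}
where $a_k$ denotes the right-hand side of \eqref{eq:localLp} on $B_k$. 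By bounded overlap, $\sum_k\|\nabla v\|_{L^2(B_k)}^2\le N\|\nabla v\|_{L^2}^2$, and the same holds for $\rho v$. This yields the first estimate with constant $C_1N^{1/2}$, and no dependence on the number of balls enters. The $p=\infty$ estimate is already global in \eqref{eq:globLinfty}, because there the right-hand side involves full-space norms; we only need to take the supremum over $x\in D_{R,t}$.

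\emph{Main obstacle.} The only delicate step is ensuring the constants do not depend on the number of balls, which grows like $t^{-1}$ as $t\to 0$. The $\ell^2\subset\ell^p$ summation handles this, as it uses only the bounded overlap constant $N$, which is independent of $t$.
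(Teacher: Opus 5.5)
Your proposal is correct and follows the paper's route. The paper takes $M=\|\rho_0u_0\|_{L^2}$ via the energy inequality and combines \cref{prop:localLp} with the Besicovitch covering theorem. You instead use a maximal $\sqrt t L$-separated family, which suffices because all the balls have the same radius, and you make explicit the $\ell^2\subset\ell^p$ summation that keeps the constant independent of the number of balls, while noting that the $p=\infty$ case is already global in \eqref{eq:globLinfty}.
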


\begin{proof}
Set $M=\|\rho_0 u_0\|_{L^2(\mathbb{R}^2)}$. By the energy inequality this is an admissible choice in \eqref{eq:trExDP2}.
The claim follows by combining \cref{prop:localLp} with the Besicovitch covering theorem.
\end{proof}

\subsection{Finite propagation of the density}
We now apply the theory from the previous subsection to immediately strong solutions arising from initial data $(\rho_0,u_0)$ satisfying \eqref{ass:weakData2}, namely 
\begin{align}
    u_0 \in L^2_\sigma(\R^2), \qquad
    0 \le \rho_0 = \mathbf{1}_D,
\end{align}
where $D \subseteq \R^2$ is a bounded Lipschitz domain. 
Let $(\rho,u)$ be a weak solution of \eqref{eq:INS} with initial data $(\rho_0,u_0)$, and let $v$ be an immediately strong solution to \eqref{eq:LSs} advected by $(\rho,u)$. 
We recall that, by \cref{def:immstrongsol-LS}, there exists a constant $C_{(\rho,u)}(v)$ such that
\begin{align}
    \sup_{i \in \{0,1,2,3\}} \sup_{t>0} A_i^0(t,v,u) \le C_{(\rho,u)}(v).
\end{align}
All the properties studied in the next two subsections become particularly transparent in the self-advected case, namely when $v = u$; however, for future analysis we will make essential use of the properties of solutions to the linearized system, which is why we work in this more general setting.

We start by proving estimates on the $L^\infty$ norm of the solution.

\begin{lemma}\label{lem:Linfty}
    Let $(\rho,u)$ a weak solution of \eqref{eq:INS} arising from \eqref{ass:weakData2} and $v$ an immediately strong solution to \eqref{eq:LSs} advected by $(\rho,u)$. Then, for any $R>0$ there exist two constants 
    \begin{align}
        C= C(R, \norm{\rho_0 u_0}_{L^2(\R^2)}, C_{(\rho,u)}(v)), \qquad C^\prime= C^\prime( \norm{\rho_0 u_0}_{L^2(\R^2)}, C_{(\rho,u)}(v))
    \end{align}
    such that for every $T>0$
    \begin{align}\label{eq:Linftytubdomainglobal}
        \sup_{t\in (0, T)} t^\frac{1}{2} \norm{v(t)}_{L^\infty(D_{t,R})} \lesssim_{T,D} C, \quad \int_0^{T} t^2 \norm{\dot{v}(t)}^2_{L^\infty(D_{t,R})} \dd t \lesssim_{T,D} C 
    \end{align}
    and every $x \in \R^2$,
    \begin{align}\label{eq:growuptw}
       \sup_{t \in (0,T)}t^{3\slash 4} \frac{\abs{v(t,x)}}{(1 + \abs{x})} \lesssim_{T,D} C^\prime, \quad \int_0^{T} t^{ 5\slash 2} \frac{ \abs{\dot{v}(t,x)}^2 }{(1+\abs{x})^2} \dd t \lesssim_{T,D} C^\prime.
    \end{align}
    Both $C$ and $C^\prime$ are non-decreasing in their arguments.
\end{lemma}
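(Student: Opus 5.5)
The plan is to derive the first two bounds from the $L^\infty$ strip estimate of \cref{cor:Lpstrip}, with $q=4$, applied to $v(t)$ and to $\dot v(t)$. The growth bounds \eqref{eq:growuptw} will then follow by integrating the gradient along a segment from $x$ back to the strip.

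\emph{Bound on $v$.} By \cref{cor:Lpstrip} with $q=4$,
\begin{align}
t^{\frac12}\|v(t)\|_{L^\infty(D_{t,R})}\lesssim_{T,D} C_2\bigl(t^{\frac34}\|\nabla v(t)\|_{L^4}+\|\rho(t)v(t)\|_{L^2}\bigr).
\end{align}
Since $\rho\le1$, we have $\|\rho v\|_{L^2}\le\|\sqrt\rho v\|_{L^2}\le (A_0^0)^{1/2}$. Ladyzhenskaya gives $\|\nabla v\|_4^2\lesssim \|\nabla v\|_2\|\nabla^2 v\|_2$, hence
\begin{align}
t^{\frac32}\|\nabla v\|_4^2\lesssim \bigl(t^{\frac12}\|\nabla v\|_2\bigr)\bigl(t\|\nabla^2 v\|_2\bigr)\lesssim (A_1^0A_2^0)^{1/2}\le C_{(\rho,u)}(v).
\end{align}
This yields the first part of \eqref{eq:Linftytubdomainglobal}.

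\emph{Bound on $\dot v$.} We apply the same estimate to $\dot v$:
\begin{align}
t^2\|\dot v\|^2_{L^\infty(D_{t,R})}\lesssim t\bigl(t^{\frac32}\|\nabla\dot v\|_4^2\bigr)+t\|\rho\dot v\|_2^2.
\end{align}
The term $\int_0^T t\|\sqrt\rho\dot v\|_2^2$ is bounded by $A_1^0$. For the gradient term, Ladyzhenskaya gives
\begin{align}
t^{5/2}\|\nabla\dot v\|_2\|\nabla^2\dot v\|_2\le \tfrac12\bigl(t^2\|\nabla \dot v\|_2^2+t^3\|\nabla^2\dot v\|_2^2\bigr),
\end{align}
whose time integral is controlled by $A_2^0+A_3^0$. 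All of this is up to $T$-dependent factors, which is allowed.

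\emph{Growth bounds.} Fix $x\in\R^2$ and take $y\in D$ with $|x-y|\le |x|+C_D$. If $x\in D_{t,R}$ we are done by the strip bound. Otherwise we pick $x'\in D$ near $y$ and write
\begin{align}
|v(x)|\le|v(x')|+\|\nabla v\|_\infty|x-x'|.
\end{align}
Here $|v(x')|\lesssim t^{-1/2}$ by the strip bound. The key point is that \eqref{eq:L2Lip} (\cref{prop:Linfty}) gives
\begin{align}
\|\nabla v\|_\infty\lesssim\|\sqrt\rho\dot v\|_2+\|\nabla v\|_2^{1/2}\|\nabla\dot v\|_2^{1/2}.
\end{align}
The first term is at most $t^{-1}C$ by $A_2^0$. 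For the second, $\sup_t t^{1/2}\|\nabla v\|_2\le (A_1^0)^{1/2}$ and $\sup_t t^{3/2}\|\nabla \dot v\|_2\le (A_3^0)^{1/2}$, so $\|\nabla v\|_2^{1/2}\|\nabla\dot v\|_2^{1/2}\lesssim t^{-1}C$. This gives
\begin{align}
|v(t,x)|\lesssim (1+|x|)\bigl(t^{-1/2}+t^{-1}\bigr)C.
\end{align}
On $(0,T)$ the dominant factor is $t^{-1}$, not $t^{-3/4}$, so this crude argument falls short, and \textbf{the main obstacle} is recovering the sharper exponent $3/4$.

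I would instead use Morrey with $q=4$ along the segment. Since
\begin{align}
|v(x)-v(x')|\lesssim \|\nabla v\|_4\,|x-x'|^{1/2}\le\|\nabla v\|_4(1+|x|),
\end{align}
and $\|\nabla v\|_4\lesssim t^{-3/4}C$ by the bound above, we obtain $t^{3/4}|v(x)|/(1+|x|)\lesssim C'$. The constant is uniform in $R$ because we can take $x'\in D$ and use the strip with $R=1$. For $\dot v$ we proceed analogously:
\begin{align}
|\dot v(x)|^2/(1+|x|)^2\lesssim \|\nabla \dot v\|_4^2+|\dot v(x')|^2.
\end{align}
We multiply by $t^{5/2}$ and integrate. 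The first term is bounded via $t^{5/2}\|\nabla\dot v\|_2\|\nabla^2\dot v\|_2$ as above. The second is bounded by $t^{1/2}\cdot t^2\|\dot v\|^2_{L^\infty(D_{t,1})}$, which is integrable by the second part of \eqref{eq:Linftytubdomainglobal}.
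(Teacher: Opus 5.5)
Your proposal is correct and follows essentially the same route as the paper: \cref{cor:Lpstrip} with $q=4$, combined with Ladyzhenskaya and the $A_1^0$--$A_3^0$ bounds, gives \eqref{eq:Linftytubdomainglobal}, and Morrey's inequality with $q=4$ between $x$ and a fixed point of $D$ gives \eqref{eq:growuptw}. The only differences are cosmetic: you evaluate the strip bound with $R=1$ where the paper uses $R=0$, and you include a $\|\nabla v\|_{L^\infty}$ detour that you rightly discard; the remaining $t^{-1/2}$ term is absorbed via $t^{-1/2}\le T^{1/4}t^{-3/4}$, exactly as in the paper.
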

\begin{proof}
   Choosing $v = v(t)$, $q=4$ in \cref{cor:Lpstrip} we have 
\begin{align}
    t^{\frac12}\,
\|v(t)\|_{L^\infty(D_{R,t})}
& \le
C_2\left(
t^{\frac{3}{4}}\|\nabla v(t)\|_{L^4(\mathbb{R}^2)}
+
\|\rho(t)\,v(t)\|_{L^2(\mathbb{R}^2)}.
\right) \\ & \le
C_2\left(
t^{\frac{3}{4}}\|\nabla v(t)\|^{1\slash 2}_{L^2(\mathbb{R}^2)} \|\nabla^2 v(t)\|^{1\slash 2}_{L^2(\mathbb{R}^2)}
+
C_{(\rho,u)}(v)^{1\slash 2}
\right) \\ & \le
C_2\left(
t^{\frac{3}{4}} C_{(\rho,u)}(v)^{1\slash 2} t^{-1\slash 4} t^{-1\slash 2}
+
C_{(\rho,u)}(v)^{1\slash 2}
\right)
\end{align}
and this  proves the first estimate in \eqref{eq:Linftytubdomainglobal}. For the estimate on $\dot{u}$ we use $ v = \dot{u}(t)$ and as before we get
\begin{align}
    t^{\frac12}\,
\|\dot{v}(t)\|_{L^\infty(D_{R,t})}
 & \le
C_2( 
t^{\frac{3}{4}}\|\nabla \dot{v}(t)\|_{L^4(\mathbb{R}^2)} 
+
\|\rho(t)\,\dot{v}(t)\|_{L^2(\mathbb{R}^2)}).
\end{align}
Multiplying both sided by $t^{1\slash 2}$, squaring and integerating in $(0,T)$ we get
\begin{align}
    \int_0^{T} t^2 
\|\dot{v}(t)\|_{L^\infty(D_{R,t})}^2 \dd t
 & \lesssim_{C_2}  \int_0^{T} t^{\frac{5}{2}} 
\|\nabla \dot{v}(t)\|_{L^4(\mathbb{R}^2)}^2  \dd t 
+ \int_0^{T} t 
\|\rho \dot{v}(t)\|_{L^2(\mathbb{R}^2)}^2 \dd t . 
\end{align}
The last term is bounded by $C_{(\rho,u)}(v)$ and 
\begin{equation}\label{eq:grad_udot_L4}
\begin{aligned}
\int_0^{T} t^{\frac{5}{2}}
\|\nabla \dot{v}(t)\|_{L^4(\mathbb{R}^2)}^2 \, dt
&\;\lesssim_{C_2}\;
\int_0^{T} t^{\frac{5}{2}}
\|\nabla \dot{v}(t)\|_{L^2(\mathbb{R}^2)}
\|\nabla^2 \dot{v}(t)\|_{L^2(\mathbb{R}^2)} \, dt \\
&\;\lesssim\;
\int_0^{T} t^{2}
\|\nabla \dot{v}(t)\|_{L^2(\mathbb{R}^2)}^2 \, dt
+
\int_0^{T} t^{3}
\|\nabla^2 \dot{v}(t)\|_{L^2(\mathbb{R}^2)}^2 \, dt .
\end{aligned}
\end{equation}
which is also bounded by the same constant and concludes  \eqref{eq:Linftytubdomainglobal}.

To prove \eqref{eq:growuptw} we consider $x\in \R^2$ and $y \in D$:
\begin{align}
    \abs{v(t,x)} & \leq \abs{v(t,x) - v(t,y)} + \abs{v(t,y)} \\ & \leq C \norm{\nabla v(t)}_{L^4(\R^2)}\abs{ x -y }^{1\slash 2} + C t^{-1\slash 2} \\ & \leq C C_{(\rho,u)}(v)^{1\slash 2} t^{-\frac{3}{4}} \abs{ x -y }^{1\slash 2} + C T^{1\slash2}t^{-3\slash 4}
\end{align}
where, in the second inequality, we have used Morrey's inequality and \eqref{eq:Linftytubdomainglobal} for $R = 0$. Now, since $y \in D,$ we simply estimate
\begin{align}
    \abs{ x -y }^{1\slash 2} \leq C ( 1 + \abs{x})
\end{align}
where $C$ depends on $\operatorname{diam}(D)$ and so we get the estimate for $v$ in \eqref{eq:growuptw}. For $\dot{v}$ in the same way we get
\begin{align}
    \abs{\dot{v}(t,x)} & \leq C  \norm{\nabla \dot{v}(t)}_{L^4(\R^2)}( 1 + \abs{x})+ \abs{\dot{v}(t,y)}   
\end{align}
Then, dividing by $1+\abs{x}$, multiplying by $t^{5\slash 4}$, squaring and integrating in $(0,T)$ we get 
\begin{align}
     \int_0^{T} t^{5\slash 2} \frac{ \abs{\dot{v}(t,x)}^2 }{(1+\abs{x})^2} \dd t \leq C  \int_0^{T} t^{5\slash 2} \norm{\nabla \dot{v}(t)}^2_{L^4(\R^2)} \dd t + \int_0^{T} t^{5\slash 2} \norm{\dot{v}(t)}^2_{L^\infty(D)} \dd t.
\end{align}
The right hand side is bounded by \eqref{eq:grad_udot_L4} and \eqref{eq:Linftytubdomainglobal}. This concludes \eqref{eq:growuptw}.
\end{proof}
We immediately obtain the following corollary.

\begin{corollary}\label{cor:lingrowthu}
   Let $(\rho,u)$ and immediately strong solution of \eqref{eq:INS} arising from \eqref{ass:weakData2}. Then  
    \begin{align}\label{eq:lineargrowthu}
        \frac{u}{1+\abs{x}} \in L_\loc^1 ([0, \infty);L^\infty(\R^2)).
    \end{align}
    In particular, $\rho$ is the unique renormalized solution \eqref{eq:conteq} with vector field $u$ and, up to a modification on a negligible set of times, one has
\begin{align}
\rho \in C_{w^\ast}\big([0,\infty);L^\infty(\mathbb{R}^2)\big)
\quad\text{and}\quad
\rho \in C\big([0,\infty);L^p(\mathbb{R}^2)\big)
\ \text{for every } p\in[1,\infty).
\end{align}
\end{corollary}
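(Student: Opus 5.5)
The plan is to derive \eqref{eq:lineargrowthu} directly from \cref{lem:Linfty} with $v=u$, and then to invoke \cref{prop:wellposedtrans}.

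\textbf{Step 1: linear growth of $u$.} Since $(\rho,u)$ is immediately strong, $u$ is an immediately strong solution of \eqref{eq:LSs} advected by itself, with constant $C_{(\rho,u)}(u)$. The first estimate in \eqref{eq:growuptw} then gives, for every $T>0$, every $t\in(0,T)$ and every $x\in\R^2$,
\begin{align}
\frac{|u(t,x)|}{1+|x|} \lesssim_{T,D} C'\, t^{-3/4}.
\end{align}
Hence $\bigl\|\tfrac{u(t)}{1+|x|}\bigr\|_{L^\infty(\R^2)}\lesssim t^{-3/4}$, which is integrable on $(0,T)$. This proves \eqref{eq:lineargrowthu}.

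The only point requiring care is that \eqref{eq:growuptw} is a pointwise statement, so it must hold at a.e.\ $x$. This follows from the Morrey argument in the proof of \cref{lem:Linfty}. For a.e.\ $t$ we have $\nabla u(t)\in L^4$, so $u(t)$ has a continuous representative. For $y\in D$, the bound $|u(t,y)|\lesssim t^{-1/2}$ comes from \eqref{eq:Linftytubdomainglobal}, and $t^{-1/2}\lesssim_T t^{-3/4}$ on $(0,T)$. I expect no real obstacle here.

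\textbf{Step 2: verify \eqref{ass:velocity}.} The weak solution satisfies $\nabla u\in L^2((0,\infty)\times\R^2)$, so $\nabla u\in L^1((0,T);L^2_{\loc})$. For $u$ itself, Step 1 gives $|u(t,x)|\lesssim t^{-3/4}(1+|x|)$, so $u\in L^1((0,T);L^\infty_{\loc})$. Therefore $u\in L^1((0,T);W^{1,2}_{\loc}(\R^2))$, i.e.\ \eqref{ass:velocity} holds with $q=2$. The second condition in \eqref{ass:velocity} is exactly Step 1, since $L^\infty\subset L^1+L^\infty$. Finally, $\dive u=0$ is part of \cref{def:Distsol}, so \eqref{eq:trExDP} holds.

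\textbf{Step 3: conclusion.} By \cref{prop:wellposedtrans}, on each interval $(0,T)$ the continuity equation \eqref{eq:conteq} with field $u$ has a unique renormalized solution. It also has the stated continuity: $\rho\in C_{w^\ast}([0,T];L^\infty)$ and $\rho\in C([0,T];L^p)$ for $p<\infty$, after modification on a null set of times.

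The density $\rho$ of our solution is a bounded weak solution of \eqref{eq:conteq}. Under \eqref{ass:velocity}, the DiPerna--Lions commutator argument makes every bounded weak solution renormalized, and uniqueness in the class of renormalized solutions then shows that $\rho$ coincides with this unique solution. Uniqueness on each $(0,T)$ makes the solutions on different intervals consistent, and letting $T\to\infty$ gives the claimed continuity on $[0,\infty)$.

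The step needing the most attention is this identification of $\rho$ with the renormalized solution. My approach is to cite \cite{DipLi89}, where the commutator lemma applies because $u\in L^1W^{1,2}_{\loc}$ and $\rho\in L^\infty$.
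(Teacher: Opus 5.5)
Your proposal is correct and follows the paper's proof. The paper likewise gets \eqref{eq:lineargrowthu} from the first bound in \eqref{eq:growuptw} with $v=u$ (integrable weight $t^{-3/4}$) and then invokes \cref{prop:wellposedtrans}; your explicit check of \eqref{ass:velocity} with $q=2$ and your DiPerna--Lions commutator argument, which identifies the given bounded weak solution $\rho$ with the unique renormalized one, just spell out what the paper leaves implicit in that citation.
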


\begin{proof}
    \eqref{eq:lineargrowthu} follows immediately from \eqref{eq:growuptw} in the case $u=v$. Now, the rest follows from \cref{prop:wellposedtrans}. 
\end{proof}
We now show that the support of the density moves with a scale-critical speed. More precisely, the transported patch remains confined in a $\sqrt{t}$–neighbourhood of the initial domain.

\begin{proposition}\label{prop:ImprovedDecay}
Let $(\rho,u)$ be an immediately strong solution of \eqref{eq:INS} arising from
\eqref{ass:weakData2}, and fix $T>0$. Then there exists a constant 
\begin{align}
C = C\big( \|\rho_0 u_0\|_{L^2(\mathbb{R}^2)}, C_{(\rho,u)}(u), T, D \big)
\end{align}
such that, for almost every $t \in [0,T)$ and for almost every $x \in D$,
\begin{align}
    |X(t,x) - x|
    \leq C \, t^{1/2}.
\end{align}
In particular,
\begin{align}
\operatorname{supp}\rho(t) \subseteq D + B_{C\sqrt{t}}(0).
\end{align}
\end{proposition}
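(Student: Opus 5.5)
We prove the displacement bound by integrating the velocity along trajectories and using the $L^\infty$ bound of \cref{lem:Linfty} on the moving neighbourhoods $D_{t,R}$, closed by a continuity (bootstrap) argument. By \cref{cor:lingrowthu}, $u/(1+|x|)\in L^1_{\loc}([0,\infty);L^\infty)$ and $u\in L^1_{\loc}W^{1,q}_{\loc}$. Hence the regular Lagrangian flow $X$ of DiPerna--Lions/Ambrosio exists, and $\rho(t)=\rho_0\circ X(t)^{-1}$. For a.e. $x\in D$ the map $t\mapsto X(t,x)$ is absolutely continuous with $\partial_t X = u(t,X(t,x))$. Since $t^{1/2}\|u(t)\|_{L^\infty(D_{t,R})}$ is bounded, we also need the value of $u$ at $X(t,x)$ to be controlled, i.e. $X(t,x)\in D_{t,R}$. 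This is exactly what we want to prove, hence the bootstrap.

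Fix $R$ to be chosen, and let $K=K(R)$ be the constant in the first estimate of \eqref{eq:Linftytubdomainglobal} with $v=u$, so that $\|u(t)\|_{L^\infty(D_{t,R})}\le K t^{-1/2}$ on $(0,T)$. The danger is that $K$ depends on $R$, so a naive choice is circular. To break the circularity I would use \cref{prop:localLp} and \cref{lem:nondegdensity} more carefully. The $L^\infty$ bound at a point $y$ only uses $\|\rho v\|_{L^2}$, the $L^q$ norm of $\nabla u$, and a lower mass bound on $B_{\sqrt t L}(y)$.

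For a point $y=X(t,x)$ in the support of $\rho(t)$ itself, no $R$-dependence should be needed. Indeed, rerunning the argument of \cref{prop:localLp} on the ball centered at $y$ only requires $\int_{B_{\sqrt t L}(y)}\rho(t)\gtrsim t$. This lower bound should follow from the $(A)$-property applied to $X(t)^{-1}(B)$, using that the flow is measure preserving and that nearby trajectories stay close. That last fact is again a bootstrap, so I would rather fix one large $R_0$ and prove the claim with $C\le R_0$ by continuity.

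Concretely, let $\tau=\sup\{s\le T: |X(t,x)-x|\le R_0\sqrt t \text{ for all } t\le s\}$. On $(0,\tau)$ we have $X(t,x)\in D_{t,R_0}$, hence
\begin{align}
|X(t,x)-x|\le\int_0^t |u(s,X(s,x))|\,\dd s\le K(R_0)\int_0^t s^{-1/2}\,\dd s=2K(R_0)\sqrt t .
\end{align}
To close we need $2K(R_0)<R_0$. This requires the growth of $K$ in $R$ to be sublinear. From the proof of \cref{lem:nondegdensity}, $L\sim R+1$, and $K\sim L^{2-2/q}$ from the Morrey term, which is superlinear. So this does not close directly and is the main obstacle.

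To fix it I would not use the $R$-neighbourhood constant. Instead I would estimate $u$ at a point $y$ with $\dist(y,\operatorname{supp}\rho(t))$ small, via Morrey from a point in $X(t,D)$ where a density-weighted estimate holds with constants independent of $R$. The nondegeneracy needed there is $\int_{B_{\sqrt t}(y)}\rho(t)\gtrsim t$ for $y\in X(t,D)$. I would try to obtain it from the fact that $\rho$ is transported and that mass cannot leave a ball of radius $\sqrt t$ too fast, arguing as in \cite[Lemma~2.2]{HaoShaoWeiZhang2026} backwards in time.

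The alternative fallback is to close for small $t$ first. Using $t^{3/4}|u(t,x)|/(1+|x|)\le C'$ from \eqref{eq:growuptw}, whose constant is independent of $R$, Gronwall gives $|X(t,x)|\lesssim 1$ on $[0,T]$, so $X(t,x)$ stays in a fixed compact set. On that set, the bound $\|u(t)\|_{L^\infty(D_{t,R})}\lesssim t^{-1/2}$ holds with a single $R\sim T^{-1/2}\operatorname{diam}$, at least for $t$ bounded below. Near $t=0$ one still needs the bootstrap, which I expect to be the delicate point.

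The support inclusion then follows from $\rho(t)=\mathbf 1_{X(t,D)}$.
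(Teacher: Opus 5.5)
Your proposal does not reach a proof. You correctly identify the obstruction: using \eqref{eq:Linftytubdomainglobal} at the moving point $X(s,x)$ requires $X(s,x)\in D_{s,R}$. You also correctly see that a continuity argument in $R$ fails, because $K(R)$ grows with $R$ through $L\sim R+1$ in \cref{lem:nondegdensity}. (The growth is in fact linear, via the weighted-average term, not superlinear. But its coefficient is not small, so $2K(R_0)<R_0$ still cannot be arranged.) However, neither of your repairs is carried out.

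The nondegeneracy bound $\int_{B_{\sqrt t}(y)}\rho(t)\gtrsim t$ for $y\in X(t,D)$ is only conjectured, and it is itself circular, as you note. The fallback through \eqref{eq:growuptw} only gives $|X(t,x)-x|\lesssim t^{1/4}$, and this cannot be upgraded by iteration. If $|X(s,x)-x|\le a s^{\alpha}$ with $\alpha<\tfrac12$, then $X(s,x)\in D_{s,R(s)}$ with $R(s)\sim a s^{\alpha-1/2}$. The resulting bound $|u(s,X(s,x))|\lesssim s^{-1/2}+a s^{\alpha-1}$ integrates back to the same exponent $\alpha$.

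The missing idea is to compare with the velocity at the \emph{initial} point $x\in D$, which does not move, instead of bounding $u$ at $X(s,x)$ directly:
\begin{align}
|X(t,x)-x|\le\int_0^t|u(s,x)|\,\dd s+\int_0^t|u(s,X(s,x))-u(s,x)|\,\dd s .
\end{align}
The first integral is at most $\int_0^t\|u(s)\|_{L^\infty(D)}\,\dd s\lesssim t^{1/2}$. This uses \eqref{eq:Linftytubdomainglobal} with one fixed $R$ (note $D\subset D_{s,R}$ for every $R\ge 0$), so it needs no information about where the trajectory is.

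The second integral is controlled by the \emph{global} Morrey estimate $|u(s,y)-u(s,x)|\lesssim\|\nabla u(s)\|_{L^4(\R^2)}|y-x|^{1/2}$. Combine it with $\|\nabla u(s)\|_{L^4(\R^2)}\lesssim\|\nabla u(s)\|_{L^2}^{1/2}\|\nabla^2u(s)\|_{L^2}^{1/2}\lesssim s^{-3/4}$, which comes from the $A_1^0$ bound.

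Set $f(t):=|X(t,x)-x|$ and $F(t):=\sup_{s\le t}f(s)$. $F$ is finite since the trajectory is continuous on $[0,T]$, or by your bound from \eqref{eq:growuptw}. Then
\begin{align}
F(t)\le A\,t^{1/2}+4B\,t^{1/4}F(t)^{1/2}\le A\,t^{1/2}+\tfrac12F(t)+8B^2t^{1/2},
\end{align}
hence $F(t)\le 2(A+8B^2)\,t^{1/2}$ for arbitrary constants $A,B$. This is the paper's proof, whose last step is the fractional Gr\"onwall argument. The H\"older exponent $\tfrac12$ makes the feedback sublinear, and that is exactly what removes the circularity you ran into: no neighbourhood $D_{s,R}$ of the trajectory is ever needed.
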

\begin{proof}
  Since $u$ satisfies \eqref{eq:lineargrowthu} and enjoys Sobolev regularity provided by the energy inequality, there exists a unique regular Lagrangian flow
\begin{align}
X : [0,T) \times \mathbb{R}^2 \to \mathbb{R}^2
\end{align}
associated with $u$ (see \cite{CripDel08}).  
Then, for every $t \in (0,T)$ and every $x \in D$, we have
\begin{align}
    |X(t,x) - x|
    &\le \int_0^t |u(s,X(s,x))|\,\dd s \\
    &\le \int_0^t |u(s,x)|\,\dd s
    + \int_0^t |u(s,x)-u(s,X(s,x))|\,\dd s \\
    &\le \int_0^t |u(s,x)|\,\dd s
    + C_M \int_0^t \|\nabla u(s)\|_{L^4(\mathbb{R}^2)}
    |x - X(s,x)|^{1/2}\,\dd s,
\end{align}
where we have used Morrey's inequality.

For the first term, there exists a constant
\begin{align}
C = C\bigl(\|\rho_0 u_0\|_{L^2(\mathbb{R}^2)}, C_{(\rho,u)}(u), D\bigr)
\end{align}
(see \eqref{eq:Linftytubdomainglobal} with $v = u$) such that
\begin{align}
    \int_0^t |u(s,x)|\,\dd s
    &\le \int_0^t \|u(s)\|_{L^\infty(D)}\,\dd s
    \lesssim_{T,D} C \int_0^t s^{-1/2}\,\dd s
    \leq C\, t^{1/2}.
\end{align}
For the second term, we use the interpolation inequality
\begin{align}
    \|\nabla u(s)\|_{L^4(\mathbb{R}^2)}
    \lesssim
    \|\nabla u(s)\|_{L^2(\mathbb{R}^2)}^{1/2}
    \|\nabla^2 u(s)\|_{L^2(\mathbb{R}^2)}^{1/2}
    \le C_{(\rho,u)}(u)^{1/2}\, s^{-3/4}.
\end{align}
Up to redefining the constant $C$, we conclude that
\begin{align}
    |X(t,x) - x|
    \lesssim_{ C}
    t^{1/2}
    + \int_0^t s^{-3/4} |x - X(s,x)|^{1/2}\,\dd s.
\end{align}
Using fractional Grönwall's we get
    \begin{align}
        \abs{X(t,x) - x} \lesssim_{ C} t^{1/2}
    \end{align}
    which concludes the proof.
\end{proof}

We conclude this subsection by establishing a regularity property for the velocity field away from the initial time.

\begin{lemma}\label{lem:B1}
  Let $(\rho,u)$ an immediately strong solution of \eqref{eq:INS} arising from \eqref{ass:weakData2} and $v$ an immediately strong solution to \eqref{eq:LSs} advected by $(\rho,u)$. Then 
\begin{align}
 v  \in H^1_{\loc}\big((0,\infty); H^1_{\loc}(\mathbb{R}^2)\big).
\end{align}
\end{lemma}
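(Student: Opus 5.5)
The goal is to show that on every compact box $[t_1,t_2]\times B_K$ with $0<t_1<t_2<\infty$ and $K>0$, both $v$ and $\partial_t v$ lie in $L^2(t_1,t_2;H^1(B_K))$. Away from $t=0$ all time weights in $A_i^0(t,v,u)$ are bounded above and below. The bounds of \cref{def:immstrongsol-LS} therefore give, on $[t_1,t_2]$,
\begin{align}
\nabla v \in L^\infty(L^2),\qquad \nabla^2 v\in L^2(L^2),\qquad \nabla \dot v\in L^\infty(L^2)\cap L^2(L^2),\qquad \nabla^2\dot v\in L^2(L^2).
\end{align}
The plan is to express $\partial_t v=\dot v-(u\cdot\nabla)v$ and bound both pieces locally.

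\emph{Spatial regularity of $v$.} For $v$ itself, $\nabla v\in L^2(t_1,t_2;L^2)$ is already given. The missing ingredient is a local $L^2$ bound on $v$ on $B_K$. I would use \eqref{eq:growuptw} from \cref{lem:Linfty}, which gives
\begin{align}
|v(t,x)|\lesssim t^{-3/4}(1+|x|)
\end{align}
uniformly on $[t_1,t_2]$. Hence $v\in L^\infty(t_1,t_2;L^\infty(B_K))$, and so $v\in L^2(H^1(B_K))$.

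\emph{Regularity of $\partial_t v$.} For $\dot v$, the same lemma gives
\begin{align}
\int t^{5/2}|\dot v(t,x)|^2(1+|x|)^{-2}\,\dd t<\infty
\end{align}
uniformly in $x$, so $\dot v\in L^2(t_1,t_2;L^\infty(B_K))$. Combined with $\nabla\dot v\in L^2(L^2)$, this yields $\dot v\in L^2(H^1(B_K))$.

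It remains to handle the convective term $(u\cdot\nabla)v$ in $L^2(H^1(B_K))$.

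\begin{itemize}
\item Its $L^2$ norm is at most $\|u\|_{L^\infty(B_K)}\|\nabla v\|_{L^2}$. Here $u$ is locally bounded by \eqref{eq:growuptw} with $v=u$.
\item Its gradient is $\nabla u\,\nabla v+u\,\nabla^2 v$.
\item For $\nabla u\,\nabla v$, I would use $\|\nabla u\|_{L^4}\|\nabla v\|_{L^4}$ and Ladyzhenskaya's inequality. By the $A_1^0$ bounds for $u$ and $v$, each factor is in $L^4$ in time, so the product is in $L^2$ in time. Alternatively, use $\|\nabla u\|_\infty\|\nabla v\|_2$ together with \eqref{eq:almostlipu}.
\item For $u\,\nabla^2 v$, use $\|u\|_{L^\infty(B_K)}\|\nabla^2 v\|_{L^2}$, which lies in $L^2$ in time.
\end{itemize}

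Thus $\partial_t v\in L^2(t_1,t_2;H^1(B_K))$.

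\emph{Justification of the identity.} The main subtlety is that the identity $\partial_t v=\dot v-u\cdot\nabla v$ holds only distributionally. In the definition of immediately strong solutions, $\dot v$ is understood as the material derivative, and the $A_i$ bounds are stated for it. I would verify that $u\cdot\nabla v\in L^1_{\loc}$, which follows from the bounds above, so that $\partial_t v$ is defined as a distribution. The computed regularity then identifies it with an $L^2_{\loc}H^1_{\loc}$ function. Approximating by the smooth solutions of Section 2 and passing to the limit under the stated uniform bounds would be the fallback if the distributional identification needs care.
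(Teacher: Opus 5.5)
Your proposal is correct and follows the paper's proof essentially step by step. Like the paper, you obtain local $L^\infty$ control of $v$, $u$ (in $L^\infty$ in time) and $\dot v$ (in $L^2$ in time) on compact sets from \eqref{eq:growuptw}, write $\partial_t v=\dot v-(u\cdot\nabla)v$, and bound $\nabla[(u\cdot\nabla)v]$ via Ladyzhenskaya and the $A_i^0$ bounds, whose time weights are harmless away from $t=0$.
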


\begin{proof}
Fix $\varepsilon >0$, then by  \eqref{eq:BarC-LS} we have
\begin{align}\label{eq:gradientB}
v,\,\dot v \in L^\infty\!\big((\varepsilon,\infty);\dot H^1(\mathbb{R}^2)\big).
\end{align}
Fix also $T>\eps$ and let $K \subseteq \mathbb{R}^2$ be compact. 
We also claim that
\begin{align}\label{eq:Lloc}
v \in L^\infty\!\big((\varepsilon,T); L^\infty(K)\big),
\qquad
\dot{v} \in L^2\!\big((\varepsilon,T); L^\infty(K)\big).
\end{align}
The first inclusion in \eqref{eq:Lloc} follows directly from \eqref{eq:growuptw}. For the
second one, we note that for every $x\in K$
\begin{align}
\int_{\varepsilon}^{0} |\dot{v}(t,x)|^2 \,dt
&\le_{\varepsilon,K}
\int_{\varepsilon}^{T} t^{5/2}\,
\frac{|\dot{v}(t,x)|^2}{(1+|x|)^2} \,dt
\le C',
\end{align}
where the last inequality follows again from \eqref{eq:growuptw} and proves \eqref{eq:Lloc}.

We now turn to the estimates for $\partial_t v$. Recall that
\begin{align}\label{eq:ptuu}
\partial_t v = \dot v - (u\cdot\nabla)v,
\qquad
\nabla\partial_t v = \nabla\dot v - \nabla\big[(u\cdot\nabla)v\big].
\end{align}
From the first identity in \eqref{eq:ptuu}, we obtain
\begin{align}
\|\partial_t v(t)\|_{L^2(K)}
&\le
\|\dot v(t)\|_{L^2(K)}
+ \|u(t)\|_{L^\infty(K)}\,\|\nabla v(t)\|_{L^2(\mathbb{R}^2)},
\end{align}
which belongs to $L^2(\varepsilon,T)$ by \eqref{eq:Lloc} and
\eqref{eq:gradientB} applied to both $v$ and $u$. In a similar way,
\begin{align}
\|\nabla \partial_t v(t)\|_{L^2(K)}
&\le
\|\nabla \dot{v}(t)\|_{L^2(\mathbb{R}^2)}
+ \|\nabla[(u\cdot\nabla)v](t)\|_{L^2(K)}.
\end{align}
The first term belongs to $L^\infty (\varepsilon,T)$ by \eqref{eq:gradientB}.
For the second one, we estimate
\begin{align}
\|\nabla[(u\cdot\nabla)v]\|_{L^2(K)}
&\le
\|\nabla u\|_{L^4(\mathbb{R}^2)} \|\nabla v\|_{L^4(\mathbb{R}^2)}
+ \|u\|_{L^\infty(K)}\,\|\nabla^2 v\|_{L^2(\mathbb{R}^2)}
\end{align}
which belongs to $L^\infty (\varepsilon,T)$ by \eqref{eq:BarC-INS}, \eqref{eq:BarC-LS} and
\eqref{eq:Lloc}. Since the divergence-free condition is propagated in time, the claim
follows.
\end{proof}

\subsection{Conservation of momentum and tail estimates}
Fix $T>0$, thanks to \cref{prop:ImprovedDecay}, the density remains compactly supported in
space--time. More precisely there exists a compact set $K \subseteq \R^2$ (possibly depending on $T$), such that for all
$t \in [0,T)$,
\begin{equation}\label{eq:Kdef}
\operatorname{supp}\rho(t) \subseteq K .
\end{equation}
This property is sufficient to prove conservation of the total momentum for
immediately strong solutions. We emphasize that, for general initial densities,
momentum conservation in the whole space $\mathbb{R}^2$ is still unknown.

\begin{lemma}\label{lem:MomentCons}
  Let $(\rho,u)$ an immediately strong solution of \eqref{eq:INS} arising from \eqref{ass:weakData2} and $v$ an immediately strong solution to \eqref{eq:LSs} advected by $(\rho,u)$. Then, for almost every $t >0 $,
\begin{equation}\label{eq:momentum0}
\int_{\mathbb{R}^2} \rho(t,x)\,v(t,x)\,dx
=
\int_{\mathbb{R}^2} \rho_0(x)\,v_0(x)\,dx .
\end{equation}
Moreover,
\begin{equation}\label{eq:momentum}
\int_{\mathbb{R}^2} \rho(t,x)\,\dot v(t,x)\,dx = 0,
\qquad
\int_{\mathbb{R}^2} |x|\,|\rho(t,x)\,\dot u(t,x)|\,dx
\lesssim_K \|\rho \dot v(t)\|_{L^2(\mathbb{R}^2)} ,
\end{equation}
where $K$ is defined in \eqref{eq:Kdef}.
\end{lemma}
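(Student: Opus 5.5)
The plan is to test the weak formulation of \eqref{eq:LSs} with spatially constant test functions, localized by a cutoff that equals one on the compact set $K$ of \eqref{eq:Kdef}. The difficulty is that $\varphi\equiv e_i$ is not compactly supported. So fix $T>0$ and $R_0$ with $K\subseteq B_{R_0}$. Let $\chi_R(x)=\chi(x/R)$ with $\chi\in C_c^\infty$, $\chi=1$ on $B_1$, and take $R\ge R_0$. A divergence-free test function is needed, and $\chi_R e_i$ is not divergence-free, so I would use the pressure formulation. By \cref{lem:B1} and the bounds in $A_1$, the equation $\rho\dot v+\nabla P=\Delta v$ holds in $L^2_{\loc}((0,\infty);L^2)$, with $\nabla P\in L^2$ for a.e. $t$.

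First I prove the identity $\int\rho\dot v=0$ in \eqref{eq:momentum}. Pair the equation with $\chi_R e_i$ at a.e. time $t$. Since $\rho(t)$ is supported in $K$ and $\chi_R=1$ there, the term $\int\rho\dot v\chi_R$ equals $\int\rho\dot v$. The remaining terms are
\begin{align}
\int(\Delta v_i-\partial_iP)\chi_R\,\dd x = -\int \nabla v_i\cdot\nabla\chi_R\,\dd x+\int P\,\partial_i\chi_R\,\dd x .
\end{align}
The first term is bounded by $\|\nabla v\|_{L^2(B_{2R}\setminus B_R)}\|\nabla\chi_R\|_{L^2}$. Here $\|\nabla\chi_R\|_{L^2}$ is scale invariant in 2D and the annular norm tends to zero, so this term vanishes as $R\to\infty$. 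For the pressure term, I note that $\int\partial_i\chi_R=0$, so $P$ may be replaced by $P-c_R$ with $c_R$ its average on the annulus. Poincaré on the annulus then gives $|\int (P-c_R)\partial_i\chi_R|\lesssim R\|\nabla P\|_{L^2(\text{ann})}\cdot R^{-1}R=R\|\nabla P\|_{L^2(\text{ann})}$, which does not obviously decay. \textbf{This is the main obstacle.} I would fix it by writing $P$ via the Stokes structure, $\nabla P=\nabla(-\Delta)^{-1}\dive(\rho\dot v)$. Since $\rho\dot v$ is compactly supported in $K$, far from $K$ we have $P=\partial_j G*(\rho\dot v_j)$ with $G$ the log kernel. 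This gives $|\nabla P(x)|\lesssim |x|^{-2}\|\rho\dot v\|_{L^1}$ for $|x|\ge 2R_0$, so $R\|\nabla P\|_{L^2(\text{ann})}\lesssim R\cdot R^{-2}\cdot R\to$ bounded, with no decay. Better still, I would expand the kernel: the leading $|x|^{-1}$ term of $P$ is $c\,x\cdot\int\rho\dot v/|x|^2$ plus $O(|x|^{-2})$. Testing this leading term against $\partial_i\chi_R$ gives exactly $c'\int\rho\dot v_i$. This produces a relation $\int\rho\dot v=\lambda\int\rho\dot v$ with an explicit constant $\lambda\neq1$, which again forces $\int\rho\dot v=0$. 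Alternatively, and more simply, I would test the equation directly with $\nabla^\perp(\chi_R\,x^\perp_i)$-type divergence-free fields equal to $e_i$ on $B_R$, which kills the pressure. Their error terms are supported on the annulus, involve $\nabla^2(\chi_R x)\sim R^{-1}$, and are paired with $\nabla v$ in $L^2$; the vanishing Dirichlet tail then gives decay. I would adopt this second route.

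Next comes conservation \eqref{eq:momentum0}. Since $\rho$ solves the continuity equation and $\rho v\in L^2$ is supported in $K$, the distributional identity $\partial_t(\rho v)+\dive(\rho u\otimes v)=\rho\dot v$ holds. This is justified on $(\eps,T)$ by \cref{lem:B1} and \cref{cor:lingrowthu}. Integrating against $\chi_R$, the flux term vanishes because $\rho$ is supported in $K$. Hence $\frac{\dd}{\dd t}\int\rho v=0$ on $(\eps,T)$. To reach $t=0$, I would use the weak formulation with test functions $\psi(t)\,\nabla^\perp(\ldots)$ as above and let $\eps\to0$, using $\sqrt\rho v\in L^\infty L^2$ and $\rho\in C([0,T);L^p)$.

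Finally, the weighted bound in \eqref{eq:momentum} follows from Cauchy--Schwarz on $K$:
\begin{align}
\int|x||\rho\dot v|\,\dd x\le \sup_K|x|\,|K|^{1/2}\|\rho\dot v\|_{L^2}.
\end{align}
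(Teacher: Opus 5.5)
Your proposal is correct, but it runs the paper's argument in the opposite order.

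The paper proves \eqref{eq:momentum0} first, directly from the weak formulation. It tests with the time-independent divergence-free field $\varphi_N=\nabla^\perp\bigl((x\cdot a^\perp)\chi_N\bigr)$, which is the same field you end up adopting. The convective term vanishes because $\supp\rho(t)\subset K$, and the viscous term is bounded by $C|a|\,\|\nabla v\|_{L^2((0,t)\times A_N)}\to0$. Only afterwards does the paper derive $\int\rho\dot v=0$, by testing the transport equation with $v\chi_n$ (admissible by \cref{lem:B1}) and using that the momentum is constant.

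You instead obtain $\int\rho\dot v(t)=0$ first, at a single a.e.\ time, by pairing the strong form $\rho\dot v+\nabla P=\Delta v$ with that field. This is a clean, time-local argument that avoids the transport identity altogether. It does rely on the strong form with $\nabla P(t),\nabla^2 v(t)\in L^2$. By contrast, the paper's argument for \eqref{eq:momentum0} needs, beyond the support bound, only the weak formulation and $\nabla v\in L^2_{t,x}$.

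For \eqref{eq:momentum0} itself, your route through the time derivative of $\int\rho v$ on $(\eps,T)$ is superfluous. The tool you invoke to reach $t=0$ is the weak formulation tested with $\psi(t)\varphi_R$, followed by $R\to\infty$. That step alone already gives conservation, and it is precisely the paper's proof.

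Also, $\sqrt\rho\,v\in L^\infty L^2$ and $\rho\in C([0,T);L^p)$ do not by themselves give continuity of $t\mapsto\int\rho v$ at $t=0$. That continuity comes from the weak formulation with divergence-free test fields.
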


\begin{proof}
Fix $T>0.$ Since $v$ is an immediately strong solution of \eqref{eq:LSs}, it satisfies the weak
formulation of the momentum equation in the sense of \cref{def:Distsol}. By a
standard time--cutoff argument, for every
$\varphi \in C_{c,\sigma}^\infty(\mathbb{R}^2;\mathbb{R}^2)$ and for a.e.\ $t>0$,
\begin{equation}\label{eq:momentum_spatial_test}
\int_{\mathbb{R}^2} \rho(t)\,v(t)\cdot\varphi
-
\int_{\mathbb{R}^2} \rho_0\,v_0\cdot\varphi
=
-\int_0^t\!\!\int_{\mathbb{R}^2}
\rho\,u\otimes v : \nabla \varphi
+
\int_0^t\!\!\int_{\mathbb{R}^2}
\nabla v : \nabla \varphi .
\end{equation}

Let $N>0$ and let $\chi\in C_c^\infty(\mathbb{R}^2)$ be such that $\chi\equiv1$ on
$B(0,1)$ and $\chi\equiv0$ outside $B(0,2)$. Define
\begin{align}
\chi_N(x):=\chi\!\left(\frac{x}{N}\right),
\end{align}
so that $\chi_N\equiv1$ on $B(0,N)$, $\supp\chi_N\subset B(0,2N)$, and
$\|\nabla\chi_N\|_{L^\infty}\lesssim N^{-1}$. Fix $a\in\mathbb{R}^2$ and set
\begin{align}
\varphi_N(x):=\nabla^\perp\!\big((x\cdot a^\perp)\,\chi_N(x)\big).
\end{align}
Then $\dive\varphi_N=0$, $\supp\varphi_N\subset B(0,2N)$, and
$\varphi_N(x)\to a$ pointwise as $N\to\infty$. Moreover, defining
\begin{align}
A_N:=\{x\in\mathbb{R}^2:\ N\le |x|\le 2N\},
\end{align}
we have $\supp\nabla\varphi_N\subset A_N$ and
\begin{align}
\|\nabla\varphi_N\|_{L^2(\mathbb{R}^2)} \le C|a|,
\end{align}
with $C$ independent of $N$.

Using $\varphi_N$ as a test function in \eqref{eq:momentum_spatial_test}, and
observing that for $N$ large enough $\supp\rho(t)\cap A_N=\emptyset$, the
convective term vanishes and we obtain
\begin{equation}\label{eq:momentum_spatial_test2}
\int_{\mathbb{R}^2} \rho(t)\,v(t)\cdot\varphi_N
-
\int_{\mathbb{R}^2} \rho_0\,v_0\cdot\varphi_N
=
\int_0^t\!\!\int_{\mathbb{R}^2}
\nabla v : \nabla \varphi_N .
\end{equation}
By Cauchy--Schwarz,
\begin{align}
\left|\int_0^t\!\!\int_{\mathbb{R}^2} \nabla v : \nabla \varphi_N\right|
\le C|a|\,\|\nabla v\|_{L^2((0,t)\times A_N)},
\end{align}
which converges to zero as $N\to\infty$. Passing to the limit in \eqref{eq:momentum_spatial_test2} yields
\eqref{eq:momentum0}. Indeed, the limit is well defined since
$\rho v \in L^\infty((0,T);L^1(\mathbb{R}^2))$, as a consequence of the uniform compact support $\supp\rho(t)\subset K$.

Fix now $\varepsilon>0$ and define $v^n := v\,\chi_n$. By \cref{lem:B1},
\begin{align}
v^n \in H^1\bigl((\varepsilon,T); H^1(\mathbb{R}^2)\bigr),
\end{align}
and $v^n$ is an admissible test function for the weak formulation of the
transport equation. Therefore, for a.e.\ $s,t \in (\varepsilon,T)$,
\begin{align}\label{eq:Transportw}
\frac12 \int_{\mathbb{R}^2} \rho(t)\,v^n(t)
- \frac12 \int_{\mathbb{R}^2} \rho(s)\,v^n(s)
= \int_s^t\!\!\int_{\mathbb{R}^2}
\rho\,\partial_t v^n
+ \rho\,u \cdot \nabla v^n .
\end{align}
The regularity of $v$ given by \eqref{lem:B1} together with \eqref{eq:Kdef} allows us to
pass to the limit $n\to\infty$ in \eqref{eq:Transportw}. By
\eqref{eq:momentum0}, the left-hand side vanishes, and we obtain
\begin{align}
\int_s^t\!\!\int_{\mathbb{R}^2} \rho\,\dot v = 0
\qquad \text{for a.e.\ } s,t \in (\varepsilon,T).
\end{align}
By the freedom in the choice of $\eps>0$ and $T>0.$
we get the first identity in \eqref{eq:momentum}. The second estimate follows
from the compact support of $\rho(t)$ and the Cauchy--Schwarz inequality.
\end{proof}

The main consequence of \cref{lem:MomentCons} is the derivation of tail estimates for $(u,P)$.
These follow from the properties of the stationary Stokes system in $\R^2$
\begin{equation}\label{eq:Stokes}
    \begin{cases}
        -\nabla Q + \Delta w = f,\\
        \dive w = 0.
    \end{cases}
\end{equation}
with mean free source.

\begin{lemma}\label{lem:stokes}
Let $f\in L^{2}(\mathbb{R}^{2};\mathbb{R}^{2})$ satisfy
\begin{equation}\label{eq:fprop}
\int_{\mathbb{R}^{2}} f(y)\,dy = 0,
\qquad
\int_{\mathbb{R}^{2}} |y|\,|f(y)|\,dy \le \widehat C .
\end{equation}
Then there exists a distributional solution $(\tilde w,\tilde Q)$ of \eqref{eq:Stokes}
such that $\nabla \tilde w \in L^2(\mathbb R^2)$ and
\begin{align}
\abs{\tilde Q(x)} = \widehat C\,O(|x|^{-2}),
\qquad
\abs{\tilde w(x)} = \widehat C\,O(|x|^{-1}),
\qquad
\text{as } |x|\to\infty.
\end{align}
Moreover, if $(w,Q)$ is another distributional solution of \eqref{eq:Stokes}
such that $\nabla w \in L^2(\mathbb R^2)$, then there exists a constant vector
$c \in \mathbb R^2$ such that $w = \tilde w + c$ and $ \nabla Q = \nabla \tilde{Q}$
\end{lemma}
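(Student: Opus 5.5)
The plan is to build $(\tilde w,\tilde Q)$ explicitly as convolutions with the fundamental solution of the two-dimensional Stokes system. Then I would use the two moment conditions in \eqref{eq:fprop} to extract one order of decay beyond the kernels themselves. Finally, uniqueness would follow from a Liouville argument.

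\textbf{Construction.} Up to sign conventions, the Oseen tensor in $\R^2$ and its pressure kernel are
\begin{align}
G(x)=\frac{1}{4\pi}\Big(-\log|x|\,\mathrm{Id}+\frac{x\otimes x}{|x|^2}\Big),\qquad \Pi(x)=\frac{x}{2\pi|x|^2}.
\end{align}
I would set $\tilde w=G*f$ and $\tilde Q=\Pi*f$. The condition $\int|y||f|<\infty$ together with $f\in L^2$ makes the convolutions converge, since $\int\log(2+|y|)|f(y)|\,\dd y<\infty$. Moreover, $\nabla\tilde w=\nabla G*f$, where $\nabla G$ is homogeneous of degree $-1$. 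Writing $\nabla^2\tilde w$ as Calderón--Zygmund operators applied to $f$ gives $\nabla^2\tilde w\in L^2$. To get $\nabla\tilde w\in L^2$, I would use the mean-zero condition: it gives $\hat f(\xi)=O(|\xi|)$ near $0$ (because $\int|y||f|<\infty$), so $\widehat{\nabla\tilde w}=O(|\xi|^{-1}|\hat f|)$ is bounded near the origin. Combined with $\hat f\in L^2$, this yields $\nabla\tilde w\in L^2$. The equations $\Delta\tilde w-\nabla\tilde Q=f$ and $\dive\tilde w=0$ then hold distributionally by the defining property of $(G,\Pi)$.

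\textbf{Decay.} Using $\int f=0$, I would write, for $|x|$ large,
\begin{align}
\tilde w(x)=\int_{\R^2}\big(G(x-y)-G(x)\big)f(y)\,\dd y,
\end{align}
and the analogous formula for $\tilde Q$ with $\Pi$. I would split the integral into three regions.

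\begin{enumerate}
\item On $|y|\le|x|/2$, the mean value theorem gives $|G(x-y)-G(x)|\lesssim|y|/|x|$ and $|\Pi(x-y)-\Pi(x)|\lesssim|y|/|x|^2$. Integrating against $|f|$ produces $\widehat C|x|^{-1}$ and $\widehat C|x|^{-2}$ respectively.
\item On the region $|y|>|x|/2$, $|x-y|\ge|x|/4$, the matrix part of $G$ is bounded. The logarithmic part satisfies $|\log|x-y|-\log|x||\le\log 4+\log(3|y|/|x|)\lesssim|y|/|x|$. Hence this region also contributes $\lesssim|x|^{-1}\int|y||f|\le\widehat C|x|^{-1}$.
\item The remaining near-diagonal region is $|x-y|<|x|/4$, where $|y|\sim|x|$.
\end{enumerate}

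\textbf{Main obstacle.} I expect the near-diagonal region to be the hard step, because there $G(x-y)$ has a logarithmic singularity and $\Pi(x-y)$ has a $|x-y|^{-1}$ singularity. I would first use the tail bound $\int_{|y|\ge|x|/2}|f|\le 2\widehat C/|x|$ coming from the first moment. The term $\int G(x)f$ over this region then costs $\log|x|\cdot\widehat C/|x|$. For the singular part, I would split further into $|x-y|<1$ and $1\le|x-y|<|x|/4$. On $|x-y|<1$, Cauchy--Schwarz bounds the contribution by $\|\log|\cdot|\|_{L^2(B_1)}\|f\|_{L^2(B_1(x))}$. On the other annulus, I would again use the tail bound $\int_{|y|\ge|x|/2}|f|\le 2\widehat C/|x|$.

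Obtaining exactly $O(|x|^{-1})$ rather than $O(|x|^{-1}\log|x|)$, and converting the local $L^2$ term into a rate, is where the argument is delicate. What I would try first is to pair the $\log|x|$ loss from $G(x)$ with the $\log|x-y|$ term. Writing $\log|x-y|-\log|x|=\log(|x-y|/|x|)$, the combined contribution from $1\le|x-y|<|x|/4$ is bounded by $\log|x|\int_{|y|\sim|x|}|f|$. I would then try to absorb this into the weighted moment by arranging that the first-moment integral effectively controls $\int|y|\log(2+|y|)|f|$. If the pure $L^2$ local term cannot be given a rate, I would note that in the intended application ($f=\rho\dot u$, supported in the compact set $K$) this term vanishes for $|x|$ large, and that the far-field estimates above are uniform. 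The pressure is treated identically: the kernel difference gains $|y|/|x|^2$, and the near-diagonal part is handled by the same tail bound via $\int_{|x-y|<|x|/4}|f(y)|/|x-y|\,\dd y$.

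\textbf{Uniqueness.} If $(w,Q)$ is another solution with $\nabla w\in L^2$, set $W=w-\tilde w$ and $\Theta=Q-\tilde Q$. These satisfy $\Delta W=\nabla\Theta$ and $\dive W=0$, hence $\Delta\Theta=0$ and $\Delta^2W=0$ in $\mathcal D'$. Thus $\nabla W$ is biharmonic and lies in $L^2(\R^2)$, so Liouville's theorem gives $\nabla W=0$, i.e.\ $W=c$ is a constant vector. Then $\nabla\Theta=\Delta W=0$, which gives $\nabla Q=\nabla\tilde Q$.
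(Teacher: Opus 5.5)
Your construction, your Fourier-side proof that $\nabla\tilde w\in L^2$ (from $\hat f(0)=0$ and $|\nabla\hat f|\le\int|y||f|$), your estimates on $\{|y|\le |x|/2\}$ and on $\{|y|>|x|/2,\ |x-y|\ge |x|/4\}$, and your biharmonic Liouville argument for uniqueness are all correct. The gap is the near-diagonal region $\{|x-y|<|x|/4\}$, and neither of your proposed patches closes it. First, absorbing the $\log|x|$ loss would require $\int |y|\log(2+|y|)|f|\,\dd y<\infty$, and this does not follow from $\int|y||f|\le\widehat C$. Second, for the pressure, $\int_{|x-y|<|x|/4}|f(y)|\,|x-y|^{-1}\dd y$ cannot be controlled by the $L^1$ tail bound, because the kernel is unbounded at $y=x$. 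It cannot be controlled by Cauchy--Schwarz either, because $|z|^{-1}\notin L^2(B_1)$ in two dimensions. For the same reason, $\Pi*f$ converges only almost everywhere, not at every point.

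This gap cannot be repaired: under the stated hypotheses both decay claims are false.

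For the pressure, let $\Phi(z)=\log\log(e/|z|)$ for $|z|\le 1$ and $\Phi=0$ otherwise. Then $\Phi\in H^1(\R^2)$ and $\Phi$ is unbounded at $0$. Put $f=\sum_k\epsilon_k\nabla\Phi(\cdot-x_k)$ with $|x_k-x_m|\ge 2$, $|x_k|\to\infty$ and $\epsilon_k=2^{-k}(1+|x_k|)^{-1}$. Then $f\in L^2$, $\int f=0$, $\int|y||f|<\infty$, and $(w,Q)=(0,-\sum_k\epsilon_k\Phi(\cdot-x_k))$ solves \eqref{eq:Stokes}. By the (correct) uniqueness part, every admissible pressure differs from this $Q$ by a constant, so it is essentially unbounded near every $x_k$.

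For the velocity, take $f=\sum_k a_k(\phi(\cdot-x_k)-\phi)e_1$ with $\phi$ a radial unit-mass bump, $x_k=e^{k^3}e_1$ and $a_k=k^{-2}e^{-k^3}$. This gives $|x_k|\,|\tilde w(x_k)|\ge k/(4\pi)-C$. Since $\tilde w\to0$ along $\R e_2$, no additive constant helps. So the $\log|x|$ loss you found is real.

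The lemma therefore needs an extra hypothesis, and your fallback is the right one. If $\supp f\subset B_{R_0}$, which holds in the only application ($f=\rho\dot v(t)$ with $\supp\rho(t)\subset K$), then for $|x|\ge 2R_0$ only your first region occurs. It gives $|\tilde w(x)|\lesssim\widehat C|x|^{-1}$ and $|\tilde Q(x)|\lesssim \widehat C|x|^{-2}$. This compactly supported case is also all the paper's proof actually covers. It Taylor-expands the kernels ``as $|y|\ll|x|$'', and it derives $\nabla\tilde w\in L^2$ near infinity from the resulting pointwise bound. By contrast, your Fourier argument for $\nabla\tilde w\in L^2$ and your Liouville argument for uniqueness hold under the stated hypotheses.
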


\begin{proof}
The fundamental solution of the two-dimensional stationary Stokes system is given by
(see \cite[IV.2.4]{Galdi2011IntroNS})
\begin{equation}\label{eq:Stokeslet2D}
q(x)= \frac{1}{2\pi}\,\frac{x}{|x|^{2}}, \qquad 
W(x) =
-\frac{1}{4\pi}
\left(
\log\frac{\mathrm{Id}}{|x|}
+
\frac{x \otimes x }{|x|^{2}}
\right).
\end{equation}
Here $\mathrm{Id}$ denotes the identity matrix in $\mathbb{R}^{2\times 2}$.
Convolving with the source term $f$, we define
\begin{equation}\label{eq:Stokeslet2D2}
\tilde{Q}(x) = \int_{\mathbb{R}^2} q(x-y)\cdot f(y)\,\dd y,
\quad
\tilde{w}(x) = \int_{\mathbb{R}^2} W(x-y)\,f(y)\,\dd y.
\end{equation}

Using Taylor expansions, we obtain, as $\abs{y} \ll |x|$ and $\abs{x} \to \infty$,
\begin{align}
\begin{aligned}
 q(x-y)& =O(|x|^{-1}) + y\,O(|x|^{-2}), \\ 
 W(x-y)& =O(\log|x|) + y\,O(|x|^{-1}), \\ 
\nabla W(x-y)& =O(|x|^{-1}) + y\,O(|x|^{-2}).
\end{aligned}
\end{align}
Therefore,
\begin{align}
\tilde{Q}(x)
=
O(|x|^{-1}) \int_{\mathbb{R}^2} f(y)\,\dd y
+
O(|x|^{-2}) \int_{\mathbb{R}^2} y \cdot f(y)\,\dd y,
\end{align}
which, by \eqref{eq:fprop}, yields the first estimate in \eqref{eq:Stokes}.
The estimate $\tilde{w}(x)=O(|x|^{-1})$ follows in the same way.

Again by the previous expansions, we have
$\nabla \tilde{w}(x)=O(|x|^{-2})$ as $|x|\to\infty$, hence
$\nabla \tilde{w}\in L^2(\mathbb{R}^2\setminus B_R(0))$ for $R$ big enough.
Moreover, by Stokes estimates,
\begin{align}
\|\nabla \tilde{Q}, \nabla^2 \tilde{w}\|_{L^2(\mathbb{R}^2)} \lesssim \|f\|_{L^2(\mathbb{R}^2)},
\end{align}
which implies $\nabla \tilde{w}\in L^2_{\mathrm{loc}}(\mathbb{R}^2)$ and therefore
$\nabla \tilde{w}\in L^2(\mathbb{R}^2)$.

Finally, let $(w,Q)$ be another distributional solution of \eqref{eq:Stokes} such that $\nabla w \in L^2(\mathbb{R}^2)$. Then $Q-\tilde Q$ is harmonic and has gradient in $L^2(\mathbb{R}^2)$ (by the Stokes estimates), hence it is constant.
Since $\nabla Q=\nabla \tilde Q$, it follows that $w-\tilde w$ is harmonic with gradient in $L^2(\mathbb{R}^2)$, and therefore it is constant.
\end{proof}

Collecting the two previous lemma \cref{lem:MomentCons} and \cref{lem:stokes} we get the following proposition.

\begin{proposition}\label{prop:tailest}
 Let $(\rho,u)$ an immediately strong solution of \eqref{eq:INS} arising from \eqref{ass:weakData2} and $v$ an immediately strong solution to \eqref{eq:LSs} advected by $(\rho,u)$. Then, up to redefining the pressure (of \eqref{eq:LSs}) by a constant, for almost
every $t>0 $ we have, as $|x|\to\infty$,
\begin{align}
\label{eq:pressure-decay}
|Q(t,x)|
&=
\|\rho\,\dot v(t)\|_{L^2(\mathbb{R}^2)}\,O(|x|^{-2}),\\
\label{eq:velocity-decay}
|v(t,x)|
&=
\|\rho\,\dot v(t)\|_{L^2(\mathbb{R}^2)}\,O(|x|^{-1}) + c(t).
\end{align}

\end{proposition}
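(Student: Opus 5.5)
The plan is to read, for almost every fixed $t>0$, the equation for $v$ in \eqref{eq:LSs} as a stationary Stokes system of the form \eqref{eq:Stokes} with source $f:=\rho\,\dot v(t)$, and then apply \cref{lem:stokes}. More precisely, the momentum equation can be rewritten as
\begin{align}
\Delta v(t) - \nabla Q(t) = \rho(t)\,\dot v(t), \qquad \dive v(t)=0,
\end{align}
where $Q$ is the pressure of \eqref{eq:LSs}. By \cref{def:immstrongsol-LS}, for a.e.\ $t>0$ we have $\sqrt{\rho}\,\dot v(t)\in L^2$, hence $f=\rho\dot v(t)\in L^2$ since $0\le\rho\le1$. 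The energy bound also gives $\nabla v(t)\in L^2(\R^2)$ for a.e.\ $t$. The distributional formulation, together with the regularity from \cref{lem:B1}, allows us to identify the equation pointwise in time. Indeed, $\rho\partial_t v+\rho u\cdot\nabla v=\rho\dot v$ holds in $L^2_{\loc}$, and the transport equation converts the conservative form into this nonconservative one.

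Next, I would check the two hypotheses \eqref{eq:fprop}. The zero-mean condition $\int f=\int\rho\dot v(t)=0$ is exactly the first identity in \eqref{eq:momentum} of \cref{lem:MomentCons}. The first-moment bound $\int|y||f(y)|\,dy\lesssim_K\|\rho\dot v(t)\|_{L^2}$ follows from the compact support $\supp\rho(t)\subseteq K$ given by \eqref{eq:Kdef} and \cref{prop:ImprovedDecay}, combined with Cauchy--Schwarz, as in the second estimate of \eqref{eq:momentum}. Note that the latter is written there for $\dot u$, but the same argument applies verbatim to $\dot v$. We may therefore take $\widehat C\simeq\|\rho\dot v(t)\|_{L^2}$, with a constant depending on $K$, hence on $T$ for $t<T$.

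\cref{lem:stokes} then yields a solution $(\tilde w,\tilde Q)$ satisfying $|\tilde Q|=\widehat C\,O(|x|^{-2})$ and $|\tilde w|=\widehat C\,O(|x|^{-1})$. The uniqueness part of the same lemma gives $v(t)=\tilde w+c(t)$ and $\nabla Q(t)=\nabla\tilde Q$, since $\nabla v(t)\in L^2$. Consequently $Q(t)-\tilde Q$ is constant in space, and redefining the pressure by this time-dependent constant gives \eqref{eq:pressure-decay}. The identity $v(t)=\tilde w+c(t)$ gives \eqref{eq:velocity-decay}.

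The main obstacle is the rigorous identification of the fixed-time stationary Stokes equation with $L^2$ forcing. One needs that $(v(t),Q(t))$ is a distributional solution of \eqref{eq:Stokes} with $f=\rho\dot v(t)$ for a.e.\ $t$, and that the constants hidden in the $O(\cdot)$ bounds are linear in $\widehat C$. The linearity follows from the explicit Taylor expansion of the kernels \eqref{eq:Stokeslet2D}, where the error terms are controlled precisely by $\int|y||f|$. The $|y|\gtrsim|x|$ region must also be handled, and this is where the compact support of $f$ is used. For the identification, I would test \eqref{eq:LSs} with $\theta(s)\varphi(x)$, use \cref{lem:B1} to move the time derivative onto $v$, and let $\theta$ concentrate at $t$ through Lebesgue points.
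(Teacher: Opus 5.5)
Your proposal is correct and follows the same route as the paper: take $f=\rho\dot v(t)$, verify \eqref{eq:fprop} via \cref{lem:MomentCons} and the compact support of $\rho(t)$, apply \cref{lem:stokes}, and use its uniqueness part (with $\nabla v(t)\in L^2$) to obtain $v(t)=\tilde w+c(t)$, and $Q(t)=\tilde Q$ after subtracting a constant. Two of your remarks fill in details the paper leaves implicit: identifying the fixed-time Stokes equation through \cref{lem:B1}, and observing that the compact support of $f$ is what makes the $O(\cdot)$ constants linear in $\widehat C$ for $|x|$ large.
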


\begin{proof}
Set $f(t,x):=(\rho\dot v)(t,x)$. By \cref{lem:MomentCons},
in particular by \eqref{eq:momentum}, the conditions \eqref{eq:fprop} are satisfied
for almost every $t>0$. Hence, there exists a distributional solution
$(\tilde w(t),\tilde Q(t))$ of
\begin{equation}
\begin{cases}
-\nabla \tilde Q(t) + \Delta \tilde w(t) = \rho \dot v(t),\\
\dive \tilde w(t) = 0,
\end{cases}
\end{equation}
enjoying the decay properties stated in \cref{lem:stokes}.

Since $\nabla v(t)\in L^2(\mathbb{R}^2)$, the uniqueness result in
\cref{lem:stokes} implies that
\begin{align}
v(t)=\tilde w(t)+c(t)
\quad\text{and}\quad
\nabla Q(t)=\nabla \tilde Q(t).
\end{align}
Therefore, up to redefining the pressure by an additive constant, we may assume $Q(t)=\tilde Q(t)$, which concludes the proof.
\end{proof}

\begin{corollary}\label{cor:tailEst}
Let $(\rho,u)$ an immediately strong solution of \eqref{eq:INS} arising from \eqref{ass:weakData2} and $v$ an immediately strong solution to \eqref{eq:LSs} advected by $(\rho,u)$. Let $h\in \dot H^1(\mathbb{R}^2)$ be divergence free and such that
$h(x)=O(1)$ as $|x|\to\infty$. Then, for almost every $t>0)$,
\begin{align}
\int_{\mathbb{R}^2} \rho \dot v(t,x)\cdot h(x)\,\dd x
=
- \int_{\mathbb{R}^2} \nabla v(t,x):\nabla h(x)\,\dd x .
\end{align}
\end{corollary}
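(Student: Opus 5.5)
We will derive the identity from the momentum equation of \eqref{eq:LSs} at a fixed good time $t$, namely $\rho\dot v(t) = \Delta v(t) - \nabla Q(t)$, tested against $h\chi_N$, where $\chi_N(x)=\chi(x/N)$ is the cutoff used in the proof of \cref{lem:MomentCons}. We fix $t>0$ outside a null set so that $\nabla^2 v(t),\nabla Q(t),\rho\dot v(t)\in L^2(\R^2)$, $\nabla v(t)\in L^2(\R^2)$, and the conclusions of \cref{lem:MomentCons} and \cref{prop:tailest} hold; all of this is guaranteed by \cref{def:immstrongsol-LS}. Since $h\chi_N$ is compactly supported and lies in $H^1$ (because $h$ is locally $L^2$, being $\dot H^1$ and bounded at infinity), integrating by parts gives
\begin{align}
\int_{\R^2}\rho\dot v\cdot h\chi_N = -\int_{\R^2}\nabla v:\nabla h\,\chi_N - \int_{\R^2}\nabla v:(h\otimes\nabla\chi_N) + \int_{\R^2} Q\, h\cdot\nabla\chi_N,
\end{align}
where we have used $\dive h=0$, so that $\dive(h\chi_N)=h\cdot\nabla\chi_N$.

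Next we send $N\to\infty$. By \cref{prop:ImprovedDecay}, $\rho(t)$ has compact support, so the left-hand side equals $\int\rho\dot v\cdot h$ once $N$ is large, and this integral is finite because $h$ is bounded on $\operatorname{supp}\rho(t)$ (in the sense of $L^2$ on compacts) and $\rho\dot v\in L^2$. The first term on the right converges to $-\int\nabla v:\nabla h$ by dominated convergence, since $\nabla v,\nabla h\in L^2$.

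The main obstacle is to show that the two boundary terms, which are supported in the annulus $A_N=\{N\le|x|\le 2N\}$, vanish in the limit. For the pressure term we use \cref{prop:tailest}, which gives $|Q(t,x)|\lesssim|x|^{-2}$ for large $|x|$; combined with $h=O(1)$, $|\nabla\chi_N|\lesssim N^{-1}$ and $|A_N|\sim N^2$, this yields a bound of order $N^{-2}\cdot N^{-1}\cdot N^2=N^{-1}\to0$. For the viscous term we apply Cauchy--Schwarz on $A_N$:
\begin{align}
\Big|\int\nabla v:(h\otimes\nabla\chi_N)\Big|\lesssim \|\nabla v(t)\|_{L^2(A_N)}\,\|h\|_{L^\infty(A_N)}\,N^{-1}|A_N|^{1/2}\lesssim\|\nabla v(t)\|_{L^2(A_N)}.
\end{align}
The right-hand side tends to $0$ because $\nabla v(t)\in L^2(\R^2)$ and the annuli $A_N$ escape to infinity. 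Note that the constant $c(t)$ in \eqref{eq:velocity-decay} never enters the argument, since only $\nabla v$ appears.

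Putting these limits together yields the claimed identity for almost every $t>0$.
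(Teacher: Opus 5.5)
Your proposal is correct and follows essentially the same route as the paper: you test the a.e.-in-time momentum equation against $h\chi_N$ and integrate by parts using $\dive h=0$. The two terms on the annulus $A_N$ then vanish, the pressure term by the $O(|x|^{-2})$ decay from \cref{prop:tailest} (which rests on the momentum conservation of \cref{lem:MomentCons}) and the viscous term by Cauchy--Schwarz, with only the cosmetic difference that you bound $h$ in $L^\infty(A_N)$ where the paper uses $\|h\|_{L^2(A_N)}\lesssim N$.
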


\begin{proof}
Let $\chi_N$ be the cutoff function introduced in the proof of
\cref{lem:MomentCons}, and set $h_N:=h\,\chi_N$, so that $h_N\in L^2(\mathbb{R}^2)$.
Since for every $\varepsilon>0$ we have
\begin{align}\label{eq:moma.e.}
\rho \dot v + \nabla P = \Delta v
\quad\text{in } L^2((\varepsilon,\infty);L^2(\mathbb{R}^2)),
\end{align}
we may multiply \eqref{eq:moma.e.} against $h_N$. Integrating by parts and using that
$\dive h=0$, we obtain for almost every $t\in(0,\infty)$
\begin{align}
\int_{\mathbb{R}^2} \rho \dot v(t)\cdot h
+ \int_{\mathbb{R}^2} \nabla v(t):\nabla h
=
- \lim_{N\to\infty}
\Bigg(
\int_{\mathbb{R}^2} Q(t)\, h\cdot\nabla\chi_N
+
\int_{\mathbb{R}^2} \nabla v(t)\, (h\otimes\nabla\chi_N)
\Bigg).
\end{align}

By \cref{prop:tailest}, for almost every $t>0$ and for $|x|\ge N$ with big $N$,
\begin{align}
|Q(t,x)| \lesssim \frac{1}{N^2},
\qquad
|v(t,x)| \lesssim 1 .
\end{align}
Therefore,
\begin{align}
\int_{\mathbb{R}^2} Q(t)\, h\cdot\nabla\chi_N
&\le
\|Q(t)\,h\|_{L^1(A_N)}\,\frac{1}{N}
\lesssim
\frac{1}{N^2}\,|A_N|\,\frac{1}{N}
\;\longrightarrow\;0 ,
\end{align}
and
\begin{align}
\int_{\mathbb{R}^2} \nabla v(t)\,(h\otimes\nabla\chi_N)\,\dd x
&\le
\|\nabla v(t)\|_{L^2(A_N)}\,\|h\|_{L^2(A_N)}\,\frac{1}{N}
\lesssim
\|\nabla v(t)\|_{L^2(A_N)}
\;\longrightarrow\;0 .
\end{align}
This concludes the proof.
\end{proof}

\section{\texorpdfstring{Existence of $L^2(\mathbb{R}^2)$ solutions}{Existence of L2(R2) solutions}}
In this section we prove an existence result for \eqref{eq:INS}.
Throughout, we assume
\begin{align}\label{eq:initial-data-L2}
\rho_0 = \mathbf{1}_D,
\qquad 
D \subset \R^2 \text{ bounded Lipschitz domain},
\qquad 
u_0 \in L_\sigma^2(\R^2).
\end{align}

\subsection{Existence by lifting}
As described in the introduction, our strategy is to approximate the vacuum density by strictly positive
densities. For every $n \in \N$, we set
\begin{align}\label{eq:rho0-approx}
\rho_0^n \coloneqq \mathbf{1}_D + \frac{1}{n},
\end{align}
and denote by $(\rho_n,u_n)$ the unique solution of \eqref{eq:INS}
with initial data $(\rho_0^n,u_0)$.

By \cref{thm:sect2}, and in particular by \eqref{eq:A0u}, there exists a constant $C_u = C ( \|\sqrt{\rho^n_0}u_0\|_{L^2(\R^2)})$ such that
\begin{align}
\sup_{i\in\{0,1,2,3\}} \sup_{t>0} A_i^0(t,u_n)
\le 
C_u \; \|\sqrt{\rho_0^n}\,u_0\|_{L^2(\R^2)}.
\end{align}
Since
\begin{align}
\|\sqrt{\rho_0^n}\,u_0\|_{L^2(\R^2)}
\le 
2 \|u_0\|_{L^2(\R^2)},
\end{align}
and since $C_u$ is non-decreasing in its argument, there exists a constant $C>0$, depending only on
$\|u_0\|_{L^2(\R^2)}$, such that
\begin{align}\label{eq:uniform-Ai-un}
\sup_{i\in\{0,1,2,3\}} \sup_{t>0} A_i^0(t,u_n)
\le C.
\end{align}

Thanks to the uniform estimates \eqref{eq:uniform-Ai-un} we can show compactness properties of the sequence $(\rho_n,u_n)$.

\begin{proposition}[Existence with $L^2$ velocity]\label{prop:existenceL2}
Let $(\rho_0,u_0)$ satisfy \eqref{eq:initial-data-L2}, and let $(\rho_n,u_n)$ be the unique solution to \eqref{eq:INS} associated with the lifted initial data $(\rho_0^n,u_0)$.
Then, up to a subsequence, $(\rho_n,u_n)$ converges to a pair $(\rho,u)$ which is a Leray-Hopf solution to \eqref{eq:INS} with initial data $(\rho_0,u_0)$.

Moreover, the following convergences hold in $\mathcal D'\bigl((0,\infty)\times\R^2\bigr)$:
\begin{align}\label{eq:distributional-convergences}
 u_n \to  u,
\qquad\sqrt{\rho_n} u_n \to \sqrt{\rho} u,
\qquad
\dot u_n \to \dot u,
\qquad
\sqrt{\rho_n} \dot u_n \to \sqrt{\rho} \dot u.
\end{align}
\end{proposition}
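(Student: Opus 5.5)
The plan is to extract limits from the uniform bounds \eqref{eq:uniform-Ai-un} and then use the dynamic Gagliardo--Nirenberg estimate to identify the nonlinear terms.

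\emph{Step 1: compactness of the densities.} The energy inequality for $(\rho_n,u_n)$ gives $\nabla u_n$ bounded in $L^2((0,\infty)\times\R^2)$ and $\sqrt{\rho_n}u_n$ bounded in $L^\infty L^2$. Moreover $0\le\rho_n\le 1+1/n$. Up to a subsequence, $\rho_n\rightharpoonup^\ast\rho$ in $L^\infty$ and $\nabla u_n\rightharpoonup\nabla u$ in $L^2L^2$.

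\emph{Step 2: local strong compactness of the velocities.} Control of $u_n$ itself is only local, and this is where the lifting helps. Since $\rho_n\ge 1/n$, Lemma~\ref{lem:nondegdensity} and Proposition~\ref{prop:localLp} apply with constants uniform in $n$: their proofs use only $\rho_n\ge\mathbf 1_{X_n(t,D)}$, the $(A)$-property of $D$, and the bound on $M$. Hence, on $D_{R,t}$,
\[
t^{\frac12-\frac1p}\|u_n(t)\|_{L^p(D_{R,t})}\lesssim t^{1/2}\|\nabla u_n(t)\|_{2}+\|\sqrt{\rho_n}u_n(t)\|_{2}.
\]
The same bound holds with $\dot u_n$ in place of $u_n$, using $A_1^0,A_2^0$. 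To reach an arbitrary compact set $K$, I argue as in Lemma~\ref{lem:Linfty}, via Morrey and the $L^4$ bounds on $\nabla u_n$. This gives $t^{3/4}u_n/(1+|x|)$ bounded in $L^\infty$, and likewise for $\dot u_n$ in a time-weighted $L^2$. Therefore $u_n$ is bounded in $L^\infty((\varepsilon,T);H^1(K))$. Writing $\partial_tu_n=\dot u_n-u_n\cdot\nabla u_n$ as in Lemma~\ref{lem:B1}, $\partial_tu_n$ is bounded in $L^2((\varepsilon,T);L^2(K))$. Aubin--Lions then gives $u_n\to u$ strongly in $L^2_{\loc}((0,\infty)\times\R^2)$, and a diagonal argument handles $\varepsilon\downarrow0$, $K\uparrow\R^2$, $T\uparrow\infty$. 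The $A_3^0$ bound ($\nabla^2\dot u_n$, $\nabla\dot u_n$) together with $\rho_n\ddot u_n$ similarly gives local compactness of $\dot u_n$. I expect this to be the main obstacle, because $\partial_t\dot u_n$ is only controlled through $\sqrt{\rho_n}\ddot u_n$, which degenerates in vacuum. I would instead bound $\partial_t\dot u_n$ in a negative space, $L^1(H^{-1}_{\loc})$, via the differentiated Stokes system \eqref{eq:momA3}, and run Aubin--Lions in that weaker form.

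\emph{Step 3: identifying the density limit and passing to the limit.} Strong local convergence of $u_n$ combined with $\rho_n\rightharpoonup^\ast\rho$ shows that $\rho$ solves the continuity equation with field $u$. Since $u$ satisfies \eqref{eq:lineargrowthu} by the uniform bounds, it is the unique renormalized solution. DiPerna--Lions stability then yields $\rho_n\to\rho$ strongly in $C([0,T];L^p_{\loc})$. Because the limit is renormalized with initial datum $\mathbf 1_D$, it satisfies $\rho^2=\rho$, so $\rho=\mathbf 1_{X(t,D)}$. Consequently $\sqrt{\rho_n}\to\sqrt\rho$ strongly in $L^p_{\loc}$, so $\sqrt{\rho_n}u_n\to\sqrt\rho u$ and $\sqrt{\rho_n}\dot u_n\to\sqrt\rho\dot u$ in $\mathcal D'$. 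The limit $\dot u_n\to\dot u$ follows from $\partial_tu_n\to\partial_tu$ in $\mathcal D'$ and $u_n\cdot\nabla u_n\to u\cdot\nabla u$ (strong times weak). The convective term $\rho_nu_n\otimes u_n$ passes to the limit on compact sets. Finally, the energy inequality and the $A_i^0$ bounds follow from weak lower semicontinuity. The initial datum is recovered because $\rho_nu_n$ is equicontinuous in time in $H^{-1}_{\loc}$ and $\rho_0^nu_0\to\rho_0u_0$ in $L^2_{\loc}$.
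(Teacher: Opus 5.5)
Your proposal is correct and follows essentially the paper's route: uniform $A_i^0$ bounds, the localized Poincar\'e/strip estimate applied to $\rho_n-\frac1n$ (which is exactly your observation $\rho_n\ge\mathbf 1_{X_n(t,D)}$), local bounds on $u_n$ and on $\partial_t u_n=\dot u_n-u_n\cdot\nabla u_n$, then Aubin--Lions, DiPerna--Lions stability for $\rho_n$, and identification of the limits. You should drop the strong compactness of $\dot u_n$ that you flag as the main obstacle, for two reasons: it is not needed, since weak-$\ast$ compactness of $\dot u_n$ in $L^\infty_{\loc}((0,\infty);H^1_{\loc}(\R^2))$ plus strong convergence of $\sqrt{\rho_n}$ already gives $\sqrt{\rho_n}\,\dot u_n\to\sqrt\rho\,\dot u$ in $\mathcal D'$ (as in the paper); and the route you sketch via \eqref{eq:momA3} controls only $\rho_n\ddot u_n$, so it gives no $n$-uniform bound on $\partial_t\dot u_n$ where $\rho_n=\frac1n$.
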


\begin{proof}
For the reader's convenience we recall that, thanks to the uniform estimates \eqref{eq:uniform-Ai-un}, the following bounds hold uniformly in \(n\) and \(t>0\):
\begin{align}\label{eq:nreadocnv}
\begin{aligned}
&\|\sqrt{\rho_n}\,u_n(t)\|_{L^2(\R^2)}^2
+ t\|\nabla u_n(t)\|_{L^2(\R^2)}^2
+ t^2\|\sqrt{\rho_n}\dot u_n(t), \nabla^2 u_n(t)\|_{L^2(\R^2)}^2
+ t^3\|\nabla \dot u_n(t)\|_{L^2(\R^2)}^2 
\le C,
\\
& \int_0^t
\Big(
\|\nabla u_n\|_{L^2(\R^2)}^2
+ s\|\sqrt{\rho_n}\dot u_n\|_{L^2(\R^2)}^2
+ s^2\|\nabla \dot u_n\|_{L^2(\R^2)}^2\Big)\,ds
\le C.
\end{aligned}
\end{align}
Fix \(T>0\) and \(0<\delta<T\).

\underline{Step 1.} Additional estimates for \((\rho_n,u_n)\).

Choose \(R>0\) sufficiently large. We claim that, for every \(w\) and almost every $t \in (0,T)$
\begin{align}\label{eq:step1}
\|w\|_{L^2(B_R)}
\lesssim
\|\nabla w\|_{L^2(\R^2)}
+
\|\sqrt{\rho_n(t)}w\|_{L^2(\R^2)},
\end{align}
where the implicit constant may depend on \(T\), \(R\), \(D\), and \(\|u_0\|_{L^2(\R^2)}\), but is independent of \(n\).
We postpone the proof of \eqref{eq:step1} to the final step.

Choosing \(w=u_n(t)\) in \eqref{eq:step1} and multiplying by \(\sqrt{t}\), we obtain
\begin{align}\label{eq:L2loc_un}
\sqrt{t}\,\|u_n(t)\|_{L^2(B_R)}
\lesssim
\sqrt{t}\,\|\nabla u_n(t)\|_{L^2(\R^2)}
+
\sqrt{t}\,\|\sqrt{\rho_n}u_n(t)\|_{L^2(\R^2)}.
\end{align}
Combining \eqref{eq:L2loc_un} with \eqref{eq:nreadocnv}, we deduce, uniformly in \(n\in\mathbb N\),
\begin{align}\label{eq:local-apriori-goal1}
\sqrt{t}\,u_n \in L^\infty\bigl((0,T);H^1(B_R)\bigr),
\qquad
u_n \in L^\infty\bigl((\delta,T);H^2(B_R)\bigr).
\end{align}
Arguing in the same way with \(w=\dot u_n(t)\), we infer
\begin{align}\label{eq:local-apriori-goal2}
t\,\dot u_n \in L^2\bigl((0,T);H^1(B_R)\bigr),
\qquad
\dot u_n \in L^\infty\bigl((\delta,T);H^1(B_R)\bigr).
\end{align}
We now estimate the time derivative \(\partial_t u_n\), uniformly in \(n\in\mathbb N\). Since
\begin{align}
\partial_t u_n=\dot u_n-(u_n\cdot\nabla)u_n,
\end{align}
it remains to control the nonlinear term.

First, by Hölder's inequality,
\begin{align}
s\,\|(u_n(s)\cdot\nabla u_n(s))\|_{L^1(B_R)}
&\lesssim
s\,\|u_n(s)\|_{L^2(B_R)}\|\nabla u_n(s)\|_{L^2(B_R)}
\lesssim
\bigl(\sqrt{s}\,\|u_n(s)\|_{H^1(B_R)}\bigr)^2,
\end{align}
and the right-hand side belongs to \(L^\infty(0,T)\) thanks to \eqref{eq:local-apriori-goal1}.

Moreover, for every \(s\in(\delta,T)\),
\begin{align}
\|\nabla[(u_n(s)\cdot\nabla)u_n(s)]\|_{L^1(B_R)}
&\lesssim
\|\nabla u_n(s)\|_{L^2(B_R)}^2
+
\|u_n(s)\|_{L^2(B_R)}\|\nabla^2 u_n(s)\|_{L^2(B_R)},
\end{align}
which is bounded in \(L^\infty(\delta,T)\) thanks to \eqref{eq:local-apriori-goal1}.

Combining these bounds with \eqref{eq:local-apriori-goal2}, we conclude that
\begin{align}\label{eq:local-apriori-goal3}
t\,\partial_t u_n \in L^2\bigl((0,T);L^1(B_R)\bigr),
\qquad
\partial_t u_n \in L^\infty\bigl((\delta,T);W^{1,1}(B_R)\bigr).
\end{align}
\underline{Step 2.} Local strong convergence of $(u_n)_{n\in\N}$ and $(\rho_n)_{n\in\N}$.

Let $R>0$ and choose $\varepsilon\in\bigl(0,\frac12\bigr)$.
For every $n\in\N$, define
\begin{align}
v_n(t)\coloneqq t^{\frac12+\varepsilon}u_n(t).
\end{align}
By \eqref{eq:local-apriori-goal1} and \eqref{eq:local-apriori-goal3},
\begin{align}
v_n \in L^\infty\bigl((0,T);H^1(B_R)\bigr),
\qquad
\partial_t v_n \in L^q\bigl((0,T);L^1(B_R)\bigr)
\end{align}
for some $q>1$.
Hence, by the Aubin--Lions lemma, up to a subsequence,
\begin{align}
v_n\to v
\quad\text{in } C\bigl([0,T];L^2(B_R)\bigr).
\end{align}
Setting $u(t)\coloneqq t^{-(\frac12+\varepsilon)}v(t)$, we infer that
\begin{align}
u_n\to u
\quad\text{in } L^1\bigl((0,T);L^2(B_R)\bigr).
\end{align}
By a diagonal argument, we conclude that
\begin{align}\label{eq:u-nconv21}
u_n\to u
\quad\text{in }
L^1_{\loc}\bigl([0,\infty);L^2_{\loc}(\R^2)\bigr).
\end{align}
Next, by \cref{lem:Linfty} (in particular \eqref{eq:growuptw}) with $v = u_n$, together with the uniform estimates in \eqref{eq:uniform-Ai-un}, there exists a constant $C'>0$, depending only on $T$, $D$, and $\|u_0\|_{L^2(\R^2)}$ (uniformly in $n$), such that for every $x\in\R^2$,
\begin{align}
\sup_{t\in(0,T)} t^{3/4}\frac{|u_n(t,x)|}{1+|x|}
\le C'.
\end{align}
Passing to the limit and using \eqref{eq:u-nconv21}, we obtain
\begin{align}
\frac{|u(t,x)|}{1+|x|}
\le \liminf_{n\to\infty}\frac{|u_n(t,x)|}{1+|x|}
\le C' t^{-3/4}.
\end{align}
Since $t^{-3/4}\in L^1_{\loc}[0,\infty)$, it follows that
\begin{align}
\frac{|u(t,x)|}{1+|x|}
\in L^1_{\loc}\bigl([0,\infty);L^\infty(\R^2)\bigr).
\end{align}
Then, by the DiPerna--Lions theory \cite[Theorem II.4.1]{DipLi89}, there exists $\rho\in C_{w^*}\bigl([0,\infty);L^\infty(\R^2)\bigr)$
such that
\begin{align}\label{eq:rho-nconv}
\rho_n\to \rho
\qquad \text{in } C\bigl([0,\infty);L^p_{\loc}(\R^2)\bigr),
\qquad \forall p\in[1,\infty).
\end{align}
By combining \eqref{eq:u-nconv21} and \eqref{eq:rho-nconv}, and using that
$\rho_n$ is uniformly bounded, we obtain\footnote{The same argument also yields \eqref{eq:sqrt-rhou-nconv} with $\rho_n$ in place of $\sqrt{\rho_n}$.}
\begin{align}\label{eq:sqrt-rhou-nconv}
\sqrt{\rho_n}\, u_n \to \sqrt{\rho}\, u
\qquad\text{in }
L^1_{\loc}\bigl([0,\infty);L^2_{\loc}(\R^2)\bigr).
\end{align}
Indeed, writing
\begin{align}
\sqrt{\rho_n}\,u_n-\sqrt{\rho}\,u
=\sqrt{\rho_n}(u_n-u)+(\sqrt{\rho_n}-\sqrt{\rho})u,
\end{align}
the first term converges by \eqref{eq:u-nconv21}, while the second one is handled by truncating $u$
and using the strong convergence of $\rho_n$ in $C([0,\infty);L^p_{\loc})$.
 Thanks to \eqref{eq:u-nconv21} and \eqref{eq:sqrt-rhou-nconv} we obtain the first two convergences in \eqref{eq:distributional-convergences}.

\underline{Step 3.} $(\rho,u)$ is a Leray--Hopf weak solution.

Thanks to the uniform estimates \eqref{eq:nreadocnv} and the identification of the limit \eqref{eq:sqrt-rhou-nconv}, we obtain
\begin{align}\label{eq:weakconv}
\begin{aligned}
\nabla u_n \rightharpoonup \nabla u 
&\qquad \text{in } L^2\bigl((0,\infty);L^2(\R^2)\bigr),\\
\sqrt{\rho_n}\, u_n \rightharpoonup^* \sqrt{\rho}\, u 
&\qquad \text{in } L^\infty\bigl((0,\infty);L^2(\R^2)\bigr).
\end{aligned}
\end{align}
Together with \eqref{eq:rho-nconv} and \eqref{eq:sqrt-rhou-nconv}, these convergences allow us to pass to the limit in the weak formulation of $(\rho_n,u_n)$.
Hence $(\rho,u)$ is a weak solution.

To conclude that $(\rho,u)$ is a Leray--Hopf solution, it remains to prove the energy inequality. The additional regularity given by \eqref{eq:local-apriori-goal1} and \eqref{eq:local-apriori-goal3} yields
\begin{align}
u_n \in L^\infty\bigl((\delta,T);H^2(B_R)\bigr),
\qquad
\partial_t u_n \in L^\infty\bigl((\delta,T);W^{1,1}(B_R)\bigr),
\end{align}
for every $R>0$. 
By the Aubin--Lions lemma, up to extraction and a diagonal argument, we obtain
\begin{align}\label{eq:u-nconv22}
u_n \to u
\quad\text{in } 
C\bigl((0,\infty);H^1_{\loc}(\R^2)\bigr),
\end{align}
which in particular implies
\begin{align}\label{eq:u-nconv23}
u_n \to u
\quad\text{in } 
C\bigl((0,\infty);L^p_{\loc}(\R^2)\bigr),
\qquad \forall p\in[1,\infty).
\end{align}
Combining \eqref{eq:u-nconv23} with \eqref{eq:rho-nconv}, we deduce
\begin{align}\label{eq:sqrt-rhou-conv2}
\sqrt{\rho_n}\,u_n \to \sqrt{\rho}\,u
\qquad \text{in } C\bigl((0,\infty);L^p_{\loc}(\R^2)\bigr) \qquad \forall p\in[1,\infty),
\end{align}
and thanks to \eqref{eq:weakconv} we also get
\begin{align}\label{eq:sqrt-rhou-conv3}
\sqrt{\rho_n}\,u_n(t) \rightharpoonup \sqrt{\rho}\,u(t)
\qquad \text{weakly in } L^2(\R^2),
\qquad \forall t>0.
\end{align}
Using \eqref{eq:sqrt-rhou-conv3}, \eqref{eq:weakconv}, and the energy inequality satisfied by $(\rho_n,u_n)$, we infer that
\begin{align}
    \frac12 \|\sqrt{\rho(t)}\,u(t)\|_{L^2(\R^2)}^2
    &+ \int_0^t \|\nabla u(s)\|_{L^2(\R^2)}^2 \, \dd s \\
    &\le \liminf_{n\to\infty}
    \Bigg(
        \frac12 \|\sqrt{\rho_n(t)}\,u_n(t)\|_{L^2(\R^2)}^2
        + \int_0^t \|\nabla u_n(s)\|_{L^2(\R^2)}^2 \, \dd s
    \Bigg) \\
    &\le \liminf_{n\to\infty}
    \frac12 \|\sqrt{\rho_0^n}\,u_0\|_{L^2(\R^2)}^2
    = \frac12 \|\sqrt{\rho_0}\,u_0\|_{L^2(\R^2)}^2 .
\end{align}

\underline{Step 4.}  Weak convergences.

By \eqref{eq:u-nconv22}, we have
\begin{align}
(u_n\cdot\nabla)u_n \to (u\cdot\nabla)u
\qquad \text{in } C\bigl((0,\infty); L_\loc^1( \R^2)\bigr).
\end{align}
Consequently, we obtain the following identification:
\begin{align}
\dot{u}_n
\to
\partial_t u+(u\cdot\nabla)u
\qquad
\text{in } \mathcal D'((0,\infty)\times \R^2).
\end{align}
This yields the third convergence in \eqref{eq:distributional-convergences}. 
Moreover, by Banach--Alaoglu and the second estimate in \eqref{eq:local-apriori-goal2}, up to extraction of a subsequence and by a diagonal argument,
\begin{align}
\dot u_n \rightharpoonup^* f
\qquad \text{in } L^\infty_{\loc}\bigl((0,\infty);H^1_{\loc}(\R^2)\bigr).
\end{align}
By uniqueness of the distributional limit, we conclude that \(f=\dot u\). 
In particular,
\begin{align}\label{eq:dotun_weakstar_local}
\dot u_n \rightharpoonup^* \dot{u}
\qquad 
\text{in } L^\infty_{\loc}\bigl((0,\infty);L^p_{\loc}(\R^2)\bigr),
\qquad \forall p \in [1,\infty).
\end{align}
Combining \eqref{eq:dotun_weakstar_local} with the strong convergence of the density \eqref{eq:rho-nconv}, we conclude
\begin{align}
\sqrt{\rho_n}\dot{u}_n
\to \sqrt{\rho}\,\dot{u}
\qquad
\text{in } \mathcal D'((0,\infty)\times \R^2).
\end{align}
This concludes the convergences in \eqref{eq:distributional-convergences}.

\underline{Step 5.} Proof of \eqref{eq:step1}.

Set
\begin{align}
\widetilde{\rho}_n \coloneqq \rho_n - \frac{1}{n}.
\end{align}
For every function $w$,
\begin{align}\label{eq:rhotilde1}
\|\sqrt{\widetilde{\rho}_n}\,w\|_{L^2(\R^2)}
\le
\|\sqrt{\rho_n}\,w\|_{L^2(\R^2)}.
\end{align}
Taking $w=u_n(t)$ and using the energy inequality for $(\rho_n,u_n)$, we get
\begin{align}\label{eq:rhotilde}
\|\sqrt{\widetilde{\rho}_n}\,u_n(t)\|_{L^2(\R^2)}
\le
2\|u_0\|_{L^2(\R^2)}.
\end{align}

The estimate \eqref{eq:rhotilde} allows us to apply \cref{prop:localLp}.
Since $\widetilde{\rho}_n$ solves the transport equation with velocity field $u_n$
and initial datum $\rho_0$, \cref{prop:localLp} with $R=0$ yields, for every $w$
and every $x\in D$,
\begin{align}\label{eq:localLp_balls}
\|w\|_{L^2(B_{L\sqrt t}(x))}
\lesssim
\sqrt{t}\,\|\nabla w\|_{L^2(B_{L\sqrt t}(x))}
+
\|\widetilde{\rho}_n(t)w\|_{L^2(B_{L\sqrt t}(x))},
\end{align}
where $L>0$ depends only on $\|u_0\|_{L^2(\R^2)}$.
By a covering argument and \eqref{eq:rhotilde1}, we infer
\begin{align}\label{eq:uL2DD}
\|w\|_{L^2(D)}
\lesssim
\sqrt{t}\,\|\nabla w\|_{L^2(\R^2)}
+
\|\rho_n(t)w\|_{L^2(\R^2)}.
\end{align}

Fix $R_0>0$ such that $D\subset B_{R_0}(0)$ and let $R>R_0$.
Denote
\begin{align}
w^{B_R}\coloneqq \fint_{B_R} w(x)\,\dd x,
\qquad
w^D\coloneqq \fint_D w(x)\,\dd x.
\end{align}
By Poincar\'e's inequality,
\begin{align}
\|w\|_{L^2(B_R)}
&\le
\|w-w^{B_R}\|_{L^2(B_R)}
+
|B_R|^{1/2}|w^{B_R}| \lesssim
\|\nabla w\|_{L^2(B_R)}
+
|w^{B_R}|.
\end{align}
Moreover,
\begin{align}
|w^{B_R}|
\le
|w^{B_R}-w^D|
+
|w^D|
\lesssim
\|\nabla w\|_{L^2(B_R)}
+
\|w\|_{L^2(D)}.
\end{align}
Combining the above estimates with \eqref{eq:uL2DD}, and using $t \in (0,T)$,
we conclude that
\begin{align}
\|w\|_{L^2(B_R)}
\lesssim
\|\nabla w\|_{L^2(\R^2)}
+
\|\rho_n(t)w\|_{L^2(\R^2)}.
\end{align}

\end{proof}
\begin{remark}\label{rem:nconvergence}
We recall that by \eqref{eq:sqrt-rhou-conv2} and \eqref{eq:weakconv}, we obtain that for every \(t>0\),
\begin{align}
\begin{aligned}
\sqrt{\rho_n}\,u_n(t)
&\to
\sqrt{\rho}\,u(t)
&&\text{in } L^2_{\loc}(\R^2),
\\
\nabla u_n
&\rightharpoonup \nabla u
&&\text{in } L^2\bigl((0,\infty)\times\R^2\bigr).
\end{aligned}
\end{align}
However, the combination of these two convergences is not sufficient to pass to the limit in the energy equality satisfied by \((\rho_n,u_n)\). Indeed, in order to justify such a passage to the limit, one would need either a global version of the first convergence or a strong version of the second one, namely
\begin{align}
\begin{aligned}
\sqrt{\rho_n}\,u_n(t)
&\to
\sqrt{\rho}\,u(t)
&&\text{in } L^2(\R^2),\\
\nabla u_n
&\to \nabla u
&&\text{in } L^2\bigl((0,\infty)\times\R^2\bigr).
\end{aligned}
\end{align}
\end{remark}

\begin{remark}\label{rem:Immstrong}
The solution constructed in \cref{prop:existenceL2} is indeed immediately strong. 
To see this, recall the following general fact. Let $\alpha\ge0$, $p,q\in[1,\infty]$, and
\begin{align}
X=L^p\big((0,\infty),t^\alpha dt;L^q(\R^2)\big).
\end{align}
Assume that for some $m\in\N$
\begin{align}
f_n \to f \quad \text{in } \mathcal D'((0,\infty)\times\R^2),
\qquad
\sup_n \|\nabla^m f_n\|_X < \infty .
\end{align}
Then $\nabla^m f\in X$ and, up to a subsequence,
\begin{align}
\nabla^m f_n \rightharpoonup \nabla^m f 
\quad \text{weakly (or weakly-$*$) in } X,
\end{align}
and
\begin{align}
\|\nabla^m f\|_X 
\le 
\liminf_{n\to\infty}\|\nabla^m f_n\|_X .
\end{align}

Applying this with $f_n=\sqrt{\rho_n} u_n$, $u_n$, $\dot u_n$, and $\sqrt{\rho_n}\dot u_n$, 
and using the distributional convergences in \eqref{eq:distributional-convergences},
we obtain 
\begin{align}
   \sup_{i \in \{0,1,2,3\}} \sup_{t>0}  A_i^0(t,u)
\le 
   \sup_{i \in \{0,1,2,3\}} \sup_{t>0}  \liminf_{n\to\infty} A_i^0(t,u_n)
\le C,
\end{align}
where $C$ is independent of $n$ by \eqref{eq:uniform-Ai-un}.

\end{remark}

\begin{remark}
   In Section~2 we were able to show that smooth solutions also satisfy the following estimates uniformly in $t>0$:
   \begin{align}\label{eq:nosmooth}
    \int_0^t s^3 \| \nabla \dot{P}, \sqrt{\rho}\,\ddot{u} \|_2 \, ds \leq C.
   \end{align}
   Due to the lack of even higher-order estimates (such as $A_4$), it is unclear how to obtain sufficient compactness to show that non-smooth solutions also satisfy \eqref{eq:nosmooth}.
\end{remark}

\subsection{Refined existence via atomic decomposition}

We now aim to prove that the solution $(\rho,u)$ constructed in \cref{prop:existenceL2}
satisfies the energy equality and that
\begin{align}
\rho u \in C([0,\infty);L^2(\R^2)).
\end{align}
The argument is based on an atomic decomposition of the initial velocity.

Recall that if $u_0 \in L^2(\mathbb{R}^2)$, then for every $\eta \in (0,1)$ there exists a sequence $(u_{0,j})_{j \in \mathbb{Z}}$ in
$\dot{H}^{\eta}(\mathbb{R}^2) \cap \dot{H}^{-\eta}(\mathbb{R}^2)$ such that
\begin{align}\label{eq:decompositionL2}
u_0 = \sum_{j \in \mathbb{Z}} u_{0,j}, \qquad
\sum_{j \in \mathbb{Z}}
\left(
2^{-j}\norm{u_{0,j}}_{\dot{H}^{\eta}(\mathbb{R}^2)}^2
+
2^{j}\norm{u_{0,j}}_{\dot{H}^{-\eta}(\mathbb{R}^2)}^2
\right) \leq C \| u_0\|_{L^2(\R^2)}.
\end{align}
For each atom $u_{0,j}$, we then consider the linearized system transported by $(\rho,u)$ with initial datum $u_{0,j}$:
\begin{align}
\begin{cases}\label{eq:LSj}
\partial_t (\rho u_j) + \dive (\rho u \otimes u_j) + \nabla P_j = \Delta u_j,\\
\dive u_j = 0,\\
u_j(0) = u_{0,j}.
\end{cases}
\end{align}

In \cref{lem:exLS} we prove that, for every $j \in \Z$, there exists a solution $u_j$ to \eqref{eq:LSj} such that
\begin{align}
\rho u_j \in C([0,\infty);L^2(\R^2)),
\end{align}
and the corresponding energy equality holds. This is a consequence of the additional regularity of the initial data, namely
$u_{0,j}\in \dot{H}^{\eta}(\mathbb{R}^2)$.

Finally, in \cref{prop:gluinglevel0} we show that the partial sums
\begin{align}\label{eq:psum}
\sum_{|j|\le J} u_j 
\end{align}
converge to $u$ in a stronger sense with respect to the convergence of $u_n$ to $u$. As a consequence, $u$ inherits the same continuity and energy properties, which improves the conclusion of \cref{prop:existenceL2}.

\begin{lemma}\label{lem:exLS}
Let $(\rho_0,u_0)$ be as in \eqref{eq:initial-data-L2}, and let $(\rho,u)$ be the solution to \eqref{eq:INS} with initial data $(\rho_0,u_0)$ constructed in \cref{prop:existenceL2}. 
For every $j\in\mathbb{Z}$, let $u_{0,j}$ be as in \eqref{eq:decompositionL2}. 

Then, for every $j\in\mathbb{Z}$, there exists an immediately strong solution $u_j$ to \eqref{eq:LSj}. Moreover, there exists a constant $C>0$, independent of $j$, such that
\begin{align}
\sup_{i\in\{0,1,2,3\}}\sup_{t>0} A_i^0(t,u_j,u)
&\le
C\,\|\sqrt{\rho_0}\,u_{0,j}\|_{L^2(\R^2)},
\label{eq:boundsujL2}\\
\sup_{i\in\{0,1,2,3\}}\sup_{t>0} A_i^\eta(t,u_j,u)
&\le
C\,\|u_{0,j}\|_{\dot H^\eta(\R^2)}.
\label{eq:boundsujHeta}
\end{align}
In addition, $\sqrt{\rho}\,u_j\in C([0,\infty);L^2(\R^2))$ and $u_j$ satisfies the energy equality.

Finally, there exists a constant $C>0$, independent of $j$, such that for every $0<s<T<\infty$,
\begin{align}
\begin{aligned}\label{eq:decayAoiuj}
\sup_{t\in (s,T)} (t-s)\|\nabla u_j(t)\|_{L^2(\R^2)}^2
&\le C \int_s^T \|\nabla u_j(\tau)\|_{L^2(\R^2)}^2\,\dd\tau, \\
\sup_{t\in (s,T)} (t-s)^2\|\sqrt{\rho}\,\dot u_j(t)\|_{L^2(\R^2)}^2
&\le C \int_s^T \|\nabla u_j(\tau)\|_{L^2(\R^2)}^2\,\dd\tau, \\
\sup_{t\in (s,T)} (t-s)^3\|\nabla \dot u_j(t)\|_{L^2(\R^2)}^2
&\le C \int_s^T \|\nabla u_j(\tau)\|_{L^2(\R^2)}^2\,\dd\tau.
\end{aligned}
\end{align}
\end{lemma}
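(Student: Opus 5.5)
We construct $u_j$ by the same lifting scheme as in \cref{prop:existenceL2}, now applied to the linear system. For $n\in\N$ let $(\rho_n,u_n)$ be the lifted solutions with data $(\rho_0^n,u_0)$. Let $u_{j,n}$ be the unique smooth solution of \eqref{eq:LSs} advected by $(\rho_n,u_n)$ with datum $u_{0,j}$; it exists since $\rho_n\ge 1/n$ and the system is linear. \cref{thm:sect2} gives \eqref{eq:A0} and \eqref{eq:As} with $s=\eta$, with constant $C_u=C(\|\sqrt{\rho_0^n}u_0\|_2)\le C(2\|u_0\|_2)$, uniformly in $n$ and $j$. Here $\|\sqrt{\rho_0^n}u_{0,j}\|_2\to\|\sqrt{\rho_0}u_{0,j}\|_2$.

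We then repeat Steps 1--5 of the proof of \cref{prop:existenceL2} with $w=u_{j,n}$ and $w=\dot u_{j,n}$. Step 5 only uses \cref{prop:localLp} for $\tilde\rho_n$, which depends on $(\rho_n,u_n)$ alone. This yields local $H^1$/$H^2$ bounds and time-derivative bounds, hence Aubin--Lions compactness, $u_{j,n}\to u_j$ in $C((0,\infty);H^1_{\loc})$, and the distributional convergences analogous to \eqref{eq:distributional-convergences}. Since $\rho_n\to\rho$ strongly and $u_n\to u$ in $C((0,\infty);H^1_{\loc})$, we can pass to the limit in $\rho_n u_n\otimes u_{j,n}$, so $u_j$ solves \eqref{eq:LSj}. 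Lower semicontinuity, as in \cref{rem:Immstrong}, gives \eqref{eq:boundsujL2} and \eqref{eq:boundsujHeta}, so $u_j$ is immediately strong.

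\textbf{Energy equality and continuity.} This is the main obstacle and the reason for the $\dot H^\eta$ regularity. From \eqref{eq:boundsujHeta} we have $\int_0^t s^{1-\eta}\|\sqrt{\rho}\dot u_j\|_2^2\,ds<\infty$ and $\sup_s s^{1-\eta}\|\nabla u_j(s)\|_2^2<\infty$. By Cauchy--Schwarz, $\int_0^t\|\sqrt\rho\,\dot u_j\|_2\|\sqrt\rho u_j\|_2\,ds<\infty$, because $s^{-(1-\eta)}$ is integrable at $0$. For $t>\varepsilon>0$ we test the equation with $u_j$ itself; this is justified by \cref{lem:B1}, the tail estimates \cref{prop:tailest} and \cref{cor:tailEst} with $h=u_j$, and the compact support of $\rho$ from \cref{prop:ImprovedDecay}. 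Using $D_u(\rho)=0$, this gives
\begin{align}
\tfrac12\|\sqrt\rho u_j(t)\|_2^2-\tfrac12\|\sqrt\rho u_j(\varepsilon)\|_2^2=\int_\varepsilon^t\!\!\int\rho\,\dot u_j\cdot u_j=-\int_\varepsilon^t\|\nabla u_j\|_2^2 .
\end{align}
Hence $t\mapsto\|\sqrt\rho u_j(t)\|_2^2$ is absolutely continuous on $(0,\infty)$. Moreover $\partial_t(\rho u_j)$ lies in $L^1_tL^2$ locally, away from $\rho$'s support issues, by the integrability just shown. Therefore $\rho u_j$ is strongly continuous on $(0,\infty)$ and $\sqrt{\rho}u_j(t)$ converges in $L^2$ as $t\to0$.

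To identify the limit at $t=0$, we combine weak continuity from the weak formulation with the energy inequality. The $\limsup$ of the norm is at most $\|\sqrt{\rho_0}u_{0,j}\|_2$, so weak convergence upgrades to strong convergence, which gives continuity at $0$. Letting $\varepsilon\to0$ then gives the energy equality.

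\textbf{Shifted decay \eqref{eq:decayAoiuj}.} We rerun the $A_1$, $A_2$, $A_3$ computations of Section 2 on $(s,T)$ with weights $(t-s)^k$ instead of $t^k$, starting from the datum $u_j(s)$. The energy input $\int_0^t\|\nabla v\|_2^2$ is replaced by $\int_s^T\|\nabla u_j\|_2^2$. The $u$-dependent factors such as $\sup_{\tau}\sqrt{\tau}\|\nabla u\|_2$ are still controlled by \eqref{eq:A0u}, since $(t-s)\le t$. The Gronwall constants depend only on $\|u_0\|_2$, so $C$ is independent of $j$. These estimates are first derived for $u_{j,n}$ and then passed to the limit by lower semicontinuity.
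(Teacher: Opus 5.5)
Your construction (lifting, uniform bounds from \cref{thm:sect2}, compactness as in \cref{prop:existenceL2}, and lower semicontinuity for \eqref{eq:boundsujL2}--\eqref{eq:boundsujHeta}) agrees with the paper, and so does the energy identity on $(\eps,t)$. There is a genuine gap in the last step. You prove $\sup_{t\in(s,T)}(t-s)\|\nabla u_{j,n}(t)\|_{L^2}^2\le C\int_s^T\|\nabla u_{j,n}\|_{L^2}^2\,\dd\tau$, together with the $\dot u$ analogues, and then say these pass to the limit ``by lower semicontinuity''. That argument only handles the left-hand side. On the right-hand side, weak convergence gives $\int_s^T\|\nabla u_j\|_{L^2}^2\le\liminf_n\int_s^T\|\nabla u_{j,n}\|_{L^2}^2$, which is the wrong direction. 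Convergence in $C((0,\infty);H^1_{\loc})$ does not prevent part of $\|\nabla u_{j,n}\|_{L^2(\R^2)}^2$ from escaping to spatial infinity. No uniform-in-$n$ tail control is available either: the tail estimates of Section 3 rely on the compact support of $\rho$, which fails for $\rho_n\ge 1/n$.

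The missing step is the paper's Step 4. Combine the energy equalities for $(\rho_n,u_{n,j})$ and for $(\rho,u_j)$ with $\|\sqrt{\rho_0^n}u_{0,j}\|_{L^2}\to\|\sqrt{\rho_0}u_{0,j}\|_{L^2}$ and with lower semicontinuity of each of the two energy terms. This gives $\int_0^t\|\nabla u_{n,j}\|_{L^2}^2\to\int_0^t\|\nabla u_j\|_{L^2}^2$ for every $t$, hence also on $(s,T)$. This is why the energy equality for $u_j$ must be proved before the shifted estimates. You do prove it, but you never use it for this purpose.

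A second, smaller problem is your continuity argument at $t=0$. The claim $\partial_t(\rho u_j)\in L^1_tL^2$ is false. Indeed $\partial_t(\rho u_j)=\rho\dot u_j-\dive(\rho u\otimes u_j)$, and the divergence term carries a singular part on the moving interface $\partial D_t$. Moreover, the weak formulation only admits solenoidal test functions. By itself it therefore gives weak continuity of $\mathbb{P}(\rho u_j)$, not $\sqrt{\rho}\,u_j(t)\rightharpoonup\sqrt{\rho_0}\,u_{0,j}$ in $L^2$, and the latter is what your energy-inequality upgrade needs.

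The paper uses the $\dot H^\eta$ regularity exactly here. The bound on $A_1^\eta$ gives $\int_0^t s^{1-\eta}\|\nabla P_j\|_{L^2}^2\,\dd s<\infty$. The same Cauchy--Schwarz you applied to $\sqrt\rho\,\dot u_j$ then yields $\nabla P_j\in L^1_{\loc}([0,\infty);L^2)$, so the momentum equation can be tested with arbitrary $\varphi\in C_c^\infty(\R^2)$. Alternatively, one can identify the weak limit using the Helmholtz decomposition and $\rho_0^2=\rho_0$, but some such argument has to be supplied.
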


\begin{proof}
We lift the initial density as in \eqref{eq:rho0-approx}, obtaining a sequence of initial data $(\rho_0^n,u_0)$. 
For each $n\in\mathbb{N}$ we consider the corresponding solution $(\rho_n,u_n)$ arising from $(\rho_0^n,u_0)$.

\underline{Step 1.} Decay estimates.

For each $n\in\mathbb{N}$ and $j\in\mathbb{Z}$, we consider \eqref{eq:LSj} with $\rho$ and $u$ replaced by $\rho_n$ and $u_n$, and with initial datum $u_{0,j}$. 
This yields a unique solution $u_{n,j}$.

By \cref{thm:sect2}, for every $n\in\mathbb{N}$, every $j\in\mathbb{Z}$, every $i\in\{0,1,2,3\}$, and every $t\in[0,\infty)$ we have
\begin{align}
\begin{aligned}\label{eq:unifEstj}
A_i^0(t,u_{n,j},u_n)
&\le C_{u_n}\,\|\sqrt{\rho_0^n}\,u_{0,j}\|_{L^2(\R^2)}^{2}
\le C\,\|\sqrt{\rho_0^n}\,u_{0,j}\|_{L^2(\R^2)}^{2}, \\
A_i^1(t,u_{n,j},u_n)
&\le C_{u_n}\,\|u_{0,j}\|_{\dot{H}^1(\R^2)}^{2}
\le C\,\|u_{0,j}\|_{\dot{H}^1(\R^2)}^{2}.
\end{aligned}
\end{align}

Here we used that $C_{u_n}$ depends in a nondecreasing way on  $\| \sqrt{\rho_0^n}u_{0,j}\|_{L^2(\R^2)}$ which can be controlled by $ \| u_{0,j}\|_{L^2(\R^2)}$. 
Therefore $C_{u_n}$ can be bounded by a constant $C$ independent of $n$.

Since the a priori bounds for $u_{n,j}$ are uniform in $n$ and coincide with those available for $u_n$ in \eqref{eq:uniform-Ai-un}, we can reproduce the proof of \cref{prop:existenceL2}. 
Hence the sequence $(\rho_n, u_{n,j})$ enjoys the same compactness properties as $(\rho_n,u_n)$ in \cref{prop:existenceL2}. 
In particular, up to a subsequence, it converges to a pair $(\rho,u_j)$ which is an immediately strong solution to \eqref{eq:INS} with initial data $(\rho_0,u_{0,j})$.

Moreover, the following convergences hold in $\mathcal D'\bigl((0,\infty)\times\R^2\bigr)$:
\begin{align}
 u_{n,j} \to  u_j,
\qquad
\sqrt{\rho_n} u_{n,j} \to \sqrt{\rho} u_j,
\qquad
\dot u_{n,j} \to \dot u_j,
\qquad
\sqrt{\rho_n} \dot u_{n,j} \to \sqrt{\rho} \dot u_j.
\end{align}

Arguing as in \cref{rem:Immstrong}, we obtain
\begin{align}
A_i^0(t,u_j,u)
\le
\liminf_{n\to\infty} A_i^0(t,u_{n,j},u_n)
\le
C \liminf_{n\to\infty}
\|\sqrt{\rho_0^n}\,u_{0,j}\|_{L^2(\R^2)}^2
=
C\,\|\sqrt{\rho_0}\,u_{0,j}\|_{L^2(\R^2)}^2,
\end{align}
where we used \eqref{eq:unifEstj} and the convergence
\begin{align}\label{eq:intimeconv}
\lim_{n \to \infty} \| \sqrt{\rho_0^n}\,u_{0,j} - \sqrt{\rho_0}\,u_{0,j}\|_{L^2(\R^2)} = 0.
\end{align}
This yields \eqref{eq:boundsujL2}.

Similarly, passing to the limit in the estimate involving $\|u_{0,j}\|_{\dot H^1(\R^2)}$, we obtain the corresponding $A_i^1$ bound. 
Since the constant $C$ does not depend on $j$, interpolating between the $A_i^0$ and $A_i^1$ bounds yields \eqref{eq:boundsujHeta}.

\underline{Step 2.} Additional regularity.

Since $u_{0,j} \in \dot{H}^\eta(\R^2)$ we have the additional regularity of $A_1^\eta(t,u_j,u)$. For instance we have
\begin{align}
\int_0^\infty t^{\eta-1}\|\nabla P_j(t)\|_{L^2(\R^2)}^2\,dt <\infty,
\end{align}
which implies
\begin{align}\label{eq:pjest}
\nabla P_j \in L^1_{\loc}([0,\infty);L^2(\R^2)).
\end{align}
Similarly, by the bounds on $A_1^\eta(\rho,u,u_j)$ and $A_2^\eta(\rho,u,u_j)$ we have
\begin{align}\label{eq:scen2Hyp2}
\sqrt{t}\|\nabla u_j(t)\|_{L^2(\R^2)} \to 0, 
\qquad
\rho \dot{u}_j,\; \sqrt{t}\,\nabla \dot{u}_j \in L^1_{\loc}((0,\infty);L^2(\R^2)).
\end{align}
Moreover, by \eqref{eq:L2Lip} and the bound on $A_3^\eta(\rho,u,u_j)$ we obtain
\begin{align}
\|\nabla u_j(t)\|_{L^\infty(\R^2)}
&\lesssim
\|\nabla^2 u_j(t)\|_{L^2(\R^2)}
+
\|\nabla u_j(t)\|_{L^2(\R^2)}^{\frac12}
\|\nabla \dot u_j(t)\|_{L^2(\R^2)}^{\frac12} \lesssim
t^{-\frac{2-\eta}{2}}
+
t^{-\frac{1-\eta}{4}}
t^{-\frac{3-\eta}{4}} .
\end{align}
Since $\eta>0$, the right-hand side belongs to $L^1_{\loc}[0,\infty)$, and therefore
\begin{align}\label{eq:scen2Hyp3}
\nabla u_j \in L^1_{\loc}([0,\infty);L^\infty(\R^2)).
\end{align}

\underline{Step 3.} Continuity in time and energy equality.

The regularity \eqref{eq:pjest} allows us to test the momentum equation \eqref{eq:LSj} against a non-divergence-free test function 
$\varphi\in C_c^\infty(\R^2)$ and obtain
\begin{align}
\int_{\R^2}\rho(t)u_j(t)\cdot\varphi\,dx
-
\int_{\R^2}\rho_0u_{0,j}\cdot\varphi\,dx
=
\int_0^t\int_{\R^2}
\bigl[\rho u\otimes u_j+\nabla u_j\bigr]:\nabla\varphi
+
\nabla P_j\cdot\varphi
\,dx\,d\tau .
\end{align}

Since 
\begin{align}
\rho u\otimes u_j+\nabla u_j,\ \nabla P_j
\in L^1_{\loc}([0,\infty);L^2(\R^2)),
\end{align}
the right-hand side converges to $0$ as $t\to0^+$. Hence
\begin{align}
\sqrt{\rho}u_j(t)\rightharpoonup \sqrt{\rho_0}u_{0,j}
\qquad
\text{in }L^2(\R^2).
\end{align}
By the energy inequality for $u_j$ the continuity is indeed strong
\begin{align}\label{eq:scen2Hyp1}
\sqrt{\rho}u_j(t)\to \sqrt{\rho_0}u_{0,j}
\qquad
\text{in }L^2(\R^2)\quad\text{as }t\to0^+,
\end{align}
which proves the continuity at the initial time. 

Collecting \eqref{eq:scen2Hyp1}, \eqref{eq:scen2Hyp2}, and \eqref{eq:scen2Hyp3}, we see that all the assumptions of Scenario~2 in \cref{sce:scenario2} are satisfied with
\begin{align}
(\rho_1,u_1)=(\rho_2,u_2)=(\rho,u), \qquad v_1 =v_2 = u_j
\end{align}
Hence, by \cref{prop:Scen2}, we obtain
\begin{align}
\int_{\R^2}\rho(t)\,|u_j(t)|^2\,dx
=
\int_{\R^2}\rho_0\,|u_{0,j}|^2\,dx
-2\int_0^t\int_{\R^2} |\nabla u_j|^2\,dx\,ds,
\end{align}
which yields the energy equality. This extends the continuity  \eqref{eq:scen2Hyp1} to every time $t\geq 0$.

\underline{Step 4.} Strong convergence of the approximating sequence.

As shown in Step 3 of \cref{prop:existenceL2}, see \eqref{eq:weakconv} and \eqref{eq:sqrt-rhou-conv3}, for every $t>0$ one has
\begin{align}\label{eq:weakconv2}
\begin{aligned}
   \|\sqrt{\rho(t)}\,u_j(t)\|_{L^2(\R^2)}^2
    &\le \liminf_{n\to\infty}
    \|\sqrt{\rho_n(t)}\,u_{n,j}(t)\|_{L^2(\R^2)}^2, \\
   \int_0^t \|\nabla u_j(s)\|_{L^2(\R^2)}^2 \,\dd s
    &\le \liminf_{n\to\infty}
    \int_0^t \|\nabla u_{n,j}(s)\|_{L^2(\R^2)}^2 \,\dd s .
\end{aligned}
\end{align}
On the other hand, by the energy equalities for both $(\rho_n,u_{n,j})$ and $(\rho,u_j)$, together with \eqref{eq:intimeconv}, we obtain for every $t>0$
\begin{align}\label{eq:totsum}
\begin{aligned}
\lim_{n\to\infty} & 
\Bigg(
\frac12 \|\sqrt{\rho_n(t)}\,u_{n,j}(t)\|_{L^2(\R^2)}^2
+
\int_0^t \|\nabla u_{n,j}(s)\|_{L^2(\R^2)}^2 \,\dd s
\Bigg)
\\ & =
\frac12 \|\sqrt{\rho(t)}\,u_j(t)\|_{L^2(\R^2)}^2
+
\int_0^t \|\nabla u_j(s)\|_{L^2(\R^2)}^2 \,\dd s .
\end{aligned}
\end{align}
Combining \eqref{eq:weakconv2} and \eqref{eq:totsum}, we infer that for every $t>0$
\begin{align}\label{eq:strongConvnj}
\begin{aligned}
\lim_{n\to\infty}
\|\sqrt{\rho_n(t)}\,u_{n,j}(t)\|_{L^2(\R^2)}^2
&=
\|\sqrt{\rho(t)}\,u_j(t)\|_{L^2(\R^2)}^2, \\
\lim_{n\to\infty}
\int_0^t \|\nabla u_{n,j}(s)\|_{L^2(\R^2)}^2 \,\dd s
&=
\int_0^t \|\nabla u_j(s)\|_{L^2(\R^2)}^2 \,\dd s .
\end{aligned}
\end{align}
Let
\begin{align}
F_n(t):=\int_0^t \|\nabla u_{n,j}(s)\|_{L^2(\R^2)}^2 \,\dd s,
\qquad
F(t):=\int_0^t \|\nabla u_j(s)\|_{L^2(\R^2)}^2 \,\dd s.
\end{align}
Since $F_n$ and $F$ are continuous and nondecreasing, the pointwise convergence in \eqref{eq:strongConvnj} implies $F_n \to F $ in $C([0,\infty))$. Using again the energy equalities, we then deduce that
\begin{align}\label{eq:unifconvenergy}
\|\sqrt{\rho_n(\cdot)}\,u_{n,j}(\cdot)\|_{L^2(\R^2)}^2
\to
\|\sqrt{\rho(\cdot)}\,u_j(\cdot)\|_{L^2(\R^2)}^2
\qquad\text{in } C([0,\infty)).
\end{align}

\underline{Step 5.} Estimates \eqref{eq:decayAoiuj}.
 
To prove the first estimate, we recall that by \eqref{eq:conl1} we have
\begin{align}
\frac{d}{dt} \|\nabla u_{n,j}(t)\|_{L^2(\R^2)}^2 
\lesssim 
\|\nabla u_{n,j}(t)\|_{L^2(\R^2)}^2 
\, \|\nabla u_n(t)\|_{L^2(\R^2)}^2 .
\end{align}
Hence, for every $t>s>0$,
\begin{align}
\frac{d}{dt}\Bigl((t-s)\|\nabla u_{n,j}(t)\|_{L^2(\R^2)}^2\Bigr)
\lesssim 
\|\nabla u_{n,j}(t)\|_{L^2(\R^2)}^2
+
(t-s)\|\nabla u_{n,j}(t)\|_{L^2(\R^2)}^2
\|\nabla u_n(t)\|_{L^2(\R^2)}^2 .
\end{align}
Applying Gronwall's inequality, we obtain
\begin{align}
(t-s)\|\nabla u_{n,j}(t)\|_{L^2(\R^2)}^2
\lesssim 
\int_s^t \|\nabla u_{n,j}(\tau)\|_{L^2(\R^2)}^2\,\dd\tau
\exp\!\left(\int_0^t \|\nabla u_n(\tau)\|_{L^2(\R^2)}^2\,\dd\tau\right).
\end{align}
Since the exponential factor is uniformly bounded with respect to $n$ and $j$, we deduce that for every $t>s\geq0$,
\begin{align}
(t-s)\|\nabla u_{n,j}(t)\|_{L^2(\R^2)}^2
\le
C_u \int_s^t \|\nabla u_{n,j}(\tau)\|_{L^2(\R^2)}^2\,\dd\tau .
\end{align}
Then, similarly to \cref{rem:Immstrong}, we get for every  $0<s<T<\infty$
\begin{align}
 \sup_{t\in (s,T)} (t-s)\|\nabla u_j(t)\|_{L^2(\R^2)}^2
& \le \liminf_{n \to \infty }   \sup_{t\in (s,T)} (t-s)\|\nabla u_{n,j}(t)\|_{L^2(\R^2)}^2 \\ & \le C_u \liminf_{n \to \infty }   \sup_{t\in (s,T)}  \int_s^t \|\nabla u_{n,j}(\tau)\|_{L^2(\R^2)}^2\,\dd\tau  \\ & = C_u \liminf_{n \to \infty }   \int_s^T \|\nabla u_{n,j}(\tau)\|_{L^2(\R^2)}^2\,\dd\tau 
\end{align}
and by \eqref{eq:strongConvnj} we conclude
\begin{align}
    \sup_{t\in (s,T)} (t-s)\|\nabla u_j(t)\|_{L^2(\R^2)}^2 \le C_u \int_s^T \|\nabla u_{j}(\tau)\|_{L^2(\R^2)}^2\,\dd\tau 
\end{align}
which gives the first estimate in \eqref{eq:decayAoiuj}.

The other two estimates are obtained in the same way. For instance, it is enough to observe that
\begin{align}
(t-s)^2 \|\sqrt{\rho_n(t)}\, \dot{u}_{n,j}(t)\|_{L^2(\R^2)}^2
&\le
C_u (t-s)^2 \bigl(t-\tfrac{t+s}{2}\bigr)^{-1}
\|\nabla u_{n,j}\bigl(\tfrac{t+s}{2}\bigr)\|_{L^2(\R^2)}^2 \\
&\le
C_u (t-s)\|\nabla u_{n,j}\bigl(\tfrac{t+s}{2}\bigr)\|_{L^2(\R^2)}^2 \\
&\le
C_u (t-s)\bigl(\tfrac{t+s}{2}-s\bigr)^{-1}
\int_s^t \|\nabla u_{n,j}(\tau)\|_{L^2(\R^2)}^2 \,\dd \tau \\
&\le
C_u \int_s^t \|\nabla u_{n,j}(\tau)\|_{L^2(\R^2)}^2 \,\dd \tau.
\end{align}
One can then argue exactly as above.
\end{proof}

\begin{proposition}\label{prop:gluinglevel0}
Let $(\rho_0,u_0)$ be as in \eqref{eq:initial-data-L2}, and let $(\rho,u)$ be the solution to \eqref{eq:INS} arising from $(\rho_0,u_0)$ constructed in \cref{prop:existenceL2}. 

For every $j\in\mathbb{Z}$, let $u_{0,j}$ be as in \eqref{eq:decompositionL2}, and denote by $u_j$ the solution to \eqref{eq:LSj} arising from $u_{0,j}$ constructed in \cref{lem:exLS}. Then the partial sum $u_J$ satisfies the following convergences:
\begin{align}\label{eq:gluinglevel0}
\rho u_J \to \rho u 
\quad \text{in } C([0,\infty);L^2(\R^2)), 
\qquad 
\nabla u_J \to \nabla u
\quad \text{in } L^2((0,\infty);L^2(\R^2)).
\end{align}
In particular, $u$ satisfies the energy equality. Moreover, the following convergences hold in $\mathcal D'\bigl((0,\infty)\times\R^2\bigr)$:
\begin{align}\label{eq:distributional-convergencesj}
 u_J \to u,
\qquad 
\rho u_J \to \rho u,
\qquad
\dot u_J \to \dot u,
\qquad
\rho \dot u_J \to \rho \dot u.
\end{align}
\end{proposition}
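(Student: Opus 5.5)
The plan follows the sketch given in the introduction. Set $w_J \coloneqq u_J - u$ and $u_{0,J}\coloneqq \sum_{|j|\le J}u_{0,j}$, with $P_J\coloneqq \sum_{|j|\le J}P_j$. By linearity of \eqref{eq:LSj} in $(u_j,P_j)$, and since $(\rho,u)$ itself solves \eqref{eq:LSj} with datum $u_0$, the pair $(w_J,P_J-P)$ is an immediately strong solution of the linearized system \eqref{eq:LSs} advected by $(\rho,u)$, with initial datum $u_{0,J}-u_0$. The bounds on $A_i^0(t,w_J,u)$ follow from those of $u$ (see \cref{rem:Immstrong}) and of the finitely many $u_j$ (see \eqref{eq:boundsujL2}).

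\textbf{Step 1: energy inequality for $w_J$.} The key step is to test this equation with $w_J$ and obtain, for a.e.\ $t>0$,
\begin{align}
\tfrac12\|\sqrt{\rho(t)}\,w_J(t)\|_{L^2(\R^2)}^2+\int_0^t\|\nabla w_J\|_{L^2(\R^2)}^2\,\dd s\le \tfrac12\|\sqrt{\rho_0}(u_{0,J}-u_0)\|_{L^2(\R^2)}^2 .
\end{align}
I would not derive this by direct testing, since $u$ is not known to satisfy the energy equality. Instead I would expand the square:
\begin{align}
\|\sqrt\rho\, w_J\|^2=\|\sqrt\rho\, u_J\|^2-2\langle\rho u_J,u\rangle+\|\sqrt\rho\, u\|^2 .
\end{align}
Each term is then controlled as follows.
\begin{itemize}
\item The first term satisfies the energy \emph{equality}, since it is a finite sum of the $u_j$; the cross terms $\langle \rho u_j,u_k\rangle$ come from the relative-energy identity of \cref{prop:Scen2} applied with $v_1=u_j$, $v_2=u_k$.
\item The last term satisfies the energy \emph{inequality} from \cref{prop:existenceL2}.
\item The mixed term $\langle \rho u_J,u\rangle$ requires a relative-energy \emph{identity} between $u_j$ and $u$. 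It should follow from \cref{prop:Scen2} applied with $v_1=u_j$ and $v_2=u$, because only one of the two fields needs the extra regularity \eqref{eq:scen2Hyp1}--\eqref{eq:scen2Hyp3}, and $u_j$ has it.
\end{itemize}
Admissibility of the test functions despite the lack of decay at infinity is handled by \cref{cor:tailEst}, \cref{prop:tailest} and the appendix. Adding the three contributions gives the inequality above, because the only inequality enters with the right sign.

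\textbf{Step 2: convergence.} By \eqref{eq:decompositionL2}, the partial sums satisfy $u_{0,J}\to u_0$ in $L^2(\R^2)$, so the right-hand side of the Step 1 inequality tends to zero. This gives $\nabla u_J\to\nabla u$ in $L^2((0,\infty);L^2(\R^2))$, and $\sqrt\rho\, u_J\to\sqrt\rho\, u$ in $L^\infty((0,\infty);L^2(\R^2))$ off a null set of times. Since $\rho=\mathbf 1_{D_t}$ we have $\rho=\sqrt\rho$, and the same holds for $\rho u_J$.

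Each $\rho u_J$ lies in $C([0,\infty);L^2)$ by \cref{lem:exLS}, so the sequence is uniformly Cauchy on a full-measure set of times, hence uniformly Cauchy everywhere. Its limit is a continuous representative of $\rho u$, which gives \eqref{eq:gluinglevel0}.

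\textbf{Step 3: energy equality for $u$.} Pass to the limit in the energy equality of $u_J$. Each $u_j$ satisfies the equality, and so does the finite sum $u_J$ once the cross terms are included via \cref{prop:Scen2}. The convergences of Step 2 make both sides converge, so $u$ satisfies the energy equality.

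\textbf{Step 4: distributional convergences.} Convergence of $u_J$ follows from \eqref{eq:gluinglevel0} together with the localized inequality \eqref{eq:step1}, which holds as well for the limit density via \cref{cor:Lpstrip}. This gives $u_J\to u$ in $L^2_{\loc}$, away from $t=0$ and in time-weighted form.

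For $\dot u_J=\partial_t u_J+(u\cdot\nabla)u_J$, note that the transport field is the fixed $u$ and the expression is linear in $u_J$. Hence $\dot u_J\to\dot u$ in $\mathcal D'$, using $u\in L^\infty_{\loc}$ from \cref{lem:Linfty}. Multiplying by the bounded $\rho$, and using the uniform $A_i^0$ bounds with \cref{rem:Immstrong}, gives $\rho\dot u_J\rightharpoonup\rho\dot u$.

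\textbf{Main obstacle.} The delicate point is Step 1: justifying the relative-energy identity between $u_j$ and $u$ when only one of the two fields is known to be continuous at $t=0$ and to satisfy the energy equality. I expect the care to go into checking that the hypotheses of Scenario 2 in \cref{sce:scenario2} allow one field to be merely immediately strong.
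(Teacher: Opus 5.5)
Your route is essentially the paper's. The paper applies \cref{thm:relaEnergyL} (Scenario~1) with $v=u_J$, and the proof of that theorem is exactly your expansion of $\|\sqrt{\rho}\,w_J\|_{L^2}^2$. It uses the energy inequality for $u$, the energy inequality for $u_J$ (inherited from \cref{lem:exLS}), and the cross identity of \cref{prop:admuMomL}. The convergences, the energy equality and the distributional limits then follow as in your Steps 2--4. Your Cauchy argument producing a continuous representative of $\rho u$ is a detail the paper leaves implicit.

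Two corrections are needed. First, you have the roles in Scenario~2 reversed. The extra hypotheses (ii) are imposed on $v_2$, including the strong continuity $\rho_2 v_2\to\rho_0 v_2(0)$ in $L^2(\R^2)$ and $\nabla v_2\in L^1_tL^\infty_x$. For the mixed term you must therefore take $v_1=u$, $v_2=u_j$. With $v_2=u$ as you wrote, the step presupposes the strong continuity of $\rho u$ at $t=0$, which is one of the conclusions being proved. Scenario~1 with $v=u_J$ is simpler still: it only needs $\rho u_J\to\rho_0 u_{0,J}$ in $L^2(\R^2)$, which \cref{lem:exLS} already provides, so no term-by-term treatment is necessary.

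Second, in Step~4 the ``uniform $A_i^0$ bounds'' for $u_J$ do not follow from summing the finitely many bounds \eqref{eq:boundsujL2}. That sum gives constants of order $\bigl(\sum_{|j|\le J}\|u_{0,j}\|_{L^2(\R^2)}\bigr)^2$, which need not stay bounded in $J$ for mere $L^2$ data. There are two ways around this.
\begin{itemize}
\item You can obtain $J$-uniform bounds by viewing $u_J$ as a lifting limit with datum $u_{0,J}$, which is bounded in $L^2$.
\item You can avoid bounds altogether. By the transport equation for $\rho$ and \cref{lem:B1}, $\rho\dot u_J=\partial_t(\rho u_J)+\dive(\rho u\otimes u_J)$ in $\mathcal D'((0,\infty)\times\R^2)$, and the same identity holds for $u$. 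Since $\rho^2=\rho$, we have $\rho\, u\otimes(u_J-u)=(\rho u)\otimes\rho(u_J-u)$ with $\rho u\in L^\infty_tL^2_x$. Both terms therefore converge by \eqref{eq:gluinglevel0}.
\end{itemize}
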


\begin{remark}
\cref{prop:gluinglevel0} shows that the approximating sequence $(u_J)_J$ enjoys better properties than $(u_n)_n$. Indeed, the strong convergences in \eqref{eq:gluinglevel0} are a novelty with respect to the approximation by lifting. According to \cref{rem:nconvergence}, this additional compactness allows us to pass to the limit in the energy equality for $(\rho,u_J)$.
\end{remark}

\begin{proof}
First observe that, by \eqref{eq:decompositionL2}, the partial sums
\begin{align}
u_{0,J}:=\sum_{|j|\le J}u_{0,j}
\end{align}
converge to $u_0$ in $L^2(\R^2)$.

Since $u_J$ is a finite sum of functions of the form $u_j$, it inherits all the properties established in \cref{lem:exLS}. In particular,
\begin{align}\label{eq:UJcont}
\rho u_J \in C([0,\infty);L^2(\R^2)).
\end{align}
Hence, by \cref{thm:relaEnergyL}, for almost every $t>0$,
\begin{align}\label{eq:relenapp}
\|\rho(t)(u_J(t)-u(t))\|_{L^2(\R^2)}^2
+2
\int_0^t \|\nabla(u_J(s)-u(s))\|_{L^2(\R^2)}^2\,\dd s
\le
\|u_{0,J}-u_0\|_{L^2(\R^2)}^2 .
\end{align}
Since $u_{0,J}\to u_0$ in $L^2(\R^2)$, from \eqref{eq:relenapp} we deduce \eqref{eq:gluinglevel0} and  the energy equality for $(\rho,u)$ itself. We are left to show \eqref{eq:distributional-convergencesj}. We fix $T>0$ and $R>0$.

By repeating the proof of \eqref{eq:step1}\footnote{It suffices to take $\rho_n=\tilde{\rho}_n=\rho$.}, we obtain that for every function \(w\),
\begin{align}\label{eq:w_J}
\|w\|_{L^2(B_R)}
\lesssim
\|\nabla w\|_{L^2(\R^2)}
+
\|\rho w\|_{L^2(\R^2)},
\end{align}
where the implicit constant depends only on $T$, $D$, $R$ and $\|u_0\|_{L^2(\R^2)}$.

Applying this estimate with \(w=u_J(t)-u(t)\), we obtain
\begin{align}
\|u_J(t)-u(t)\|_{L^2(B_R)}
\lesssim
\|\nabla(u_J(t)-u(t))\|_{L^2(\R^2)}
+
\|\rho(t)(u_J(t)-u(t))\|_{L^2(\R^2)}.
\end{align}
Using \eqref{eq:relenapp}, the first term on the right-hand side tends to zero in \(L^2(0,T)\), while the second one tends to zero in \(L^\infty(0,T)\). Integrating in time over \((0,T)\), we conclude that
\begin{align}
u_J \to u
\qquad
\text{in } L^2\bigl((0,T);L^2(B_R)\bigr).
\end{align}
Since \(T>0\) and \(R>0\) are arbitrary, we obtain
\begin{align}
u_J \to u
\qquad
\text{in } L^2_{\loc}\bigl([0,\infty);L_\loc^2(\R^2)\bigr).
\end{align}
Together with the strong convergence of the gradient, this yields
\begin{align}
( u_J \cdot \nabla ) u_J
\to
( u \cdot \nabla ) u
\qquad
\text{in } L^1_{\loc}\bigl([0,\infty);L^2_\loc(\R^2)\bigr),
\end{align}
and therefore
\begin{align}
\dot u_J \to \dot u
\qquad
\text{in } \mathcal D'\bigl((0,\infty)\times\R^2\bigr).
\end{align}
To prove the convergence \(\rho \dot{u}_J \to \rho \dot{u}\), we use \eqref{eq:w_J} with
\(w=\dot u_J\). Multiplying by \(t\), squaring and integrating, we obtain
\begin{align}
\int_0^T t^2 \|\dot{u}_J(t)\|_{L^2(B_R)}^2 \, \dd t
&\lesssim
\int_0^T t^2 \|\nabla \dot{u}_J(t)\|_{L^2(\R^2)}^2 \, \dd t
+
\int_0^T t^2 \|\rho \dot{u}_J(t)\|_{L^2(\R^2)}^2 \, \dd t .
\end{align}
Since \(u_J=\sum_{|j|\le J}u_j\), we estimate
\begin{align}
\int_0^T t^2 \|\dot{u}_J(t)\|_{L^2(B_R)}^2 \, \dd t
\lesssim
\sum_{|j|\le J}
\int_0^T t^2 \|\nabla \dot{u}_j(t)\|_{L^2(\R^2)}^2 \, \dd t
+
\sum_{|j|\le J}
\int_0^T t \|\rho \dot{u}_j(t)\|_{L^2(\R^2)}^2 \, \dd t .
\end{align}
By \cref{lem:exLS} (see \eqref{eq:boundsujL2}), there exists a constant
\(C>0\), independent of \(j\), such that
\begin{align}
\int_0^T t^2 \|\dot{u}_J(t)\|_{L^2(B_R)}^2 \, \dd t
\le
C \sum_{|j|\le J}
\|\sqrt{\rho_0}\,u_{0,j}\|_{L^2(\R^2)}^2 .
\end{align}
Since the right-hand side is uniformly bounded in \(J\), and by the
arbitrariness of \(T\) and \(R\), we deduce that
\begin{align}
\dot u_J \to \dot u
\qquad
\text{in } L^2_{\loc}\bigl((0,\infty)\times\R^2\bigr).
\end{align}
Consequently,
\begin{align}
\rho \dot u_J \to \rho \dot u
\qquad
\text{in } \mathcal D'\bigl((0,\infty)\times\R^2\bigr).
\end{align}
\end{proof}
\subsection{Backward parabolic system}

We conclude this section with an existence result for a backward parabolic system.
Let \((\rho,u)\) be the solution given by \cref{prop:existenceL2}, and for every
\(j\in\Z\) let \(u_j\) be the solution of \eqref{eq:LSj} arising from the initial
data \(u_{0,j}\), constructed in \cref{lem:exLS}.

Given \(T>0\), we consider the dual system of \eqref{eq:LSj} on \([0,T]\times\R^2\):
\begin{align}
\begin{cases}\label{eq:BLSj}
\partial_t (\rho w_j^T) + \dive (\rho u \otimes w_j^T) + \nabla Q_j^T
= - \Delta w_j^T + \rho u_j,\\
\dive w_j^T = 0,\\
w_j^T(T) = 0.
\end{cases}
\end{align}
\begin{lemma}\label{lem:exBLSnj}
In the above setting, there exists a distributional solution \(w_j^T\) to
\eqref{eq:BLSj} and a constant \(C>0\), depending only on
\(\|u_0\|_{L^2(\R^2)}\), such that
\begin{align}\label{eq:estibackA1-limit}
\sup_{t\in(0,T)} (T-t)^{-1} \|\sqrt{\rho}\, w_j^T(t)\|_{L^2(\R^2)}^2
+
\sup_{t\in(0,T)}\|\nabla w_j^T(t)\|_{L^2(\R^2)}^2
\leq
C\int_0^T \|\rho u_j(t)\|_{L^2(\R^2)}^2 \dd t.
\end{align}
Moreover,
\begin{align}\label{eq:dualfor}
\int_0^T \|\rho u_j(t)\|_{L^2(\R^2)}^2 \dd t
=
-
\int_{\R^2} \rho_0(x)\, w_j^T(0,x)\cdot u_{0,j}(x)\,\dd x,
\end{align}
and for every \(t\in[0,T]\),
\begin{align}\label{eq:meanwj}
\int_{\R^2} \rho(t)\, w_j^T(t)\,\dd x
=
-(T-t)\int_{\R^2}\rho_0 u_{0,j}\,\dd x.
\end{align}
\end{lemma}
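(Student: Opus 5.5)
Reversing time reduces the statement to a forced forward problem, which I would treat by the same lifting-and-compactness scheme used for \cref{lem:exLS}. Set $s=T-t$, $\tilde w(s)=w_j^T(T-s)$, $\tilde\rho(s)=\rho(T-s)$, $\tilde u(s)=-u(T-s)$. Then $\tilde\rho$ is transported by $\tilde u$, and \eqref{eq:BLSj} becomes
\begin{align}
\tilde\rho\,\partial_s\tilde w+\tilde\rho(\tilde u\cdot\nabla)\tilde w+\nabla\tilde Q-\Delta\tilde w=-\tilde\rho\,\tilde u_j,\qquad \tilde w(0)=0,
\end{align}
where $\tilde u_j(s)=u_j(T-s)$. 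This is the linear system \eqref{eq:LSs} with zero initial datum and forcing $f=-\rho u_j$.

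First, I would carry out this construction at the level of the lifted solutions $(\rho_n,u_n)$ and $u_{n,j}$, where everything is smooth and $\rho_n\ge 1/n$. For each $n$ this yields a unique $w_n$. Testing with $w_n$ and using Cauchy--Schwarz gives
\begin{align}
\tfrac{d}{ds}\|\sqrt{\tilde\rho_n}\tilde w_n\|_2^2+2\|\nabla\tilde w_n\|_2^2\le 2\|\sqrt{\tilde\rho_n}\tilde w_n\|_2\|\sqrt{\rho_n}u_{n,j}\|_2.
\end{align}
Integrating from $0$ gives the bound $\|\sqrt{\tilde\rho_n}\tilde w_n(s)\|_2^2\lesssim s\int_0^T\|\sqrt{\rho_n}u_{n,j}\|_2^2$, which is the weight $(T-t)^{-1}$ in \eqref{eq:estibackA1-limit}. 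Second, I would test with $\dot{\tilde w}_n$, exactly as in the $A_1$ estimate leading to \eqref{eq:conl1}. The convective and pressure terms are handled through \cref{prop:divH} and the Stokes estimate \eqref{eq:StokesEst}, and the forcing contributes $\|\sqrt\rho_n u_{n,j}\|_2^2$ after Young's inequality. This gives
\begin{align}
\tfrac{d}{ds}\|\nabla\tilde w_n\|_2^2+\tfrac12\|\sqrt{\tilde\rho_n}\dot{\tilde w}_n\|_2^2\lesssim\|\nabla\tilde w_n\|_2^2\|\nabla u_n\|_2^2+\|\sqrt{\rho_n}u_{n,j}\|_2^2 .
\end{align}
Since $\tilde w_n(0)=0$, no time weight is needed. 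Grönwall with $\int_0^\infty\|\nabla u_n\|_2^2\le\|u_0\|_2^2$ then yields the uniform bound on $\sup\|\nabla w_n\|_2^2$, with a constant depending only on $\|u_0\|_{L^2}$.

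Next I pass to the limit $n\to\infty$. The local $L^2$ control \eqref{eq:step1} turns the weighted bounds into local $H^1$ bounds, and together with the bounds on $\rho\dot w$ and on $\partial_t$ this gives compactness by Aubin--Lions, as in \cref{prop:existenceL2}. With \eqref{eq:rho-nconv} and the convergence $u_{n,j}\to u_j$ from \cref{lem:exLS}, I can pass to the limit in the weak formulation to get a distributional solution $w_j^T$. Lower semicontinuity transfers the estimate, and the right-hand side converges because of \eqref{eq:strongConvnj}, or more precisely the corresponding statement for $\int_0^T\|\sqrt{\rho_n}u_{n,j}\|_2^2$. I can replace $\|\sqrt\rho u_j\|$ by $\|\rho u_j\|$ since $\rho=\mathbf 1_{D_t}$.

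The duality identity \eqref{eq:dualfor} I would prove at level $n$, by testing the equation for $u_{n,j}$ with $w_n$ and the equation for $w_n$ with $u_{n,j}$. The convective terms cancel thanks to the continuity equation, the viscous and pressure terms cancel by symmetry and incompressibility, and the endpoint terms reduce to $t=0$ because $w_n(T)=0$. This gives $\int_0^T\int\rho_n|u_{n,j}|^2=-\int\rho^n_0 w_n(0)\cdot u_{0,j}$. To pass to the limit I need $\sqrt{\rho_n^0}w_n(0)\rightharpoonup\sqrt{\rho_0}w_j^T(0)$ weakly in $L^2$, which follows from the uniform bounds together with local strong convergence and the support confinement \cref{prop:ImprovedDecay}.

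The mean identity \eqref{eq:meanwj} comes from integrating the equation in space, i.e. testing with constant vectors through the cutoff $\varphi_N$ of \cref{lem:MomentCons}. The terms $\Delta w$ and $\nabla Q$ vanish in the limit: the viscous term because $\nabla w\in L^2$, and the pressure term by the tail decay of \cref{prop:tailest}. This leaves $\frac{d}{dt}\int\rho w=\int\rho u_j=\int\rho_0u_{0,j}$ by \cref{lem:MomentCons}, and integrating from $t$ to $T$ with $w(T)=0$ gives the claim.

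I expect the main obstacle to be justifying these two identities in the vacuum limit, since $w$ and $u_j$ are not in $L^2$ outside the patch. Both the pairing at $t=0$ and the removal of the pressure boundary term rely on the localization of the dynamics in $D_{R,t}$ and on the tail estimates. For the tail of the pressure term I would fall back on the admissible-test-function machinery of the appendix, as in \cref{cor:tailEst}.
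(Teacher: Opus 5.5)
Your a priori estimates are the paper's: test with $w_n$, then with $\dot w_n$, and apply Gr\"onwall without a time weight thanks to the terminal condition. The duality identity at level $n$ and the use of \cref{lem:MomentCons} for \eqref{eq:meanwj} also match.

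The gap is in the passage $n\to\infty$, specifically compactness in time. You invoke Aubin--Lions ``as in \cref{prop:existenceL2}'' using bounds on $\partial_t w_n$, but you have none. The only time-derivative information your estimates give is $\sqrt{\rho_n}\,\dot w_n\in L^2_tL^2_x$. Off the patch $\rho_n$ is only $1/n$, so this gives no uniform local control there. You also cannot feed $\dot w_n$ into \eqref{eq:step1} without an $A_2$-type bound on $\nabla\dot w_n$. You have not derived one, and it would be delicate near $t=0$, where the available bounds on the differentiated forcing $\rho_n\dot u_{n,j}$ are singular.

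Without time compactness, the weak-$*$ limit in $L^\infty((0,T);H^1_{\loc})$ does not determine the traces $w(0)$ and $w(T)$. Uniform $H^1_{\loc}$ bounds at $t=0$ give some Rellich limit of $w_n(0)$, but nothing ties it to the limit solution $w$. As a result, three steps are unjustified:
\begin{itemize}
\item passing to the limit in $-\int\rho_0^n w_n(0)\cdot u_{0,j}$ to get \eqref{eq:dualfor};
\item the terminal condition $\rho(T)w(T)=0$;
\item \eqref{eq:meanwj} for every $t$.
\end{itemize}

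The missing idea is cheap, and it is what the paper does. Test the weak formulation of \eqref{eq:BLSnj} with divergence-free $\varphi\in C_c^\infty(\R^2)$. By your uniform bounds, \eqref{eq:step1}, and the energy bounds on $\rho_n u_n$ and $\rho_n u_{n,j}$, the maps $t\mapsto\int\rho_n(t)w_n(t)\cdot\varphi$ are uniformly Lipschitz on $[0,T]$. Apply Arzel\`a--Ascoli with a diagonal argument over a countable dense set, and use the uniform bound on $\rho_n w_n$ in $L^\infty_tL^2_x$. This gives $\rho_n w_n(t)\rightharpoonup\rho w(t)$ weakly in $L^2_\sigma$ for every $t\in[0,T]$.

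That is all you need, and no strong compactness of $w_n$ is required:
\begin{itemize}
\item $u_{0,j}$ is divergence free, so the pairing at $t=0$ converges;
\item $\rho w(T)=0$ follows from $\rho_n w_n(T)=0$;
\item \eqref{eq:meanwj} holds at every $t$, by passing to the limit in the weak formulation at fixed times before letting $\varphi_N\to a$.
\end{itemize}

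Finally, drop the appeal to \cref{prop:tailest} for the pressure. It concerns forward solutions of \eqref{eq:LSs}, not the backward system. It is also unnecessary: the test fields $\varphi_N$ of \cref{lem:MomentCons} are divergence free, so the pressure term vanishes identically.
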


\begin{proof}
For simplicity, we write \(w\) in place of \(w_j^T\).
As discussed in \cref{prop:existenceL2}, there exists a lifting approximation
scheme \((\rho_n,u_n)\) converging to \((\rho,u)\).
Moreover, for each \(n\in\N\) and \(j\in\Z\), there exists a unique solution
\(u_{n,j}\) of \eqref{eq:LSj}, advected by \((\rho_n,u_n)\), with initial data
\(u_{0,j}\).
By \cref{lem:exLS}, \(u_{n,j}\) converges to \(u_j\), which solves \eqref{eq:LSj}
advected by \((\rho,u)\) with initial data \(u_{0,j}\).

We denote by \(w_n\) the solution of
\begin{align}
\begin{cases}\label{eq:BLSnj}
\partial_t(\rho_n w_n) + \dive(\rho_n u_n \otimes w_n) + \nabla Q^T_{n,j}
= -\Delta w_n + \rho_n u_{n,j},\\
\dive w_n = 0,\\
w_n(T)=0.
\end{cases}
\end{align}

\underline{Step 1.} A priori estimates. Testing the momentum equation in \eqref{eq:BLSnj} with \(w_n\) yields
\begin{align}\label{eq:estibackA0}
- \frac12 \frac{\dd}{\dd t}
\|\sqrt{\rho_n}\,w_n(t)\|_{L^2(\R^2)}^2
+ \|\nabla w_n(t)\|_{L^2(\R^2)}^2
&\le
\left|
\int_{\R^2} \rho_n u_{n,j}(t)\cdot w_n(t)\,\dd x
\right| \\
&\le
\|\sqrt{\rho_n}u_{n,j}(t)\|_{L^2(\R^2)}
\|\sqrt{\rho_n}w_n(t)\|_{L^2(\R^2)} .
\nonumber
\end{align}
Integrating in time over \((t,T)\) and using Cauchy--Schwarz, we obtain
\begin{align}\label{eq:unif1back}
\begin{aligned}
\|\sqrt{\rho_n}(t)\,w_n(t)\|_{L^2(\R^2)}
&\le
\int_t^T
\|\sqrt{\rho_n}u_{n,j}(s)\|_{L^2(\R^2)}\,\dd s \\
&\le
(T-t)^{1/2}
\left(
\int_t^T
\|\sqrt{\rho_n}u_{n,j}(s)\|_{L^2(\R^2)}^2\,\dd s
\right)^{1/2}.
\end{aligned}
\end{align}
We next test the momentum equation in \eqref{eq:BLSnj} with \(\dot{w}_n\) and obtain
\begin{align}\label{eq:dotwnjTeq}
\begin{aligned}
\|\sqrt{\rho_n}\dot{w}_n\|_{L^2(\R^2)}^2
-\frac{\dd}{\dd t}\|\nabla w_n\|_{L^2(\R^2)}^2
&= \langle \sqrt{\rho_n}u_{n,j}, \sqrt{\rho_n}\dot{w}_n \rangle \\
&\quad + \langle \nabla w_n, \nabla \big((u_n\cdot\nabla) w_n \big)\rangle
+ \langle Q^T_{n,j}, \dive \dot{w}_n \rangle \\
&\eqqcolon a^T_{n,j}(t).
\end{aligned}
\end{align}
As in \cref{sec:energy} (see \eqref{eq:conl1}), the right-hand side can be estimated as
\begin{align}\label{eq:estiRHSnjT}
|a^T_{n,j}(t)|
\le \frac12 \|\sqrt{\rho_n}\dot{w}_n(t)\|_{L^2(\R^2)}^2
+ C\|\sqrt{\rho_n}u_{n,j}(t)\|_{L^2(\R^2)}^2
+ C\|\nabla u_n(t)\|_{L^2(\R^2)}^2\,\|\nabla w_n(t)\|_{L^2(\R^2)}^2,
\end{align}
with a constant \(C\) independent of \(T\), \(n\in\N\), and \(j\in\Z\).

Inserting \eqref{eq:estiRHSnjT} into \eqref{eq:dotwnjTeq}, absorbing the term
\(\frac12\|\sqrt{\rho_n}\dot{w}_n\|_{L^2(\R^2)}^2\) on the left-hand side, and integrating from \(t\) to \(T\), we obtain, for every \(t\in(0,T]\),
\begin{align}\label{eq:pregronwall_back}
\int_t^T & \|\sqrt{\rho_n}\dot{w}_n(s)\|_{L^2(\R^2)}^2 \dd s
+ \|\nabla w_n(t)\|_{L^2(\R^2)}^2
\\ & \le C\int_t^T \|\sqrt{\rho_n}u_{n,j}(s)\|_{L^2(\R^2)}^2 \dd s
+ C\int_t^T \|\nabla u_n(s)\|_{L^2(\R^2)}^2
\,\|\nabla w_n(s)\|_{L^2(\R^2)}^2 \dd s .
\end{align}
By Grönwall's inequality, we conclude that for every \(t\in[0,T]\),
\begin{align}\label{eq:gronwall_back_concl}
\|\nabla w_n(t)\|_{L^2(\R^2)}^2
\le
C \int_t^T \|\sqrt{\rho_n}u_{n,j}(s)\|_{L^2(\R^2)}^2 \dd s .
\end{align}

\underline{Step 2.} Compactness and convergence.

Combining \eqref{eq:unif1back} and \eqref{eq:gronwall_back_concl}, we obtain
\begin{align}\label{eq:estibackA1}
\sup_{t\in(0,T)}
\Big(
\|\sqrt{\rho_n}\, w_n(t)\|_{L^2(\R^2)}^2
+
\|\nabla w_n(t)\|_{L^2(\R^2)}^2
\Big)
\le C .
\end{align}
Applying \eqref{eq:step1} with \(w=w_n(t)\), we infer that for every sufficiently large \(R>0\),
\begin{align}\label{eq:step1back-app}
\|w_n(t)\|_{L^2(B_R)}
\lesssim
\|\nabla w_n(t)\|_{L^2(\R^2)}
+
\|\sqrt{\rho_n(t)}\,w_n(t)\|_{L^2(\R^2)},
\end{align}
where the implicit constant may depend on \(T\), \(R\), \(D\), and \(\|u_0\|_{L^2(\R^2)}\).
Combining \eqref{eq:step1back-app} with \eqref{eq:estibackA1}, we obtain
\begin{align}
\|w_n\|_{L^\infty((0,T);H^1(B_R))}\le C .
\end{align}
Therefore, up to a subsequence and by a diagonal argument,
\begin{align}\label{eq:wstar}
w_n \rightharpoonup^* w
\qquad
\text{in } L^\infty\bigl((0,T);H^1_{\loc}(\R^2)\bigr).
\end{align}
Moreover, by \eqref{eq:rho-nconv} and \eqref{eq:sqrt-rhou-nconv} (applied to both \(u_n\) and \(u_{n,j}\)) we have
\begin{align}\label{eq:conrecap}
\begin{aligned}
\rho_n &\to \rho
\qquad \text{in } C\bigl([0,T];L^p_{\loc}(\R^2)\bigr),
\qquad \forall p\in[1,\infty), \\
\sqrt{\rho_n}\, u_n &\to \sqrt{\rho}\, u
\qquad
\text{in } L^1\bigl((0,T);L^2_{\loc}(\R^2)\bigr), \\
\rho_n u_{n,j} &\to \rho u_j
\qquad
\text{in } L^1\bigl((0,T);L^2_{\loc}(\R^2)\bigr).
\end{aligned}
\end{align}
In particular, combining \eqref{eq:wstar} with the strong convergence of \(\rho_n\), we infer that
\begin{align}\label{eq:rhow-conv}
\rho_n w_n \rightharpoonup^* \rho w
\qquad
\text{in } L^\infty\bigl((0,T);L^2_{\loc}(\R^2)\bigr).
\end{align}
Since \(w_n\) solves \eqref{eq:BLSnj}, for every divergence-free
\(\varphi\in C_c^\infty(\R^2)\) and every \(0\le s\le t\le T\) we have
\begin{align}\label{eq:weakBLSnj}
\begin{aligned}
&\int_{\R^2}\rho_n(t)\,w_n(t)\cdot\varphi
-
\int_{\R^2}\rho_n(s)\,w_n(s)\cdot\varphi
\\
&\qquad=
-\int_s^t\!\!\int_{\R^2}(\rho_n u_n\otimes w_n):\nabla\varphi
+
\int_s^t\!\!\int_{\R^2}\nabla w_n:\nabla\varphi
+
\int_s^t\!\!\int_{\R^2}\rho_n u_{n,j}\cdot\varphi .
\end{aligned}
\end{align}
Using \eqref{eq:estibackA1} and \eqref{eq:conrecap}, we deduce that there exists a constant \(C>0\), independent of \(n\), \(s\), and \(t\), such that
\begin{align}\label{eq:equicont}
\left|
\int_{\R^2}\rho_n(t)\,w_n(t)\cdot\varphi
-
\int_{\R^2}\rho_n(s)\,w_n(s)\cdot\varphi
\right|
\le
C |t-s|\, \|\varphi\|_{W^{1,\infty}(\R^2)} .
\end{align}
Hence, for every fixed \(\varphi\in C_c^\infty(\R^2)\), the family of functions
\begin{align}
t\mapsto \int_{\R^2}\rho_n(t)\,w_n(t)\cdot\varphi
\end{align}
is equibounded and equicontinuous on \([0,T]\). Therefore, up to a subsequence, it converges uniformly on \([0,T]\). In view of \eqref{eq:rhow-conv}, the limit is necessarily
\begin{align}
t\mapsto \int_{\R^2}\rho(t)\,w(t)\cdot\varphi .
\end{align}
Since \(\rho_n  w_n\) is uniformly bounded in \(L^\infty\bigl((0,T);L^2(\R^2)\bigr)\), by density we conclude that for every \(t\in[0,T]\),
\begin{align}\label{eq:weakwL2}
\rho_n\,w_n(t) \rightharpoonup \rho\,w(t)
\qquad
\text{in } L_\sigma^2(\R^2).
\end{align}

\underline{Step 3.} Conclusion.

Since \(u_{n,j}\) solves
\begin{align}
\begin{cases}\label{eq:LSnj2}
\partial_t(\rho_n u_{n,j}) + \dive(\rho_n u_n \otimes u_{n,j}) + \nabla P_{n,j}
= \Delta u_{n,j},\\
\dive u_{n,j} = 0,\\
u_{n,j}(0)=u_{0,j},
\end{cases}
\end{align}
the system \eqref{eq:BLSnj} is dual to \eqref{eq:LSnj2}, and therefore
\begin{align}
\int_0^T \int_{\R^2} \rho_n u_{n,j} \cdot u_{n,j} \, \dd x \, \dd t
=
-
\int_{\R^2} \rho_n(0,x)\, w_n(0,x) \cdot u_{0,j}(x) \, \dd x .
\end{align}
Thanks to \eqref{eq:weakwL2} and the convergence \eqref{eq:unifconvenergy}, we may pass to the limit as \(n\to\infty\) and obtain \eqref{eq:dualfor}.

We may also pass to the limit in \eqref{eq:weakBLSnj}, using again \eqref{eq:weakwL2} together with the convergences in \eqref{eq:estibackA1} and \eqref{eq:conrecap}. Then, taking \(\varphi=a\in\R^2\) (this can be justified as in \cref{lem:MomentCons}), we obtain
\begin{align}
\int_{\R^2}\rho(t)\,w(t)\cdot a \,\dd x
-
\int_{\R^2}\rho(s)\,w(s)\cdot a \,\dd x
=
\int_s^t\!\!\int_{\R^2}\rho u_j\cdot a \,\dd x\,\dd \tau.
\end{align}
By \cref{lem:MomentCons},
\begin{align}
\int_s^t\!\!\int_{\R^2}\rho u_j\cdot a \,\dd x\,\dd \tau
=
a\cdot \int_s^t\!\!\int_{\R^2}\rho_0 u_{0,j}\,\dd x\,\dd \tau
=
(t-s)\, a\cdot \int_{\R^2}\rho_0 u_{0,j}\,\dd x .
\end{align}
Hence,
\begin{align}
\int_{\R^2}\rho(t)\,w(t)\,\dd x
-
\int_{\R^2}\rho(s)\,w(s)\,\dd x
=
(t-s)\int_{\R^2}\rho_0 u_{0,j}\,\dd x .
\end{align}
Setting \(t=T\) and using \(w(T)=0\), we obtain \eqref{eq:meanwj}.
\end{proof}

\section{Existence and Uniqueness for Besov data}
In this section, we prove a uniqueness result under the following assumptions on the initial data:
\begin{align}\label{ass:databesov2}
    \rho_0 = \mathbf{1}_D, \qquad D \text{ bounded Lipschitz domain,} \qquad u_0 \in \dot{B}^0_{2,1}(\R^2).
\end{align}
We also derive key $L^1$-in-time estimates.

Since $\dot{B}^0_{2,1}(\R^2) \subset L^2(\R^2)$, we can use the decomposition \eqref{eq:decompositionL2}. Namely, for every $\eta \in (0,1)$ there exists a sequence $(u_{0,j})_{j \in \Z}$ in
$\dot{H}^{\eta}(\R^2) \cap \dot{H}^{-\eta}(\R^2)$ such that
\begin{align}
u_0 = \sum_{j \in \Z} u_{0,j}, \qquad
c_j := 
2^{-j/2}\norm{u_{0,j}}_{\dot{H}^{\eta}(\R^2)}
+
2^{j/2}\norm{u_{0,j}}_{\dot{H}^{-\eta}(\R^2)} \in \ell^2(\Z).
\end{align}
For the rest of the section, let $(\rho,u)$ be the solution of \eqref{eq:INS} with initial data $(\rho_0,u_0)$ constructed in \cref{prop:existenceL2}. For every $j \in \Z$, let $u_j$ be the solution of \eqref{eq:LSj} constructed in \cref{lem:exLS} with initial datum $u_{0,j}$.

In Sections 5.1 and 5.2 we exploit the additional information $u_{0,j} \in \dot{H}^{-\eta}(\R^2)$ to obtain improved decay estimates for $u_j$, in contrast with \cref{lem:exLS}, where only positive regularity is used. We emphasize that these arguments only rely on the assumption $u_0 \in L^2(\R^2)$.

Finally, in Section 5.3 we show that the additional structure provided by $u_0 \in \dot{B}^0_{2,1}$ yields further regularity properties for $(\rho,u)$, which are sufficient to establish uniqueness.

We restrict to $\eta \in (0,\tfrac12)$. This is natural since, for Lipschitz domains, the characteristic function $\rho_0=\mathbf{1}_D$ belongs to the multiplier space $\mathcal{M}(\dot{H}^{s}(\R^2))$ for all $s \in (-\tfrac12,\tfrac12)$ (\cite[Proposition 5.3]{zbMATH01908115}). In particular, for every $\eta \in (0,\tfrac12)$ there exists a constant $C>0$ such that, for every $\phi \in \dot{H}^{\pm \eta}(\R^2)$,
\begin{align}
    \norm{\rho_0 \phi}_{\dot{H}^{-\eta}(\R^2)} \leq C \norm{\phi}_{\dot{H}^{-\eta}(\R^2)}, 
    \qquad
    \norm{\rho_0 \phi}_{\dot{H}^{\eta}(\R^2)} \leq C \norm{\phi}_{\dot{H}^{\eta}(\R^2)}.
\end{align}

\subsection{\texorpdfstring{Galilean transform and estimates for $\dot H^{-\eta}$ data}{Galilean transform and estimates for H-dot minus eta data}}

Here and in the following, let
\begin{align}
    M_j = \frac{1}{|D|}\int_{\R^2} \rho_0 u_{0,j} \,\dd x,
    \qquad j \in \Z,
\end{align}
and define, for every $j \in \Z$, every $t \geq 0$, and every $x \in \R^2$,
\begin{align}
    \rho_{M_j}(t,x) &\coloneqq \rho(t,x + M_j t),\\
    u_{M_j}(t,x) &\coloneqq u(t,x + M_j t) - M_j,\\
    u_{j,M_j}(t,x) &\coloneqq u_j(t,x + M_j t) - M_j.
\end{align}
This transform, called the Galilean transform, satisfies the following two basic but crucial properties.

\begin{lemma}
Let $(\rho_{M_j},u_{M_j})$ and $u_{j,M_j}$ be as above. Then, for every $t \geq 0$,
\begin{align}\label{eq:0mom}
    \int_{\R^2} \rho_{M_j}(t,x)\, u_{j,M_j}(t,x)\,\dd x = 0.
\end{align}
Moreover, for every $0 \leq s < t$,
\begin{align}\label{eq:energyequjMj2}
    \frac{1}{2} \| \sqrt{\rho_{M_j}(t)}\, u_{j,M_j}(t)\|^2_{L^2(\R^2)}
    + \int_s^t \| \nabla u_j(\tau)\|^2_{L^2(\R^2)} \,\dd \tau
    = \frac{1}{2} \| \sqrt{\rho_{M_j}(s)}\, u_{j,M_j}(s)\|^2_{L^2(\R^2)}.
\end{align}
\end{lemma}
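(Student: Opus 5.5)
The plan is to reduce both identities to two facts already established for $u_j$: the conservation of momentum in \cref{lem:MomentCons}, and the energy equality together with $\sqrt{\rho}\,u_j\in C([0,\infty);L^2(\R^2))$ from \cref{lem:exLS}.

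For \eqref{eq:0mom}, we change variables $y=x+M_jt$, which has unit Jacobian. This gives
\begin{align}
\int_{\R^2}\rho_{M_j}(t,x)\,u_{j,M_j}(t,x)\,\dd x=\int_{\R^2}\rho(t,y)\,u_j(t,y)\,\dd y-M_j\int_{\R^2}\rho(t,y)\,\dd y.
\end{align}
Since $\rho(t)=\mathbf{1}_{X(t,D)}$ with a measure-preserving flow, we have $\int\rho(t)=|D|$; equivalently, $\rho$ is the renormalized solution of \cref{cor:lingrowthu}, and mass is conserved by testing with the cutoff $\chi_N$, using the compact support from \cref{prop:ImprovedDecay}. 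By \eqref{eq:momentum0}, the first integral equals $\int\rho_0u_{0,j}=|D|M_j$, so the difference vanishes. The identity holds a priori for a.e. $t$; it extends to every $t\ge0$ because $t\mapsto\int\rho(t)u_j(t)$ is continuous. This continuity follows from $\sqrt{\rho}\,u_j\in C([0,\infty);L^2)$ together with the uniform compact support of $\rho$ on bounded time intervals (Cauchy--Schwarz against $\sqrt{\rho}\,\mathbf{1}_K$).

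For \eqref{eq:energyequjMj2}, we expand the square after the same change of variables:
\begin{align}
\|\sqrt{\rho_{M_j}(t)}\,u_{j,M_j}(t)\|_{L^2}^2=\|\sqrt{\rho(t)}\,u_j(t)\|_{L^2}^2-2M_j\cdot\int\rho(t)u_j(t)+|M_j|^2\int\rho(t)=\|\sqrt{\rho(t)}\,u_j(t)\|_{L^2}^2-|D|\,|M_j|^2,
\end{align}
where we used the two conservation laws from the previous step. The correction $|D||M_j|^2$ does not depend on time, so it cancels when we take the difference between times $s$ and $t$. The energy equality for $u_j$ from \cref{lem:exLS} gives the equality between $0$ and $t$; subtracting the versions at $t$ and at $s$ yields the identity on $[s,t]$. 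Finally, the dissipation term is unchanged by the transform, because $\nabla u_{j,M_j}(\tau,x)=\nabla u_j(\tau,x+M_j\tau)$ and translation preserves the $L^2$ norm.

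The only step that needs care is justifying that the momentum and mass identities hold for every $t$ rather than a.e. $t$, and that the integrals are finite. Both are handled by the compact support of $\rho(t)$ on bounded time intervals and the strong time continuity of $\sqrt{\rho}\,u_j$, so I do not expect a genuine obstacle here.
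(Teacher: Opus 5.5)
Your proposal is correct and follows the same route as the paper. Conservation of momentum from \cref{lem:MomentCons} and conservation of mass give the zero-momentum identity after the change of variables $y=x+M_jt$. Expanding the square then yields $\|\sqrt{\rho_{M_j}(t)}\,u_{j,M_j}(t)\|_{L^2}^2=\|\sqrt{\rho(t)}\,u_j(t)\|_{L^2}^2-|D|\,|M_j|^2$, which is inserted into the energy equality for $u_j$. Your extra argument upgrading the a.e.-in-time identities to every $t$, via $\sqrt{\rho}\,u_j\in C([0,\infty);L^2)$ and the uniform compact support of $\rho$, is sound; the paper leaves this point implicit.
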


\begin{proof}
Since $u_j$ is an immediately strong solution by \cref{lem:exLS}, its momentum is conserved in time by \cref{lem:MomentCons}. Therefore,
\begin{align}\label{eq:nullmom}
    \int_{\R^2} \rho(t,x)\,u_j(t,x)\,\dd x
    = \int_{\R^2} \rho_0(x)\,u_{0,j}(x)\,\dd x
    = |D|\,M_j.
\end{align}
Since
\begin{align}\label{eq:consmass}
    \int_{\R^2} \rho(t,x)\,\dd x = |D|,
\end{align}
identity \eqref{eq:0mom} follows directly from the definition of the transform.

This proves \eqref{eq:0mom}. Recall now that $u_j$ satisfies the energy equality, namely, for every $0 \le s < t$,
\begin{align}\label{eq:energyequj-st}
    \frac{1}{2} \int_{\R^2} \rho(t) |u_j(t)|^2 \,\dd x
    + \int_s^t \| \nabla u_j(\tau)\|^2_{L^2(\R^2)} \,\dd \tau
    = \frac{1}{2} \int_{\R^2} \rho(s) |u_j(s)|^2 \,\dd x.
\end{align}
If we show that, for every $t \geq 0$,
\begin{align}\label{eq:compGal}
    \| \sqrt{\rho_{M_j}(t)}\, u_{j,M_j}(t)\|^2_{L^2(\R^2)}
    = \int_{\R^2} \rho(t,x)\, |u_j(t,x)|^2 \,\dd x - |D|\,|M_j|^2,
\end{align}
then inserting \eqref{eq:compGal}, evaluated at times $t$ and $s$, into \eqref{eq:energyequj-st} yields \eqref{eq:energyequjMj2}.

To prove \eqref{eq:compGal}, we expand the square and use \eqref{eq:nullmom} together with \eqref{eq:consmass}:
\begin{align}
\begin{aligned}
    \int_{\R^2} \rho_{M_j}(t,x) |u_{j,M_j}(t,x)|^2 \,\dd x
    &= \int_{\R^2} \rho(t,x+M_j t)\, |u_j(t,x+M_j t)-M_j|^2 \,\dd x \\
    &= \int_{\R^2} \rho(t,x+M_j t)\, |u_j(t,x+M_j t)|^2 \,\dd x \\
    &\quad - 2 M_j \cdot \int_{\R^2} \rho(t,x+M_j t)\, u_j(t,x+M_j t)\,\dd x \\
    &\quad + |M_j|^2 \int_{\R^2} \rho(t,x+M_j t)\,\dd x \\
    &= \int_{\R^2} \rho(t,x)\, |u_j(t,x)|^2 \,\dd x - |D|\,|M_j|^2.
\end{aligned}
\end{align}
This concludes the proof.
\end{proof}

Since $u_{j,M_j}$ has zero $\rho_{M_j}$-momentum, we can eliminate the constant component and recover a Poincaré-type control. This yields improved decay estimates, as shown in the following lemma.
 
\begin{lemma}\label{lem:exLSnegative}
There exists a constant $C>0$, depending only on $D$ and $
\|u_0\|_{L^2(\R^2)}$, such that for every $t > 0$,
\begin{align}\label{eq:neg}
    \int_0^t \|\sqrt{\rho_{M_j}(s)}\, u_{j,M_j}(s)\|_{L^2(\R^2)}^2 \dd s
    \le C \|u_{0,j}\|_{\dot{H}^{-\eta}(\R^2)}^2\, t^{1-\eta},
\end{align}
and
\begin{align}\label{eq:neg2}
    \|\sqrt{\rho_{M_j}(t)}\, u_{j,M_j}(t)\|_{L^2(\R^2)}^2
    \le C \|u_{0,j}\|_{\dot{H}^{-\eta}(\R^2)}^2\, t^{-\eta}.
\end{align}
\end{lemma}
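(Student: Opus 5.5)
The plan is the duality argument sketched in the introduction, run in the Galilean frame. Fix $t>0$ and let $w=w_j^t$ be the backward solution of \eqref{eq:BLSj} given by \cref{lem:exBLSnj} with $T=t$.

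\emph{Step 1: transfer the duality identity to the moving frame.} Start from \eqref{eq:dualfor}; since $\rho=\rho^2$ for the patch, its left-hand side is $\int_0^t\|\sqrt\rho\, u_j\|_2^2$. Set $\bar w_0:=w(0)+tM_j$. By \eqref{eq:meanwj} at time $0$,
\begin{align}
\int_{\R^2}\rho_0\,\bar w_0\,\dd x=-t|D|M_j+t|D|M_j=0 .
\end{align}
Subtract $t|D||M_j|^2$ from both sides of \eqref{eq:dualfor}. On the left, \eqref{eq:compGal} converts $\int_0^t\|\sqrt\rho\,u_j\|_2^2-t|D||M_j|^2$ into $\int_0^t\|\sqrt{\rho_{M_j}}\,u_{j,M_j}\|_2^2$. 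On the right, $t|D||M_j|^2=t M_j\cdot\int\rho_0u_{0,j}$. This gives
\begin{align}
\int_0^t\|\sqrt{\rho_{M_j}}\,u_{j,M_j}\|_2^2\,\dd s=-\int_{\R^2}\rho_0\,\bar w_0\cdot u_{0,j}\,\dd x\le \|\mathbf 1_D\bar w_0\|_{\dot H^\eta}\|u_{0,j}\|_{\dot H^{-\eta}} .
\end{align}
Here we used that $\mathbf 1_D=\mathbf 1_D^2$ and that $\rho_0\in\mathcal M(\dot H^{\eta})$ for $\eta<\tfrac12$.

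\emph{Step 2: estimate $\|\mathbf 1_D\bar w_0\|_{\dot H^\eta}$.} This is the delicate point. The bound \eqref{eq:estibackA1-limit} controls $w(0)$, not $\bar w_0$, so it has to be redone in the moving frame. The plan is to rerun the energy estimate of \cref{lem:exBLSnj}, Step 1, for $\bar w(s):=w(s)+(t-s)M_j$ (or, equivalently, after the Galilean change of variables). Since constants are divergence free with zero gradient, $\bar w$ solves the same backward system with source $\rho u_j-\rho M_j$. The same identity as \eqref{eq:meanwj} gives $\int\rho(s)\bar w(s)=0$, so the source can be replaced by $\rho(u_j-M_j)$. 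Hence $\sqrt{\rho}\,\bar w$ is controlled by the source $\sqrt\rho(u_j-M_j)$, and the gradient is unchanged. Testing as in \eqref{eq:unif1back} and \eqref{eq:gronwall_back_concl} yields
\begin{align}
t^{-1}\|\sqrt{\rho_0}\,\bar w_0\|_2^2+\|\nabla\bar w_0\|_2^2\lesssim I(t):=\int_0^t\|\sqrt{\rho_{M_j}}u_{j,M_j}\|_2^2\,\dd s .
\end{align}

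\emph{Step 3: interpolation and absorption.} Take the bounded extension $E:H^1(D)\to H^1(\R^2)$ and use the multiplier property as in \eqref{eq:ExD}. Since $\bar w_0$ has zero mean on $D$, the Poincaré inequality on $D$ removes the lower-order term, giving \eqref{eq:ExD2}:
\begin{align}
\|\mathbf 1_D\bar w_0\|_{\dot H^\eta}\lesssim\|\bar w_0\|_{L^2(D)}^{1-\eta}\|\nabla\bar w_0\|_{L^2}^{\eta}\lesssim t^{\frac{1-\eta}2}I(t)^{1/2}.
\end{align}
Plugging into Step 1 gives $I\lesssim t^{\frac{1-\eta}{2}}I^{1/2}\|u_{0,j}\|_{\dot H^{-\eta}}$, hence \eqref{eq:neg}. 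The constants depend only on $D$ and $\|u_0\|_2$, through the Gronwall factor $\exp(C\|u_0\|^2_2)$ and the extension and Poincaré constants.

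\emph{Step 4: pointwise bound \eqref{eq:neg2}.} By \eqref{eq:energyequjMj2}, $s\mapsto\|\sqrt{\rho_{M_j}(s)}u_{j,M_j}(s)\|_2^2$ is nonincreasing. Therefore
\begin{align}
t\,\|\sqrt{\rho_{M_j}(t)}u_{j,M_j}(t)\|_2^2\le I(t)\lesssim t^{1-\eta}\|u_{0,j}\|_{\dot H^{-\eta}}^2 ,
\end{align}
and dividing by $t$ gives \eqref{eq:neg2}.

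The main obstacle is Step 2. The shifted backward estimate is only justified at the level of the lifted approximations $w_n$, whose momentum identity involves $\rho_n$ rather than $\rho$. The way to handle this is to derive the $\bar w$-bounds for the lifted problems and pass to the limit with \eqref{eq:weakwL2}. Alternatively, one can avoid the shifted source altogether: use $\|\nabla\bar w_0\|=\|\nabla w(0)\|$ directly, and bound $\|\bar w_0\|_{L^2(D)}$ by Poincaré through $\|\nabla w(0)\|_2$. That route gives the weaker $t^{1/2}$-type control, which should be checked against the claimed power; if it falls short, the shifted energy estimate is needed.
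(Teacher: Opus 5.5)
Your proof follows the paper's in every step but one: the duality \eqref{eq:dualfor}, the shift $\bar w_0=w_j^t(0)+tM_j$ made mean-free by \eqref{eq:meanwj}, the use of \eqref{eq:compGal} to produce $I(t)$, the extension/multiplier interpolation with Poincar\'e on $D$, absorption, and monotonicity via \eqref{eq:energyequjMj2}. The exception is Step 2, where your version is the more careful one.

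The paper obtains \eqref{eq:estibackA1recall2} by subtracting $t|D||M_j|^2$ from both sides of the unshifted bound, using \eqref{eq:L2barw} and \eqref{eq:compGal}. This is exact for the $L^2$ term, because \eqref{eq:unif1back} holds with constant one. For the gradient term, however, \cref{lem:exBLSnj} only gives $\|\nabla\bar w_0\|_{L^2}^2=\|\nabla w_j^t(0)\|_{L^2}^2\le C\bigl(I(t)+t|D||M_j|^2\bigr)$, with a Gronwall constant $C$. The $t|D||M_j|^2$ in it is not removed by the subtraction, which the $L^2$ term has already used up; \eqref{eq:estibackA1recall} is written with constant one. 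Your observation is what fixes this: $\bar w(s)=w_j^t(s)+(t-s)M_j$ solves the same backward system with zero terminal datum and source $\rho(u_j-M_j)$, so rerunning the estimates gives a gradient bound in terms of $I(t)$ alone.

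Two remarks on making this rigorous.

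First, do not shift the lifted $w_n$. Since $\rho_n\ge 1/n$ on all of $\R^2$, the shifted source $\rho_n(u_{n,j}-M_j)$ is not in $L^2$. Instead, define $\bar w$ directly as the limit of lifted backward solutions with source $\tilde\rho_n(u_{n,j}-M_j)$, where $\tilde\rho_n=\rho_n-\frac1n$.
\begin{itemize}
\item The proof of \cref{lem:exBLSnj} goes through with $\sqrt{\rho_n}u_{n,j}$ replaced by $\sqrt{\tilde\rho_n}(u_{n,j}-M_j)$.
\item The lifted duality passes to the limit as $\int_0^t\!\int\rho(u_j-M_j)\cdot u_j=-\int\rho_0\bar w(0)\cdot u_{0,j}$. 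By momentum conservation, the left side equals $I(t)$.
\item Testing the limit equation with constants gives $\int\rho_0\bar w(0)=0$.
\end{itemize}

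Second, your fallback does not actually fall short. Since $\mathbf 1_D\in\dot H^\eta$ for $\eta<\frac12$, you have $|M_j|\lesssim_D\|u_{0,j}\|_{\dot H^{-\eta}}$.
\begin{itemize}
\item The Poincar\'e-only route gives $I\lesssim\|u_{0,j}\|_{\dot H^{-\eta}}\bigl(I+t|M_j|^2\bigr)^{1/2}$. Hence $I\lesssim(1+t^{1/2})\|u_{0,j}\|_{\dot H^{-\eta}}^2\lesssim t^{1-\eta}\|u_{0,j}\|_{\dot H^{-\eta}}^2$ for $t\ge 1$, again because $\eta<\frac12$.
\item For $t\le1$, use $I(t)\le\int_0^t\|\sqrt\rho\,u_j\|_{L^2}^2\lesssim(t+t^{1-\eta})\|u_{0,j}\|_{\dot H^{-\eta}}^2$. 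This is your Step 3 run with $w_j^t(0)$ in place of $\bar w_0$ and without Poincar\'e, cf.\ \eqref{eq:thisone}.
\end{itemize}
So \eqref{eq:neg}, and then \eqref{eq:neg2}, also follow from \cref{lem:exBLSnj} exactly as stated.
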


\begin{proof}
Let $w_j^t$ be the solution to \eqref{eq:BLSj} on $[0,t]\times\R^2$. By \cref{lem:exBLSnj} and the definition of $M_j$, we have
\begin{align}\label{eq:meanwj0}
    \int_{\R^2} \rho_0(x)\, w_j^t(0,x)\,\dd x
    = - t \int_{\R^2} \rho_0(x)\, u_{0,j}(x)\,\dd x
    = - t |D|\, M_j .
\end{align}

We introduce
\begin{align}\label{eq:defbarw}
    \bar w_0(x)
    :=
    w_j^t(0,x)+ t M_j,
\end{align}
which has zero mean on $D$ thanks to \eqref{eq:meanwj0} and the fact that $\rho_0=\mathbf{1}_D$. By duality,
\begin{align}\label{eq:dualsplitwj}
    \int_0^t \|\sqrt{\rho}(s)\,u_j(s)\|_{L^2(\R^2)}^2 \,\dd s
    = - \int_{\R^2} \rho_0(x)\, u_{0,j}(x)\cdot w_j^t(0,x)\,\dd x.
\end{align}
Adding $-t |D| |M_j|^2$ to both sides and using \eqref{eq:compGal}, we can rewrite \eqref{eq:dualsplitwj} as
\begin{align}\label{eq:dualsplitwj2}
    \int_0^t \|\sqrt{\rho_{M_j}}(s)\,u_{j,M_j}(s)\|_{L^2(\R^2)}^2 \,\dd s
    = - \int_{\R^2} \rho_0(x)\, u_{0,j}(x)\cdot \bar w_0(x)\,\dd x
    \le \|u_{0,j}\|_{\dot{H}^{-\eta}(\R^2)}\|\rho_0 \bar{w}_0\|_{\dot{H}^\eta(\R^2)}.
\end{align}
To prove \eqref{eq:neg}, it suffices to show
\begin{align}\label{eq:neg1}
    \|\rho_0 \bar{w}_0\|^2_{\dot{H}^\eta(\R^2)}
    \leq C t^{1-\eta}
    \int_0^t \|\sqrt{\rho_{M_j}}(s)\,u_{j,M_j}(s)\|_{L^2(\R^2)}^2 \,\dd s.
\end{align}
Since $D$ is a Lipschitz domain and $\bar{w}_0$ has zero average on $D$, we can apply the Poincaré inequality on $D$ and  arguing as in \eqref{eq:ExD} obtain
\begin{align}\label{eq:hestbarw}
       \|\rho_0 \bar{w}_0\|^2_{\dot{H}^\eta(\R^2)}
       & \lesssim   \|\bar{w}_0\|_{L^2(D)}^{2-2\eta}\,
                     \|\bar{w}_0\|_{H^1(D)}^{2\eta}   \lesssim   \|\bar{w}_0\|_{L^2(D)}^{2-2\eta}
                     \left(
                     \|\bar{w}_0\|_{L^2(D)}^{2}
                     +
                     \|\nabla \bar{w}_0\|_{L^2(D)}^{2}
                     \right)^\eta \\
       & \lesssim   \|\bar{w}_0\|_{L^2(D)}^{2-2\eta}
                     \|\nabla \bar{w}_0\|_{L^2(\R^2)}^{2\eta}.
\end{align}
The implicit constants depend only on $D$.

By \cref{lem:exBLSnj}, there exists $C>0$ such that
\begin{align}\label{eq:estibackA1recall}
        t^{-1} \|\sqrt{\rho_0} w^t_j(0)\|^2_{L^2(\R^2)} + \|\nabla w^t_j(0)\|^2_{L^2(\R^2)}
        \leq \int_0^t \|\sqrt{\rho}(s)\, u_{j}(s)\|_{L^2(\R^2)}^2 \dd s.
\end{align}
Moreover, a direct computation shows that
\begin{align}\label{eq:L2barw}
\|\sqrt{\rho_0}\,\bar w_0\|_{L^2(\R^2)}^2 =
\|\sqrt{\rho_0}\, w_j^t(0)\|_{L^2(\R^2)}^2
- t^2 |D|\, |M_j|^2.
\end{align}
Using \eqref{eq:compGal} and \eqref{eq:L2barw}, we deduce
\begin{align}\label{eq:estibackA1recall2}
        t^{-1} \|\sqrt{\rho_0} \bar{w}_0\|^2_{L^2(\R^2)} + \|\nabla \bar{w}_0\|^2_{L^2(\R^2)}
        \leq \int_0^t \|\sqrt{\rho_{M_j}(s)}\, u_{j,M_j}(s)\|_{L^2(\R^2)}^2 \dd s.
\end{align}
Inserting \eqref{eq:estibackA1recall2} into \eqref{eq:hestbarw} yields \eqref{eq:neg1}.

To prove \eqref{eq:neg2}, we observe that by \eqref{eq:energyequjMj2} the quantity
\begin{align}
    \|\sqrt{\rho_{M_j}(t)}\,u_{j,M_j}(t)\|_{L^2(\R^2)}^2
\end{align}
is non-increasing in time. Therefore,
\begin{align}
    \,t\,\|\sqrt{\rho_{M_j}(t)}\,u_{j,M_j}(t)\|_{L^2(\R^2)}^2
    \le
    \int_0^t \|\sqrt{\rho_{M_j}(s)}\,u_{j,M_j}(s)\|_{L^2(\R^2)}^2 \dd s,
\end{align}
and inserting this into \eqref{eq:neg} gives \eqref{eq:neg2}.
\end{proof}

\subsection{\texorpdfstring{$L^1$-in-time estimates}{L1-in-time estimates}}

Combining the shifted estimates \eqref{eq:decayAoiuj} with the decay of the Galilean transform obtained in \cref{lem:exLSnegative}, we obtain decay estimates for the solution $u_j$.

\begin{lemma}\label{lem:decay-shifted-uj}
There exists a constant $C>0$, depending only on $D$ and $\|u_0\|_{L^2(\R^2)}$, such that for every $s>0$ and for almost every $t>0$,
\begin{align}\label{eq:decayAoiujMj}
\begin{aligned}
    \|\nabla u_j(t)\|_{L^2(\R^2)}
    &\le C\, t^{-1\slash 2-\eta/2}\|u_{0,j}\|_{\dot H^{-\eta}(\R^2)},\\
  \|\sqrt{\rho(t)}\, \dot u_j(t)\|_{L^2(\R^2)}
    &\le C\, t^{-1-\eta/2}\|u_{0,j}\|_{\dot H^{-\eta}(\R^2)},\\
    \|\nabla \dot u_j(t)\|_{L^2(\R^2)}
    &\le C\, t^{-3\slash 2-\eta/2}\|u_{0,j}\|_{\dot H^{-\eta}(\R^2)}.
\end{aligned}
\end{align}
\end{lemma}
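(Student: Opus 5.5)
The plan is to combine the shifted parabolic estimates \eqref{eq:decayAoiuj} of \cref{lem:exLS}, which bound $\nabla u_j$, $\sqrt\rho\,\dot u_j$ and $\nabla\dot u_j$ at time $t$ in terms of $\int_s^t\|\nabla u_j\|_{L^2}^2$, with the decay in the moving frame from \cref{lem:exLSnegative}. The key observation is that gradients and material derivatives are unchanged by the Galilean transform. Indeed, $\nabla u_{j,M_j}(t,x)=\nabla u_j(t,x+M_jt)$, so $\|\nabla u_j(t)\|_{L^2}=\|\nabla u_{j,M_j}(t)\|_{L^2}$. Likewise, since $M_j$ is constant, $\dot u_j$ computed with respect to $u$ coincides, after the shift, with the material derivative of $u_{j,M_j}$ with respect to $u_{M_j}$. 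So no estimate on $\nabla u_j$ or $\dot u_j$ sees the rigid drift $M_j$, and it suffices to control $\int_s^t\|\nabla u_j\|_{L^2}^2$.

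\textbf{Step 1 (dissipation on a dyadic window).} Fix $t>0$ and choose $s=t/2$. The shifted energy identity \eqref{eq:energyequjMj2} on $[t/4,t]$ gives
\begin{align}
\int_{t/4}^{t}\|\nabla u_j(\tau)\|_{L^2}^2\,\dd\tau\le \tfrac12\|\sqrt{\rho_{M_j}(t/4)}\,u_{j,M_j}(t/4)\|_{L^2}^2\le C\,t^{-\eta}\|u_{0,j}\|_{\dot H^{-\eta}}^2,
\end{align}
where the last inequality is \eqref{eq:neg2} from \cref{lem:exLSnegative}. As an alternative, one can average \eqref{eq:neg} over $[0,t]$ to pick a good initial time, which gives the same bound.

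\textbf{Step 2 (plug into the shifted estimates).} Apply \eqref{eq:decayAoiuj} with starting time $s=t/4$ and final time $T=t$, evaluated at time $t$, so that $t-s=\tfrac34 t$. This yields $t\|\nabla u_j(t)\|^2\lesssim t^{-\eta}\|u_{0,j}\|^2_{\dot H^{-\eta}}$, $t^2\|\sqrt\rho\dot u_j(t)\|^2\lesssim t^{-\eta}\|u_{0,j}\|^2_{\dot H^{-\eta}}$, and $t^3\|\nabla\dot u_j(t)\|^2\lesssim t^{-\eta}\|u_{0,j}\|^2_{\dot H^{-\eta}}$. Taking square roots gives exactly \eqref{eq:decayAoiujMj}. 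The constant depends only on $C_u$ from \cref{lem:exLS}, hence only on $\|u_0\|_{L^2}$, and on the constant of \cref{lem:exLSnegative}, hence on $D$.

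\textbf{Main obstacle.} The delicate point is that \eqref{eq:decayAoiuj} is stated with a supremum over $(s,T)$ at the endpoint $T$ itself. I would either take $T$ slightly larger than $t$, using the window $[t/4,2t]$, whose length is still comparable to $t$ so the bound of Step 1 is unchanged, or note that the bound holds for almost every $t$; this accounts for the ``almost every $t$'' in the statement. I would also check that \eqref{eq:energyequjMj2} holds for every $0\le s<t$, as proved, so that evaluating at $t/4$ is legitimate.
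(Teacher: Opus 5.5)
Your proposal is correct and follows the paper's proof: combine the shifted estimates \eqref{eq:decayAoiuj} with the bound $\int_s^T\|\nabla u_j\|_{L^2}^2\,\dd\tau\le\frac12\|\sqrt{\rho_{M_j}(s)}\,u_{j,M_j}(s)\|_{L^2}^2\le Cs^{-\eta}\|u_{0,j}\|_{\dot H^{-\eta}}^2$, which comes from \eqref{eq:energyequjMj2} and \eqref{eq:neg2}, and then take $s$ proportional to $t$. The paper uses $s=t/2$ and lets $T$ be arbitrary; this is your endpoint fix, and it needs no comparability of the window length because the dissipation bound does not depend on $T$. Your remark on the Galilean invariance of $\nabla u_j$ and $\dot u_j$ is harmless but unnecessary, since \eqref{eq:decayAoiuj} and \eqref{eq:energyequjMj2} are already written for the unshifted $\nabla u_j$.
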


\begin{proof}
In the proof, every constant depending only on $D$ and $\|u_0\|_{L^2(\R^2)}$ is denoted by $C$.

We first prove the estimate for $\nabla u_j$. By \eqref{eq:decayAoiuj}, for every $0<s<T<\infty$,
\begin{align}\label{eq:laststop}
   \sup_{t \in (s,T)} (t-s)\|\nabla u_j(t)\|_{L^2(\R^2)}^2
    \le C \int_s^T \|\nabla u_j(\tau)\|_{L^2(\R^2)}^2 \,\dd \tau .
\end{align}
Then, by the energy equality \eqref{eq:energyequjMj2} and the decay estimate \eqref{eq:neg2}, we get
\begin{align}\label{eq:laststop2}
    \int_s^T \|\nabla u_j(\tau)\|_{L^2(\R^2)}^2 \,\dd \tau
    \leq \frac12 \|\sqrt{\rho_{M_j}(s)}\,u_{j,M_j}(s)\|_{L^2(\R^2)}^2
    \le C s^{-\eta}\|u_{0,j}\|_{\dot H^{-\eta}(\R^2)}^2.
\end{align}
Combining \eqref{eq:laststop} and \eqref{eq:laststop2}, we infer that for every $0<s<T<\infty$,
\begin{align}
    \sup_{t \in (s,T)} (t-s)\|\nabla u_j(t)\|_{L^2(\R^2)}^2
    \le C s^{-\eta}\|u_{0,j}\|_{\dot H^{-\eta}(\R^2)}^2.
\end{align}
Since $T>0$ is arbitrary, it follows that for every $s>0$ and for almost every $t>s$,
\begin{align}
  (t-s)\|\nabla u_j(t)\|_{L^2(\R^2)}^2
  \le C s^{-\eta}\|u_{0,j}\|_{\dot H^{-\eta}(\R^2)}^2.
\end{align}
By setting $s = t\slash 2$ we conclude the proof and taking square roots, we obtain the first estimate in \eqref{eq:decayAoiujMj}. The other two estimates are proved in the same way, starting from the corresponding bounds in \eqref{eq:decayAoiuj}. 
\end{proof}

Now we study a time-integrated version of \cref{lem:decay-shifted-uj}.  This combines the decay estimates obtained in \eqref{eq:decayAoiujMj} with the bounds provided by \cref{lem:exLS}, in particular \eqref{eq:boundsujHeta}. 

\begin{lemma}\label{lem:ancientlemma}
There exists a constant $C>0$, depending only on $D$ and $\|u_0\|_{L^2(\R^2)}$, such that the following estimates hold.

\begin{itemize}

\item[(i)] Let $\beta_1>0$ be such that $\beta_1 = 2(\alpha_1 + 1)$. Then
\begin{align}
\int_0^\infty t^{\alpha_1} 
\|\nabla u_j(t)\|_{L^2(\R^2)}^{\beta_1}\, dt
\le
C\Big(
2^{-\frac{j}{2}\beta_1}\|u_{0,j}\|_{\dot H^\eta(\R^2)}^{\beta_1}
+
2^{\frac{j}{2}\beta_1}\|u_{0,j}\|_{\dot H^{-\eta}(\R^2)}^{\beta_1}
\Big).
\end{align}

\item[(ii)] 
Let $\beta_2>0$ be such that $\beta_2 = \alpha_2 + 1$. Then
\begin{align}
\int_0^\infty t^{\alpha_2} 
\|\sqrt{\rho(t)}\,\dot u_j(t)\|_{L^2(\R^2)}^{\beta_2}\, dt
\le
C\Big(
2^{-\frac{j}{2}\beta_2}\|u_{0,j}\|_{\dot H^\eta(\R^2)}^{\beta_2}
+
2^{\frac{j}{2}\beta_2}\|u_{0,j}\|_{\dot H^{-\eta}(\R^2)}^{\beta_2}
\Big).
\end{align}

\item[(iii)] 
Let $\beta_3>0$ be such that $3\beta_3 = 2(\alpha_3 + 1)$. Then
\begin{align}
\int_0^\infty t^{\alpha_3} 
\|\nabla \dot u_j(t)\|_{L^2(\R^2)}^{\beta_3}\, dt
\le
C\Big(
2^{-\frac{j}{2}\beta_3}\|u_{0,j}\|_{\dot H^\eta(\R^2)}^{\beta_3}
+
2^{\frac{j}{2}\beta_3}\|u_{0,j}\|_{\dot H^{-\eta}(\R^2)}^{\beta_3}
\Big).
\end{align}

\end{itemize}
\end{lemma}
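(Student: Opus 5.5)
The plan is the dynamic interpolation of the introduction: split the time integral at $T_j := 2^{-2j/\eta}$. On $(0,T_j)$ we use the positive-regularity bounds \eqref{eq:boundsujHeta}; on $(T_j,\infty)$ we use the negative-regularity pointwise decay of \cref{lem:decay-shifted-uj}. All three items are handled the same way, so we describe (i) in detail and only indicate the changes for (ii) and (iii).

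For (i), on $(0,T_j)$ the bound on $A_1^\eta(t,u_j,u)$ gives
\[
t^{1-\eta}\|\nabla u_j(t)\|_{L^2(\R^2)}^2\le C\|u_{0,j}\|_{\dot H^\eta(\R^2)}^2 .
\]
We read this with the norm $\|u_{0,j}\|_{\dot H^\eta}^2$ on the right, which is the natural homogeneity from interpolating $A^0$ and $A^1$; the displayed \eqref{eq:boundsujHeta} is written linearly, which we regard as a typo. This yields $\|\nabla u_j(t)\|_{L^2}\le C t^{-\frac12+\frac\eta2}\|u_{0,j}\|_{\dot H^\eta}$, and therefore
\[
\int_0^{T_j} t^{\alpha_1}\|\nabla u_j\|_{L^2}^{\beta_1}\,dt\le C\|u_{0,j}\|_{\dot H^\eta}^{\beta_1}\int_0^{T_j} t^{\alpha_1-\frac{\beta_1}{2}+\frac{\eta\beta_1}{2}}\,dt .
\]
The constraint $\beta_1=2(\alpha_1+1)$ gives $\alpha_1-\beta_1/2=-1$, so the exponent is $-1+\eta\beta_1/2>-1$. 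The integral is then finite and equals $C T_j^{\eta\beta_1/2}=C2^{-j\beta_1}$. This is smaller than the factor $2^{-j\beta_1/2}$ required, so the exponent convention needs checking. With $T_j=2^{-j/\eta}$ instead, one gets $T_j^{\eta\beta_1/2}=2^{-j\beta_1/2}$, matching the $\dot H^\eta$ term. We therefore take $T_j=2^{-j/\eta}$, since the weights $2^{\mp j/2}$ in $c_j$ are calibrated to it.

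On $(T_j,\infty)$, \cref{lem:decay-shifted-uj} gives $\|\nabla u_j(t)\|_{L^2}\le C t^{-\frac12-\frac\eta2}\|u_{0,j}\|_{\dot H^{-\eta}}$. The integrand is then $t^{-1-\eta\beta_1/2}$, which is integrable at infinity. The tail equals $C T_j^{-\eta\beta_1/2}=C2^{j\beta_1/2}$, producing the $\dot H^{-\eta}$ term. Adding the two pieces proves (i).

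Items (ii) and (iii) follow identically. For (ii), use $A_2^\eta$, namely $t^{2-\eta}\|\sqrt\rho\dot u_j\|_{L^2}^2\le C\|u_{0,j}\|_{\dot H^\eta}^2$, so $\|\sqrt\rho\dot u_j\|_{L^2}\lesssim t^{-1+\eta/2}\|u_{0,j}\|_{\dot H^\eta}$, together with the second estimate in \eqref{eq:decayAoiujMj}. The condition $\beta_2=\alpha_2+1$ makes $\alpha_2-\beta_2=-1$, and both pieces contribute $T_j^{\pm\eta\beta_2/2}=2^{\mp j\beta_2/2}$. For (iii), use $A_3^\eta$, giving $\|\nabla\dot u_j\|_{L^2}\lesssim t^{-\frac32+\frac\eta2}\|u_{0,j}\|_{\dot H^\eta}$, together with the third estimate in \eqref{eq:decayAoiujMj}. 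The condition $3\beta_3=2(\alpha_3+1)$ makes $\alpha_3-3\beta_3/2=-1$, and the same bookkeeping applies.

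The computation is elementary once the pointwise bounds are in hand. The only delicate points are two consistency checks: reading \eqref{eq:boundsujHeta} with the squared norm, so that the sup bound from $A_i^\eta$ is pointwise in $t$, and matching the splitting time to the weights. The pointwise decay of \cref{lem:decay-shifted-uj} holds only for almost every $t$, which is harmless under the integral. The constant $C$ is independent of $j$ because the constants in \cref{lem:exLS} and \cref{lem:decay-shifted-uj} are.
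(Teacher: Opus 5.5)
Your proposal is correct and is essentially the paper's proof. You split the time integral at $A=2^{-j/\eta}$, use the $A_i^\eta$ bounds (read with the squared norm, as in \eqref{eq:unifEstj}) on $(0,A)$ and the decay of \cref{lem:decay-shifted-uj} on $(A,\infty)$, and the constraints on $(\alpha_i,\beta_i)$ normalize the exponents to $-1\pm\eta\beta_i/2$; your correction of the splitting time from the introduction's $2^{-2j/\eta}$ to $2^{-j/\eta}$ is exactly the choice the paper makes in this lemma.
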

\begin{proof}
Throughout the proof, constants depending only on $D$ and $\|u_0\|_{L^2(\R^2)}$ are omitted and denoted by $\lesssim$.

We start with (i). By \cref{lem:decay-shifted-uj} with $s = t/2$, we obtain, for almost every $t>0$,
\begin{align}
\|\nabla u_j(t)\|_{L^2(\R^2)}
\lesssim
\|u_{0,j}\|_{\dot H^{-\eta}(\R^2)}\, t^{-1/2-\eta/2}.
\end{align}
On the other hand, by \cref{lem:exLS} (see \eqref{eq:boundsujHeta}), we have, for almost every $t>0$,
\begin{align}\label{eq:mindec}
\|\nabla u_j(t)\|_{L^2(\R^2)}
\lesssim
\min\Big\{
\|u_{0,j}\|_{\dot H^\eta(\R^2)}\, t^{-1/2+\eta/2},
\,
\|u_{0,j}\|_{\dot H^{-\eta}(\R^2)}\, t^{-1/2-\eta/2}
\Big\}.
\end{align}
Let $A>0$. Splitting the integral at $A$ and using \eqref{eq:mindec}, we obtain
\begin{align}
\int_0^\infty t^{\alpha_1}\|\nabla u_j(t)\|_{L^2(\R^2)}^{\beta_1}\,dt
&\lesssim
\|u_{0,j}\|_{\dot H^\eta}^{\beta_1}
\int_0^A
t^{\alpha_1+(-\frac12+\frac\eta2)\beta_1}\,dt
+
\|u_{0,j}\|_{\dot H^{-\eta}}^{\beta_1}
\int_A^\infty
t^{\alpha_1+(-\frac12-\frac\eta2)\beta_1}\,dt .
\end{align}

Using $\beta_1=2(\alpha_1+1)$, we compute
\begin{align}
\int_0^\infty t^{\alpha_1}\|\nabla u_j(t)\|_{L^2}^{\beta_1}\,dt
\lesssim
\|u_{0,j}\|_{\dot H^\eta}^{\beta_1} A^{\frac{\eta}{2}\beta_1}
+
\|u_{0,j}\|_{\dot H^{-\eta}}^{\beta_1} A^{-\frac{\eta}{2}\beta_1}.
\end{align}
Choosing $A=2^{-j/\eta}$ yields (i).

The proof of (ii) is identical. Indeed, using again \cref{lem:exLS} and \cref{lem:decay-shifted-uj}, we obtain, for almost every $t>0$,
\begin{align}
\|\sqrt{\rho(t)}\,\dot u_j(t)\|_{L^2(\R^2)}
\lesssim
\min\Big\{
\|u_{0,j}\|_{\dot H^\eta}\, t^{-1+\eta/2},
\,
\|u_{0,j}\|_{\dot H^{-\eta}}\, t^{-1-\eta/2}
\Big\}.
\end{align}
Proceeding as above yields (ii). The proof of (iii) is analogous.
\end{proof}
\subsection{Lipschitz regularity of Besov solutions.}

We now exploit the assumption $u_0 \in \dot{B}^0_{2,1}(\R^2)$. 
Since this regularity is more evenly distributed across frequencies, we may assume that the initial decomposition \eqref{eq:initial-data-L2} satisfies the additional summability property
\begin{align}\label{eq:decBes}
  c_j 
  := 
  2^{-j/2}\|u_{0,j}\|_{\dot{H}^{\eta}(\R^2)}
  +
  2^{j/2}\|\rho_0 u_{0,j}\|_{\dot{H}^{-\eta}(\R^2)}
  \in \ell^1(\Z).
\end{align}
The fact that $c_j \in \ell^1(\Z)$ ensures that the solution constructed in \cref{prop:gluinglevel0} is Lipschitz. 
We also note that $u_0$ belongs to the same weighted spaces considered in \cite{Danchin2025}.

\begin{proposition}\label{prop:existencebesov}
Let $(\rho,u)$ be the solution constructed in \cref{prop:existenceL2}, and assume in addition that $u_0 \in \dot{B}^0_{2,1}(\R^2)$. Then there exists a constant $C>0$, depending only on $D$ and $\|u_0\|_{L^2(\R^2)}$, such that
\begin{align}\label{eq:L1L2estimates}
    \int_0^\infty \|\sqrt{\rho} \dot{u}(t),\nabla^2 u(t),\nabla P(t)\|_{L^2(\R^2)} \,\dd t
    +
    \int_0^\infty \sqrt{t}\,\|\nabla \dot{u}(t)\|_{L^2(\R^2)} \,\dd t
    \leq C \|u_0\|_{\dot B^0_{2,1}(\R^2)}.
\end{align}
Moreover,
\begin{align}\label{eq:addiestimatesu}
    \int_0^\infty  t^{-1/2}  \|\nabla u(t)\|_{L^2(\R^2)} + \|\nabla u(t)\|_{L^\infty(\R^2)} \,\dd t
    +
    \int_0^\infty t^{3/4} \|\nabla \dot{u}(t)\|_{L^4(\R^2)} \,\dd t
    \leq C \|u_0\|_{\dot B^0_{2,1}(\R^2)}.
\end{align}
Furthermore,
\begin{align}\label{eq:addiestimatesu2}
   \lim_{t \to 0} \sqrt{t}\,\|\nabla u(t)\|_{L^2(\R^2)} = 0.
\end{align}
\end{proposition}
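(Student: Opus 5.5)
The plan is to prove every estimate for a single atom $u_j$ first, with a bound by $c_j$, and then sum over $j$ using that the partial sums $u_J$ converge to $u$ (\cref{prop:gluinglevel0}). Since each quantity is a norm (or seminorm) of a linear expression in $u_j$, we get $\|\cdot(u_J)\|\le\sum_{|j|\le J}\|\cdot(u_j)\|$, and we pass to the limit $J\to\infty$ by lower semicontinuity under the distributional convergences in \eqref{eq:distributional-convergencesj}, as in \cref{rem:Immstrong}. The pressure needs a separate remark: $P_J=\sum P_j$ and $\nabla P_J\to\nabla P$ distributionally.

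\textbf{Step 1: $L^1$-in-time bounds for each atom.} Apply \cref{lem:ancientlemma} with exponent $\beta=1$.
\begin{itemize}
\item Part (ii) with $\beta_2=1$, $\alpha_2=0$ gives $\int_0^\infty\|\sqrt\rho\,\dot u_j\|_2\,\dd t\lesssim c_j$.
\item Part (iii) with $\beta_3=1$, $\alpha_3=1/2$ gives $\int_0^\infty\sqrt t\,\|\nabla\dot u_j\|_2\,\dd t\lesssim c_j$.
\item Part (i) with $\beta_1=1$, $\alpha_1=-1/2$ gives $\int_0^\infty t^{-1/2}\|\nabla u_j\|_2\,\dd t\lesssim c_j$.
\end{itemize}
The Stokes estimate \eqref{eq:StokesEst} then converts the $\sqrt\rho\,\dot u_j$ bound into bounds on $\nabla^2u_j$ and $\nabla P_j$. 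Some care is needed about whether $c_j$ uses $\|u_{0,j}\|_{\dot H^{-\eta}}$ or $\|\rho_0u_{0,j}\|_{\dot H^{-\eta}}$. Only $\rho_0u_{0,j}$ enters the duality identity \eqref{eq:dualsplitwj2}, so the definition \eqref{eq:decBes} is the right one, and the multiplier property of $\mathbf 1_D$ reconciles the two.

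\textbf{Step 2: Lipschitz and $L^4$ bounds for each atom.} By \cref{prop:Linfty},
\begin{align}
\|\nabla u_j\|_\infty\lesssim\|\sqrt\rho\,\dot u_j\|_2+\|\nabla u_j\|_2^{1/2}\|\nabla\dot u_j\|_2^{1/2}.
\end{align}
The first term is handled by Step 1. For the second, Cauchy--Schwarz in $t$ applied to $(t^{-1/2}\|\nabla u_j\|_2)^{1/2}(t^{1/2}\|\nabla\dot u_j\|_2)^{1/2}$ bounds its time integral by the geometric mean of the two integrals in Step 1, hence by $c_j$. For the $L^4$ term, Gagliardo--Nirenberg gives
\begin{align}
t^{3/4}\|\nabla\dot u_j\|_4\lesssim(t^{1/2}\|\nabla\dot u_j\|_2)^{1/2}(t\|\nabla^2\dot u_j\|_2)^{1/2}.
\end{align}
This requires an $L^1$-in-time bound on $t\|\nabla^2\dot u_j\|_2$, which is not directly in \cref{lem:ancientlemma}. \textbf{This is the main obstacle.} I would first try to extend \cref{lem:ancientlemma} to it. The $\dot H^\eta$ side comes from $A_3^\eta$, but only in time-integrated $L^2$ form, so I would split dyadically in time and use Cauchy--Schwarz on each dyadic block. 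For the $\dot H^{-\eta}$ side, I would use a shifted version of the $A_3$ estimate (time-weighted integral of $\|\nabla^2\dot u_j\|_2^2$ from $s$ to $t$, bounded by the energy available at time $s$), combined with \eqref{eq:neg2}. Splitting at $A=2^{-j/\eta}$ as before should then give $\lesssim c_j$.

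\textbf{Step 3: Summation and the limit at $t=0$.} Summing over $j$ with $\sum_j c_j\lesssim\|u_0\|_{\dot B^0_{2,1}}$ gives \eqref{eq:L1L2estimates} and \eqref{eq:addiestimatesu}. For \eqref{eq:addiestimatesu2}, each atom satisfies $\sqrt t\,\|\nabla u_j(t)\|_2\to0$ by \eqref{eq:scen2Hyp2}. Moreover, uniformly in $t$,
\begin{align}
\sqrt t\,\|\nabla u_j(t)\|_2\lesssim\min\{t^{\eta/2}\|u_{0,j}\|_{\dot H^\eta},\ t^{-\eta/2}\|u_{0,j}\|_{\dot H^{-\eta}}\}\le c_j,
\end{align}
where the last inequality follows by comparing $t$ with $2^{-j/\eta}$. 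Dominated convergence on $\ell^1(\Z)$ then gives $\sqrt t\,\|\nabla u(t)\|_2\le\sum_j\sqrt t\,\|\nabla u_j(t)\|_2\to0$.
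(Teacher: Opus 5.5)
Your Steps 1 and 3 match the paper's proof. Your Cauchy--Schwarz treatment of $\|\nabla u_j\|_2^{1/2}\|\nabla\dot u_j\|_2^{1/2}$ is a valid and slightly simpler variant of the paper's Hölder $(4/3,4)$ split.

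\textbf{The gap.} The $L^4$ term is left unproved, as you yourself flag. Splitting $t^{3/4}\|\nabla\dot u_j\|_{L^4}$ symmetrically as $(t^{1/2}\|\nabla\dot u_j\|_2)^{1/2}(t\|\nabla^2\dot u_j\|_2)^{1/2}$ forces you to need
\begin{align}
\int_0^\infty t\|\nabla^2\dot u_j\|_2\,\dd t\lesssim c_j,
\end{align}
a critical $L^1$-in-time bound that nothing in the paper provides. Your proposed repair needs a shifted, time-integrated $A_3$ estimate of the form
\begin{align}
\int_s^T(\tau-s)^3\|\nabla^2\dot u_j\|_2^2\,\dd\tau\lesssim\int_s^T\|\nabla u_j\|_2^2\,\dd\tau .
\end{align}
\cref{lem:exLS} only supplies the sup-type shifted bounds \eqref{eq:decayAoiuj}. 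So this estimate would have to be re-derived through the lifting scheme and lower semicontinuity. It must also be phrased via $\int_s^T\|\nabla u_j\|_2^2$ or the Galilean energy, because $\|\sqrt{\rho(s)}u_j(s)\|_2^2$ itself does not decay. As written, ``should then give'' is a plan, not a proof.

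\textbf{The missing idea.} No decay information on $\nabla^2\dot u_j$ is needed. Use Hölder with exponents $4/3$ and $4$ instead of Cauchy--Schwarz:
\begin{align}
\int_0^\infty t^{3/4}\|\nabla\dot u_j\|_{L^2}^{1/2}\|\nabla^2\dot u_j\|_{L^2}^{1/2}\,\dd t
\le\Bigl(\int_0^\infty\|\nabla\dot u_j\|_{L^2}^{2/3}\,\dd t\Bigr)^{3/4}\Bigl(\int_0^\infty t^{3}\|\nabla^2\dot u_j\|_{L^2}^{2}\,\dd t\Bigr)^{1/4}.
\end{align}
\begin{itemize}
\item The first factor is $\lesssim c_j^{1/2}$ by \cref{lem:ancientlemma} (iii) with $\alpha_3=0$, $\beta_3=2/3$.
\item The second factor is the unshifted, scale-invariant $A_3^0$ bound of \cref{lem:exLS}, which gives $\lesssim\|u_{0,j}\|_{L^2}^{1/2}$.
\item By interpolation and the arithmetic--geometric mean inequality,
\begin{align}
\|u_{0,j}\|_{L^2}\le\|u_{0,j}\|_{\dot H^{\eta}}^{1/2}\|u_{0,j}\|_{\dot H^{-\eta}}^{1/2}\le\tfrac12\bigl(2^{-j/2}\|u_{0,j}\|_{\dot H^\eta}+2^{j/2}\|u_{0,j}\|_{\dot H^{-\eta}}\bigr)\lesssim c_j .
\end{align}
\end{itemize}
The product is therefore $\lesssim c_j$, exactly as in the paper. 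All of the critical time-integrability is carried by the $\nabla\dot u_j$ factor, for which \cref{lem:decay-shifted-uj} already gives the needed decay.
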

\begin{proof}
Similarly as before, throughout the proof we omit constants depending only on $D$ and $\|u_0\|_{L^2(\R^2)}$, and denote them by $\lesssim$.

For every $j \in \Z$, let $u_j$ be the solution to the linearized system \eqref{eq:LSj}
with initial data $u_{0,j}$ constructed in \cref{lem:exLS}. For every $J \in \N$, define
\begin{align}
    u_J := \sum_{|j| \leq J} u_j.
\end{align}
We start with \eqref{eq:L1L2estimates}. By \cref{lem:ancientlemma} (ii) with $\alpha_2=0$ and $\beta_2=1$, we obtain
\begin{align}\label{eq:strategy}
    \int_0^\infty \|\rho \dot{u}_{j}(t)\|_{L^2(\R^2)} \,\dd t
    \lesssim c_j.
\end{align}
Summing over $|j|\le J$ and using that $c_j \in \ell^1(\Z)$, we deduce the uniform bound
\begin{align}\label{eq:est1J}
    \int_0^\infty \|\rho \dot{u}_J(t)\|_{L^2(\R^2)} \,\dd t
    \le \sum_{|j|\le J} \int_0^\infty \|\rho \dot{u}_j(t)\|_{L^2(\R^2)} \,\dd t
    \lesssim
    \|u_0\|_{\dot{B}^0_{2,1}(\R^2)}.
\end{align}
By \cref{prop:gluinglevel0} (see \eqref{eq:distributional-convergencesj}), we have
\begin{align}
\rho \dot{u}_J \to \rho \dot{u}
\qquad \text{in } \mathcal D'\bigl((0,\infty)\times\R^2\bigr).
\end{align}
Therefore, by lower semicontinuity (see \cref{rem:Immstrong}), we conclude
\begin{align}
\int_0^\infty \|\rho \dot{u}(t)\|_{L^2(\R^2)} \,\dd t
\le
\liminf_{J\to\infty}
\int_0^\infty \|\rho \dot{u}_J(t)\|_{L^2(\R^2)} \,\dd t
\lesssim \|u_0\|_{\dot{B}^0_{2,1}(\R^2)}.
\end{align}
This proves the first estimate in \eqref{eq:L1L2estimates}. By linear Stokes, we immediately obtain that
\begin{align}
    \int_0^\infty \|\nabla^2 u(t),\nabla P(t)\|_{L^2(\R^2)} \,\dd t \lesssim \int_0^\infty \|\sqrt{\rho} \dot{u}(t)\|_{L^2(\R^2)} \,\dd t \leq C \|u_0\|_{\dot B^0_{2,1}(\R^2)}.
\end{align}
The second estimate in \eqref{eq:L1L2estimates} follows in the same way by using \cref{lem:ancientlemma} (iii) with $\alpha_3 = \tfrac{1}{2}$ and $\beta_3 = 1$.

In order to prove the first estimate in \eqref{eq:addiestimatesu}, we use \cref{lem:ancientlemma} (i) with $\alpha_1=-1/2$ and $\beta_1=1$ and we obtain that, for every $j \in \Z$,
\begin{align}
    \int_0^\infty t^{-1/2} \norm{\nabla u_j(t)}_{L^2(\R^2)}  \,\dd t \lesssim c_j.
\end{align}
By summing over all $j \in \Z$ and the same arguments as above, we conclude the first bound in \eqref{eq:addiestimatesu}. To show the second bound in \eqref{eq:addiestimatesu}, we recall that by \eqref{eq:L2Lip} (and Stokes estimates), for every $j \in \Z$ and every $t>0$,
\begin{align}
    \|\nabla u_j(t)\|_{L^\infty(\R^2)}
    \lesssim
    \|\rho(t)\dot{u}_j(t)\|_{L^2(\R^2)}
    +
    \|\nabla u_j(t)\|_{L^2(\R^2)}^{1/2}
    \|\nabla \dot{u}_j(t)\|_{L^2(\R^2)}^{1/2}.
\end{align}

We have already estimated the first term. For the second one, we use Hölder's inequality with exponents $4/3$ and $4$, together with \cref{lem:ancientlemma} (i) with $\alpha_1=-2/3$ and $\beta_1=2/3$, and \cref{lem:ancientlemma} (iii) with $\alpha_3=2$ and $\beta_3=2$, to obtain
\begin{align}
    \int_0^\infty & 
    \|\nabla u_j(s)\|_{L^2(\R^2)}^{1/2} 
    \|\nabla \dot{u}_j(s)\|_{L^2(\R^2)}^{1/2}\, ds
    \\ &\le
    \left(\int_0^\infty s^{-2/3}\|\nabla u_j(s)\|_{L^2(\R^2)}^{2/3}\, ds\right)^{3/4} 
    \left(\int_0^\infty s^{2}\|\nabla \dot{u}_j(s)\|_{L^2(\R^2)}^{2}\, ds\right)^{1/4} \\
    &\lesssim
    \left(
    2^{-j/3}\|u_{0,j}\|_{\dot H^\eta(\R^2)}^{2/3}
    +
    2^{j/3}\|u_{0,j}\|_{\dot H^{-\eta}(\R^2)}^{2/3}
    \right)^{3/4} 
    \left(
    2^{-j}\|u_{0,j}\|_{\dot H^\eta(\R^2)}^{2}
    +
    2^{j}\|u_{0,j}\|_{\dot H^{-\eta}(\R^2)}^{2}
    \right)^{1/4} \lesssim c_j.
\end{align}
Since $(c_j)_{j\in\Z} \in \ell^1(\Z)$, we conclude as before.

To complete the proof of \eqref{eq:addiestimatesu}, it remains to show that
\begin{align}\label{eq:L1L4nablaudot}
    \int_0^\infty \|t^{3/4} \nabla \dot{u}_j(t)\|_{L^4(\R^2)} \,\dd t
    \lesssim c_j.
\end{align}
Fix $j \in \Z$. By Ladyzhenskaya's inequality and Hölder's inequality, we obtain
\begin{align}
    \int_0^\infty \|t^{3/4} \nabla \dot{u}_j(t)\|_{L^4(\R^2)} \,\dd t
    &\lesssim \int_0^\infty
    t^{3/4}
    \|\nabla \dot{u}_j(t)\|_{L^2(\R^2)}^{1/2}
    \|\nabla^2 \dot{u}_j(t)\|_{L^2(\R^2)}^{1/2}
    \,\dd t \\
    &\le
    \left(
    \int_0^\infty
    \|\nabla \dot{u}_j(t)\|_{L^2(\R^2)}^{2/3}
    \,\dd t
    \right)^{3/4}
    \left(
    \int_0^\infty
    t^{3}\|\nabla^2 \dot{u}_j(t)\|_{L^2(\R^2)}^2
    \,\dd t
    \right)^{1/4}.
\end{align}
By \cref{lem:exLS}, we have
\begin{align}
    \left(
    \int_0^\infty
    t^{3}\|\nabla^2 \dot{u}_j(t)\|_{L^2(\R^2)}^2
    \,\dd t
    \right)^{1/4}
    \lesssim 
    \|u_{0,j}\|_{L^2(\R^2)}^{1/2}
    \lesssim
    \left(
    2^{-j/4}\|u_{0,j}\|_{\dot H^\eta(\R^2)}^{1/2}
    +
    2^{j/4}\|u_{0,j}\|_{\dot H^{-\eta}(\R^2)}^{1/2}
    \right),
\end{align}
while by \cref{lem:ancientlemma} (iii), applied with $\alpha_3=0$ and $\beta_3=2/3$, we obtain
\begin{align}
    \left(
    \int_0^\infty
    \|\nabla \dot{u}_j(t)\|_{L^2(\R^2)}^{2/3}
    \,\dd t
    \right)^{3/4}
    \lesssim
    \left(
    2^{-j/4}\|u_{0,j}\|_{\dot H^\eta(\R^2)}^{1/2}
    +
    2^{j/4}\|u_{0,j}\|_{\dot H^{-\eta}(\R^2)}^{1/2}
    \right).
\end{align}
    Multiplying the above estimates yields \eqref{eq:L1L4nablaudot}. By \eqref{eq:mindec}, for $j\in\Z$, and almost every $t>0$,
    \begin{align}\label{eq:mindec_used}
        \sqrt{t}\,\|\nabla u_{j}(t)\|_{L^2(\R^2)}
        \lesssim \min \left\{
        t^{\eta/2}\|u_{0,j}\|_{\dot{H}^\eta(\R^2)},
        \;
        t^{-\eta/2}\|u_{0,j}\|_{\dot{H}^{-\eta}(\R^2)}
        \right\}.
    \end{align}
        Then, for every $j \in \Z$, we have
        \begin{align}
        \sqrt{t}\,\|\nabla u_{j}(t)\|_{L^2(\R^2)}
        &\lesssim t^{\eta/2}\|u_{0,j}\|_{\dot H^\eta(\R^2)}
        \le C t^{\eta/2} 2^{j/2} c_j, \label{eq:lowfr2}\\
        \sqrt{t}\,\|\nabla u_{j}(t)\|_{L^2(\R^2)}
        &\lesssim t^{-\eta/2}\|u_{0,j}\|_{\dot H^{-\eta}(\R^2)}
        \le C t^{-\eta/2} 2^{-j/2} c_j. \label{eq:highfr2}
    \end{align}
    We now choose
    \begin{align}
        N(t) := -\eta \log_2 t,
        \qquad \text{so that} \qquad
        t^{\eta/2} 2^{N(t)/2} = 1.
        \end{align}
        If $j \le N(t)$, we use \eqref{eq:lowfr2} and obtain
        \begin{align}
        \sqrt{t}\,\|\nabla u_{j}(t)\|_{L^2(\R^2)}
        \lesssim t^{\eta/2} 2^{N(t)/2} c_j
        =  c_j .
        \end{align}
        If $j \ge N(t)$, we use \eqref{eq:highfr2} and obtain
        \begin{align}
        \sqrt{t}\,\|\nabla u_{j}(t)\|_{L^2(\R^2)}
        \lesssim t^{-\eta/2} 2^{-N(t)/2} c_j
        =  c_j .
    \end{align}
    Therefore
    \begin{align}\label{eq:domCOn2}
        \sqrt{t}\,\|\nabla u_{j}(t)\|_{L^2(\R^2)}
        \lesssim \, c_j.
    \end{align}
    Since $c_j \in \ell^1(\Z)$ and for each $j$ we have (see \eqref{eq:scen2Hyp2})
    \begin{align}
        \lim_{t \to 0}   \sqrt{t}\,\|\nabla u_{j}(t)\|_{L^2(\R^2)} = 0
    \end{align}
    we conclude the continuity by dominated convergence. 
\end{proof}

\subsection{Weak--strong uniqueness for Besov data}

We start with the following lemma on $H^{-1}(\R^2)$-stability. It is inspired by \cite{CrinBaratSkondricViolini2025}. We stress that here we consider two immediately strong solutions, not necessarily constructed as in Section~4, and therefore not enjoying the additional properties discussed there. We simply recall that, by \cref{def:immstrongsol-INS}, there exist constants $C_{(\rho_1,u_1)}(u_1)$ and $C_{(\rho_2,u_2)}(u_2)$ such that
\begin{align}
\sup_{i \in \{0,1,2,3\}} \sup_{t>0} A_i^0(t,u_1)
\le C_{(\rho_1,u_1)}(u_1),
\qquad
\sup_{i \in \{0,1,2,3\}} \sup_{t>0} A_i^0(t,u_2)
\le C_{(\rho_2,u_2)}(u_2).
\end{align}

\begin{lemma}\label{lem:H-1stability}
Let $(\rho_1,u_1)$ and $(\rho_2,u_2)$ be two immediately strong solutions of \eqref{eq:INS}
arising from the initial data $(\rho_0,u_1(0))$ and $(\rho_0,u_2(0))$, both satisfying \eqref{ass:weakData2}. Assume moreover that
\begin{equation}\label{eq:Linftydef}
L_\infty
:=
\exp\!\left(
\int_0^{\infty}
\norm{\nabla u_2(s)}_{L^\infty(\R^2)}\,\dd s
\right)
<\infty .
\end{equation}
Fix $T>0$. Then there exist constants $C,N>0$\footnote{Here $N$ depends on $\|u_1(0)\|_{L^2(\R^2)}$, $C_{(\rho_1,u_1)}(u_1)$, $\|u_2(0)\|_{L^2(\R^2)}$, $C_{(\rho_2,u_2)}(u_2)$, $T$, and $D$, while $C$ depends on $N$ and $L_\infty$.} such that
\begin{equation}\label{eq:supdeltarho}
\supp \rho_1(t)\cup\supp \rho_2(t)
\subset D_{N,t} := D + B_{N\sqrt{t}}(0)
\quad \text{for all } t\in(0,T),
\end{equation}
and, for every $\phi\in H^1_{\loc}(\R^2)$ and every $t\in(0,T)$,
\begin{equation}\label{eq:negStab}
\left|
\int_{\R^2}
(\rho_1(t)-\rho_2(t))\,\phi \,\dd x
\right|
\le
C\, t 
\sup_{s\in(0,t)}
\norm{\rho_1(s)(u_1(s)-u_2(s))}_{L^2(\R^2)}
\,\norm{\nabla\phi}_{L^2(D_{N,t})}.
\end{equation}
\end{lemma}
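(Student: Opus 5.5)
The support inclusion \eqref{eq:supdeltarho} follows directly from \cref{prop:ImprovedDecay}, applied to each solution separately. We take $N$ to be the larger of the two constants it provides on $(0,T)$. These constants depend only on $\|u_i(0)\|_{L^2(\R^2)}$, $C_{(\rho_i,u_i)}(u_i)$, $T$ and $D$, as stated in the footnote.

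For \eqref{eq:negStab} I would use a duality argument that only requires Lipschitz regularity of $u_2$. Set $\delta\rho=\rho_1-\rho_2$, which vanishes at $t=0$. Since $\rho_i$ are the renormalized solutions of \cref{cor:lingrowthu}, we have
\begin{align}
\partial_t \delta\rho + \dive(\delta\rho\, u_2) = -\dive\big(\rho_1(u_1-u_2)\big).
\end{align}
Fix $t$ and $\phi$. Let $\Phi$ solve the backward transport problem $\partial_s\Phi + u_2\cdot\nabla\Phi=0$ on $(0,t)$ with $\Phi(t)=\phi$, that is, $\Phi(s,x)=\phi(X_2(t,s,x))$, where $X_2$ is the flow of $u_2$. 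This flow is bi-Lipschitz with constant at most $L_\infty$ by \eqref{eq:Linftydef}.

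Testing the equation for $\delta\rho$ against $\Phi$ should give
\begin{align}
\int_{\R^2}\delta\rho(t)\,\phi\,\dd x=\int_0^t\!\!\int_{\R^2}\rho_1(u_1-u_2)\cdot\nabla\Phi(s)\,\dd x\,\dd s .
\end{align}
This identity must be justified by mollifying $\phi$ and cutting off in space. The cutoffs are harmless because $\rho_1$ and $\delta\rho$ are supported in the compact set $D_{N,t}$, and $\Phi$ is $H^1_{\loc}$ since the flow is bi-Lipschitz.

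Next, by Cauchy--Schwarz the integrand at time $s$ is bounded by $\|\rho_1(s)(u_1-u_2)(s)\|_{L^2}\,\|\nabla\Phi(s)\|_{L^2(\supp\rho_1(s))}$. Since $\nabla\Phi(s)=(\nabla X_2)^T\nabla\phi\circ X_2$, a change of variables (the flow is measure preserving) gives
\begin{align}
\|\nabla\Phi(s)\|_{L^2(\supp\rho_1(s))}\le L_\infty\,\|\nabla\phi\|_{L^2(X_2(t,s,\supp\rho_1(s)))}.
\end{align}
Integrating in $s$ over $(0,t)$ then produces the factor $t\sup_{s}$, which is the claimed estimate \eqref{eq:negStab}, provided the set on the right-hand side lies in $D_{N,t}$.

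\textbf{Main obstacle.} The hardest step is this localization: showing that $X_2(t,s,\supp\rho_1(s))$ is contained in $D_{N',t}$. We know $\supp\rho_1(s)\subset D_{N,s}$, but transporting it forward from $s$ to $t$ along $u_2$ can in principle leave the neighbourhood of $D$ of radius $N\sqrt t$. I would control this with the displacement bound from the proof of \cref{prop:ImprovedDecay}, namely $|X_2(t,s,y)-y|\lesssim \int_s^t\|u_2(\tau)\|_{L^\infty(D_{N,\tau})}\,\dd\tau\lesssim\sqrt t$, using \eqref{eq:Linftytubdomainglobal} with a Grönwall bootstrap to keep the trajectory inside the tube. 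After enlarging $N$ if necessary, this gives the containment in $D_{N,t}$.

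If this bootstrap fails, the fallback is to replace $\phi$ by $\chi\phi$, with $\chi$ a cutoff equal to $1$ on $D_{N,t}$. This does not change the left-hand side, since $\delta\rho(t)$ is supported in $D_{N,t}$. The resulting error term is then bounded via the Poincaré-type control of \cref{cor:Lpstrip}, after subtracting a constant from $\phi$; the left-hand side is unchanged by this since $\delta\rho(t)$ has zero mean.
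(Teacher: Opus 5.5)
Your proposal is correct and is essentially the paper's argument. Both use the support inclusion from \cref{prop:ImprovedDecay} with $N=\max\{N_1,N_2\}$, the Lagrangian/duality representation of $\int\delta\rho(t)\phi$ along the flow of $u_2$, Cauchy--Schwarz, and a measure-preserving change of variables costing a factor $L_\infty$. Your backward solution $\Phi(s)=\phi\circ X_2(t,s,\cdot)$ is the precise form of the identity the paper writes with $\phi(X_2(t,\cdot))$.

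The only difference is the localization step you single out. The paper settles it in one line with the Lipschitz bound you already have (written here with your two-time flow): for $x\in D$, $|X_2(t,s,X_1(s,x))-x|\le L_\infty|X_1(s,x)-X_2(s,x)|+|X_2(t,x)-x|\le\bigl(L_\infty(N_1+N_2)+N_2\bigr)\sqrt t$. So only displacements of trajectories starting in $D$ are needed, which is exactly what \cref{prop:ImprovedDecay} provides.

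Your route re-derives a displacement bound for $u_2$-trajectories starting at $y\in D_{N,s}$. This also works, but run it as in the proof of \cref{prop:ImprovedDecay}. First bound $|u_2(\sigma,y)|\lesssim_N\sigma^{-1/2}$ at the fixed starting point, which is valid since $y\in D_{N,s}\subset D_{N,\sigma}$. Then control $|u_2(\sigma,X_2(\sigma,s,y))-u_2(\sigma,y)|$ by Morrey with $\|\nabla u_2(\sigma)\|_{L^4}\lesssim\sigma^{-3/4}$, and close with fractional Gr\"onwall. As you literally wrote it, $\int_s^t\|u_2(\tau)\|_{L^\infty(D_{N,\tau})}\,\dd\tau$ presupposes that the trajectory stays in the tube. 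A bootstrap based only on \eqref{eq:Linftytubdomainglobal} does not obviously close, because the tube constant grows with the radius.

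Done this way, your enlarged $N$ is independent of $L_\infty$, which matches the dependence stated in the footnote; the paper's redefinition of $N$ picks up a factor $L_\infty$. The price is redoing the Gr\"onwall argument, while the paper's version is shorter.

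The cutoff fallback is unnecessary. As sketched, it would also only bound the error through $\|\nabla\phi\|_{L^2(\R^2)}$ via \cref{cor:Lpstrip}, not through $\|\nabla\phi\|_{L^2(D_{N,t})}$.
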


\begin{proof}
Applying \cref{prop:ImprovedDecay} to $(\rho_1,u_1)$ we obtain that there exists
\begin{align}
N_1 = N_1\!\left(
\|u_1(0)\|_{L^2(\R^2)},
C_{(\rho_1,u_1)}(u_1),
T,
D
\right)
\end{align}
such that for every $t \in [0,T)$
\begin{align}
\supp\rho_1(t) \subset D_{N_1,t}.
\end{align}
Applying \cref{prop:ImprovedDecay} also to $(\rho_2,u_2)$ we obtain, for some $N_2>0$,
\begin{align}
\supp\rho_2(t)\subset D_{N_2,t}
\qquad\text{for all } t\in[0,T).
\end{align}
Setting $N=\max\{N_1,N_2\}$ yields \eqref{eq:supdeltarho}.

To prove \eqref{eq:negStab} we use assumption \eqref{eq:Linftydef}.  
If $X_2$ denotes the flow map associated with $u_2$, then
\begin{align}
\sup_{t>0} \norm{\nabla X_2(t)}_{L^\infty(\R^2)} \le L_\infty .
\end{align}
Denoting $(\delta \rho,\delta u)=(\rho_1-\rho_2,u_1-u_2)$ we observe that $\delta\rho$ satisfies
\begin{align}\label{eq:deltarhoeul}
\partial_t \delta \rho + \dive(\delta\rho\,u_2)
=
-\dive(\rho_1\delta u),
\qquad
\delta\rho(0)=0 .
\end{align}
For every $\phi\in H^1_{\loc}(\R^2)$ we use the Lagrangian formulation of \eqref{eq:deltarhoeul} (this computation can be justified by mollification and a commutator estimate; see \cite[Prop.~4.4]{CrinBaratSkondricViolini2025}):
\begin{align}
\int_{\R^2} \delta \rho(t,x)\phi(x)\,\dd x
&=
-\int_{\R^2}\int_0^t
\dive(\rho_1\delta u)(s,X_2(s,x))
\,\dd s\;
\phi(X_2(t,x))\,\dd x .
\end{align}
Using integration by parts we obtain
\begin{align}
\abs{\int_{\R^2} \delta \rho(t,x)\phi(x)\,\dd x}
&\le
\int_0^t
\norm{\rho_1\delta u(s)}_{L^2(\R^2)}
\norm{\nabla[\phi(X_2(t,\cdot))]}_{L^2(\supp(\rho_1(s)))}
\,\dd s
\\
&\le
\sup_{s\in(0,t)}
\norm{\rho_1\delta u(s)}_{L^2(\R^2)}
\int_0^t
\norm{\nabla[\phi(X_2(t,\cdot))]}_{L^2(\supp(\rho_1(s)))}
\,\dd s .
\end{align}
Let $X_1$ denote the flow map associated with $u_1$.  
Since $X_1(s,D)=\supp(\rho_1(s))$, we have
\begin{align}
\norm{\nabla[\phi(X_2(t,\cdot))]}_{L^2(X_1(s,D))}
&\le
\norm{(\nabla\phi)(X_2(t,\cdot))}_{L^2(X_1(s,D))}
\norm{\nabla X_2(t)}_{L^\infty(\R^2)}
\\
&\le
L_\infty
\norm{\nabla\phi}_{L^2(X_2(t,X_1(s,D)))} .
\end{align}
Now let $x\in D$. Using \eqref{eq:supdeltarho} and $s\le t$ we estimate
\begin{align}
|X_2(t,X_1(s,x))-x|
&\le
|X_2(t,X_1(s,x))-X_2(t,x)|
+
|X_2(t,x)-x|
\\
&\le
\norm{\nabla X_2(t)}_{L^\infty(\R^2)}
|X_1(s,x)-x|
+
N_2\sqrt{t}
\\
&\le
L_\infty N_1\sqrt{t}+N_2\sqrt{t}.
\end{align}
Hence, up to redefining $N$, we obtain
\begin{align}
\norm{\nabla\phi}_{L^2(X_2(t,X_1(s,D)))}
\le
\norm{\nabla\phi}_{L^2(D_{N,t})}.
\end{align}
Inserting this estimate into the previous inequality yields \eqref{eq:negStab}.
\end{proof}

We now state the uniqueness result in the Besov setting. This is a weak--strong uniqueness result: the solution $(\rho_2,u_2)$ has Besov regularity, while $(\rho_1,u_1)$ is only immediately strong and satisfies the regularity estimates propagated from $L^2$ data. In addition, the result yields a quantitative stability estimate between the two solutions.

\begin{theorem}[Uniqueness]
Assume that $\rho_0 = \mathbf{1}_D$, where $D$ is a bounded Lipschitz domain, and let
$u_1(0) \in L^2(\R^2)$ and $u_2(0) \in \dot{B}^0_{2,1}(\R^2)$.
Let $(\rho_1,u_1)$ and $(\rho_2,u_2)$ be two immediately strong solutions of \eqref{eq:INS}
arising from the initial data $(\rho_0,u_1(0))$ and $(\rho_0,u_2(0))$, respectively.

Then there exists a function $\gamma \in L^1(0,\infty)$ such that, for almost every $t>0$, there exists a constant
\begin{equation}\label{eq:Cdep}
C_t = C\!\left(t,\|u_1(0)\|_{L^2(\R^2)}, C_{(\rho_1,u_1)}(u_1), \|u_2(0)\|_{\dot{B}^0_{2,1}(\R^2)}, D\right),
\end{equation}
\footnote{One can verify that $C_t$ is non-decreasing with respect to $\|u_1(0)\|_{L^2(\R^2)}$, $C_{(\rho_1,u_1)}(u_1)$, and $\|u_2(0)\|_{\dot{B}^0_{2,1}(\R^2)}$.}
for which
\begin{equation}
\sup_{s\in(0,t)} \|\rho_1(s)[u_1(s)-u_2(s)]\|_{L^2(\R^2)}^2 +  \int_0^t \|\nabla \delta u(s)\|_{L^2(\R^2)}^2\,\dd s
\le
\|\rho_0[u_1(0)-u_2(0)]\|_{L^2(\R^2)}^2
\exp\!\Bigl(C_t\int_0^t \gamma(s)\,\dd s\Bigr).
\end{equation}

In particular, if $u_1(0)=u_2(0)$, then $(\rho_1,u_1)=(\rho_2,u_2)$ almost everywhere in $(0,\infty)\times \R^2$.
\end{theorem}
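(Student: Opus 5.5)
The argument is a relative-energy/Gronwall estimate for $\delta u = u_1-u_2$, $\delta\rho=\rho_1-\rho_2$. Here $(\rho_2,u_2)$ is the Besov solution and supplies the $L^1$-in-time bounds of \cref{prop:existencebesov}: $\nabla u_2\in L^1_tL^\infty$, $\sqrt{\rho_2}\dot u_2\in L^1_tL^2$, $t^{3/4}\nabla\dot u_2\in L^1_tL^4$ and $\sqrt t\nabla \dot u_2\in L^1_tL^2$. Only the $A_i^0$ bounds are used for $(\rho_1,u_1)$.

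\emph{Step 1: relative energy inequality.} I invoke the relative energy inequality (\cref{thm:relaEnergyNL} in the appendix, with the admissibility of test functions justified via the tail estimates \cref{prop:tailest}, \cref{cor:tailEst}). It gives, for a.e.\ $t$,
\begin{align}
\tfrac12\|\sqrt{\rho_1}\delta u(t)\|_2^2+\int_0^t\|\nabla\delta u\|_2^2
\le \tfrac12\|\sqrt{\rho_0}\delta u(0)\|_2^2-\int_0^t\!\!\int\rho_1\delta u\otimes\delta u:\nabla u_2-\int_0^t\!\!\int\delta\rho\,\delta u\cdot\dot u_2 .
\end{align}
The first error term is bounded by $\int_0^t\|\nabla u_2\|_\infty\|\sqrt{\rho_1}\delta u\|_2^2$, which is of Gronwall type with $\gamma\ge\|\nabla u_2\|_\infty\in L^1$.

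\emph{Step 2: the density term.} This is the main obstacle. Set $E(t):=\sup_{s<t}\|\rho_1\delta u(s)\|_2$. I apply \cref{lem:H-1stability} at each time $s$ with test function $\phi=\delta u(s)\cdot\dot u_2(s)$. This is legitimate because $L_\infty<\infty$ by the Besov bound on $u_2$. It yields
\begin{align}
\Big|\int\delta\rho(s)\,\delta u\cdot\dot u_2\Big|\le C s\,E(s)\,\|\nabla(\delta u\cdot \dot u_2)\|_{L^2(D_{N,s})} .
\end{align}
The gradient splits as $\nabla\delta u\,\dot u_2+\delta u\,\nabla\dot u_2$, and both pieces are handled with the localized Gagliardo--Nirenberg estimate \cref{cor:Lpstrip} on $D_{N,s}$. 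For the first piece I use $\|\nabla\delta u\|_2\|\dot u_2\|_{L^\infty(D_{N,s})}$. By the $p=\infty$ case of \cref{cor:Lpstrip} with $q=4$,
\begin{align}
s^{1/2}\|\dot u_2\|_{L^\infty(D_{N,s})}\lesssim s^{3/4}\|\nabla\dot u_2\|_4+\|\rho_2\dot u_2\|_2 ,
\end{align}
and the right-hand side is integrable in time. For the second piece I use $\|\delta u\|_{L^4(D_{N,s})}\|\nabla\dot u_2\|_4$ together with
\begin{align}
s^{1/4}\|\delta u\|_{L^4(D_{N,s})}\lesssim s^{1/2}\|\nabla\delta u\|_2+\|\rho_1\delta u\|_2 ,
\end{align}
applied with the density $\rho_1$. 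Combining the two pieces, the integrand is bounded by
\begin{align}
\big(s^{1/2}\|\nabla\delta u\|_2+E(s)\big)\,E(s)\,\big(s^{3/4}\|\nabla\dot u_2\|_4+\|\rho_2\dot u_2\|_2\big).
\end{align}
Young's inequality absorbs $\tfrac12\int\|\nabla\delta u\|_2^2$ into the left-hand side. The price is a term $s\,E^2 g(s)^2$ with $g(s):=s^{3/4}\|\nabla\dot u_2\|_4+\|\rho_2\dot u_2\|_2$. For this I use the scaling-invariant pointwise bounds $s\|\rho_2\dot u_2\|_2\lesssim 1$ and $s^{7/4}\|\nabla\dot u_2\|_4\lesssim1$ locally in time. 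These follow from the $A_2$/$A_3$ bounds together with Step 5 of \cref{lem:exLS} and the decay of \cref{lem:decay-shifted-uj}, summed over $j$ with weights $c_j$. They give $s\,g(s)^2\lesssim g(s)$ up to constants depending on $t$, which explains the $t$-dependence of $C_t$. I would therefore take $\gamma := \|\nabla u_2\|_\infty+g\in L^1(0,\infty)$.

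\emph{Step 3: closing and uniqueness.} The estimate now reads $E(t)^2+\int_0^t\|\nabla\delta u\|_2^2\le \|\rho_0\delta u(0)\|_2^2+C_t\int_0^t\gamma(s)E(s)^2\,ds$. Since the right-hand side is nondecreasing in $t$, the same bound holds for the supremum, and Gronwall's lemma gives the stated inequality. If $u_1(0)=u_2(0)$, then $\rho_1\delta u=0$ and $\nabla\delta u=0$, so $\delta u$ is constant in space. By \eqref{eq:negStab}, $\delta\rho=0$; with zero relative momentum the constant must vanish (or one can use $\delta u\in L^2$ on $D_{N,t}$ via \cref{cor:Lpstrip}). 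Hence $\delta u=0$.
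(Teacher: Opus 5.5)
Your proof follows the paper's argument step for step: the relative energy inequality of \cref{thm:relaEnergyNL}, the bound of the convective term by $\|\nabla u_2\|_{L^\infty}\|\rho_1\delta u\|_{L^2}^2$, \cref{lem:H-1stability} with $\phi=\delta u(s)\cdot\dot u_2(s)$, the splitting of $\nabla\phi$, the two localized estimates from \cref{cor:Lpstrip} (with $\rho_1$ for $\delta u$ and $\rho_2$ for $\dot u_2$), then Young and Gronwall. The one step that fails as written is your treatment of the Young remainder $s\,E(s)^2 g(s)^2$, where $g=s^{3/4}\|\nabla\dot u_2\|_{L^4}+\|\rho_2\dot u_2\|_{L^2}$.

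The bound $s\|\rho_2\dot u_2\|_{L^2}\lesssim 1$ is fine; it is just $A_2^0$. The bound $s^{7/4}\|\nabla\dot u_2(s)\|_{L^4}\lesssim1$ is not available. By Ladyzhenskaya it would need a pointwise-in-time bound on $s^2\|\nabla^2\dot u_2(s)\|_{L^2}$. However, $A_3^0$ controls $\nabla^2\dot u_2$ only in $L^2(s^3\,\dd s)$, and a pointwise bound would amount to an $A_4$-type bound on $\sqrt{\rho}\ddot u$, which the paper explicitly lacks. Nor does \cref{lem:exLS} or \cref{lem:decay-shifted-uj} supply it: they give only $L^2$ norms of $\nabla u_j$, $\sqrt{\rho}\dot u_j$ and $\nabla\dot u_j$, so summing over $j$ gives no $L^4$ information.

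The fix is to drop the goal $s g^2\lesssim g$. Instead use $s g^2\lesssim s^{5/2}\|\nabla\dot u_2\|_{L^4}^2+s\|\rho_2\dot u_2\|_{L^2}^2$. Both terms lie in $L^1(0,\infty)$ directly from the immediately-strong bounds:
\begin{itemize}
\item $\int_0^\infty s^{5/2}\|\nabla\dot u_2\|_{L^4}^2\,\dd s\lesssim\int_0^\infty s^2\|\nabla\dot u_2\|_{L^2}^2\,\dd s+\int_0^\infty s^3\|\nabla^2\dot u_2\|_{L^2}^2\,\dd s$, controlled by $A_2^0$ and $A_3^0$;
\item $\int_0^\infty s\|\rho_2\dot u_2\|_{L^2}^2\,\dd s$ is controlled by $A_1^0$.
\end{itemize}
Adding these two terms to your $\gamma=\|\nabla u_2\|_{L^\infty}+g$ gives exactly the paper's $\gamma$. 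With that change the rest of your argument closes, including two points you make more explicit than the paper: passing to the supremum via monotonicity of the right-hand side, and concluding $\delta\rho=0$ from \eqref{eq:negStab}. Note also that the $t$-dependence of $C_t$ comes from the $T$-dependence of $N$ and of the constants in \cref{cor:Lpstrip} and \cref{lem:H-1stability}, not from pointwise bounds on $\dot u_2$.
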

\begin{proof}
Throughout the proof, any constant with the dependence specified in \eqref{eq:Cdep} will be denoted by $C_t$ adn frequently omiteed by $\lesssim$.

Without loss of generality, we assume that $(\rho_2,u_2)$ is constructed as in \cref{prop:existenceL2} and satisfies all the properties stated in \cref{prop:existencebesov}. In particular, thanks to \cref{rem:Immstrong}, there exists a constant $C$ depending only on $\|u_2(0)\|_{L^2(\R^2)}$ such that
\begin{align}
C_{(\rho_2,u_2)}(u_2)
= C\bigl(\|u_2(0)\|_{L^2(\R^2)}\bigr)
\le
C\bigl(\|u_2(0)\|_{\dot{B}^0_{2,1}(\R^2)}\bigr)
\lesssim C_t.
\end{align}
    
Set
\begin{align}
\delta \rho \coloneqq \rho_1 - \rho_2,
\qquad
\delta u \coloneqq u_1 - u_2 .
\end{align}
For convenience, introduce
\begin{align}\label{eq:f_g_def}
f(s) &\coloneqq \sup_{\tau\in(0,s)} \|\rho_1(\tau)\,\delta u(\tau)\|_{L^2(\R^2)}, \qquad
g(s) \coloneqq \|\nabla \delta u(s)\|_{L^2(\R^2)} .
\end{align}
By \cref{prop:existencebesov}, the solution $(\rho_2,u_2)$ has sufficient regularity to apply
\cref{thm:relaEnergyNL} with $v_1=u_1$ and $v_2 = u_2$. Therefore, for almost every $t>0$, we obtain
\begin{align}\label{eq:rel_energy_ineq2}
\frac{1}{2}f^2(t) + \int_0^t g(s)^2\,\dd s
\le
\frac{1}{2}\|\rho_0\,\delta u(0)\|_{L^2(\R^2)}^2
+ \int_0^t I(s)\,\dd s
+ \int_0^t J(s)\,\dd s ,
\end{align}
where
\begin{align}\label{eq:I_J_def}
I(s) \coloneqq \Bigl| \int_{\R^2} \rho_1\,\delta u \otimes \delta u : \nabla u_2 \Bigr|, \qquad
J(s) \coloneqq \Bigl| \int_{\R^2} \delta \rho\,\delta u \cdot \dot u_2 \Bigr| .
\end{align}

\underline{Estimates on $\gamma$.}
Before studying \eqref{eq:rel_energy_ineq2} and attempting to recover a Gronwall-type inequality, we show that
\begin{align}\label{eq:gammadef}
\gamma(s) \coloneqq 
\|\nabla u_2(s)\|_{L^\infty(\R^2)}
+ s^{\frac52}\,\|\nabla \dot u_2(s)\|_{L^4(\R^2)}^2
+ s\,\|\rho_2(s)\,\dot u_2(s)\|_{L^2(\R^2)}^2
+ s^{\frac34}\,\|\nabla \dot u_2(s)\|_{L^4(\R^2)}
\end{align}
belongs to $L^1(0,\infty)$.

The first and the last terms belong to $L^1(0,\infty)$ thanks to \cref{prop:existencebesov}.
The third term belongs to $L^1(0,\infty)$ thanks to the bound $A_1(\rho_2,u_2)$.
It remains to control the second term.

Using the Gagliardo--Nirenberg inequality
\begin{align}
\|\nabla \dot u_2\|_{L^4}^2
\lesssim
\|\nabla \dot u_2\|_{L^2}\,\|\nabla^2 \dot u_2\|_{L^2},
\end{align}
we obtain
\begin{align}\label{eq:first_term_gamma}
\int_0^t s^{\frac52}\,\|\nabla \dot u_2(s)\|_{L^4(\R^2)}^2 \,\dd s
&\lesssim
\int_0^t s^{\frac52}\,\|\nabla \dot u_2(s)\|_{L^2(\R^2)}
\,\|\nabla^2 \dot u_2(s)\|_{L^2(\R^2)}\,\dd s \\
&\le
\frac12\int_0^t s^{2}\,\|\nabla \dot u_2(s)\|_{L^2(\R^2)}^2\,\dd s
+\frac12\int_0^t s^{3}\,\|\nabla^2 \dot u_2(s)\|_{L^2(\R^2)}^2\,\dd s .
\end{align}
Both terms are finite thanks to $A_2(\rho_2,u_2)$ and $A_3(\rho_2,u_2)$.
Therefore, $\gamma \in L^1(0,\infty)$.

\underline{Estimate of $I(s)$.}
By Hölder's inequality we obtain
\begin{align}\label{eq:I_est}
I(s)
\le
\|\rho_1(s)\,\delta u(s)\|_{L^2(\R^2)}^2
\|\nabla u_2(s)\|_{L^\infty(\R^2)}
\le
f(s)^2\,\gamma(s).
\end{align}

\underline{Estimate of $J(s)$.} Since $\nabla u_2 \in L^1((0,\infty); L^\infty(\R^2))$, we may apply \cref{lem:H-1stability} with 
\begin{align}
    \phi=\delta u(s)\cdot \dot u_2(s)
\end{align}
to obtain the existence of $N>0$ such that, for almost every $s\in(0,t)$,
\begin{align}\label{eq:J_start}
J(s)
\le C_t\, s\,
\sup_{\tau\in(0,s)} \|\rho_1(\tau)\,\delta u(\tau)\|_{L^2(\R^2)}\,
\|\nabla(\delta u(s)\cdot \dot u_2(s))\|_{L^2(D_{N,s})},
\end{align}
and
\begin{align}\label{eq:supp_in_DNs}
\supp \rho_1(\tau)\cup \supp \rho_2(\tau)\subset D_{N,s}
\qquad \text{for all } \tau\in(0,s).
\end{align}
Using the product rule
\begin{align}
\nabla(\delta u\cdot \dot u_2) = (\nabla \delta u)\,\dot u_2 + \delta u \cdot \nabla \dot u_2,
\end{align}
and \eqref{eq:f_g_def}, we deduce from \eqref{eq:J_start} that
\begin{align}\label{eq:J_split}
J(s)
&\lesssim s\,f(s)\,g(s)\,\|\dot u_2(s)\|_{L^\infty(D_{N,s})}
+ s\,f(s)\,\|\delta u(s)\|_{L^4(D_{N,s})}\,\|\nabla \dot u_2(s)\|_{L^4(\R^2)} .
\end{align}

By \cref{cor:Lpstrip} applied to $(\rho_1,u_1)$ with $v=\delta u(s)$ and $R=N$, we obtain for a.e.\ $s\in(0,t)$
\begin{align}\label{eq:strip_du}
s^{\frac14}\,\|\delta u(s)\|_{L^4(D_{N,s})}
\;\lesssim\;
s^{\frac12}\,\|\nabla\delta u(s)\|_{L^2(\R^2)}
+\|\rho_1(s)\,\delta u(s)\|_{L^2(\R^2)}
=
s^{\frac12} g(s)+f(s).
\end{align}
With the same argument applied to $(\rho_2,u_2)$ with $v=\dot u_2(s)$ (and the same $R=N$), we obtain
\begin{align}\label{eq:strip_dot_u2}
s^{\frac12}\,\|\dot u_2(s)\|_{L^\infty(D_{N,s})}
\;\lesssim\;
s^{\frac34}\,\|\nabla \dot u_2(s)\|_{L^4(\R^2)}
+\|\rho_2(s)\,\dot u_2(s)\|_{L^2(\R^2)} .
\end{align}
Inserting \eqref{eq:strip_du}--\eqref{eq:strip_dot_u2} into \eqref{eq:J_split} yields
\begin{align}\label{eq:J_inserted}
J(s)
&\le C_t\, f(s)\,s\,g(s)\Bigl[
s^{\frac14}\|\nabla \dot u_2(s)\|_{L^4(\R^2)}
+s^{-\frac12}\|\rho_2(s)\,\dot u_2(s)\|_{L^2(\R^2)}
\Bigr] \\
&\quad
+ C_t\, f(s)\,s^{\frac34}\Bigl[s^{\frac12}g(s)+f(s)\Bigr]\,
\|\nabla \dot u_2(s)\|_{L^4(\R^2)} .
\end{align}
Finally, by Young's inequality, we obtain (up to redefining $C_t$)
\begin{align}\label{eq:J_final}
J(s)
\le
\frac{1}{2} g(s)^2
+ C_t\, f(s)^2\,\gamma(s).
\end{align}

\underline{Conclusion (Gronwall).}
Combining \eqref{eq:rel_energy_ineq2}, \eqref{eq:I_est}, and \eqref{eq:J_final}, and absorbing the factor $g(s)^2$, we obtain for a.e.\ $t>0$,
\begin{align}\label{eq:gronwall_form}
f(t)^2 + \int_0^t g(s)^2\,\dd s
\le
2 \|\rho_0\,\delta u(0)\|_{L^2(\R^2)}^2
+ C_t \int_0^t \gamma(s)\,f(s)^2\,\dd s .
\end{align}
Now we define the relative energy as
\begin{align}
E_{\operatorname{rel}}(t)
=
\sup_{s\in(0,t)} \|\rho_1(s)\,\delta u(s)\|_{L^2(\R^2)}^2
+ \int_0^t \|\nabla \delta u(s)\|_{L^2(\R^2)}^2\,\dd s .
\end{align}
Since $f(t)^2 \le 2E_{\mathrm{rel}}(t)$, we obtain from \eqref{eq:gronwall_form}
\begin{align}\label{eq:gronwall_form_rel}
E_{\operatorname{rel}}(t)
\le
2 \|\rho_0\,\delta u(0)\|_{L^2(\R^2)}^2
+ C_t \int_0^t \gamma(s)\,E_{\operatorname{rel}}(s)\,\dd s .
\end{align}
By Gronwall's lemma we conclude
\begin{align}\label{eq:gronwall_conclusion}
E_{\operatorname{rel}}(t)
\le
\|\rho_0\,\delta u(0)\|_{L^2(\R^2)}^2
\exp\!\Bigl(C_t\int_0^t \gamma(s)\,\dd s\Bigr).
\end{align}

If the two initial data coincide, we obtain that
\begin{align}
\|\nabla \delta u(s)\|_{L^2(\R^2)} = 0
\quad \text{for a.e. } s\in(0,t),
\end{align}
which implies that
\begin{align}
u_1(s,x) - u_2(s,x) = c(s)
\end{align}
for some function $c:(0,t)\to\R^2$. Moreover,
\begin{align}
0
&=
\sup_{s\in(0,t)} \|\rho_1(s)\,\delta u(s)\|_{L^2(\R^2)}
=
\sup_{s\in(0,t)} |c(s)|\,\|\rho_1(s)\|_{L^2(\R^2)} =
|D|^{1/2}\sup_{s\in(0,t)} |c(s)| .
\end{align}
This implies that $c(s)=0$ for almost every $s\in(0,t)$, and therefore
\begin{align}
u_1 = u_2 \quad \text{a.e. in } (0,t)\times\R^2 .
\end{align}
By the freedom in the choice of $t>0$ we conclude
\begin{align}
u_1 = u_2 \quad \text{a.e. in } (0,\infty)\times\R^2 .
\end{align}
\end{proof}

\section{Additional properties of Besov solutions}
In this section, we study properties of the unique solution $(\rho,u)$ associated with the initial data
$\rho_0=\mathbf{1}_D$ and $u_0\in\dot B^0_{2,1}(\R^2)$, constructed in \cref{prop:gluinglevel0}.

By \cref{prop:existencebesov}, this solution is Lipschitz. Denoting by $X$ the measure-preserving flow associated with $u$, we have
\begin{align}\label{eq:61}
    \sup_{t \in (0,\infty)} \|\nabla X(t)\|_{L^\infty(\R^2)} \leq \exp\left(\int_0^\infty \|\nabla u(t)\|_{L^\infty(\R^2)} \,\dd t \right) =: L_\infty.
\end{align}

This estimate allows us to apply Gagliardo--Nirenberg-type inequalities on the evolving domains $D_t = X(t,D)$. We only state the specific cases needed in the sequel.

\begin{lemma}\label{lem:GNonDt}
    Let $(\rho,u)$ be the solution constructed in \cref{prop:existenceL2}, and assume in addition that $u_0 \in \dot{B}^0_{2,1}(\R^2)$. Denote by $X(t)$ its flow, and let $p \in [2,\infty)$.
    Then there exists a constant 
    \begin{align}
        C= C\bigl(p,\|u_0\|_{\dot{B}^0_{2,1}(\R^2)},D\bigr)
    \end{align}
    such that for almost every $t>0$ and every $v(t) \in L^2(D_t) \cap \dot{H}^1(\R^2)$, we have
    \begin{align}\label{eq:pcase}
        \|v(t)-M_t\|_{L^p(D_t)} \leq C \|v(t)-M_t\|_{L^2(D_t)}^{2/p} \|\nabla v(t)\|_{L^2(\R^2)}^{1-2/p}, 
        \qquad 
        M_t \coloneqq \fint_{D_t} v(t,x)\,\dd x.
    \end{align}
    Similarly, there exists a constant $C=C(\|u_0\|_{\dot{B}^0_{2,1}(\R^2)},D)$ such that
     \begin{align}\label{eq:pinftycase}
        \|v(t)-M_t\|_{L^\infty(D_t)} \leq C \|v(t)-M_t\|_{L^2(D_t)}^{1/2} \|\nabla v(t)\|_{L^\infty(\R^2)}^{1/2}.
    \end{align}
\end{lemma}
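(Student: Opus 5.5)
The plan is to pull both inequalities back to the fixed reference domain $D$ through the flow. By \eqref{eq:61}, $X(t)$ is bi-Lipschitz with constants bounded by $L_\infty$, uniformly in $t$. It is also measure preserving, and its inverse is the flow of the time-reversed field, so $\|\nabla X(t)^{-1}\|_{L^\infty}\le L_\infty$ as well. Given $v(t)$, set $\tilde v := v(t)\circ X(t)$ on $D$. Measure preservation gives, for every $q$,
\begin{align}
\|\tilde v-M_t\|_{L^q(D)}=\|v(t)-M_t\|_{L^q(D_t)},
\qquad
\fint_D\tilde v=\fint_{D_t}v(t)=M_t .
\end{align}
So $\tilde v - M_t$ has zero mean on $D$. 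The chain rule together with the bound on $\nabla X(t)$ and measure preservation gives $\|\nabla\tilde v\|_{L^2(D)}\le L_\infty\|\nabla v(t)\|_{L^2(D_t)}\le L_\infty\|\nabla v(t)\|_{L^2(\R^2)}$. In the same way, $\|\nabla \tilde v\|_{L^\infty(D)}\le L_\infty\|\nabla v(t)\|_{L^\infty}$. This reduces everything to inequalities on the fixed Lipschitz domain $D$, with constants depending on $D$, $p$ and $L_\infty$. By \cref{prop:existencebesov}, $L_\infty$ is controlled by $\|u_0\|_{\dot B^0_{2,1}}$.

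For \eqref{eq:pcase}, let $w:=\tilde v-M_t$ on $D$. I apply the Stein extension operator $E:H^1(D)\to H^1(\R^2)$, which is bounded $L^2(D)\to L^2(\R^2)$ and $H^1(D)\to H^1(\R^2)$. The Gagliardo--Nirenberg inequality on $\R^2$ then gives
\begin{align}
\|w\|_{L^p(D)}\lesssim\|Ew\|_{L^2}^{2/p}\|Ew\|_{H^1}^{1-2/p}\lesssim \|w\|_{L^2(D)}^{2/p}\bigl(\|w\|_{L^2(D)}+\|\nabla w\|_{L^2(D)}\bigr)^{1-2/p}.
\end{align}
Because $w$ has zero mean on $D$, the Poincaré inequality on $D$ yields $\|w\|_{L^2(D)}\le C_D\|\nabla w\|_{L^2(D)}$, so the inhomogeneous term is absorbed. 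Transporting back with the identities above gives \eqref{eq:pcase}, with a constant of the form $C(p,D)L_\infty^{1-2/p}$.

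For \eqref{eq:pinftycase}, I use a pointwise argument on $D$ that relies on the interior cone condition of Lipschitz domains. Let $x\in D$ and set $a:=|w(x)|$. Since $w$ is Lipschitz with constant $\Lambda:=\|\nabla w\|_{L^\infty(D)}$, we have $|w|\ge a/2$ on $B_r(x)\cap D$ with $r=a/(2\Lambda)$. The cone condition gives $|B_r(x)\cap D|\gtrsim_D\min\{r^2,|D|\}$. Hence
\begin{align}
\|w\|_{L^2(D)}^2\gtrsim a^2\min\{a^2\Lambda^{-2},1\}.
\end{align}
In the main case $r\lesssim\operatorname{diam}D$ this gives $a^4\lesssim\|w\|_2^2\Lambda^2$, which is exactly the desired bound. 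In the other case, $a^2\lesssim\|w\|_{L^2}^2$. There I use the zero mean: $w$ vanishes somewhere on $D$, so $a\le\Lambda\operatorname{diam}D$, and therefore $a\lesssim a^{1/2}(\Lambda\operatorname{diam}D)^{1/2}$ combined with $a\lesssim\|w\|_2$ again gives $a\lesssim\|w\|_2^{1/2}\Lambda^{1/2}$. Pulling back yields \eqref{eq:pinftycase}. The main obstacle I expect is justifying the change of variables: the map is only bi-Lipschitz (it lies in $W^{1,\infty}$), and $v(t)$ is defined only for a.e. $t$. Both are handled by the chain rule for Sobolev functions composed with bi-Lipschitz maps. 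In the $L^\infty$ case, when $\nabla v(t)\notin L^\infty$ the statement is trivial.
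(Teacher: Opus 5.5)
Your proof is correct, and for \eqref{eq:pcase} its core (extension to $\R^2$, Gagliardo--Nirenberg on $\R^2$, and Poincar\'e for the zero-mean function $v(t)-M_t$) is the same as the paper's. What differs is the domain on which you run it.

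The paper stays on $D_t$. It asserts that $D_t=X(t,D)$ is a Lipschitz domain of uniformly controlled character, so that extension operators $E_t$ on $H^1(D_t)$ and $W^{1,\infty}(D_t)$, and the Poincar\'e inequality on $D_t$, hold with $t$-independent constants. It then gets \eqref{eq:pinftycase} from the same extension together with $\|f\|_{L^\infty(\R^2)}\lesssim\|f\|_{L^2(\R^2)}^{1/2}\|\nabla f\|_{L^\infty(\R^2)}^{1/2}$.

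You instead transport everything to the fixed domain $D$ through the measure-preserving bi-Lipschitz map $X(t)$. Only the extension operator and Poincar\'e constant of $D$ itself are used, and the constant comes out explicitly as $C(p,D)L_\infty^{1-2/p}$. This is the more robust route. Bi-Lipschitz bounds alone do not make $X(t,D)$ a Lipschitz-graph domain, and the $t$-uniform extension and Poincar\'e properties the paper relies on are exactly what your pullback supplies. No $C^1$ information on $X(t)$ is available to rescue the paper's claim, since \cref{prop:C1patch} is itself proved using this lemma.

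The paper's version is shorter, and its $L^\infty$ case is a one-line consequence of the extension. Your measure-density argument is an elementary substitute for a $W^{1,\infty}$ extension, but it needs two small repairs.
\begin{enumerate}
\item Take $\Lambda$ to be the global Lipschitz constant of $v(t)\circ X(t)$ on $\R^2$, which is at most $L_\infty\|\nabla v(t)\|_{L^\infty(\R^2)}$, rather than $\|\nabla w\|_{L^\infty(D)}$. On a non-convex $D$ the latter only controls increments along paths inside $D$, so it does not give $|w|\ge a/2$ on $B_r(x)\cap D$. Alternatively, restrict to the interior cone at $x$, which is star-shaped with respect to its vertex.
\item Since $w$ is vector-valued, zero mean does not force it to vanish anywhere. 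Replace that step by $|w(x)|=\bigl|\fint_D (w(x)-w(y))\,\dd y\bigr|\le\Lambda\operatorname{diam}D$.
\end{enumerate}
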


\begin{proof}
Fix $t>0$. Since $X(t,\cdot)$ is bi-Lipschitz with distortion uniformly bounded by $L_\infty$, the domain $D_t=X(t,D)$ is Lipschitz with uniformly controlled character. Hence, there exists an extension operator
\begin{align}
    E_t:H^1(D_t)\to H^1(\R^2), \qquad   E_t:W^{1,\infty}(D_t)\to W^{1,\infty}(\R^2),
\end{align}
whose operator norms are bounded independently of $t$. Set
\begin{align}
    f_t(x)\coloneqq v(t,x)-M_t,
    \qquad x\in D_t.
\end{align}
Then $\int_{D_t} f_t\,\dd x=0$. By the boundedness of $E_t$, the Gagliardo--Nirenberg inequality on $\R^2$, and the Poincar\'e inequality on $D_t$, we obtain
\begin{align}
\|f_t\|_{L^p(D_t)}
&\leq \|E_t f_t\|_{L^p(\R^2)}
\lesssim \|E_t f_t\|_{L^2(\R^2)}^{2/p}\|\nabla E_t f_t\|_{L^2(\R^2)}^{1-2/p} \lesssim \|f_t\|_{L^2(D_t)}^{2/p}\|\nabla f_t\|_{L^2(D_t)}^{1-2/p}.
\end{align}
Since $\nabla f_t=\nabla v(t)$ on $D_t$, this yields the first estimate.

The $L^\infty$ case follows in the same way, using the corresponding Gagliardo--Nirenberg inequality on $\R^2$:
\begin{align}
\|f_t\|_{L^\infty(D_t)}
\lesssim \|f_t\|_{L^2(D_t)}^{1/2}\|\nabla f_t\|_{L^\infty(D_t)}^{1/2},
\end{align}
which gives \eqref{eq:pinftycase}.
\end{proof}

\subsection{\texorpdfstring{$C^{1,\gamma}$}{C¹,γ} patches}

As in \cite{Danchin2025}, we can show that the flow is uniformly $C^1(\R^2)$ which implies that $C^1$ patches are preserved under flow. More precisely, we have the following.

\begin{proposition}\label{prop:C1patch}
    Let $(\rho,u)$ be the solution constructed in \cref{prop:existenceL2}, and assume in addition that $u_0 \in \dot{B}^0_{2,1}(\R^2)$. Then there exists a constant $C>0$ such that 
    \begin{align}\label{eq:refinedbesovbound}
        \int_0^\infty \norm{\nabla u(t)}_{\dot{B}^{1/2}_{4,1}(\R^2)} \dd t \leq C.
    \end{align}
    In particular, for every $t \geq 0,$ we have $X(t) \in C^1(\R^2)$ and, if $D$ is a $C^1$-domain, then $D_t$ is a $C^1$-domain as well.
\end{proposition}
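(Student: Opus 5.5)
We prove \eqref{eq:refinedbesovbound} atom by atom, following the scheme of \cref{prop:existencebesov}. For each $j\in\Z$ we aim for
\begin{align}
\int_0^\infty \|\nabla u_j(t)\|_{\dot B^{1/2}_{4,1}(\R^2)}\,\dd t \lesssim c_j .
\end{align}
We then sum over $j$, using $(c_j)\in\ell^1(\Z)$, and pass to the limit $u_J\to u$. The limit passage uses \eqref{eq:distributional-convergencesj} together with lower semicontinuity of the norm on $L^1(\R_+;\dot B^{1/2}_{4,1})$, as in \cref{rem:Immstrong}; note that $L^1(\R_+;\dot B^{1/2}_{4,1})$ is the dual-type space of $L^\infty(\R_+;\dot B^{-1/2}_{4/3,\infty})$, so Fatou-type arguments apply.

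\textbf{Step 1: pointwise-in-time bound.} For fixed $t$ we control $\|\nabla u_j\|_{\dot B^{1/2}_{4,1}}$ through the real interpolation inequality
\begin{align}
\|f\|_{\dot B^{1/2}_{4,1}} \lesssim \|f\|_{L^4}^{1/2}\|\nabla f\|_{L^4}^{1/2}, \qquad f=\nabla u_j .
\end{align}
This reduces the task to bounding $\|\nabla u_j\|_{L^4}$ and $\|\nabla^2 u_j\|_{L^4}$.

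For $\|\nabla u_j\|_{L^4}$ we use Gagliardo--Nirenberg and the Stokes estimate:
\begin{align}
\|\nabla u_j\|_{L^4}\lesssim \|\nabla u_j\|_{L^2}^{1/2}\|\sqrt\rho\,\dot u_j\|_{L^2}^{1/2}.
\end{align}

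For $\|\nabla^2 u_j\|_{L^4}$ we use the $L^4$ Stokes estimate of \cref{lem:Genstokes} applied to $-\Delta u_j+\nabla P_j=-\rho\dot u_j$, which gives
\begin{align}
\|\nabla^2 u_j\|_{L^4}\lesssim\|\rho\dot u_j\|_{L^4}.
\end{align}
We then bound $\|\rho \dot u_j\|_{L^4}$ by the dynamic Gagliardo--Nirenberg inequality of \cref{cor:Lpstrip} with $p=4$, or, globally in time, by \cref{lem:GNonDt}. With the latter this reads
\begin{align}
\|\rho\dot u_j\|_{L^4}\lesssim \|\rho\dot u_j\|_{L^2}^{1/2}\|\nabla\dot u_j\|_{L^2}^{1/2}+|D|^{-1/4}\Bigl|\fint_{D_t}\dot u_j\Bigr| .
\end{align}
The mean term vanishes by \cref{lem:MomentCons}, since $\int\rho\dot u_j=0$. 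This is exactly where the Galilean structure enters, and it ensures that no non-integrable large-time tail appears.

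Collecting the bounds, we obtain
\begin{align}
\|\nabla u_j\|_{\dot B^{1/2}_{4,1}}\lesssim \|\nabla u_j\|_{L^2}^{1/4}\,\|\sqrt\rho\,\dot u_j\|_{L^2}^{1/2}\,\|\nabla\dot u_j\|_{L^2}^{1/4}.
\end{align}
This has the correct scaling: it behaves like $t^{-1}$ times an $L^2$-level quantity.

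\textbf{Step 2: time integration.} We integrate in time using H\"older's inequality with exponents $(4,2,4)$. We apply \cref{lem:ancientlemma} (i) with $\beta_1=1$, $\alpha_1=-1/2$; (ii) with $\beta_2=1$, $\alpha_2=0$; and (iii) with $\beta_3=1$, $\alpha_3=1/2$. The time weights $t^{-1/8}$, $t^{0}$ and $t^{1/8}$ cancel, since $-\tfrac12\cdot\tfrac14+\tfrac12\cdot\tfrac14=0$. Each of the three factors is then $\lesssim c_j$, and the product is $\lesssim c_j$, which gives the desired per-atom bound.

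\textbf{Main obstacle.} We expect the main obstacle to be the $L^4$ control of $\rho\dot u_j$ uniformly in time. \cref{cor:Lpstrip} is only local in time, because its constants depend on $T$. We would first try \cref{lem:GNonDt}, whose constant depends only on $L_\infty$, combined with zero momentum as above. If the extension-operator argument there is delicate, a fallback is to split the time integral at $t=1$ and use \cref{cor:Lpstrip} on $(0,1)$ together with Galilean-shifted decay on $(1,\infty)$.

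\textbf{Consequences.} Since $\dot B^{1/2}_{4,1}\hookrightarrow C_0(\R^2)$, the bound \eqref{eq:refinedbesovbound} gives $\nabla u\in L^1(\R_+;C_0)$. Then $\nabla X$ solves
\begin{align}
\partial_t\nabla X=\nabla u(t,X)\,\nabla X,
\end{align}
so $\nabla X$ is a uniform limit of continuous functions, and $X(t)\in C^1$ with $C^1$ inverse. Finally, $D_t=X(t,D)$ is the image of a $C^1$ domain under a $C^1$ diffeomorphism, and hence $C^1$.
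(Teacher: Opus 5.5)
Your proof is correct and uses the same pointwise estimate as the paper. The ingredients match: $\|\nabla u\|_{\dot B^{1/2}_{4,1}}\lesssim\|\nabla u\|_{L^4}^{1/2}\|\nabla^2u\|_{L^4}^{1/2}$, Gagliardo--Nirenberg, the $L^4$ Stokes estimate, \cref{lem:GNonDt} with the zero-momentum identity $\int\rho\dot u=0$, and H\"older in time with weights $t^{\mp1/8}$.

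The only difference is organisational. You work atom by atom through \cref{lem:ancientlemma}, then sum and pass to the limit by lower semicontinuity. The paper applies the same bound directly to $u$ and uses the already-summed estimates \eqref{eq:L1L2estimates}--\eqref{eq:addiestimatesu} of \cref{prop:existencebesov}, so it needs neither the atomic step nor the limit passage.
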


\begin{proof}
    Every constant that may depend on $\norm{u_0}_{L^2(\R^2)}$, $\norm{u_0}_{\dot{B}^{0}_{2,1}(\R^2)}$ and $D$ is omitted. 
    
    Since $\dot{B}^{1/2}_{4,1}(\R^2) \hookrightarrow C_0(\R^2)$ by \cite[Proposition 2.39]{BCD2011}, it suffices to show \eqref{eq:refinedbesovbound} to prove \cref{prop:C1patch}. In order to prove \eqref{eq:refinedbesovbound}, recall that
    \begin{align}\label{eq:estimaterefbesov}
    \begin{aligned}
        \norm{\nabla u(t)}_{\dot{B}^{1/2}_{4,1}(\R^2)} \lesssim & \|\nabla u(t)\|_{L^4(\R^2)}^{1/2}\|\nabla^2 u(t)\|_{L^4(\R^2)}^{1/2}\\
        \lesssim & t^{-1/8}\|\nabla u(t)\|_{L^2(\R^2)}^{1/4}\|\nabla^2 u(t)\|_{L^2(\R^2)}^{1/4} \|\sqrt{\rho} \dot{u}(t)\|_{L^2(\R^2)}^{1/4} t^{1/8}\|\nabla \dot{u}(t)\|_{L^2(\R^2)}^{1/4}
    \end{aligned}
    \end{align}
    In the last step, we used the Stokes estimates, that $\sqrt{\rho} \dot{u}$ is mean free thanks to \cref{lem:MomentCons} and \cref{lem:GNonDt}, i.e.,
    \begin{align}
        \|\nabla^2 u(t)\|_{L^4(\R^2)} \lesssim \|\sqrt{\rho} \dot{u}(t)\|_{L^4(\R^2)} \lesssim \|\sqrt{\rho} \dot{u}(t)\|_{L^2(\R^2)}^{1/2} \|\nabla \dot{u}(t)\|_{L^2(\R^2)}^{1/2}
    \end{align}
    We see that each factor appearing on the right-hand side of \eqref{eq:estimaterefbesov} is in $L^4((0,\infty))$ thanks to \cref{prop:existencebesov} and we deduce \eqref{eq:refinedbesovbound} which completes the proof.
\end{proof}

Higher regularity of the patch is propagated under suitable assumptions on the initial velocity.

\begin{proposition}\label{prop:C1gammapatch}
    Let $(\rho,u)$ be the solution constructed in \cref{prop:existenceL2}, and assume in addition that $u_0 \in \dot{B}^0_{2,1}(\R^2)$. If additionally $u_0 \in \dot{B}^\gamma_{2,1}(\R^2)$, where $\gamma \in (0,1),$ then there exists a constant $C>0$ such that
    \begin{align}\label{eq:L1C1gamma}
        \int_0^\infty \norm{\nabla u(t)}_{C^\gamma(\R^2)} \dd t \leq C \left( \norm{u_0}_{\dot{B}^0_{2,1}(\R^2)} + \norm{u_0}_{\dot{B}^\gamma_{2,1}(\R^2)} \right).
    \end{align}
    In particular, for every $t\geq 0,$ we have that $X(t) \in C^{1,\gamma}(\R^2)$  and, if $D$ is a $C^{1,\gamma}$-domain, then $D_t$ is a $C^{1,\gamma}$-domain as well.
\end{proposition}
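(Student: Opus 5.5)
The plan is to run the dynamic interpolation argument of \cref{prop:existencebesov} once more, now at the level $\gamma$, and to combine it with the interpolation used in \cref{prop:C1patch}. The first step is to make the atomic decomposition adapted to both spaces at once. Since $u_0\in \dot B^0_{2,1}\cap\dot B^\gamma_{2,1}$, I would decompose $u_0=\sum_j u_{0,j}$ so that $c_j$ from \eqref{eq:decBes} lies in $\ell^1(\Z)$, and so that in addition the atoms $u_{0,j}$ lie in $\dot H^{\gamma+\eta}\cap\dot H^{\gamma-\eta}$ with coefficients
\begin{align}
c_j^\gamma:=2^{-j/2}\|u_{0,j}\|_{\dot H^{\gamma+\eta}}+2^{j/2}\|u_{0,j}\|_{\dot H^{\gamma-\eta}}\in\ell^1(\Z).
\end{align}
A Littlewood--Paley based decomposition gives both summabilities simultaneously, with $\sum_j c_j^\gamma\lesssim\|u_0\|_{\dot B^\gamma_{2,1}}$. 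Each $u_j$ then solves \eqref{eq:LSj}, and since $u_J\to u$ in the sense of \cref{prop:gluinglevel0}, lower semicontinuity (\cref{rem:Immstrong}) reduces \eqref{eq:L1C1gamma} to the per-atom bound $\int_0^\infty\|\nabla u_j\|_{C^\gamma}\,\dd t\lesssim c_j+c_j^\gamma$.

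For the per-atom bound I would interpolate between $L^\infty$ and Lipschitz control of $\nabla u_j$:
\begin{align}
\|\nabla u_j\|_{C^\gamma}\lesssim \|\nabla u_j\|_{L^\infty}^{1-\gamma}\|\nabla^2 u_j\|_{L^\infty}^{\gamma}.
\end{align}
Alternatively, I would use the Besov scale $\|\nabla u_j\|_{\dot B^{\gamma+1/2}_{4,1}}$ as in \eqref{eq:estimaterefbesov}. For the second derivatives I would use $L^p$ Stokes estimates, writing $\|\nabla^2 u_j\|_{L^p}\lesssim\|\rho\dot u_j\|_{L^p}$. The mean-free property of $\rho\dot u_j$ (\cref{lem:MomentCons}) together with \cref{lem:GNonDt} on the uniformly Lipschitz domains $D_t$ then bounds $\|\rho\dot u_j\|_{L^p}$ by powers of $\|\sqrt\rho\dot u_j\|_{L^2}$ and $\|\nabla\dot u_j\|_{L^2}$.

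The next step is the time-decay bookkeeping. The positive side requires estimates $A_i^{\gamma+\eta}(t,u_j,u)$, which should follow from \cref{thm:sect2} by interpolating the $s=1$ and $s=0$ bounds, provided $\gamma+\eta<1$; I restrict to $\eta$ small for this reason. The negative side needs the analogue of \cref{lem:exLSnegative} and \cref{lem:decay-shifted-uj} starting from $\dot H^{\gamma-\eta}$ data. When $\gamma>\eta$ this exponent is positive, so I would instead produce the large-time decay from $\dot H^{-\eta}$ data, which is already available, and obtain the gain of $t^{-\gamma/2}$ from parabolic smoothing. This smoothing would be implemented through the time-shift estimates \eqref{eq:decayAoiuj}, applied once more to $\dot u_j$, which would require $A_4$-type bounds.

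The hardest step is making the higher regularity quantitative without $A_4$. Besides the regularity gap just described, the weight exponents have to line up: after splitting the time integral at $A=2^{-j/\eta}$ as in \cref{lem:ancientlemma}, the powers of $t$ must cancel exactly at scale $\gamma$. If that fails, I would use the weaker route $\dot B^{\gamma}_{\infty,\infty}\supset C^\gamma$ and interpolate $\nabla u$ between $\dot H^1$-in-$L^4$ control and $\nabla\dot u_j$ in $L^4$, as in \eqref{eq:L1L4nablaudot}, choosing H\"older exponents so that each factor is given by \cref{lem:ancientlemma} with the $\gamma$-shifted coefficients.

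Once \eqref{eq:L1C1gamma} holds, the geometric conclusion is standard. Differentiating $\partial_t\nabla X=\nabla u(X)\nabla X$ and using Gr\"onwall in $C^\gamma$, with $\nabla X$ bounded by $L_\infty$, gives $X(t)\in C^{1,\gamma}$. A $C^{1,\gamma}$ diffeomorphism maps $C^{1,\gamma}$ domains to $C^{1,\gamma}$ domains, so $D_t$ is again of class $C^{1,\gamma}$.
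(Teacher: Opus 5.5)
Your proposal has a genuine gap: it never completes the argument, and it stalls at exactly the two points where the paper's proof is short.

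\textbf{The link to the H\"older seminorm.} Your main interpolation $\|\nabla u_j\|_{C^\gamma}\lesssim\|\nabla u_j\|_{L^\infty}^{1-\gamma}\|\nabla^2u_j\|_{L^\infty}^{\gamma}$ needs $\nabla^2u_j\in L^\infty$. Nothing in the paper provides this. It is also not to be expected: the Stokes forcing $\rho\dot u_j=\mathbf 1_{D_t}\dot u_j$ jumps across a merely Lipschitz boundary, and Calder\'on--Zygmund bounds stop at $p<\infty$. The $L^p$ Stokes estimate you mention is the right tool, but the missing step is Morrey's embedding at the specific exponent $p=2/(1-\gamma)$:
\[
|\nabla u_j|_{C^\gamma}\lesssim\|\nabla^2u_j\|_{L^p}\lesssim\|\sqrt\rho\,\dot u_j\|_{L^p}.
\]
For this $p$, \cref{lem:GNonDt}, together with the mean-free property from \cref{lem:MomentCons}, gives exactly
\[
\|\sqrt\rho\,\dot u_j\|_{L^p}\lesssim\|\sqrt\rho\,\dot u_j\|_{L^2}^{1-\gamma}\|\nabla\dot u_j\|_{L^2}^{\gamma}.
\]

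\textbf{The time bookkeeping.} You treat integrability at large times as requiring $\dot H^{-\eta}$ information, the Galilean shift, and $A_4$-type smoothing. You then leave this as the unresolved ``hardest step'', but $A_4$ bounds are precisely what the paper says are unavailable. None of this is needed, because the factor $\|\nabla\dot u_j\|_{L^2}^{\gamma}$ in the Gagliardo--Nirenberg bound already supplies the extra decay.
\begin{itemize}
\item From the $A^0_2,A^0_3$ bounds of \cref{thm:sect2} you get $\|\sqrt\rho\dot u_j\|_{L^2}\lesssim t^{-1}\|u_{0,j}\|_{L^2}$ and $\|\nabla\dot u_j\|_{L^2}\lesssim t^{-3/2}\|u_{0,j}\|_{L^2}$. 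The product is therefore $\lesssim t^{-1-\gamma/2}\|u_{0,j}\|_{L^2}$, which is integrable at infinity since $\gamma>0$.
\item From the $A^1_2,A^1_3$ bounds you get $\lesssim t^{-1/2-\gamma/2}\|u_{0,j}\|_{\dot H^1}$, which is integrable at $0$.
\end{itemize}
So it suffices to decompose $u_0$ using $\dot B^\gamma_{2,1}=[L^2,\dot H^1]_{\gamma,1}$ and split the time integral at $A=2^{-2j}$. This yields per-atom bounds $2^{\gamma j}\|u_{0,j}\|_{L^2}+2^{-(1-\gamma)j}\|u_{0,j}\|_{\dot H^1}$, which you then sum. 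The $L^\infty$ part of the $C^\gamma$ norm is already given by \cref{prop:existencebesov}.

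Your own $\dot H^{\gamma\pm\eta}$ decomposition would also close, using only the positive-regularity bounds $A^{\gamma\pm\eta}_i$, once $\eta<\min\{\gamma,1-\gamma\}$. These give rates $t^{-1+\eta/2}$ and $t^{-1-\eta/2}$, and the split at $2^{-j/\eta}$ produces $c_j^\gamma$. No $\dot H^{-\eta}$ data are required. Your closing geometric step via Gr\"onwall on $\nabla X$ is fine.
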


We recall the definition of the $C^{\gamma}$ norm for $\gamma \in (0,1)$:
\begin{align}
    \norm{f}_{C^{\gamma}(\R^2)} \coloneqq \norm{f}_{L^\infty(\R^2)}  +  \abs{f}_{C^{\gamma}(\R^2)}, \qquad \text{where } \quad \abs{f}_{C^{\gamma}(\R^2)} \coloneqq \sup_{x, y \in \R^2} \frac{\abs{f(x) - f(y)}}{\abs{x-y}^\gamma}.
\end{align}

\begin{proof}
     Every constant that may depend on $\norm{u_0}_{L^2(\R^2)}$, $\norm{u_0}_{\dot{B}^{0}_{2,1}(\R^2)}$ and $D$ is omitted. 
    
    Again it suffices to show \eqref{eq:L1C1gamma} in order to prove \cref{prop:C1gammapatch}. Since 
    \begin{align}
        \int_0^\infty \|\nabla u(t)\|_{L^\infty(\R^2)} \dd t \leq C 
    \end{align}
    by \cref{prop:existencebesov} it only remains to show that 
    \begin{align}\label{eq:L1C1gamudot}
        \int_0^\infty |\nabla u(t)|_{C^\gamma(\R^2)} \dd t \lesssim \norm{u_0}_{\dot{B}^\gamma_{2,1}(\R^2)}
    \end{align}
    To this end, recall that $\dot{B}^\gamma_{2,1} =[L^2(\R^2),\dot{H}^1(\R^2)]_{\gamma,1}$ which implies that there is a sequence $(u_{0,j})_{j \in \Z} \subset L_\sigma^2(\R^2) \cap \dot{H}^1(\R^2)$ such that
    \begin{align}
        u_0 = \sum_{j \in \Z} u_{0,j}, \quad \sum_{j \in \Z} 2^{\gamma j}\|u_{0,j}\|_{L^2(\R^2)} + 2^{-(1-\gamma) j}\|u_{0,j}\|_{\dot{H^1}(\R^2)} \leq 2 \norm{u_0}_{\dot{B}^\gamma_{2,1}(\R^2)}.
    \end{align}
    We fix $j \in \Z$ and let $u_j$ be a solution of the linearized system \eqref{eq:LSj} with initial value $u_{0,j}.$

    First, we show that, for $p = 2/(1-\gamma)$, 
    \begin{align}\label{eq:L1Lpudot}
        \int_0^\infty \|\sqrt{\rho} \dot{u}_j(t)\|_{L^p(\R^2)} \dd t \lesssim 2^{\gamma j}\|u_{0,j}\|_{L^2(\R^2)} + 2^{-(1-\gamma) j}\|u_{0,j}\|_{\dot{H^1}(\R^2)}.
    \end{align}
    Then, by linear Stokes estimates and Morrey's inequality,
    \begin{align}
        \int_0^\infty |\nabla u_j(t)|_{C^\gamma(\R^2)} \dd t \lesssim & \int_0^\infty \|\nabla^2 u_j(t)\|_{L^p(\R^2)} \dd t \lesssim \int_0^\infty \|\sqrt{\rho} \dot{u}_j(t)\|_{L^p(\R^2)} \dd t\\ \lesssim & 2^{\gamma j}\|u_{0,j}\|_{L^2(\R^2)} + 2^{-(1-\gamma) j}\|u_{0,j}\|_{\dot{H^1}(\R^2)}.
    \end{align}
    Summing over $j \in \Z$ implies the thesis. In order to show \eqref{eq:L1Lpudot}, observe that, due to \cref{lem:GNonDt}, for every $t>0,$
    \begin{align}
        \|\sqrt{\rho} \dot{u}_j(t)\|_{L^p(\R^2)} \lesssim \|\sqrt{\rho} \dot{u}_j(t)\|_{L^2(\R^2)}^{1-\gamma} \|\nabla \dot{u}_j(t)\|_{L^2(\R^2)}^\gamma.
    \end{align}
    Thanks to \cref{thm:sect2}, for almost every $t>0,$
    \begin{align}
        \|\sqrt{\rho} \dot{u}_j(t)\|_{L^p(\R^2)} \lesssim & \|\sqrt{\rho} \dot{u}_j(t)\|_{L^2(\R^2)}^{1-\gamma} \|\nabla \dot{u}_j(t)\|_{L^2(\R^2)}^\gamma\\
        \lesssim & \left(\|\sqrt{\rho_0} u_{0,j}\|_{L^2(\R^2)}t^{-1}\right)^{1-\gamma} \left(\|\sqrt{\rho_0} u_{0,j}\|_{L^2(\R^2)}t^{-3/2}\right)^{\gamma} \lesssim \|u_0\|_{L^2(\R^2)} t^{-1-\gamma/2},
    \end{align}
    and that, for every $j \in \Z$ and almost every $t>0,$
    \begin{align}
        \|\sqrt{\rho} \dot{u}_j(t)\|_{L^p(\R^2)} \lesssim & \|\sqrt{\rho} \dot{u}_j(t)\|_{L^2(\R^2)}^{1-\gamma} \|\nabla \dot{u}_j(t)\|_{L^2(\R^2)}^\gamma\\
        \lesssim & \left(\|\nabla u_{0,j}\|_{L^2(\R^2)}t^{-1/2}\right)^{1-\gamma} \left(\|\nabla u_{0,j}\|_{L^2(\R^2)}t^{-1}\right)^{\gamma} \lesssim \|\nabla u_0\|_{L^2(\R^2)} t^{-1/2-\gamma/2}.
    \end{align}
    Consequently, for every $j \in \Z$ and every $t>0,$
    \begin{align}
        \|\sqrt{\rho} \dot{u}_j(t)\|_{L^p(\R^2)} \lesssim \min\left\{\|u_0\|_{L^2(\R^2)} t^{-1-\gamma/2}, \|\nabla u_0\|_{L^2(\R^2)} t^{-1+\gamma/2}\right\}.
    \end{align}
    We infer that, for every $j \in \Z$ and every $A>0,$
    \begin{align}
        \int_0^\infty \|\sqrt{\rho} \dot{u}_j(t)\|_{L^p(\R^2)} \dd s
        \lesssim & \|\nabla u_0\|_{L^2(\R^2)} \int_0^A t^{-1/2-\gamma/2} \dd s +\|u_0\|_{L^2(\R^2)} \int_A^\infty t^{-1-\gamma/2} \dd s\\
        \lesssim & \|\nabla u_0\|_{L^2(\R^2)} A^{1/2-\gamma/2} +\|u_0\|_{L^2(\R^2)} A^{-\gamma/2}.
    \end{align}
    We choose $A=2^{-2j}$ and obtain \eqref{eq:L1Lpudot} which completes the proof.
\end{proof}

\subsection{Asymptotics.}
By \cref{lem:MomentCons}, the average momentum of $u$ on the patch is conserved for all times:
\begin{align}\label{eq:62}
M \coloneqq \frac{1}{|D|}\int_{\R^2}\rho_0(x)u_0(x)\,\dd x
= \frac{1}{|D_t|}\int_{\R^2}\rho(t,x)u(t,x)\,\dd x
=\fint_{D_t}u(t,x)\,\dd x.
\end{align}
The combination of finite distorsion \eqref{eq:61}, \eqref{eq:62}, and the energy equality allows us to prove exponential decay of $u(t)-M$ on the patch $D_t$ in the $L^2$ norm. To track this decay, we do not assume $\nu=1$, unlike in the rest of the paper, and instead keep the viscosity explicit.

\begin{lemma}\label{lem:expdecay}
In the previous setup, we have
\begin{align}
\|u(t)-M\|_{L^2(D_t)}
\le
\|u_0-M\|_{L^2(D)}\,e^{-\lambda t},
\qquad
\lambda \coloneqq \frac{\nu}{C_D^2 L_\infty^2},
\end{align}
where $C_D$ is the Poincar\'e constant of $D$.
\end{lemma}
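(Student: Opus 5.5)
The plan is to combine the energy equality with the conservation of momentum, and to close with a Poincaré inequality on the moving domain whose constant is uniform in time because of the finite distortion \eqref{eq:61}.

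\textbf{Step 1: energy identity for $u-M$.} Set $E(t) := \|u(t)-M\|_{L^2(D_t)}^2 = \int \rho(t)|u(t)-M|^2\,\dd x$. Expanding the square as in the proof of \eqref{eq:compGal} gives
\begin{align}
E(t) = \int \rho(t)|u(t)|^2\,\dd x - |D|\,|M|^2 .
\end{align}
This uses three facts: $\int\rho(t)u(t) = |D|M$ by \eqref{eq:62}, $\int \rho(t) = |D|$, and $\nabla(u-M)=\nabla u$. The energy equality from \cref{prop:gluinglevel0}, with viscosity $\nu$, then yields for all $0\le s<t$
\begin{align}
E(t) + 2\nu\int_s^t \|\nabla u\|_{L^2(\R^2)}^2\,\dd\tau = E(s).
\end{align}
Hence $E$ is absolutely continuous and nonincreasing, with $E'(t) = -2\nu\|\nabla u(t)\|_{L^2}^2 \le -2\nu\|\nabla u(t)\|_{L^2(D_t)}^2$ for a.e.\ $t$.

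\textbf{Step 2: uniform Poincaré on $D_t$.} By \eqref{eq:62}, $u(t)-M$ has zero mean on $D_t$. We pull back by the measure-preserving flow: set $g(y) := u(t,X(t,y)) - M$ for $y\in D$. Because $X(t,\cdot)$ is measure preserving, $\int_D g = 0$ and $\|g\|_{L^2(D)} = \|u(t)-M\|_{L^2(D_t)}$. The Poincaré inequality on $D$ together with the chain rule gives
\begin{align}
\|g\|_{L^2(D)} \le C_D\|\nabla g\|_{L^2(D)} \le C_D\,\|\nabla X(t)\|_{L^\infty}\,\|(\nabla u)\circ X(t)\|_{L^2(D)} \le C_D L_\infty \|\nabla u(t)\|_{L^2(D_t)},
\end{align}
where the last step uses measure preservation once more. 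Therefore $E(t) \le C_D^2 L_\infty^2 \|\nabla u(t)\|^2_{L^2(D_t)}$.

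\textbf{Step 3: Gronwall.} Inserting Step 2 into Step 1 gives $E'(t) \le -\tfrac{2\nu}{C_D^2L_\infty^2}E(t) = -2\lambda E(t)$, so $E(t)\le E(0)e^{-2\lambda t}$. Taking square roots gives the claim, since $E(0)=\|u_0-M\|^2_{L^2(D)}$.

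\textbf{Main obstacle.} The delicate step is justifying the chain rule and the change of variables in Step 2 at the available regularity. $X(t)$ is only bi-Lipschitz, with Lipschitz constant at most $L_\infty$ by \eqref{eq:61}, and $u(t)\in H^1_{\loc}$ for $t>0$ by \cref{lem:B1}. Composition of an $H^1$ function with a bi-Lipschitz map stays $H^1$, and the chain rule holds a.e., so the step goes through. To avoid differentiating $E$ pointwise, I would instead run Gronwall in integral form, $E(t)\le E(s)-2\lambda\int_s^t E$, which gives the same conclusion.
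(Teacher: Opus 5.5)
Your proposal is correct and follows essentially the same route as the paper: an energy identity for $u-M$ on $D_t$, a Poincar\'e inequality on $D$ pulled back through the measure-preserving flow with the distortion bound $L_\infty$, and a Gronwall argument. The only cosmetic difference is how you obtain \eqref{eq:Gcons}: you expand the square using conservation of mass and momentum, as in \eqref{eq:compGal}, whereas the paper invokes Galilean invariance of the system; your justification is, if anything, the more transparent one.
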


\begin{proof}
We introduce the Galilean transform
\begin{align}
\rho_M(t,x)\coloneqq \rho(t,x+Mt),
\qquad
u_M(t,x)\coloneqq u(t,x+Mt)-M.
\end{align}
Since the system is Galilean invariant, $(\rho_M,u_M)$ satisfies the same equations as $(\rho,u)$, and therefore for every $t>s \geq 0$ we have the energy equality 
\begin{align}
\frac12\|\sqrt{\rho_M}u_M(t)\|_{L^2(\R^2)}^2
+\nu\int_s^t\|\nabla u_M(\tau)\|_{L^2(\R^2)}^2\,\dd \tau
=
\frac12\|\sqrt{\rho_M}u_M(s)\|_{L^2(\R^2)}^2
\end{align}
Using the change of variables $y=x+Mt$ and the invariance of spatial derivatives under translations, this identity becomes
\begin{align}\label{eq:Gcons}
\frac12\|u(t)-M\|_{L^2(D_t)}^2
+\nu\int_s^t\|\nabla u(\tau)\|_{L^2(\R^2)}^2\,\dd \tau
=
\frac12\|u(s)-M\|_{L^2(D_s)}^2.
\end{align}
By a change of variables using $D_\tau = X(\tau,D)$ 
\begin{align}\label{eq:char}
\begin{aligned}
\|u(\tau)-M\|_{L^2(D_\tau)}
&=
\Big\|u(\tau,\cdot)-\fint_{D_\tau}u(\tau,x)\,\dd x\Big\|_{L^2(D_\tau)} \\
&=
\Big\|u(\tau,X(\tau,\cdot))-\fint_D u(\tau,X(\tau,y))\,\dd y\Big\|_{L^2(D)}
\end{aligned}
\end{align}
By Poincar\'e inequality on $D$ and recalling that $\|\nabla X(t)\|_{L^\infty(\R^2)}\le L_\infty$, we infer that
\begin{align}
\Big\|u(\tau,X(\tau,\cdot))-\fint_D u(\tau,X(\tau,y))\,\dd y\Big\|_{L^2(D)} 
& \le
C_D \|\nabla (u(\tau)\circ X(\tau,\cdot))\|_{L^2(D)} \\
&\le
C_D \|\nabla u(\tau)\|_{L^2(\R^2)}\|\nabla X(\tau)\|_{L^\infty(\R^2)} \\  & \le
C_D L_\infty \|\nabla u(\tau)\|_{L^2(\R^2)}.
\end{align}
We have proved
\begin{align}
\|u(\tau)-M\|_{L^2(D_\tau)}
\le
C_D L_\infty \|\nabla u(\tau)\|_{L^2(\R^2)}.
\end{align}
Therefore,
\begin{align}\label{eq:thisexp}
\|\nabla u(\tau)\|_{L^2(\R^2)}^2
\ge
\frac{1}{C_D^2L_\infty^2}\|u(\tau)-M\|_{L^2(D_\tau)}^2.
\end{align}
Setting
\begin{align}
F(t)\coloneqq \frac12\|u(t)-M\|_{L^2(D_t)}^2,
\end{align}
and inserting  \eqref{eq:thisexp} into \eqref{eq:Gcons} gives for every $t>s\geq 0$
\begin{align}\label{eq:Gcons2}
F(t)
+2\lambda \int_s^tF(\tau)\,\dd \tau 
\leq F(s).
\end{align}
By Gronwall's lemma, it follows that
\begin{align}
F(t)\le F(0)e^{-2\lambda t}.
\end{align}
Taking square roots, we obtain the thesis.
\end{proof}

\begin{lemma}\label{lem:finaldomain}
In the previous setting, there exists a measure-preserving map $X_\infty$ such that
\begin{align}\label{eq:unifX}
   \lim_{t \to \infty} \bigl\|( X(t,\cdot ) - Mt) - X_\infty (\cdot) \bigr\|_{L^\infty(D)} = 0.
\end{align}
Moreover, for every $x \in D$, we have
\begin{align}\label{eq:last1}
    X_\infty(x)
    =
    x + \int_0^\infty \bigl(u(t, X(t,x)) - M\bigr)\,\dd t,
\end{align}
where
\begin{align}\label{eq:last2}
\sup_{x \in D}|u(t, X(t,x)) - M| \in L^1(0,\infty).
\end{align}
\end{lemma}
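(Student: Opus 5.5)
The heart of the argument is \eqref{eq:last2}; once it is in hand, \eqref{eq:last1} and \eqref{eq:unifX} follow from the Lagrangian identity. Write
\begin{align}
X(t,x)-Mt = x+\int_0^t \bigl(u(s,X(s,x))-M\bigr)\,\dd s .
\end{align}
Set $g(t):=\sup_{x\in D}|u(t,X(t,x))-M| = \|u(t)-M\|_{L^\infty(D_t)}$. If $g\in L^1(0,\infty)$, the integral converges absolutely and uniformly in $x$. We define $X_\infty$ by \eqref{eq:last1}. Then
\begin{align}
\|X(t,\cdot)-Mt-X_\infty\|_{L^\infty(D)}\le \int_t^\infty g(s)\,\dd s\to0 .
\end{align}
Measure preservation passes to the limit. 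For every $\varphi\in C_c(\R^2)$ we have $\int_D\varphi(X(t,x)-Mt)\,\dd x=\int_{D_t-Mt}\varphi$, because $X(t)$ is measure preserving. Uniform convergence lets us pass to the limit in the left side. The uniform Lipschitz bound $L_\infty$ from \eqref{eq:61} passes to $X_\infty$ as well, and the same holds for the inverse bound obtained from the backward flow with the same exponential. This gives a bi-Lipschitz limit, so no mass concentrates.

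To prove $g\in L^1$, I split time into $(0,1)$ and $(1,\infty)$. On $(0,1)$, I use $|u(t,X(t,x))-M|\le \|u(t)\|_{L^\infty(D_t)}+|M|$. Lemma~\ref{lem:Linfty} gives $\|u(t)\|_{L^\infty(D_{t,R})}\lesssim t^{-1/2}$, and $D_t\subset D_{C,t}$ by Proposition~\ref{prop:ImprovedDecay}. This yields an integrable bound near $0$.

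For large times I apply the $L^\infty$ Gagliardo--Nirenberg inequality \eqref{eq:pinftycase} of Lemma~\ref{lem:GNonDt} with $v=u$. By \eqref{eq:62}, the average of $u(t)$ over $D_t$ is exactly $M$, so
\begin{align}
g(t)\lesssim \|u(t)-M\|_{L^2(D_t)}^{1/2}\,\|\nabla u(t)\|_{L^\infty(\R^2)}^{1/2}.
\end{align}
Cauchy--Schwarz in time then gives
\begin{align}
\int_1^\infty g\lesssim \Bigl(\int_1^\infty \|u(t)-M\|_{L^2(D_t)}\,\dd t\Bigr)^{1/2}\Bigl(\int_1^\infty\|\nabla u\|_{L^\infty}\,\dd t\Bigr)^{1/2}.
\end{align}
The first factor is finite by the exponential decay of Lemma~\ref{lem:expdecay}. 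The second is finite by \eqref{eq:addiestimatesu} in Proposition~\ref{prop:existencebesov}. The same estimate in fact works on $(0,\infty)$ at once, since both factors are integrable over the whole half-line. This makes the short-time splitting optional, but it is a harmless safeguard.

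The main obstacle is justifying the pointwise (everywhere in $x$) statements, since $X$ is a priori only a regular Lagrangian flow. Because $\nabla u\in L^1_tL^\infty_x$ and $u$ is continuous in space for $t>0$ by the $W^{1,\infty}$ bound, the flow is classical and Lipschitz. The identity $X(t,x)=x+\int_0^t u(s,X(s,x))\,\dd s$ then holds for every $x$; near $s=0$ it is integrable thanks to the $t^{-1/2}$ bound. Continuity of $u(t,\cdot)$ also makes the $L^\infty(D_t)$ norm a true supremum over $x\in D$. This yields \eqref{eq:last2} as stated, and hence the lemma.
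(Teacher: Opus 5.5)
Your argument is correct and is essentially the paper's proof. The paper also bounds $\|u(\tau)-M\|_{L^\infty(D_\tau)}$ by \eqref{eq:pinftycase}, using that the mean of $u(\tau)$ on $D_\tau$ is $M$ by \eqref{eq:62}, and then combines Lemma~\ref{lem:expdecay} with $\nabla u\in L^1((0,\infty);L^\infty(\R^2))$ via H\"older on the whole half-line, so your short-time splitting is indeed superfluous.

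The one thin spot is your measure-preservation paragraph, which the paper does not address at all. ``No mass concentrates'' does not by itself give $\int_{D_t-Mt}\varphi\to\int_{X_\infty(D)}\varphi$. The uniform inverse Lipschitz bound you cite does give it, though: it implies $(X(t)-Mt)(B_r(x_0))\supset B_{r/L_\infty}(X(t,x_0)-Mt)$, hence $\mathbf{1}_{D_t-Mt}\to\mathbf{1}_{X_\infty(D)}$ pointwise off the null set $X_\infty(\partial D)$, and dominated convergence finishes it.
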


\begin{proof}
The goal is to show that
\begin{align}\label{eq:cauchy}
\bigl(X(t,\cdot ) - Mt\bigr)_{t>0}
\qquad\text{is Cauchy in } L^\infty(D),
\end{align}
which immediately implies \eqref{eq:unifX}. For every $x\in D$ and $0\le s<t$, we have
\begin{align}\label{eq:623}
|X(t,x)-Mt-(X(s,x)-Ms)|
&=
\left|\int_s^t \bigl(u(\tau,X(\tau,x))-M\bigr)\,\dd \tau\right| \le
\int_s^t \|u(\tau)-M\|_{L^\infty(D_\tau)}\,\dd \tau .
\end{align}
By \eqref{eq:pinftycase} with $v = u$ ther exist a constant $C=C(\norm{u_0}_{\dot{B}^0_{2,1}(\R^2)},D)$ such that 
\begin{align}\label{eq:622}
\|u(\tau)-M\|_{L^\infty(D_\tau)} \leq C
\|u(\tau)-M\|_{L^2(D_\tau)}^{\frac12}\,
\|\nabla u(\tau)\|_{L^\infty(\R^2)}^{\frac12}.
\end{align}
Putting together \eqref{eq:623} and \eqref{eq:622}, and using \cref{lem:expdecay}, we obtain
\begin{align}
\norm{X(t,x)-Mt-(X(s,x)-Ms)}_{L^\infty(D)}
&\leq C
\int_s^t
\|u(\tau)-M\|_{L^2(D_\tau)}^{\frac12}
\|\nabla u(\tau)\|_{L^\infty(\R^2)}^{\frac12}\,\dd \tau \\
&\leq C
\int_s^t
e^{-\frac{\lambda \tau}{2}}
\|\nabla u(\tau)\|_{L^\infty(\R^2)}^{\frac12}\,\dd \tau .
\end{align}
Since $\nabla u\in L^1((0,\infty);L^\infty(\R^2))$, Hölder's inequality yields
\begin{align}
H(\tau)\coloneqq e^{-\frac{\lambda \tau}{2}}\|\nabla u(\tau)\|_{L^\infty(\R^2)}^{\frac12}\in L^1(0,\infty).
\end{align}
Therefore,
\begin{align}
\norm{X(t,x)-Mt-(X(s,x)-Ms)}_{L^\infty(D)}
\le
\int_s^t H(\tau)\,\dd \tau,
\end{align}
which proves \eqref{eq:cauchy} and \eqref{eq:last2}. The solution formula \eqref{eq:last1} is obatine dy passing to the limit for $t \to \infty$ the equality 
\begin{align}
X(t,x)-Mt
&=\int_0^t \bigl(u(\tau,X(\tau,x))-M\bigr)\,\dd \tau.
\end{align}
\end{proof}
\begin{remark}
The last estimate of the previous lemma implies that there exists a
bi-Lipschitz and measure-preserving map $X_\infty$ such that
\begin{align}
\|X(t,\cdot)-Mt - X_\infty(\cdot)\|_{L^\infty(D)} \to 0
\qquad \text{as } t\to\infty .
\end{align}
In particular,
\begin{align}
D_t - Mt = X(t,D)-Mt \to X_\infty(D) \eqqcolon D_\infty .
\end{align}
\end{remark}
\textbf{Comments.}
\begin{enumerate}
\item We have shown that, for large times, the patch converges to a Lipschitz domain $D_\infty$ which moves with constant velocity $M$ (the average initial momentum). In other words, viscosity gradually removes the deformations of the patch, and the only motion that remains is a rigid translation. This behavior is expected: viscosity smooths the velocity field, while the conservation of momentum prevents the flow from stopping.

\item Notice that $L_\infty$ is dimensionless and therefore
\begin{align}
\lambda \approx \frac{\nu}{C_D^2}
\approx
\frac{\nu}{\operatorname{diam}(D)^2}
\approx
\left[\frac{1}{T}\right].
\end{align}
Hence the characteristic time scale of the system is
\begin{align}
T_* \sim \frac{1}{\lambda} \sim \frac{\operatorname{diam}(D)^2}{\nu}.
\end{align}
This is the time needed for viscosity to smooth the velocity inside the patch. For $t \ll T_*$ the velocity may still vary significantly in the patch, while for $t \gg T_*$ these variations become small and the motion approaches the asymptotic regime described above. This time scale is the same as the classical diffusive time scale of the heat equation.
\end{enumerate}

\appendix
\section{The relative energy inequality}
\subsection{The linear case}
We first study the following scenario.

\noindent\textbf{Scenario 1}\label{sce:scenario1}.  
\begin{itemize}
    \item[(i)] $(\rho,u)$ is an immediately strong solution of \eqref{eq:INS}
    with initial data $(\rho_0,u_0)$ satisfying \eqref{ass:weakData2};
    \item[(ii)]  $v$ is an immediately strong solution of the linearized system
    \eqref{eq:LSs} advected by $(\rho,u)$, such that
    $\rho v \to \rho_0 v_0 \in L^2(\R^2)$.
\end{itemize}

\begin{theorem}\label{thm:relaEnergyL}
Under the assumptions of Scenario~1, for almost every $t> 0$ we have
\begin{align}
    \|\rho(t)\,(u(t)-v(t))\|_{L^2(\R^2)}^{2}
    + 2\int_0^t \|\nabla (u(s)-v(s))\|_{L^2(\R^2)}^{2}\,ds
    \le \|\rho_0\,(u_0-v_0)\|_{L^2(\R^2)}^{2}.
\end{align}
\end{theorem}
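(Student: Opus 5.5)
The plan is to derive the relative energy inequality by expanding the square: we write
\begin{align}
\|\rho(t)(u-v)(t)\|_{L^2}^2=\|\sqrt{\rho}u(t)\|_{L^2}^2+\|\sqrt{\rho}v(t)\|_{L^2}^2-2\int_{\R^2}\rho u\cdot v(t)\,\dd x,
\end{align}
using $\rho^2=\rho$ since $\rho(t)=\mathbf{1}_{X(t,D)}$ by \cref{cor:lingrowthu}. The two diagonal terms are controlled by the energy inequalities for $u$ and for $v$, which are part of the definition of immediately strong solutions. The whole difficulty is then to prove the cross-term identity
\begin{align}
\int_{\R^2}\rho u\cdot v(t)\,\dd x+2\int_0^t\int_{\R^2}\nabla u:\nabla v=\int_{\R^2}\rho_0u_0\cdot v_0
\end{align}
for a.e. $t$. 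Adding the three relations gives exactly the claimed inequality.

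To get the cross identity, I would first work on $(\varepsilon,t)$. I test the momentum equation of $u$ with $v$, and the equation of $v$ (advected by $(\rho,u)$) with $u$. Formally the convective terms combine into $\int\rho\,\partial_t(u\cdot v)+\rho u\cdot\nabla(u\cdot v)$, which by the transport equation is $\frac{\dd}{\dd t}\int\rho u\cdot v$. The pressure terms vanish since $\dive u=\dive v=0$.

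Justifying the testing requires admissible test functions: $v$ and $u$ are neither compactly supported nor in $L^2(\R^2)$. Here I would use a cutoff $\chi_N$ as in \cref{lem:MomentCons}, together with \cref{lem:B1}, which gives $u,v\in H^1_{\loc}((0,\infty);H^1_{\loc})$, so that for $t>\varepsilon$ one can use the strong forms $\rho\dot u+\nabla P=\Delta u$ in $L^2$. In the limit $N\to\infty$, the boundary terms involve $P\,v\cdot\nabla\chi_N$ and $\nabla u:(v\otimes\nabla\chi_N)$. By \cref{prop:tailest}, $P=O(|x|^{-2})$ and $v=O(1)$ at infinity, so these vanish exactly as in \cref{cor:tailEst}. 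The terms involving $\rho$ are harmless because $\supp\rho(t)\subset K$ is compact on $[0,T]$ by \cref{prop:ImprovedDecay}. This yields the identity between any $\varepsilon<t$, for a.e. such times, via \cref{cor:tailEst} applied with $h=v(t)$ (and symmetric roles), together with the transport equation tested against $u\cdot v\,\chi_N$.

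The main obstacle is letting $\varepsilon\to0$. The dissipation term $\int_\varepsilon^t\int\nabla u:\nabla v$ converges because $\nabla u,\nabla v\in L^2_{t,x}$. For $\int\rho u\cdot v(\varepsilon)$, I need $\rho u(\varepsilon)\to\rho_0u_0$ and $\rho v(\varepsilon)\to\rho_0v_0$ with at least one of them strong in $L^2$. The hypothesis of Scenario~1 gives strong convergence of $\rho v$. For $u$, I would argue that $\rho u(\varepsilon)\rightharpoonup\rho_0u_0$ weakly in $L^2$, using the weak formulation and the uniform bound $\|\sqrt\rho u\|_{L^\infty L^2}$. A weak-times-strong product then converges, and this gives the identity down to $0$.

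If the weak continuity of $\rho u$ at $t=0$ proves delicate on noncompactly supported test functions, I would exploit that $\rho(\varepsilon)$ is supported in the compact $D+B_{C\sqrt\varepsilon}$, reducing to $\varphi\in C^\infty_c$ tested against a divergence-free approximation. The leftover gradient part is absorbed using the energy inequality.
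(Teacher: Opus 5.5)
Your architecture matches the paper's. Expanding the square, bounding the diagonal terms by the two energy inequalities, and deriving the cross identity on $(\varepsilon,t)$ from \cref{cor:tailEst} (with $h=v(t)$, then with roles exchanged) plus the transport equation tested against $u\cdot v$ is exactly \cref{prop:admuMomL} via \cref{cor:admutrasp}.

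The step that fails as written is the limit $\varepsilon\to0$. You assert $\rho u(\varepsilon)\rightharpoonup\rho_0u_0$ weakly in $L^2(\R^2)$ ``using the weak formulation and the uniform bound''. However, the momentum equation in \cref{def:Distsol} is tested only against solenoidal fields. What you actually obtain is weak continuity of the Leray projection, $\mathbb{P}(\rho u)(\varepsilon)\rightharpoonup\mathbb{P}(\rho_0u_0)$. Your weak-times-strong pairing is against $\rho v(\varepsilon)\to\rho_0v_0=\mathbf{1}_Dv_0$, which is not divergence free. So the gradient component of $\rho u(\varepsilon)$ is exactly what is needed, and it is not controlled.

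The energy inequality cannot ``absorb'' this component. It only gives $\|\rho u(\varepsilon)\|_{L^2}\le\|\rho_0u_0\|_{L^2}$. A weak limit point $m$ with $\mathbb{P}m=\mathbb{P}(\rho_0u_0)$ and $\|m\|_{L^2}\le\|\rho_0u_0\|_{L^2}$ is not forced to equal $\rho_0u_0$. Scenario~1 places the continuity hypothesis on $v$ only, so nothing about $\rho u$ near $t=0$ beyond the weak formulation is available. Indeed, the theorem is later used in \cref{prop:gluinglevel0} precisely to upgrade the continuity of $\rho u$. Full weak continuity of $\rho u$ would need a separate argument that your proposal does not supply.

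The fix is never to pair $\rho u(\varepsilon)$ with a non-solenoidal field, and to rely instead on the strong continuity of the density. Using $\rho^2=\rho$, write
\[
\int_{\R^2}\rho(\varepsilon)u(\varepsilon)\cdot v(\varepsilon)-\int_{\R^2}\rho_0u_0\cdot v_0=\int_{\R^2}\bigl[(\rho u)(\varepsilon)-\rho_0u_0\bigr]\cdot v_0+\int_{\R^2}(\rho u)(\varepsilon)\cdot\bigl[(\rho v)(\varepsilon)-\rho_0v_0-(\rho(\varepsilon)-\rho_0)v_0\bigr].
\]
The first term only sees $\mathbb{P}(\rho u)$ because $\dive v_0=0$, so it tends to zero.

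The second term is bounded by $\|\rho_0u_0\|_{L^2}\bigl(\|\rho v(\varepsilon)-\rho_0v_0\|_{L^2}+\|(\rho(\varepsilon)-\rho_0)v_0\|_{L^2}\bigr)$. The first summand vanishes by the Scenario~1 hypothesis. The second vanishes by splitting into $\{|v_0|\le M\}$ and $\{|v_0|>M\}$ and using $\rho\in C([0,\infty);L^2(\R^2))$ from \cref{cor:lingrowthu}. This is how the paper closes the argument in \cref{cor:admutrasp}.
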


\begin{proof}
Set $w:=u-v$.  
Using the energy inequalities for $u$ and $v$, yields
\begin{align}
    \|\rho(t) w(t)\|_{L^2(\R^2)}^{2}
    &= \|\rho(t)u(t)\|_{L^2(\R^2)}^{2}
       - 2\int_{\R^2} \rho(t)\,u(t)\cdot v(t)\,dx
       + \|\rho(t)v(t)\|_{L^2(\R^2)}^{2} \nonumber\\
    &\le \|\rho_0u_0\|_{L^2(\R^2)}^{2}
       + \|\rho_0v_0\|_{L^2(\R^2)}^{2}
       - 2\int_0^t \Bigl(
           \|\nabla u(s)\|_{L^2(\R^2)}^{2}
           + \|\nabla v(s)\|_{L^2(\R^2)}^{2}
         \Bigr)\,ds \nonumber\\
    &\hspace{2cm}
       - 2\int_{\R^2} \rho(t)\,u(t)\cdot v(t)\,dx .
\end{align}

Using \cref{prop:admuMomL}, we infer that for almost every $t>0$,
\begin{align}\label{eq:momm}
    \int_{\R^2} \rho(t)\,u(t)\cdot v(t)\,dx
    - \int_{\R^2} \rho_0\,u_0\cdot v_0\,dx
    = -2\int_0^t\!\!\int_{\R^2} \nabla u(s,x):\nabla v(s,x)\,dx\,ds .
\end{align}
Inserting \eqref{eq:momm} into the previous inequality and rearranging the terms we conclude the proof.
\end{proof}

To show \cref{prop:admuMomL} we need some preliminary result.  We first show an admissibility result for the transport equation.

\begin{proposition}\label{prop:admuTrasp}
Assume that $(\rho,u)$ is an immediately strong solution of \eqref{eq:INS} with initial data satisfying \eqref{ass:weakData2}. Let $\varphi \in W^{1,1}_{\loc}((0,\infty)\times\R^2)$ be such that
\begin{align}
\rho \varphi,\ \rho \nabla \varphi \in L_\loc^1([0,\infty);L^2(\R^2)), \quad \ \rho\,\partial_t \varphi \in L_\loc^1([0,\infty);L^1(\R^2)),  
\quad 
\rho \varphi \in C([0,\infty);L^1(\R^2)).
\end{align}
Then, for almost every $t>0$, we have 
\begin{align}\label{eq:weakTr}
    \int_{\R^2} \rho(t)\,\varphi(t)
    - \int_{\R^2} \rho_0\,\varphi(0)
    = \int_0^t\!\!\int_{\R^2} 
      \rho\,\partial_t \varphi 
      + \rho \,u  \cdot \nabla \varphi.
\end{align}
\end{proposition}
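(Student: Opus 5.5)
The plan is a cutoff-in-time and cutoff-in-space approximation of $\varphi$ by smooth compactly supported test functions, followed by passage to the limit in the weak formulation of the continuity equation. Since $\rho$ is the renormalized solution from \cref{cor:lingrowthu}, the weak formulation holds for all $\psi\in C_c^\infty([0,\infty)\times\R^2)$. Fix $T>0$ and let $K$ be the compact set from \eqref{eq:Kdef}, so that $\supp\rho(t)\subset K$ for $t\in[0,T)$. Choose a spatial cutoff $\chi\in C_c^\infty(\R^2)$ with $\chi\equiv1$ on a neighbourhood of $K$. Because every term in \eqref{eq:weakTr} carries a factor $\rho$, replacing $\varphi$ by $\chi\varphi$ changes nothing on $[0,T)$: $\rho\nabla(\chi\varphi)=\rho\nabla\varphi$ since $\nabla\chi=0$ on $K$. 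This reduces the statement to compactly supported $\varphi$, and removes any growth issue for $u$ at infinity.

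Next I will mollify in space and time. I extend $\varphi$ to $t<0$ by reflection, or alternatively use one-sided mollification near $t=0$, and set $\varphi_\eps=\varphi*\eta_\eps$. For each fixed $t$ the map $s\mapsto\theta_k(s)\varphi_\eps(s,x)$ is an admissible test function, where $\theta_k$ is a Lipschitz time cutoff equal to $1$ on $[0,t]$ and dropping linearly to $0$ on $[t,t+1/k]$. Testing gives
\[
\int_0^{t}\!\!\int\rho(\partial_s\varphi_\eps+u\cdot\nabla\varphi_\eps)-k\int_t^{t+1/k}\!\!\int\rho\varphi_\eps=-\int\rho_0\varphi_\eps(0)+o(1).
\]
Letting $k\to\infty$ at every Lebesgue point $t$ of $s\mapsto\int\rho(s)\varphi_\eps(s)$ yields \eqref{eq:weakTr} for $\varphi_\eps$. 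Here I use $\rho\in C([0,\infty);L^p(\R^2))$ from \cref{cor:lingrowthu}, which makes the boundary term continuous in $t$.

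The last step is $\eps\to0$, and this is where the hypotheses are used. In \cref{prop:wellposedtrans} the convergences $\varphi_\eps\to\varphi$ and $\nabla\varphi_\eps\to\nabla\varphi$ hold in $L^1_{\loc}$, but the hypotheses only control $\rho$-weighted quantities, so the main obstacle is justifying $\int\rho\,\partial_t\varphi_\eps\to\int\rho\,\partial_t\varphi$ and $\int\rho u\cdot\nabla\varphi_\eps\to\int\rho u\cdot\nabla\varphi$.

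For the time-derivative term, $\partial_t\varphi\in L^1_{\loc}$ because $\varphi\in W^{1,1}_{\loc}$. After the spatial cutoff, $\partial_t\varphi$ is integrable on compact time intervals away from $0$, so the convergence is direct on $(\delta,t)$. Near $t=0$ I will use $\rho\partial_t\varphi\in L^1$ together with dominated convergence in the form of an $\eps$-uniform integrability argument. Mollifying only in space first, then in time, keeps this commutator manageable.

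For the convective term I use that $\sqrt t\,u\in L^\infty_tL^\infty(K)$ by \eqref{eq:Linftytubdomainglobal}. This gives $\rho u\in L^2_{\loc}([0,\infty);L^\infty)$, although it is only $L^1$-integrable near $t=0$ with weight $t^{-1/2}$. Since $\rho\nabla\varphi\in L^1_tL^2$, I expect to need the DiPerna--Lions commutator $\rho u\cdot\nabla\varphi_\eps-(\rho u\cdot\nabla\varphi)_\eps$ on $(\delta,t)$, where $u\in L^2H^1_{\loc}$ and $\varphi\in W^{1,1}$ locally. The strip $(0,\delta)$ I will treat directly, showing its contribution vanishes as $\delta\to0$ by integrability of $\rho\varphi$, $\rho\nabla\varphi$ and $\rho\partial_t\varphi$.

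Finally, the endpoint terms converge by $\rho\varphi\in C([0,\infty);L^1)$. Evaluating at $t=0$ uses this continuity in place of the trace of $\varphi$ itself. Passing to the limit gives \eqref{eq:weakTr} for almost every $t$.
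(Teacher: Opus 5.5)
There is a genuine gap at $t=0$, and it sits exactly where you say the hypotheses are used. The test function is only in $W^{1,1}_{\loc}$ of the \emph{open} set $(0,\infty)\times\R^2$, and every assumption carries the weight $\rho$. On the vacuum region $\R^2\setminus\supp\rho(t)$, which by \cref{prop:ImprovedDecay} comes within $C\sqrt t$ of $D$, nothing is assumed as $t\to0$.

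For instance, take $C=C(T)$ from \cref{prop:ImprovedDecay}, a nonzero $0\le\theta\in C_c^\infty((2C,3C))$, and $\zeta\in C_c^\infty([0,T))$ equal to $1$ near $0$. Then $\varphi(t,x)=\zeta(t)\exp(e^{1/t})\,\theta\bigl(\dist(x,D)/\sqrt t\bigr)$ satisfies every hypothesis with $\rho\varphi\equiv0$. Yet it is not integrable on any neighbourhood of a point of $\{0\}\times\partial D$.

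So a reflected or one-sided mollification reaching $t=0$ need not exist, and the identity you derive for $\varphi_\eps$ on $[0,t]$, with boundary term $\int\rho_0\varphi_\eps(0)$, is not available. Even when $\varphi_\eps$ is defined, $\rho\,(\partial_t\varphi*\eta_\eps)$ averages values of $\partial_t\varphi$ taken in the vacuum region. Hence $\rho\,\partial_t\varphi\in L^1$ gives no $\eps$-uniform smallness of $\int_0^\delta\!\int\rho\,\partial_t\varphi_\eps$. Nor does anything relate $\int\rho_0\varphi_\eps(0)$ to $\int\rho_0\varphi(0)$, which here can only mean $\lim_{s\to0}\int\rho(s)\varphi(s)$.

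The missing step is to keep every approximation away from $t=0$ and to apply the hypotheses only to $\varphi$ itself. After your spatial cutoff, let your time cutoff $\theta_k$ also ramp up linearly on $[s,s+1/k]$ for some $s>0$. Then $\varphi_\eps$ is only needed on $(s/2,\infty)$, where $\chi\varphi\in W^{1,1}$. Letting $k\to\infty$ and then $\eps\to0$ gives $\int\rho(t)\varphi(t)-\int\rho(s)\varphi(s)=\int_s^t\!\int\rho\,\partial_\tau\varphi+\rho u\cdot\nabla\varphi$ for a.e.\ $0<s<t$. The paper obtains this identity by citing the admissibility of $\chi_n\varphi\in W^{1,1}((s,\infty);W^{1,1}(\R^2))$ from \cite{CrinBaratSkondricViolini2025}.

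The limit $\eps\to0$ on $(s,t)$ is elementary. $\rho$ is bounded, $\rho u$ is bounded on $(s,t)\times\R^2$ by \cref{lem:Linfty} and \cref{prop:ImprovedDecay}, and $\varphi_\eps\to\varphi$ in $W^{1,1}((s,t)\times\R^2)$. No DiPerna--Lions commutator is involved. Also, the bound $\|\rho u(\tau)\|_{L^\infty}\lesssim\tau^{-1/2}$ does not give $\rho u\in L^2_{\loc}([0,\infty);L^\infty)$ as you claim.

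Only then let $s\to0$:
\begin{itemize}
\item the boundary term converges by $\rho\varphi\in C([0,\infty);L^1)$;
\item $\int_0^s\!\int\rho\,\partial_\tau\varphi\to0$ by $\rho\,\partial_t\varphi\in L^1_tL^1_x$;
\item $\int_0^s\!\int\rho u\cdot\nabla\varphi\to0$ because $\rho u\cdot\nabla\varphi=(\rho u)\cdot(\rho\nabla\varphi)$, with $\rho u\in L^\infty_tL^2_x$ by the energy inequality and $\rho\nabla\varphi\in L^1_tL^2_x$.
\end{itemize}
That is exactly the paper's argument.
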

\begin{proof}
Fix $\eps>0$ and let $\eta \in C_c^\infty(\R^2)$ be such that $0\le \eta \le 1$, 
$\eta \equiv 1$ on $B_1(0)$ and $\supp \eta \subset B_2(0)$.
For $n\ge 1$ set
\begin{align}
    \chi_n(x) := \eta\Bigl(\frac{x}{n}\Bigr),
    \qquad  \varphi^n := \varphi\,\chi_n.
\end{align}
By assumption we have $\varphi^n \in W^{1,1}((\varepsilon,\infty);W^{1,1}(\R^2))$ and, as in \cite[Lemma~3.4]{CrinBaratSkondricViolini2025}, one can show that it is an admissible test function for the weak formulation of the 
transport equation of $(\rho,u)$. 
Therefore, for a.e.\ $s,t\in(\eps,\infty)$,
\begin{align}\label{eq:Transportw_new}
   \int_{\R^2} \rho(t)\,\varphi^n(t)
    - \int_{\R^2} \rho(s)\,\varphi^n(s)
    = \int_s^t\!\!\int_{\R^2}
      \rho\,\partial_t\varphi^n 
      + \rho \,u\cdot \nabla \varphi^n .
\end{align}
By the continuity assumption on $\rho u $ the left hand side of \eqref{eq:Transportw_new}  converges to the left hand side of \eqref{eq:weakTr} for $n \to \infty$ and $s \to 0 $. Since $\rho \partial_t \varphi \in L^1_tL^1_x$ also the first term of the right hand side converges. For the last term we have 
\begin{align}
\int_s^t\!\!\int_{\R^2} \rho\,u \cdot(\nabla \varphi^n - \nabla \varphi)
& \leq \norm{\rho u }_{L^\infty((0,t);L^2(\R^2)} \norm{ \rho \nabla ( \varphi^n - \varphi)}_{L^1((0,t);L^2(\R^2)}
\end{align}
The last factor is finite since $\rho \nabla \varphi \in L^1((0,t);L^2(\R^2))$. 
Moreover, since $\rho(t)=\mathbf{1}_{D_t}$ with $D_t$ bounded, for $n$ large enough we have $\chi_n \equiv 1$ on $\supp \rho(s)$ for all $s\in[0,t]$, hence
\begin{align}
    \rho \nabla(\varphi^n - \varphi)
    = \rho\,\varphi \nabla \chi_n .
\end{align}
Therefore,
\begin{align}
    \norm{\rho \nabla ( \varphi^n - \varphi)}_{L^1((0,t);L^2(\R^2))}
    \leq \frac{C}{n}\norm{\rho \varphi}_{L^1((0,t);L^2(\R^2))} \to 0,
\end{align}
which concludes the proof.
\end{proof}

\cref{prop:admuTrasp} allows us to show that in the first Scenario the admissibility of $u \cdot v  $ for the transport equation of $(\rho,u)$ in the time interval $(\eps,\infty)$.

\begin{corollary}\label{cor:admutrasp}
Under the hypothesis of Scenario 1 for any $\eps>0$ and almost every $t \in (\eps,\infty)$, we have 
\begin{align}\label{eq:traspvu}
  \int_{\R^2} \rho (t) u(t) \cdot v(t)
    - \int_{\R^2} \rho (\eps) u(\eps) \cdot v(\eps)
    = \int_\eps^t\!\!\int_{\R^2} 
      \rho v \cdot  \dot{u} + \rho u \cdot D_u (v).
\end{align}
Moreover we have the following convergence
\begin{align}
    \lim_{\eps \to 0} \frac{1}{2 }\int_{\R^2} \rho (\eps) u(\eps) \cdot v(\eps) =  \frac{1}{2 }\int_{\R^2} \rho_0 u_0 \cdot v_0.
\end{align}
\end{corollary}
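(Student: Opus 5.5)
The plan is to apply \cref{prop:admuTrasp} on the shifted time interval $(\eps,\infty)$ with the test function $\varphi := u\cdot v$. First, I check its hypotheses on $[\eps,T]$.

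By \cref{lem:B1}, both $u$ and $v$ lie in $H^1_{\loc}((0,\infty);H^1_{\loc}(\R^2))$. Hence $\varphi\in W^{1,1}_{\loc}$, with
\begin{align}
\partial_t\varphi=\partial_t u\cdot v+u\cdot\partial_t v,\qquad \nabla\varphi=(\nabla u)^T v+(\nabla v)^T u.
\end{align}
Since $\supp\rho(t)$ stays in the compact set $K$ of \eqref{eq:Kdef}, every weighted integrability requirement reduces to local bounds on $K$. By \eqref{eq:Linftytubdomainglobal} (or \eqref{eq:Lloc}), $u$ and $v$ are bounded in $L^\infty((\eps,T);L^\infty(K))$, and $\nabla u,\nabla v\in L^\infty((\eps,T);L^2)$. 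This gives $\rho\varphi,\rho\nabla\varphi\in L^1((\eps,T);L^2)$. For $\rho\partial_t\varphi$ I use $\partial_t u=\dot u-u\cdot\nabla u$ together with $\dot u,\dot v\in L^2((\eps,T);L^\infty(K))$, which puts $\rho\partial_t\varphi$ in $L^1L^1$. Continuity of $t\mapsto\rho\varphi(t)$ in $L^1$ on $[\eps,T]$ follows from $u,v\in C([\eps,T];L^2(K))$ (from the $H^1_t$ regularity), the $L^\infty(K)$ bounds, and $\rho\in C([0,\infty);L^p)$ (\cref{cor:lingrowthu}).

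Applying \cref{prop:admuTrasp} with initial time $\eps$ (its proof works verbatim on a shifted interval), the right-hand side becomes
\begin{align}
\int_\eps^t\!\!\int\rho(\partial_t\varphi+u\cdot\nabla\varphi)=\int_\eps^t\!\!\int\rho\bigl(\dot u\cdot v+u\cdot D_u v\bigr),
\end{align}
using the product rule for $D_u$. This is \eqref{eq:traspvu}.

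For the limit $\eps\to0$, I write
\begin{align}
\rho(\eps)u(\eps)\cdot v(\eps)=\sqrt{\rho(\eps)}u(\eps)\cdot\sqrt{\rho(\eps)}v(\eps).
\end{align}
Scenario~1 gives $\rho v(\eps)\to\rho_0v_0$ strongly in $L^2$, and since $\rho=\mathbf 1_{D_t}$ we have $\rho=\sqrt\rho$. For $u$ I only need weak convergence $\rho u(\eps)\rightharpoonup\rho_0u_0$ in $L^2$. This follows from the uniform bound $\|\rho u(\eps)\|_{L^2}\le\|u_0\|_{L^2}$ combined with the weak formulation tested against $\varphi\in C^\infty_{c,\sigma}$. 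The divergence-free restriction is the subtle point, since the limit must be identified against general fields.

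\textbf{Main obstacle.} Testing against non-solenoidal $\varphi$ needs $\nabla P\in L^1_{\loc}([0,\infty);L^2)$, which is not available for $L^2$ data. To get around this, I would pair the strong limit with the weak one directly: the product of a weakly convergent and a strongly convergent sequence converges. It then remains to identify the weak limit of $\rho u(\eps)$ up to gradients, and to use that $\rho_0v_0-\rho(\eps)v(\eps)\to0$ strongly. Alternatively, if \cref{thm:existL2}(i) is available for $u$ (the constructed solution), then $\rho u\in C([0,\infty);L^2)$ and the limit is immediate. I would state the result under that continuity, which Scenario~1 implicitly supplies through the solutions considered.
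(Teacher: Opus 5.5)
Your verification that $\varphi=u\cdot v$ is admissible on $[\eps,T]$, and your derivation of \eqref{eq:traspvu}, match the paper's argument and are fine. That argument uses \cref{lem:B1}, the compact support of $\rho$, and the local $L^\infty$ bounds from \cref{cor:Lpstrip} and \cref{lem:Linfty}.

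The gap is in the limit $\eps\to0$. You pair $\rho u(\eps)$ with the strongly convergent $\rho v(\eps)\to\rho_0v_0$, but the weak formulation only controls $\rho u(\eps)$ against divergence-free fields. It gives weak continuity of $\mathbb{P}(\rho u)$, whereas $\rho_0 v_0=\mathbf 1_D v_0$ is not divergence-free. You acknowledge this and leave it open.

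Your fallback, assuming $\rho u\in C([0,\infty);L^2(\R^2))$, is not legitimate. Scenario~1 imposes strong continuity only on $v$, and the corollary is needed in exactly that generality. It feeds into \cref{prop:admuMomL} and \cref{thm:relaEnergyL}. These are applied in \cref{prop:gluinglevel0} with $u$ the lifting solution of \cref{prop:existenceL2}, for which only weak continuity is known (cf.\ \cref{rem:nconvergence}). Their purpose there is to \emph{prove} the energy equality and strong continuity of $u$, so assuming it here would be circular.

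The missing idea is to move the density onto the $u$-factor and test against $v_0$ rather than $\rho_0v_0$. Since $\rho(\eps)^2=\rho(\eps)$,
\[
\int_{\R^2}\rho(\eps)u(\eps)\cdot v(\eps)-\int_{\R^2}\rho_0u_0\cdot v_0
=\int_{\R^2}\bigl[(\rho u)(\eps)-\rho_0u_0\bigr]\cdot v_0+\int_{\R^2}(\rho u)(\eps)\cdot\rho(\eps)\bigl[v(\eps)-v_0\bigr].
\]
The first term tends to zero because $v_0\in L^2_\sigma$. Only the weak continuity of $\mathbb{P}(\rho u)$ is needed, which follows from three ingredients:
\begin{itemize}
\item the weak formulation with $C^\infty_{c,\sigma}$ test functions;
\item the uniform bound $\|\rho u(\eps)\|_{L^2}\le\|u_0\|_{L^2}$;
\item density of $C^\infty_{c,\sigma}$ in $L^2_\sigma$.
\end{itemize}
The second term is bounded by $\|u_0\|_{L^2}\bigl(\|\rho v(\eps)-\rho_0v_0\|_{L^2}+\|(\rho(\eps)-\rho_0)v_0\|_{L^2}\bigr)$. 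The first summand vanishes by the hypothesis of Scenario~1. The second vanishes by $\rho\in C([0,\infty);L^2(\R^2))$ (\cref{cor:lingrowthu}): truncate $v_0$ at a large level $M$, and on $\{|v_0|>M\}$ use $|\rho(\eps)-\rho_0|\le1$. This is exactly how the paper closes the argument, and it never requires identifying the weak limit of $\rho u(\eps)$ against non-solenoidal fields.
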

\begin{proof}
     We simply check the assumptions of \cref{prop:admuTrasp} to show that $u\cdot v$ is an admissible test function. 
    By \cref{lem:B1} $u \cdot v $ belongs to $W^{1,1}_{\loc}((0,\infty)\times\R^2)$ and we have 
    \begin{align}
    \begin{aligned}\label{eq:bds}
        \norm{ \rho v \cdot u}_{L^1_tL_x^2} & \leq   \norm{ \rho u }_{L^\infty_tL_x^2}   \norm{ \rho v}_{L^1_tL_x^\infty} \\ 
          \norm{ \rho \nabla (v \cdot u)}_{L^1_tL_x^2} & \leq  \norm{ \nabla v}_{L^2_tL_x^2}  \norm{ \rho u }_{L^2_tL_x^\infty} +  \norm{ \rho v }_{L^2_tL_x^\infty} \norm{  \nabla u}_{L^2_tL_x^2} \\ 
          \norm{\rho \partial_t ( u \cdot v )}_{L^1_tL^1_x } & \leq   \norm{\rho \partial_t u}_{L^1_tL^2_x } \norm{\rho v }_{L_t^\infty L^2_x } +  \norm{\rho u}_{L_t^\infty L^2_x } \norm{ \rho \partial_t v}_{L_t^1L^2_x }.
    \end{aligned}
    \end{align}
    By energy inequality $ \rho u , \; \rho v \in L^\infty_tL^2_x$ and $ \nabla u , \; \nabla v \in L^2_t L^2_x$. Away from initial time, by definitions of \cref{def:immstrongsol-INS} and \cref{def:immstrongsol-LS} we have 
    \begin{align}
      \rho \partial_t u , \; \rho \partial_t v \in L^1((\eps,\infty); L^2(\R^2)).
    \end{align}
Fix $\varepsilon>0$ and $T>0$. 
We do not keep track of the precise dependence of the constants,
since it is not needed; all implicit constants may depend on $T$. By \cref{prop:ImprovedDecay}, for every $t \in [0,T]$,
\begin{align}
\operatorname{supp}\rho(t)
\subset D + B_{C\sqrt{t}}(0).
\end{align}
Applying \cref{cor:Lpstrip} with $v = v(t)$ and $R = C$, we obtain
\begin{equation}\label{eq:vinfty_local}
t^{\frac12}
\|v(t)\|_{L^\infty(D_{C,t})}
\lesssim 
t^{\frac34}\|\nabla v(t)\|_{L^4(\mathbb{R}^2)}
+
\|\rho(t)\,v(t)\|_{L^2(\mathbb{R}^2)} .
\end{equation}
Combining the support inclusion with \eqref{eq:vinfty_local}, we deduce
\begin{align}
t^{\frac12}
\|\rho(t)v(t)\|_{L^\infty(\mathbb{R}^2)}
&\lesssim
t^{\frac34}\|\nabla v(t)\|_{L^4(\mathbb{R}^2)}
+
\|\rho(t)\,v(t)\|_{L^2(\mathbb{R}^2)} \\
&\lesssim
t^{\frac34}
\|\nabla v(t)\|_{L^2(\mathbb{R}^2)}^{\frac12}
\|\nabla^2 v(t)\|_{L^2(\mathbb{R}^2)}^{\frac12}
+
\|\rho(t)\,v(t)\|_{L^2(\mathbb{R}^2)} \lesssim_{\varepsilon}
C_{(\rho,u)}(v).
\end{align}
In particular,
\begin{align}
\rho v \in L^\infty_{\loc}\bigl((0,\infty); L^\infty(\mathbb{R}^2)\bigr).
\end{align}
A similar computation shows that $ \rho u \in L^\infty_{\loc}\bigl((0,\infty); L^\infty(\mathbb{R}^2)\bigr) $. Hence we have proved that the right hand sides of \eqref{eq:bds} are bounded provided the time intervals are compact subsets of $(0,\infty)$. 

  We are left to study the continuity. First, we decompose 
   \begin{align}
       \int_{\R^2}  \rho(\eps) u(\eps) \cdot v(\eps) - \rho_0 u_0 \cdot v_0 & =  \int_{\R^2}  [ \rho(\eps) u(\eps) - \rho_0 u_0 ] \cdot v_0 + \int_{\R^2} \rho(\eps) u(\eps) \cdot [ v(\eps) -v_0]
   \end{align}
  where the first addend converges to zero because $\mathbb{P}(\rho u) \in C L^2_w$, for the second addend we write
  \begin{align}
      \int_{\R^2} \rho u(\eps) \cdot [ v(\eps) -v_0)] & \leq \norm{\rho_0 u_0}_{L^2(\R^2)} \norm{\rho(\eps) ( v(\eps) - v_0}_{L^2(\R^2)} \\ & \lesssim  \underbrace{\norm{ \rho(\eps) v(\eps) - \rho_0 v_0}_{L^2(\R^2)}}_{I(\eps)} + \underbrace{\norm{ (\rho(\eps) -\rho_0) v_0}_{L^2(\R^2)}}_{J(\eps)} .
  \end{align}
 $I(\eps) \to 0 $ because $\rho v \to \rho_0 v_0$ in $L^2(\R^2)$ by assumption.  For the convergence of $J(\eps)$ let $\delta >0 $ and choose $M=M(\delta)$ and $\Tilde{\eps}= \Tilde{\eps}(M,\delta) = \Tilde{\eps}(\delta) $ such that 
 \begin{align}
     \norm{ v_0}_{L^2(\abs{v_0}>M)} < \delta, \qquad 
     M \norm{\rho(\Tilde{\eps}) - \rho_0}_{L^2(\R^2)} < \delta.
 \end{align}
 The second choice is possible since $\rho \in C_t L^2_x$ by \cref{cor:lingrowthu}. Then for $\eps < \Tilde{\eps}$ we have 
 \begin{align}
     J(\eps) &  \leq \norm{ (\rho(\eps) - \rho_0) v_0}_{L^2(\abs{v_0}<M)} + \norm{ (\rho(\eps) - \rho_0) v_0}_{L^2(\abs{v_0} > M)} \\ &  \leq M  \norm{ \rho(\eps) - \rho_0 }_{L^2(\R^2)} + \norm{\rho(\eps) - \rho_0}_{L^\infty(\R^2))}\norm{  v_0}_{L^2(\abs{v_0} > M)}   \leq 2\delta 
 \end{align}
 which proves the convergence.
\end{proof}

\begin{proposition}\label{prop:admuMomL}
Under the hypothesis of Scenario 1 we have for almost every $t>0$
\begin{align}
   \int_{\R^2} \rho(t)\,u(t)\cdot v(t)
   - \int_{\R^2} \rho_0\,u_0\cdot v_0
   &= -2\int_0^t\!\!\int_{\R^2} \nabla u : \nabla v.
\end{align}

\end{proposition}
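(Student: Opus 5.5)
The identity will follow by combining the transport identity of \cref{cor:admutrasp} on $(\varepsilon,t)$ with the momentum equations of $u$ and $v$, each tested against the other velocity through \cref{cor:tailEst}, and then letting $\varepsilon\to0$.

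\emph{Step 1 (transport identity).} Fix $\varepsilon>0$. By \cref{cor:admutrasp}, for a.e.\ $t>\varepsilon$,
\begin{align}
\int_{\R^2}\rho(t)u(t)\cdot v(t)-\int_{\R^2}\rho(\varepsilon)u(\varepsilon)\cdot v(\varepsilon)
=\int_\varepsilon^t\!\!\int_{\R^2}\bigl(\rho\,\dot u\cdot v+\rho\,\dot v\cdot u\bigr),
\end{align}
where $\dot v=D_uv$. This is the material-derivative form of the product rule. The right-hand side is finite on $(\varepsilon,t)$ by the bounds \eqref{eq:bds} established in that proof.

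\emph{Step 2 (momentum equations at fixed times).} For a.e.\ $s\in(\varepsilon,t)$ I apply \cref{cor:tailEst} twice.

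First, to the self-advected solution $u$, with the time-independent test field $h:=v(s)$. This is admissible because $v(s)$ is divergence free and $\nabla v(s)\in L^2(\R^2)$. Moreover $v(s)=O(1)$ at infinity: by \cref{prop:tailest}, $v(s)$ equals $\|\rho\dot v(s)\|_{L^2}O(|x|^{-1})$ plus the constant $c(s)$, and $\|\rho\dot v(s)\|_{L^2}<\infty$ for $s>0$ by the $A_2^0$ bound. This gives
\begin{align}
\int_{\R^2}\rho\,\dot u(s)\cdot v(s)=-\int_{\R^2}\nabla u(s):\nabla v(s).
\end{align}
Second, to $v$ (solution of \eqref{eq:LSs}) with $h:=u(s)$, which by the same reasoning applied to $u$ gives
\begin{align}
\int_{\R^2}\rho\,\dot v(s)\cdot u(s)=-\int_{\R^2}\nabla v(s):\nabla u(s).
\end{align}
Adding the two identities and integrating over $(\varepsilon,t)$, Step 1 becomes
\begin{align}
\int_{\R^2}\rho(t)u(t)\cdot v(t)-\int_{\R^2}\rho(\varepsilon)u(\varepsilon)\cdot v(\varepsilon)=-2\int_\varepsilon^t\!\!\int_{\R^2}\nabla u:\nabla v.
\end{align}
The main point to check here is that \cref{cor:tailEst} really applies for a.e.\ $s$ with these nondecaying test fields. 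Its proof only uses the tail decay of the pressure from \cref{prop:tailest} and $\nabla v(s),\nabla u(s)\in L^2$, and both hold for a.e.\ $s>0$ for immediately strong solutions. Every step is pointwise in $s$ on $(\varepsilon,t)$, so no uniformity near $s=0$ is required.

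\emph{Step 3 ($\varepsilon\to0$).} The right-hand side of the last identity converges to $-2\int_0^t\int_{\R^2}\nabla u:\nabla v$ by dominated convergence. Indeed, $\nabla u,\nabla v\in L^2((0,\infty)\times\R^2)$ by the energy inequalities, so $\nabla u:\nabla v\in L^1$. For the left-hand side, the second part of \cref{cor:admutrasp} gives $\int_{\R^2}\rho(\varepsilon)u(\varepsilon)\cdot v(\varepsilon)\to\int_{\R^2}\rho_0u_0\cdot v_0$. This continuity is where the hypothesis $\rho v\to\rho_0v_0$ in $L^2$ of Scenario~1 enters. Passing to the limit yields the stated identity for a.e.\ $t>0$.
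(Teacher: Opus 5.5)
Your proposal is correct and follows the paper's proof step for step. You use the transport identity \eqref{eq:traspvu} from \cref{cor:admutrasp} on $(\varepsilon,t)$. You then apply \cref{cor:tailEst} twice, to $u$ with $h=v(s)$ and to $v$ with $h=u(s)$, justified by the $O(1)$ tails from \cref{prop:tailest}, which turns the material-derivative terms into $-2\int_\varepsilon^t\int_{\R^2}\nabla u:\nabla v$. Finally you pass to the limit $\varepsilon\to0$ using the continuity statement of \cref{cor:admutrasp} together with $\nabla u,\nabla v\in L^2_tL^2_x$.

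The only difference is the order of presentation: you state the transport identity first, while the paper starts from the momentum identities. Your remarks on why \cref{cor:tailEst} applies to the nondecaying fields $v(s)$ and $u(s)$ spell out what the paper leaves implicit.
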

\begin{proof}
By \cref{prop:tailest} for almost every time in $t >0 $ we have, as $\abs{x} \to \infty$, 
\begin{align}
    \abs{u(t,x)}=\abs{v(t,x)} = O(1).
\end{align}
Then by \cref{cor:tailEst} with $v  = u$ and $h = v(t)$ we get for almost every $t > 0$ that
\begin{align}
\int_{\mathbb{R}^2} \rho \dot u(t,x)\cdot v(t,x)\,\dd x
=
- \int_{\mathbb{R}^2} \nabla u(t,x):\nabla v(t,x)\,\dd x .
\end{align}
and exchanging $u$ and $v$ in \cref{cor:tailEst} (using the shortcut $\dot{v}= D_u(v)$) we also obtain 
\begin{align}
\int_{\mathbb{R}^2} \rho \dot{v}(t,x)\cdot u(t,x)\,\dd x
=
- \int_{\mathbb{R}^2} \nabla v(t,x):\nabla u(t,x)\,\dd x .
\end{align}
Now we sum the two expressions and we integrate in $(\eps,t)$ with $\eps>0$ 
\begin{align}
    \int_\eps^t\int_{\R^2}\rho \dot{u} \cdot v + \rho \dot{v} \cdot u = - 2\int_\eps^t \int_{\R^2} \nabla u : \nabla v
\end{align}
and we use \cref{cor:admutrasp}, in particular \eqref{eq:traspvu}, to say that
\begin{align}
  \int_{\R^2} \rho (t) u(t) \cdot v(t)
    - \int_{\R^2} \rho (\eps) u(\eps) \cdot v(\eps)= - 2\int_\eps^t \int_{\R^2} \nabla u : \nabla v.
\end{align}
The continuity for $\eps \to 0$ of the left-hand side is proved in \cref{cor:admutrasp} and the convergence of the right-hand side is trivial since $\nabla u, \; \nabla v \in L^2_tL^2_x$. 
\end{proof}

\subsection{The non linear case}
We now focus on the following setting.

\noindent\textbf{Scenario 2}\label{sce:scenario2}.
\begin{itemize}
    \item[(i)] $(\rho_1,u_1)$ and $(\rho_2,u_2)$ are immediately strong solutions of \eqref{eq:INS} with initial data $(\rho_0,u_1(0))$ and $(\rho_0,u_2(0))$, respectively, satisfying \eqref{ass:weakData2}. Moreover, $v_1$ and $v_2$ are immediately strong solutions of \eqref{eq:LSs}, advected by $(\rho_1,u_1)$ and $(\rho_2,u_2)$, respectively, with initial data $v_1(0)$ and $v_2(0)$.
    
    \item[(ii)] We assume that $\rho_2 v_2 \to \rho_0 v_2(0)$ in $L^2_x$ as $t \to 0$, and that
    \begin{align}
        \sqrt{t}\norm{\nabla v_2(t)}_{L^2(\R^2)} \to 0, \qquad
        \rho_2 D_{u_2}(v_2),\; \sqrt{t}\,\nabla D_{u_2}(v_2) \in L^1_t L^2_x, \quad \nabla v_2 \in L^1_t L^\infty_x,
    \end{align}
    where all spaces are understood locally in time.
\end{itemize}
For notational convenience, we denote $\dot v_i := D_{u_i}v_i$ and $\dot u_i := D_{u_i}u_i$, unless this causes ambiguity.

\begin{theorem}\label{thm:relaEnergyNL}
Under the assumptions of Scenario~2, for almost every $t >0 $ we have
\begin{align}
   \|(\rho_1\,\delta v)(t)\|_{L^2(\R^2)}^2
    & + 2 \int_0^t \|\nabla \delta v(s)\|_{L^2(\R^2)}^2 \,ds
    \le  \|\rho_0 \,\delta v(0)\|_{L^2(\R^2)}^2  \\
    &\quad
    - 2\int_0^t\!\!\int_{\R^2} \rho_1\,\delta u\!\otimes\!\delta v : \nabla v_2
    - 2\int_0^t\!\!\int_{\R^2} \delta \rho\,\delta v \cdot \dot{v}_2 .
\end{align}
\end{theorem}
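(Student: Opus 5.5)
We follow the pattern of \cref{thm:relaEnergyL}: expand the square and estimate the cross term. Write $\delta v = v_1 - v_2$ and $\delta u = u_1-u_2$. Then
\begin{align}
\|\rho_1\delta v(t)\|_{L^2}^2 = \|\rho_1 v_1(t)\|_{L^2}^2 - 2\int_{\R^2}\rho_1 v_1\cdot v_2\,\dd x + \|\rho_1 v_2(t)\|_{L^2}^2 .
\end{align}
The first term is controlled by the energy inequality for $v_1$. The difficulty lies in the last two terms, since $v_2$ is not advected by $\rho_1$. We therefore treat $\int\rho_1 |v_2|^2$ and $\int\rho_1 v_1\cdot v_2$ as tests of the transport equation for $\rho_1$ (velocity $u_1$) and the momentum equation for $v_1$ against functions built from $v_2$.

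\emph{Step 1 (mixed momentum identity).} Test the weak form of $\partial_t(\rho_1 v_1)+\dive(\rho_1u_1\otimes v_1)+\nabla P_1=\Delta v_1$ against $v_2$ on $(\eps,t)$. Admissibility is justified exactly as in \cref{cor:admutrasp}, \cref{prop:admuTrasp} and \cref{cor:tailEst}: use the tail decay from \cref{prop:tailest}, the local $H^1$ regularity from \cref{lem:B1}, and the $L^\infty$ bounds on $\rho_iv_i$ coming from \cref{cor:Lpstrip}. This gives
\begin{align}
\int\rho_1 v_1\cdot v_2\Big|_\eps^t = \int_\eps^t\!\!\int \rho_1\dot v_1\cdot v_2 + \rho_1 v_1\cdot(\partial_t v_2 + u_1\cdot\nabla v_2).
\end{align}
Writing $\partial_tv_2+u_1\cdot\nabla v_2 = \dot v_2 + \delta u\cdot\nabla v_2$, we then use $\int\rho_1\dot v_1\cdot v_2=-\int\nabla v_1:\nabla v_2$. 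For the term with $\dot v_2$ we insert $\rho_1=\rho_2+\delta\rho$ and use $\int\rho_2\dot v_2\cdot v_1=-\int\nabla v_2:\nabla v_1$ (again by \cref{cor:tailEst}). This isolates $\int\delta\rho\, v_1\cdot\dot v_2$ and $\int\rho_1 v_1\otimes\delta u:\nabla v_2$.

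\emph{Step 2.} We repeat Step 1 with $\int\rho_1|v_2|^2$, testing the transport equation of $\rho_1$ with $|v_2|^2$. This yields $2\int\rho_1 v_2\cdot(\dot v_2+\delta u\cdot\nabla v_2)$. Using $\rho_1=\rho_2+\delta\rho$ together with $\int\rho_2\dot v_2\cdot v_2=-\|\nabla v_2\|_2^2$, we obtain $-2\int\|\nabla v_2\|^2 + 2\int\delta\rho\, v_2\cdot\dot v_2 + 2\int\rho_1 v_2\otimes\delta u:\nabla v_2$. Combining this with Step 1 and the energy inequality for $v_1$, the gradient terms assemble into $-2\int\|\nabla\delta v\|^2$. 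The remaining terms collapse to
\begin{align}
-2\iint\rho_1\,\delta u\otimes\delta v:\nabla v_2 - 2\iint\delta\rho\,\delta v\cdot\dot v_2 .
\end{align}
The sign and index conventions of the tensor contraction should be checked against the stated inequality.

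\emph{Step 3 (limit $\eps\to0$) — main obstacle.} We must pass from $(\eps,t)$ to $(0,t)$. The boundary terms converge by the argument at the end of \cref{cor:admutrasp}, using $\rho_2v_2\to\rho_0 v_2(0)$ in $L^2$ and $\rho_i\in C_tL^p$ from \cref{cor:lingrowthu}. The delicate part is the integrability near $t=0$ of the error integrands, which requires the hypotheses of Scenario 2(ii). We handle the two error terms as follows.
\begin{itemize}
\item The term $\rho_1\delta u\otimes v:\nabla v_2$ is bounded by $\|\rho_1\delta u\|_{L^2}\|\rho_1 v\|_{L^2}\|\nabla v_2\|_{L^\infty}$, which lies in $L^1_t$ since $\nabla v_2\in L^1_tL^\infty_x$.
\item For $\delta\rho\,v\cdot\dot v_2$, we use that both densities are supported in $D_{N,t}$ (\cref{prop:ImprovedDecay}). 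We then apply \cref{cor:Lpstrip} with $p=2$: $\|v\|_{L^2(D_{N,t})}\lesssim\sqrt t\|\nabla v\|_2+\|\rho_1 v\|_2$. For $\dot v_2$, we bound it by $\|\rho_2\dot v_2\|_2+\sqrt t\|\nabla\dot v_2\|_2$, which is in $L^1_t$ by hypothesis. The term $\sqrt t\|\nabla v_2\|_2\to0$ controls the products near zero.
\end{itemize}
Dominated convergence then closes the argument. Since only the energy inequality, not the energy equality, is available for $v_1$, the final statement is an inequality, valid for almost every $t$.
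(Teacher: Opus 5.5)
Your Steps 1--2 are, up to bookkeeping, the paper's \cref{prop:Scen2}. Testing the $\rho_1$-transport equation with $|v_2|^2$ and using $\int\rho_2\dot v_2\cdot v_2=-\|\nabla v_2\|_2^2$ is equivalent to the paper's route (energy inequality for $v_2$ plus the $\delta\rho$-equation tested with $|v_2|^2/2$). Your algebra does produce the two stated error terms. The genuine gap is in Step~3, where you claim the boundary terms converge ``by the argument at the end of \cref{cor:admutrasp}''.

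That argument works only when a field is paired with its own density: it uses $\rho(\eps)v(\eps)\to\rho_0v_0$ in $L^2$ for the same $\rho$. Your boundary terms $\int\rho_1(\eps)v_1(\eps)\cdot v_2(\eps)$ and $\int\rho_1(\eps)|v_2(\eps)|^2$ pair $v_2$ with $\rho_1$. Scenario~2 only gives $\rho_2(\eps)v_2(\eps)\to\rho_0v_2(0)$. The discrepancy is $\int\delta\rho(\eps)|v_2(\eps)|^2$, i.e.\ the size of $v_2(\eps)$ on $\supp\rho_1(\eps)\setminus\supp\rho_2(\eps)$, where $v_2$ has no energy control.

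Knowing $\rho_i\in C_tL^p$ does not help, for two reasons:
\begin{itemize}
\item By \cref{cor:Lpstrip} with $p=2$, $|v_2(\eps)|^2$ is only bounded, not equi-integrable, in $L^1(D_{N,\eps})$. Meanwhile $\delta\rho(\eps)\to0$ in $L^p$ only for $p<\infty$, not in $L^\infty$.
\item Hölder on the symmetric difference (measure $O(\sqrt\eps)$), against $\|v_2(\eps)\|_{L^p(D_{N,\eps})}\lesssim\eps^{-1/2+1/p}$, only gives $O(\eps^{-1/2+1/p})$.
\end{itemize}

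The missing step is to prove $\rho_1(\eps)v_2(\eps)\to\rho_0v_2(0)$ strongly in $L^2$. This is exactly where the hypothesis $\sqrt t\|\nabla v_2(t)\|_{L^2}\to0$ of Scenario~2(ii) is needed (\cref{lem:cross}).

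The paper combines $\partial_t\delta\rho=-\dive(\rho_1u_1-\rho_2u_2)$ with $\supp\rho_i(s)\subset D_{N,s}$ and $\|u_i(s)\|_{L^\infty(D_{N,s})}\lesssim s^{-1/2}$. This gives $|\int\delta\rho(t)\varphi|\lesssim\sqrt t\|\nabla\varphi\|_{L^1(D_{N,t})}$, which it applies twice:
\begin{itemize}
\item $\varphi=|v_2(t)|^2$ yields $\int\delta\rho(t)|v_2(t)|^2\to0$;
\item $\varphi=v_2(t)\cdot\psi$ yields $\delta\rho(t)v_2(t)\rightharpoonup0$.
\end{itemize}
Since $\rho_1^2=\rho_1$, norm convergence plus weak convergence give the strong convergence. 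Only then does the $J(\eps)$-type argument close for the term containing $v_1$.

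A shorter alternative is to apply \cref{cor:Lpstrip} (with density $\rho_2$) to $v_2(\eps)-\phi$, where $\phi\in C_c^\infty$ is $L^2(D)$-close to $v_2(0)$. This again uses $\sqrt\eps\|\nabla v_2(\eps)\|_2\to0$.

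Conversely, you placed that hypothesis in the wrong spot. For the integrability of the error integrands it is not needed: boundedness of $\sqrt t\|\nabla v_i(t)\|_2$ (from $A_1^0$) together with $\rho_2\dot v_2,\ \sqrt t\,\nabla\dot v_2\in L^1_tL^2_x$ already suffices.
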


\begin{proof}
Using the energy inequality for both $(\rho_1,v_1)$ and $(\rho_2,v_2)$, we obtain
\begin{align}\label{eq:Erel-ineq-1}
\begin{aligned}
  \|(\rho_1\,\delta v)(t)\|_{L^2(\R^2)}^2
&= \|(\rho_1 v_1)(t)\|_{L^2(\R^2)}^{2}
   - 2\int_{\R^2} (\rho_1 v_1)(t)\cdot v_2(t)
   +  \|(\rho_1 v_2)(t)\|_{L^2(\R^2)}^{2} \\
&=  \|(\rho_1 v_1)(t)\|_{L^2(\R^2)}^{2}
   - 2\int_{\R^2} (\rho_1 v_1)(t)\cdot v_2(t)
   +  \|(\rho_2 v_2)(t)\|_{L^2(\R^2)}^{2}
   +  \int_{\R^2} \delta\rho(t)\,|v_2(t)|^2 \\
&\le  \|\rho_0 v_1(0)\|_{L^2(\R^2)}^{2}
   +  \|\rho_0 v_2(0)\|_{L^2(\R^2)}^{2}
   - 2 \int_0^t \Bigl( \|\nabla v_1(s)\|_{L^2(\R^2)}^{2}
                     + \|\nabla v_2(s)\|_{L^2(\R^2)}^{2} \Bigr)\,ds \\
&\quad - 2\int_{\R^2} (\rho_1 v_1)(t)\cdot v_2(t)
   +  \int_{\R^2} \delta\rho(t)\,|v_2(t)|^2 .
\end{aligned}
\end{align}
For the last two terms in \eqref{eq:Erel-ineq-1}, we use \cref{prop:Scen2}. Combining \eqref{eq:Erel-ineq-1} with \eqref{eq:scen2cor}, we obtain
\begin{align}
    \|(\rho_1 \delta v)(t)\|_{L^2(\R^2)}^2 \le\,
    & \|\rho_0 v_1(0)\|_{L^2(\R^2)}^{2}
     +  \|\rho_0 v_2(0)\|_{L^2(\R^2)}^{2}
     - 2 \int_{\R^2} \rho_0 v_1(0) \cdot v_2(0) \\
    & - 2\int_0^t \Bigl( \|\nabla v_1(s)\|_{L^2(\R^2)}^{2}
                       + \|\nabla v_2(s)\|_{L^2(\R^2)}^{2} \Bigr)\,ds
      + 4 \int_0^t \int_{\R^2} \nabla v_1 : \nabla v_2 \\
    & - 2 \int_0^t \int_{\R^2} \rho_1 \delta u \otimes \delta v : \nabla v_2
      - 2 \int_0^t \int_{\R^2} \delta \rho\,\delta v \cdot \dot{v}_2 .
\end{align}
By observing that
\begin{align}
\|\nabla v_1\|_{L^2}^2 + \|\nabla v_2\|_{L^2}^2
- 2 \int_{\R^2} \nabla v_1 : \nabla v_2
= \|\nabla \delta v\|_{L^2}^2,
\end{align}
we conclude the proof.
\end{proof}

To prove \cref{prop:Scen2}, we first need to establish regularity properties for cross quantities such as $\rho_1 v_2$.

\begin{lemma}\label{lem:cross}
Under the assumptions of Scenario~2, we have
\begin{align}
    \delta \rho \,\dot{v}_2 \in L_\loc^1([0,\infty);L^2(\R^2)), \qquad \delta \rho \,\delta v \in L_\loc^\infty([0,\infty);L^2(\R^2)).
\end{align}
Moreover, the following convergences hold:
\begin{align}\label{eq:conv}
    \lim_{\eps \to 0} \int_{\R^2} (\rho_1v_1)(\eps) \cdot v_2(\eps)
    = \int_{\R^2} \rho_0 v_1(0) \cdot v_2(0)\qquad
    \lim_{\eps \to 0} \int_{\R^2} \delta \rho(\eps)\,|v_2(\eps)|^2 = 0.
\end{align}
\end{lemma}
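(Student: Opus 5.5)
The plan is to exploit two facts: both densities are supported in a common $\sqrt{t}$-neighbourhood $D_{N,t}$ of $D$, and $|\delta\rho|\le \rho_1+\rho_2\le 2$. Every product against $\delta\rho$ can then be localized to $D_{N,t}$, where \cref{cor:Lpstrip} supplies scale-critical $L^p$ and $L^\infty$ control.

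First, apply \cref{prop:ImprovedDecay} to $(\rho_1,u_1)$ and $(\rho_2,u_2)$. This gives $N$ (depending on $T$) with $\supp\rho_1(t)\cup\supp\rho_2(t)\subset D_{N,t}$ for $t\in(0,T)$, exactly as in \eqref{eq:supdeltarho}.

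For $\delta\rho\,\dot v_2$, write $|\delta\rho\,\dot v_2|\le 2\,\mathbf 1_{D_{N,t}}|\dot v_2|$. Apply the $p=2$ case of \cref{cor:Lpstrip} to $(\rho_2,u_2)$ with $v=\dot v_2(t)$:
\begin{align}
\|\dot v_2(t)\|_{L^2(D_{N,t})}\lesssim \sqrt t\,\|\nabla \dot v_2(t)\|_{L^2(\R^2)}+\|\rho_2\dot v_2(t)\|_{L^2(\R^2)}.
\end{align}
Both terms on the right lie in $L^1_{\loc}$ in time by Scenario~2(ii), hence $\delta\rho\,\dot v_2\in L^1_{\loc}L^2$.

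For $\delta\rho\,\delta v$, write $\delta\rho\,\delta v=\rho_1\delta v-\rho_2\delta v$. The diagonal pieces $\rho_1v_1$ and $\rho_2v_2$ are in $L^\infty_tL^2_x$ by the energy inequality. For the cross terms $\rho_1 v_2$ and $\rho_2 v_1$, use $\supp\rho_i\subset D_{N,t}$ and the $p=2$ strip estimate of \cref{cor:Lpstrip}. For $\rho_1v_2$, apply it with the density $\rho_2$:
\begin{align}
\|\rho_1v_2(t)\|_{L^2}\le\|v_2(t)\|_{L^2(D_{N,t})}\lesssim \sqrt t\|\nabla v_2(t)\|_{L^2}+\|\rho_2v_2(t)\|_{L^2}.
\end{align}
This is bounded locally in time since $\sqrt t\|\nabla v_2\|_{L^2}\to0$ and is bounded by $A_1^0$. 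The same argument with the density $\rho_1$ handles $\rho_2v_1$, using $\sup_t \sqrt t\|\nabla v_1\|_{L^2}\le C_{(\rho_1,u_1)}(v_1)$.

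For the convergences \eqref{eq:conv}, split
\begin{align}
\int\rho_1v_1(\eps)\cdot v_2(\eps)-\rho_0v_1(0)\cdot v_2(0) = \int[\rho_1v_1(\eps)-\rho_0v_1(0)]\cdot v_2(0)+\int\rho_1v_1(\eps)\cdot[v_2(\eps)-v_2(0)]
\end{align}
and imitate the proof of \cref{cor:admutrasp}.

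The first integral tends to zero because $\rho_1v_1(\eps)\rightharpoonup\rho_0v_1(0)$ weakly in $L^2$. This follows from the weak formulation of the momentum equation tested against compactly supported fields, combined with the uniform $L^2$ bound and the fact that $v_2(0)\in L^2$.

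The second integral is bounded by $\|\rho_0v_1(0)\|_{L^2}\,\|\rho_1(\eps)(v_2(\eps)-v_2(0))\|_{L^2}$. I will control the last factor by inserting $\rho_2$:
\begin{align}
\|\rho_1(v_2(\eps)-v_2(0))\|_{L^2}\le \|\rho_2v_2(\eps)-\rho_0v_2(0)\|_{L^2}+\|(\rho_1(\eps)-\rho_0)v_2(0)\|_{L^2}+\|\delta\rho(\eps)v_2(\eps)\|_{L^2}.
\end{align}
The first term vanishes by the assumption $\rho_2v_2\to\rho_0v_2(0)$. The second vanishes by the truncation argument of \cref{cor:admutrasp}, using $\rho_1\in C_tL^2_x$ from \cref{cor:lingrowthu}.

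The last term is the main obstacle, and it is also what is needed for $\int\delta\rho(\eps)|v_2(\eps)|^2\to0$. Here I will use the quantitative strip bound
\begin{align}
\|v_2(\eps)\|_{L^2(D_{N,\eps}\setminus D_{\eps})}\lesssim \sqrt\eps\|\nabla v_2(\eps)\|_{L^2}+\|\rho_2 v_2(\eps)\|_{L^2(\cdot)},
\end{align}
which is not sharp enough alone. So I will instead decompose $|\delta\rho(\eps)||v_2(\eps)|^2\le |\delta\rho(\eps)|\,|v_2(0)|^2$-type terms plus $|v_2(\eps)-\tilde v|^2$ terms. Concretely, I split $v_2(\eps)=\bar v+(v_2(\eps)-\bar v)$ with $\bar v$ a fixed $L^2$ approximation of $v_2(0)$, truncated at level $M$. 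The $\bar v$ part tends to zero because $\|\delta\rho(\eps)\|_{L^2}\to0$. The remainder is controlled through \cref{cor:Lpstrip} by $\sqrt\eps\|\nabla v_2(\eps)\|_{L^2}+\|\rho_2v_2(\eps)-\rho_0v_2(0)\|_{L^2}+\|v_2(0)-\bar v\|_{L^2}$, each of which can be made small. This is exactly where the hypotheses $\sqrt t\|\nabla v_2\|\to0$ and strong initial continuity in Scenario~2(ii) enter. The remaining care is applying \cref{cor:Lpstrip} to the difference $v_2(\eps)-\bar v$, which may require first extending $\bar v$ smoothly.
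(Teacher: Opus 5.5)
Your proposal is correct. For the two integrability claims and the first limit it follows the paper: common support in $D_{N,t}$, the $p=2$ case of \cref{cor:Lpstrip}, and the same splitting as in \cref{cor:admutrasp}. The genuine difference is how you handle $\delta\rho(\eps)v_2(\eps)$.

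The paper tests $\partial_t\delta\rho+\dive(\rho_1u_1-\rho_2u_2)=0$ against time-independent $\varphi$. Using $\|u_i(s)\|_{L^\infty(D_{N,s})}\lesssim s^{-1/2}$ from \cref{lem:Linfty}, it obtains $|\int\delta\rho(t)\varphi|\lesssim\sqrt t\,\|\nabla\varphi\|_{L^1(D_{N,t})}$. Taking $\varphi=|v_2(t)|^2$ gives the second limit with the explicit rate $\sqrt t\|\nabla v_2(t)\|_{L^2}$. Taking $\varphi=v_2(t)\cdot\psi$ gives $\delta\rho(t)v_2(t)\rightharpoonup0$. Strong convergence $\rho_1v_2(\eps)\to\rho_0v_2(0)$ then follows from weak convergence plus convergence of $\int\rho_1|v_2|^2$.

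You instead prove $\|\delta\rho(\eps)v_2(\eps)\|_{L^2}\to0$ directly. You compare $v_2(\eps)$ with a fixed approximant $\bar v$ of $v_2(0)$, use $\|\delta\rho(\eps)\|_{L^2}\to0$ (from $\rho_i\in C([0,T];L^2)$, \cref{cor:lingrowthu}), and apply the strip estimate with density $\rho_2$ to $v_2(\eps)-\bar v$. This route is more elementary: no transport equation for $\delta\rho$, no pointwise velocity bounds, no weak-to-strong upgrade. It also gives the stronger $\int|\delta\rho(\eps)|\,|v_2(\eps)|^2\to0$, which settles both the second limit and the cross term in the first. What you lose is any rate, since your argument rests on qualitative continuity of $\rho_i$ at $t=0$. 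The paper's bound is the same $\dot H^{-1}$-type mechanism that drives \cref{lem:H-1stability}.

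Two points to tighten.
\begin{itemize}
\item Choose $\bar v\in C^\infty_c(\R^2)$ with $\|v_2(0)-\bar v\|_{L^2}<\delta$ from the start, since a truncation alone need not lie in $\dot H^1$. Then the two terms missing from your final list, $\sqrt\eps\|\nabla\bar v\|_{L^2}$ and $\|\bar v\|_{L^\infty}\|\rho_2(\eps)-\rho_0\|_{L^2}$, vanish as $\eps\to0$.
\item The weak formulation only uses divergence-free test fields, so it gives weak continuity of $\mathbb{P}(\rho_1v_1)$, not of $\rho_1v_1$ itself. The first integral still tends to zero because $v_2(0)$ is divergence free. That is the fact you must invoke, as the paper does, rather than merely $v_2(0)\in L^2$.
\end{itemize}
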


\begin{proof}
Fix $T>0$. Using \cref{prop:ImprovedDecay} for both $\rho_1$ and $\rho_2$ and picking the maximum among the two constants we get that for almost every $t\in (0,T)$,
\begin{align}\label{eq:startPoint}
    \supp \rho_i(t) \subseteq D + B_{N\sqrt{t}}(0).
\end{align}
In particular, for every measurable function $v$,
\begin{align}
    \norm{\delta \rho(t)\, v}_{L^2(\R^2)} \le \norm{v}_{L^2(D_{N,t})}.
\end{align}
Choosing $R = N$ in \cref{cor:Lpstrip}, we obtain for both $i=1$ and $i=2$, and for almost every $t \in (0,T)$,
\begin{align}\label{eq:lastcite}
    \norm{v}_{L^2(D_{N,t})} \lesssim \sqrt{t}\norm{\nabla v}_{L^2(\R^2)} + \norm{\rho_i(t)\, v}_{L^2(\R^2)}.
\end{align}
Choosing $v = \dot{v}_2(t)$ and $i=2$, we infer
\begin{align}
    \norm{\delta \rho(t)\, \dot{v}_2(t)}_{L^2(\R^2)} \lesssim \sqrt{t}\norm{\nabla \dot{v}_2(t)}_{L^2(\R^2)} + \norm{\rho_2(t)\, \dot{v}_2(t)}_{L^2(\R^2)},
\end{align}
which proves that $\delta \rho\, \dot{v}_2 \in L_\loc^1([0,\infty);L^2(\R^2))$. Choosing instead $v = v_1(t)$ and $v = v_2(t)$, respectively, we obtain
\begin{align}\label{eq:crosu}
\begin{aligned}
    \norm{\delta \rho(t)\, v_1(t)}_{L^2(\R^2)} &\lesssim \sqrt{t}\norm{\nabla v_1(t)}_{L^2(\R^2)} + \norm{\rho_1(t)\, v_1(t)}_{L^2(\R^2)}, \\
    \norm{\delta \rho(t)\, v_2(t)}_{L^2(\R^2)} &\lesssim \sqrt{t}\norm{\nabla v_2(t)}_{L^2(\R^2)} + \norm{\rho_2(t)\, v_2(t)}_{L^2(\R^2)}.
\end{aligned}
\end{align}
This proves that $\delta \rho\, \delta v \in L_\loc^\infty([0,\infty);L^2(\R^2))$.
The continuity at the initial time is nontrivial. We start by proving the second convergence in \eqref{eq:conv}. Recall that $\delta\rho$ solves
\begin{align}\label{eq:deltarhop}
    \partial_t \delta \rho + \dive(\rho_1 u_1) = \dive(\rho_2 u_2),
    \qquad \delta\rho(0)=0 .
\end{align}
Testing \eqref{eq:deltarhop} with a regular enough $\varphi$ and integrating in time, we obtain for a.e.\ $t>0$
\begin{align}
   \Big|\int_{\R^2} \delta \rho(t)\,\varphi \,dx\Big|
   &= \Big|\int_0^t \int_{D_{N,s}} \big(\rho_1 u_1 - \rho_2 u_2\big)(s)\cdot \nabla \varphi \,dx\,ds \Big| \\ 
   &\le \int_0^t\Big( \| u_1(s)\|_{L^\infty (D_{N,s})}
           + \| u_2(s)\|_{L^\infty (D_{N,s})}\Big)
        \|\nabla\varphi\|_{L^1(D_{N,s})}\,ds ,
\end{align}
where we used \eqref{eq:startPoint} to restrict the domain of integration to $D_{N,s}$. By \cref{lem:Linfty} we have
\begin{align}
    \| u_i(s)\|_{L^\infty (D_{N,s})} \lesssim s^{-1/2},
\end{align}
and therefore
\begin{align}\label{eq:transTrick}
    \Big| \int_{\R^2} \delta \rho(t)\,\varphi \,dx \Big|
    \lesssim \sqrt{t}\,\|\nabla\varphi\|_{L^1(D_{N,t})}.
\end{align}
Inserting $\varphi = |v_2(t)|^2$ in \eqref{eq:transTrick}, we obtain
\begin{align}
 \Big|\int_{\R^2} \delta \rho(t)\,|v_2(t)|^2\,dx\Big|
   \lesssim  \|v_2(t)\|_{L^2(D_{N,t})}
      \big(\sqrt t\,\|\nabla v_2(t)\|_{L^2(\R^2)}\big).
\end{align}
By \cref{cor:Lpstrip}, the first factor is bounded in $L^\infty(0,t)$, and by assumption
\begin{align}
\sqrt t\,\|\nabla v_2(t)\|_{L^2(\R^2)} \longrightarrow 0
\qquad\text{as } t\downarrow 0.
\end{align}
Hence we conclude the second convergence in \eqref{eq:conv}.

We now choose $\varphi = v_2(t)\cdot \psi$ in \eqref{eq:transTrick}, with $\psi \in H^1(\R^2)$, and obtain
\begin{align}
 \Big|\int_{\R^2} \delta \rho(t) v_2(t) \cdot \psi\,dx\Big|
   \lesssim  \|\psi\|_{L^2(\R^2)}
      \big(\sqrt t\,\|\nabla v_2(t)\|_{L^2(\R^2)}\big)
      + \sqrt t\,\|v_2(t)\|_{L^2(D_{N,t})}
      \,\|\nabla \psi\|_{L^2(\R^2)} ,
\end{align}
which converges to $0$ as $t\to 0$. This proves that
\begin{align}
\delta \rho(t) v_2(t)\rightharpoonup 0 \quad \text{in } \dot{H}^{-1}(\R^2).
\end{align}
By \eqref{eq:crosu} and a standard density argument we get
\begin{align}
\delta \rho(t) v_2(t)\rightharpoonup 0 \quad \text{in } L^2(\R^2).
\end{align}

The convergence of $\delta \rho\,|v_2|^2$ at the initial time, combined with the fact that
$\rho_2 v_2 \to \rho_0 v_2(0)$ in $L^2(\R^2)$, also yields
\begin{align}\label{eq:crosu2}
    \int_{\R^2} \rho_1(\eps)\,|v_2(\eps)|^2
    = \int_{\R^2} \delta \rho(\eps)\,|v_2(\eps)|^2
      + \int_{\R^2} \rho_2(\eps)\,|v_2(\eps)|^2
    \longrightarrow \int_{\R^2} \rho_0\,|v_2(0)|^2 .
\end{align}
Moreover, for every $\varphi \in L^2(\R^2)$ we have
\begin{align}
    \int_{\R^2} \rho_1(\eps) v_2(\eps) \cdot \varphi
    = \int_{\R^2} \delta \rho (\eps) v_2(\eps) \cdot \varphi
      + \int_{\R^2} \rho_2 (\eps) v_2(\eps) \cdot \varphi
    \longrightarrow \int_{\R^2} \rho_0 v_2(0) \cdot \varphi .
\end{align}
Hence $(\rho_1v_2)(\eps) \to \rho_0 v_2(0)$ in $L^2(\R^2)$.

We are now ready to prove the first convergence in \eqref{eq:conv}. We write
\begin{align}
     \int_{\R^2} (\rho_1 v_1)(\eps) \cdot v_2(\eps)
    - \rho_0\, v_1(0) \cdot v_2(0)
    &=  \int_{\R^2} (\rho_1v_1)(\eps) \cdot \big[v_2(\eps)-v_2(0)\big] \\
    &\quad +  \int_{\R^2} \big[(\rho_1 v_1)(\eps) 
    - \rho_0\, v_1(0)\big] \cdot v_2(0).
\end{align}
The second term converges to zero since $\mathbb{P}(\rho_1v_1) \in C([0,T];L^2_w)$ and $v_2(0)$ is divergence free. For the first term we estimate
\begin{align}
    \Big|\int_{\R^2} (\rho_1v_1)(\eps) \cdot \big[v_2(\eps)-v_2(0)\big]\Big|
    &\le \|\rho_1 v_1\|_{L^\infty_t L^2_x}
       \|\rho_1(\eps) v_2(\eps) - \rho_1(\eps) v_2(0)\|_{L^2_x} \\
    &\lesssim \|\rho_1(\eps) v_2(\eps) - \rho_0 v_2(0)\|_{L^2_x}
      + \|(\rho_1-\rho_0) v_2(0)\|_{L^2_x}.
\end{align}
The first term converges to zero by the strong convergence
$(\rho_1v_2)(\eps) \to \rho_0 v_2(0)$ in $L^2(\R^2)$, while the second one is handled exactly as $J(\varepsilon)$ in \cref{cor:admutrasp}.

\end{proof}

\begin{proposition}\label{prop:Scen2}
Under the assumptions of Scenario~2, for almost every $t>0 $ we have
\begin{align}\label{eq:scen2cor}
\begin{aligned}
       - \int_{\R^2} \rho_1(t)\,v_1(t)&\cdot v_2(t)
     + \frac12 \int_{\R^2} \delta\rho(t)\,|v_2(t)|^2
    = - \int_{\R^2} \rho_0 v_1(0) \cdot v_2(0) \\
    &\quad - \int_0^t \int_{\R^2} \rho_1\, \delta u \otimes \delta v : \nabla v_2
          - \int_0^t \int_{\R^2} \delta\rho\, \delta v \cdot \dot v_2
          + 2 \int_0^t \int_{\R^2} \nabla v_1 : \nabla v_2 .
          \end{aligned}
\end{align}
\end{proposition}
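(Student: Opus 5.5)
The identity \eqref{eq:scen2cor} should follow by computing, on an interval $(\eps,t)$ with $\eps>0$, the time derivatives of the two quantities $\int\rho_1 v_1\cdot v_2$ and $\frac12\int\delta\rho\,|v_2|^2$ from the weak formulations, adding them, and then letting $\eps\to0$ with \cref{lem:cross}. I take the same route as in the linear case (\cref{cor:admutrasp}, \cref{prop:admuMomL}). The difference here is that the test functions built from $v_2$ are used against the density $\rho_1$, which is transported by a different field $u_1$.

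\emph{Step 1 (formal computation).} Write $\partial_t v_2+u_1\cdot\nabla v_2=\dot v_2+(\delta u\cdot\nabla)v_2$, where $\dot v_2=D_{u_2}v_2$.
\begin{itemize}
\item Test the transport equation of $\rho_1$ (field $u_1$) with $v_1\cdot v_2$, as in \cref{cor:admutrasp}. This gives
\begin{align}
\frac{\dd}{\dd t}\int\rho_1 v_1\cdot v_2=\int\rho_1\,\dot v_1\cdot v_2+\int\rho_1 v_1\cdot\dot v_2+\int\rho_1 v_1\cdot(\delta u\cdot\nabla)v_2 ,
\end{align}
with $\dot v_1=D_{u_1}v_1$.
\item By \cref{cor:tailEst} applied to $v_1$ with $h=v_2(t)$ (admissible since $v_2=O(1)$ at infinity by \cref{prop:tailest}), $\int\rho_1\dot v_1\cdot v_2=-\int\nabla v_1:\nabla v_2$.
\item By the same corollary for $v_2$ with $h=v_1(t)$, $\int\rho_2\dot v_2\cdot v_1=-\int\nabla v_2:\nabla v_1$. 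Hence
\begin{align}
\int\rho_1 v_1\cdot\dot v_2=\int\delta\rho\,v_1\cdot\dot v_2-\int\nabla v_1:\nabla v_2 .
\end{align}
\item Test the transport of $\rho_1$ and of $\rho_2$ with $\frac12|v_2|^2$ and subtract. This gives
\begin{align}
\frac12\frac{\dd}{\dd t}\int\delta\rho\,|v_2|^2=\int\delta\rho\,v_2\cdot\dot v_2+\int\rho_1 v_2\cdot(\delta u\cdot\nabla)v_2 .
\end{align}
\end{itemize}
Subtract the first derivative from the second. The terms in $\delta\rho\,\dot v_2$ combine into $-\int\delta\rho\,\delta v\cdot\dot v_2$. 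The terms in $\rho_1(\delta u\cdot\nabla)v_2$ combine into $-\int\rho_1\,\delta u\otimes\delta v:\nabla v_2$. The gradient terms give $+2\int\nabla v_1:\nabla v_2$. Integrating over $(\eps,t)$ yields \eqref{eq:scen2cor} with the initial data replaced by the values at time $\eps$.

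\emph{Step 2 (admissibility on $(\eps,t)$).} Here I check the hypotheses of \cref{prop:admuTrasp} for $v_1\cdot v_2$ and $|v_2|^2$, tested against $\rho_1$ and $\rho_2$. \cref{lem:B1} gives $W^{1,1}_{\loc}$ regularity. The weighted integrability is where the cross structure enters: we need $\rho_1 v_2$, $\rho_1\nabla v_2$, $\rho_1\partial_t v_2$ in the right spaces, and also $v_1$ on $\supp\rho_2$. For this I use the common support bound $\supp\rho_i(s)\subset D_{N,s}$ from \cref{prop:ImprovedDecay}, together with \cref{cor:Lpstrip} and \cref{lem:Linfty}. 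These give $L^\infty$ bounds on $D_{N,s}$ for $v_i$, $u_i$ and $\dot v_2$, locally away from $0$. All integrands on the right of \eqref{eq:scen2cor} are then in $L^1(\eps,t)$; the term $\int\delta\rho\,\delta v\cdot\dot v_2$ is controlled by \cref{lem:cross}. Together with the Scenario~2 hypotheses $\rho_2\dot v_2,\ \sqrt{s}\,\nabla\dot v_2\in L^1_tL^2_x$ and $\nabla v_2\in L^1_tL^\infty_x$, these integrands are in fact in $L^1(0,t)$.

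\emph{Step 3 ($\eps\to0$).} The right-hand integrals converge by the $L^1(0,t)$ integrability from Step 2. The boundary terms converge by \eqref{eq:conv}: $\int(\rho_1v_1)(\eps)\cdot v_2(\eps)\to\int\rho_0v_1(0)\cdot v_2(0)$ and $\int\delta\rho(\eps)|v_2(\eps)|^2\to0$.

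I expect the main obstacle to be Step 2. The test functions are not compactly supported and are only controlled near the moving patches, so the cutoff argument of \cref{prop:admuTrasp} has to be run for a density transported by $u_1$ with a test function whose regularity comes from the $u_2$-system. I would first bound $\rho_1\partial_t v_2=\rho_1\bigl(\dot v_2-u_2\cdot\nabla v_2\bigr)$ via the strip estimates on $D_{N,s}$, and then follow the proof of \cref{cor:admutrasp}.
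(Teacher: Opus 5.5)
Your proposal is correct and follows the paper's proof essentially step for step. The shared ingredients are: the two applications of \cref{cor:tailEst}; testing the transport of $\rho_1$ with $v_1\cdot v_2$ and the $\delta\rho$-dynamics with $\frac12|v_2|^2$; recombining the terms into $-\rho_1\,\delta u\otimes\delta v:\nabla v_2$ and $-\delta\rho\,\delta v\cdot\dot v_2$; admissibility via the support bound in $D_{N,s}$ and the strip estimates; and the limit $\eps\to0$ through \eqref{eq:conv} and \cref{lem:cross}. The only differences are cosmetic: you subtract the separately tested transport equations of $\rho_1$ and $\rho_2$ instead of testing the equation for $\delta\rho$ directly, and you substitute the $\rho_2$-identity before summing rather than adding three identities.
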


\begin{proof}
We split the proof into three steps. In the first two steps, we prove two inequalities which, when combined in Step~3, yield the claim.

\underline{Step~1.} By \cref{prop:tailest}, for almost every $t > 0$ and for $i=1,2$, we have $\abs{v_i(t,x)} = O(1)$ as $\abs{x}\to\infty$. We may therefore apply \cref{cor:tailEst} to $(\rho_1,u_1)$ with $v=v_1(t)$ and $h(t)=v_2(t)$. After integrating over $(\eps,t)$, we obtain 
\begin{align}\label{eq:tosum1}
0 = -\int_\eps^{t}\!\!\int_{\R^2} \rho_1\, \dot v_1 \cdot v_2 
    - \int_\eps^{t}\!\!\int_{\R^2} \nabla v_1 : \nabla v_2,
\end{align}
and, in the same way,
\begin{align}\label{eq:tosum2}
0 = - \int_\eps^{t}\!\!\int_{\R^2} \rho_2\, \dot v_2 \cdot v_1
    - \int_\eps^{t}\!\!\int_{\R^2} \nabla v_1 : \nabla v_2.
\end{align}
A crucial step is to test the weak formulation of the transport equation associated with $(\rho_1,u_1)$ against the function $v_1\cdot v_2$ and to obtain
\begin{align}\label{eq:quickgoal}
  \int_{\R^2} \rho_1(t)\, v_1(t)\cdot v_2(t)
  - \int_{\R^2} \rho_1(\varepsilon)\, v_1(\varepsilon)\cdot v_2(\varepsilon)
  =
  \int_\varepsilon^t \!\!\int_{\R^2}
  \rho_1\, v_2\cdot \dot v_1
  + \rho_1\, v_1\cdot D_{u_1}v_2 .
\end{align}
This identity is precisely \eqref{eq:traspvu} with the corresponding replacements.

As in the proof of \cref{cor:admutrasp}, the main difficulty in establishing \eqref{eq:quickgoal} is to verify that $v_1\cdot v_2$ is an admissible test function. The argument follows the same lines as in \cref{cor:admutrasp}; however, since $v_2$ is not advected by $(\rho_1,u_1)$, quantities such as $\rho_1 v_2$ or $\rho_1 D_{u_1}v_2$ may a priori fail to have the required spatial integrability.

This issue can be handled exactly as in the proof of \cref{lem:cross}, by repeatedly exploiting localized $L^p$ bounds for $v_2$ and $D_{u_1}v_2$. As a representative example, for almost every $t> 0$ one has
\begin{align}\label{eq:rep}
  \|\rho_1(t)\,v_2(t)\|_{L^\infty(\R^2)}
  \le \|v_2(t)\|_{L^\infty(D_{N,\sqrt t})}
  \lesssim
  t^{\frac34}\|\nabla v_2(t)\|_{L^4(\R^2)}
  + t^{-\frac12}\|\rho_2(t)\,v_2(t)\|_{L^2(\R^2)} .
\end{align}
Similar estimates apply to all the other terms arising in the weak formulation, and ensure that $v_1\cdot v_2$ is an admissible test function.

Once \eqref{eq:quickgoal} is established, we add \eqref{eq:tosum1}, \eqref{eq:tosum2}, and \eqref{eq:quickgoal} to obtain
\begin{align}\label{eq:scen21}
\begin{aligned}
 \int_{\R^2} \rho_1(t)\,v_1(t) \cdot v_2(t) - & \int_{\R^2} \rho_1(\eps)\,v_1(\eps)\cdot v_2(\eps)\\
 = & \int_\eps^t \int_{\R^2} \rho_1 v_1 \cdot D_{u_1}v_2 - \rho_2 v_1 \cdot D_{u_2}v_2- 2 \int_\eps^t \int_{\R^2} \nabla v_1 : \nabla v_2.
\end{aligned}
\end{align}

\underline{Step~2.} We now turn to a different identity, following a similar strategy. The density difference $\delta\rho := \rho_1-\rho_2$ satisfies
\begin{align}\label{eq:deltarho}
  \partial_t \delta\rho + \dive(\delta\rho\,u_2)
  = - \dive(\rho_1\,\delta u),
\end{align}
in the sense of distributions. Therefore, provided that $\lvert v_2\rvert^2/2$ is an admissible test function on compact subsets of $(0,\infty)$, we obtain
\begin{align}
  \frac12 \int_{\R^2} \delta\rho(t)\,\lvert v_2(t)\rvert^2
  - \frac12 \int_{\R^2} \delta\rho(\varepsilon)\,\lvert v_2(\varepsilon)\rvert^2
  &=
  \int_\varepsilon^t \!\!\int_{\R^2}
  \delta\rho\, v_2\cdot \partial_t v_2
  + \int_\varepsilon^t \!\!\int_{\R^2}
  \delta\rho\, u_2\cdot \nabla\!\left(\frac{\lvert v_2\rvert^2}{2}\right) \\
  &\quad
  + \int_\varepsilon^t \!\!\int_{\R^2}
  \rho_1\, \delta u \cdot \nabla\!\left(\frac{\lvert v_2\rvert^2}{2}\right) .
\end{align}
After straightforward manipulations, this identity yields
\begin{align}\label{eq:scen22}
\frac12 \int_{\R^2} \delta\rho(t)\,\lvert v_2(t)\rvert^2
- \frac12 \int_{\R^2} \delta\rho(\eps)\,\lvert v_2(\eps)\rvert^2
&= \int_\eps^t \int_{\R^2}
   \delta\rho\, v_2 \cdot D_{u_2}v_2
 + \rho_1\, \delta u \otimes v_2 : \nabla v_2 .
\end{align}
As before, the only nontrivial point is to verify that $\lvert v_2\rvert^2/2$ is an admissible test function for \eqref{eq:deltarho}. In this case, the required $L^p$ bounds for the products $\delta\rho\,v_2$ and $\delta\rho\,\dot v_2$ are again obtained by exploiting the finite speed of propagation of the density, namely
\begin{align}
  \supp \delta\rho(t) \subset D + B_{N\sqrt t}(0),
\end{align}
and by repeating the localized estimates described above in \eqref{eq:rep}.

\underline{Step~3.} Subtracting \eqref{eq:scen22} from \eqref{eq:scen21}, we obtain
\begin{align}
    - \int_{\R^2}& \rho_1(t)\, v_1(t) \cdot v_2(t)
    + \frac12 \int_{\R^2} \delta\rho(t)\,|v_2(t)|^2
    = - \int_{\R^2} \rho_1(\eps)\,v_1(\eps)\cdot v_2(\eps)
       + \frac12 \int_{\R^2} \delta\rho(\eps)\,|v_2(\eps)|^2 \\
    &\quad + \int_\eps^t \int_{\R^2}
    \underbrace{-\rho_1 v_1 \cdot D_{u_1}v_2
    + \rho_2 v_1 \cdot D_{u_2}v_2
    + \delta\rho\, v_2 \cdot D_{u_2}v_2
    + \rho_1\, \delta u \otimes v_2 : \nabla v_2}_{=:K}  + 2 \int_\eps^t \int_{\R^2} \nabla v_1 : \nabla v_2 .
\end{align}
By straightforward algebraic manipulations, we can rewrite
\begin{align}
    K = - \rho_1\, \delta u \otimes \delta v : \nabla v_2
        - \delta\rho\, \delta v \cdot D_{u_2}v_2 .
\end{align}
We now study the limit as $\eps \to 0$. By \eqref{eq:conv}, we have
\begin{align}
- \int_{\R^2} \rho_1(\eps)\,v_1(\eps) \cdot v_2(\eps)\,dx
+ \frac12 \int_{\R^2} \delta \rho(\eps)\,|v_2(\eps)|^2\,dx
\longrightarrow
- \int_{\R^2} \rho_0 v_1(0) \cdot v_2(0)\,dx .
\end{align}
For the term $K$, we estimate
\begin{align}
    \|K\|_{L^1(\R^2)}
    \le \|\rho_1\, \delta u\|_{L^2(\R^2)}\,\|\rho_1\, \delta v\|_{L^2(\R^2)}\, \|\nabla v_2\|_{L^\infty(\R^2)}
       + \|\delta\rho\, \delta v\|_{L^2(\R^2)}\, \| \delta \rho\dot  v_2\|_{L^2(\R^2)},
\end{align}
which belongs to $L^1(0,t)$ by \cref{lem:cross} and the assumptions on $v_2$ (the inclusion $\rho_1\, \delta u \in L^\infty_t L^2_x$ can be proved by choosing $v = \delta u(t)$ and $i=1$ in \eqref{eq:lastcite}). Since $\nabla v_1, \nabla v_2 \in L^2_t L^2_x$, by dominated convergence we may pass to the limit as $\eps \to 0$, and we obtain
\begin{align}
    - \int_{\R^2} \rho_1(t)\,v_1(t)&\cdot v_2(t)
     + \frac12 \int_{\R^2} \delta\rho(t)\,|v_2(t)|^2
    = - \int_{\R^2} \rho_0 v_1(0) \cdot v_2(0) \\
    &\quad - \int_0^t \int_{\R^2} \rho_1\, \delta u \otimes \delta v : \nabla v_2
          - \int_0^t \int_{\R^2} \delta\rho\, \delta v \cdot \dot v_2
          + 2 \int_0^t \int_{\R^2} \nabla v_1 : \nabla v_2 .
\end{align}
\end{proof}

\subsection*{Acknowledgments}

The authors are supported by the Deutsche Forschungsgemeinschaft (DFG) through the project \emph{Inhomogeneous and compressible fluids: statistical solutions and dissipative anomalies} within the SPP 2410 \emph{Hyperbolic Balance Laws in Fluid Mechanics: Complexity, Scales, Randomness} (CoScaRa).

Part of this work was carried out during S.~\v{S}kondri\'c’s visit to the Department of Mathematics and Computer Science at the University of Basel.

We thank G.~Crippa, M.~Nesi and E.~Wiedemann for helpful discussions.

\citation

\bibliographystyle{plain} 
\bibliography{bibFin}

\end{document}